\newcommand{\COLORON}{1}
\newcommand{\NOTESON}{0}
\newcommand{\Debug}{0}
\newcommand{\fa}{topological amalgamation}
\newcommand{\vapfg}{VAP-free graph}
\newcommand{\ps}{dividing}
\newcommand{\psc}{\ps\ cycle}
\newcommand{\mpsc}{shortest dividing cycle}
\newcommand{\pccg}{plane cubic Cayley graph}
\newcommand{\iicon}{2-connected}
\newcommand{\tcon}{3-connected}
\newcommand{\sepe}{hinge}
\newcommand{\pr}{consistent}
\newcommand{\prem}{\pr\ embedding}
\newcommand{\vapf}{VAP-free}
\newcommand{\ccfg}{\ensuremath{\cc_f(\G)}} 
\newcommand{\red}{simple}
\newcommand{\topem}{topological embedding}
\newcommand{\auto}{colour-automorphism}
\newcommand{\plpr}{planar presentation}
\newcommand{\sedo}{semi-dominant}
\newcommand{\ciso}{colour-preserving isomorphism}
\newcommand{\ncp}{non-crossing pattern}
\newcommand{\soc}{society}
\newcommand{\socs}{societies}
\newcommand{\gt}{\ensuremath{G_2}}
\newcommand{\gtp}{\ensuremath{G'_2}}
\newcommand{\gat}{\ensuremath{\Gam_2}}
\newcommand{\rota}{rotation}
\newcommand{\rot}[2]{\ensuremath{#1}\text{--}\ensuremath{#2}~rotation}
\newcommand{\dray}{double ray}
\renewcommand{\theenumi}{(\roman{enumi})}
\renewcommand{\labelenumi}{\theenumi}
\newcommand{\comment}[1]{}
\newcommand{\COMMENT}[1]{}
\definecolor{darkgray}{rgb}{0.3,0.3,0.3}
\newcommand{\defi}[1]{{\color{darkgray}\emph{#1}}}
\newcommand{\acknowledgements}{\section*{Acknowledgements}}
\newtheorem{proposition}{Proposition}[section]
\newtheorem{definition}[proposition]{Definition}
\newtheorem{theorem}[proposition]{Theorem}
\newtheorem{corollary}[proposition]{Corollary}
\newtheorem{lemma}[proposition]{Lemma}
\newtheorem{conjecture}{{Conjecture}}[section]
\newtheorem{problem}[conjecture]{{Problem}}
\newtheorem{examp}[proposition]{Example}
\newcommand{\kreis}[1]{\mathaccent"7017\relax #1}
\newcommand{\FIG}{0}
\newcommand{\note}[1]{  

	{\color{blue} \hspace*{-60pt} NOTE: \color{Turquoise}{\small  \tt \begin{minipage}[c]{1.1\textwidth}  #1 \end{minipage} \ignorespacesafterend }} 
	
	}
\else \newcommand{\note}[1]{} \fi
\newcommand{\afsubm}[1]{ \ifnum \Debug = 1 {\mymargin{#1}}
\fi} 
\newcommand{\sss}{\ensuremath{\color{red} \bowtie \bowtie \bowtie\ }}
\else \newcommand{\sss}{} \fi
\newcommand{\fig}[1]{Figure ``{#1}''}
\else \newcommand{\fig}[1]{Figure~\ref{#1}} \fi
\renewcommand{\color}[1]{}
\newcommand{\showFig}[2]{
   \begin{figure}[htbp]
   \centering
   \noindent
   \epsfbox{#1.eps}
   \caption{\small #2}
   \label{#1}
   \end{figure}
}
\newcommand{\N}{\ensuremath{\mathbb N}}
\newcommand{\R}{\ensuremath{\mathbb R}}
\newcommand{\Z}{\ensuremath{\mathbb Z}}
\newcommand{\cc}{\ensuremath{\mathcal C}}
\newcommand{\cf}{\ensuremath{\mathcal F}}
\newcommand{\cp}{\ensuremath{\mathcal P}}
\newcommand{\ct}{\ensuremath{\mathcal T}}
\newcommand{\cx}{\ensuremath{\mathcal X}}
\newcommand{\oo}{\ensuremath{\omega}}
\newcommand{\Gam}{\ensuremath{\Gamma}}
\newcommand{\kap}{\ensuremath{\kappa}}
\newcommand{\sig}{\ensuremath{\sigma}}
\newcommand{\zero}{\mathbb 0}
\newcommand{\sm}{\backslash}
\newcommand{\isom}{\cong}
\newcommand{\nin}{\ensuremath{{n\in\N}}}
\newcommand{\iin}{\ensuremath{{i\in\N}}}
\newcommand{\sgl}[1]{\ensuremath{\{#1\}}}
\newcommand{\pth}[2]{\ensuremath{#1}\text{--}\ensuremath{#2}~path}
\newcommand{\pths}[2]{\ensuremath{#1}\text{--}\ensuremath{#2}~paths}
\newcommand{\seq}[1]{\ensuremath{(#1_i)_{i\in\N}}} 
\newcommand{\fml}[1]{\ensuremath{\{#1_i\}_{i\in I}}} 
\newcommand{\g}{\ensuremath{G\ }}
\newcommand{\G}{\ensuremath{G}}
\newcommand{\Lr}[1]{Lemma~\ref{#1}}
\newcommand{\Tr}[1]{Theorem~\ref{#1}}
\newcommand{\Sr}[1]{Section~\ref{#1}}
\newcommand{\Prr}[1]{Pro\-position~\ref{#1}}
\newcommand{\Cr}[1]{Corollary~\ref{#1}}
\newcommand{\Cnr}[1]{Con\-jecture~\ref{#1}}
\newcommand{\Dr}[1]{De\-fi\-nition~\ref{#1}}
\newcommand{\lf}{locally finite}
\newcommand{\lfg}{locally finite graph}
\newcommand{\Cg}{Cayley graph}
\newcommand{\hcy}{Hamilton circle}
\renewcommand{\iff}{if and only if}
\newcommand{\fe}{for every}
\newcommand{\Fe}{For every}
\newcommand{\st}{such that}
\newcommand{\ti}{there is}
\newcommand{\ta}{there are}
\newcommand{\obda}{without loss of generality}
\newcommand{\Btco}{By the construction of}
\newcommand{\wrt}{with respect to}
\newcommand{\istc}{is straightforward to check}
\newcommand{\ises}{is easy to see}
\newcommand{\labtequ}[2]{ \begin{equation} \label{#1} 	\begin{minipage}[c]{0.9\textwidth}  #2 \end{minipage} \ignorespacesafterend \end{equation} }
\newcommand{\mymargin}[1]{
  \marginpar{%
    \begin{minipage}{\marginparwidth}\small%
      \begin{flushleft}%
        {\color{blue}#1}%
      \end{flushleft}%
   \end{minipage}%
  }%
}%
\newcommand{\mySection}[2]{}
\newcommand{\citeCayIIFignospiral}{\cite[Figure~10]{cay2con}}
\newcommand{\citeIIconCorNosp}{\cite[Proposition~5.7]{cay2con}}
\newcommand{\citeCayIIFigtarget}{\cite[Figure~5]{cay2con}}
\newcommand{\citeIICorplpr}{\cite[Corollary~6.3]{cay2con}}
\newcommand{\citeObsfinfac}{\cite[Observation 5.8]{cay2con}}
\newcommand{\citVapfL}{\cite[Lemma 3.4]{vapf}}
\newcommand{\twam}{twist-amalgamation}
\newcommand{\twsqam}{twist-squeeze-amalgamation}
\renewcommand{\labelenumi}{\arabic{enumi}.}
\title{The planar cubic Cayley graphs}
\author{Agelos Georgakopoulos\thanks{Supported by FWF grant P-19115-N18.} \medskip \\
  {Technische Universit\"at Graz}\\
  {Steyrergasse 30, 8010}\\
  {Graz, Austria}\\
}
\date{}
\begin{document}
\maketitle

\begin{abstract}
We obtain a complete description of the planar cubic Cayley graphs, providing an explicit presentation and embedding for each of them. This turns out to be a rich class, comprising several infinite families. We obtain counterexamples to conjectures of Mohar, Bonnington and Watkins. Our analysis makes the involved graphs accessible to computation, corroborating a conjecture of Droms.
\end{abstract}

\section{Introduction}

\subsection{Overview}
The study of  planar Cayley graphs has a tradition starting in 1896 with Maschke's characterization of the finite ones. Among the infinite planar \Cg s, those corresponding to a discontinuous action on the plane have received a lot of attention. Their groups are important in complex analysis, and they are closely related to the surface groups \cite[Section 4.10]{ZVC}. These graphs and groups are now well understood due to the work of Macbeath \cite{macCla}, Wilkie \cite{wilNon}, and others; see \cite{ZVC} for a survey. The remaining ones are harder to analyse. They have been the subject of more recent work \cite{droInf,DrSeSeCon,dunPla,vapf}, and they are not yet completely classified. For example, we do not know if they can be effectively enumerated \cite{droInf,DrSeSeCon}.

In this paper we study those planar Cayley graphs that are \defi{cubic}, which means that every vertex is adjacent with precisely three other vertices. It turns out that this class is restricted enough to allow for a complete description of all of its elements, while offering enough variety to allow an insight into the general planar Cayley graphs.

Our main result is
\begin{theorem} \label{main}
Let \g be a planar cubic \Cg. Then \g has precisely one of the presentations listed in Table~\ref{table}. Conversely, each of these presentations, with parameters chosen in the specified domains, yields a non-trivial planar cubic \Cg.
\end{theorem}

Some of the entries  of Table~\ref{table} yield counterexamples to a conjecture of Bonnington and Watkins \cite{BoWaPla} and Bonnington and Mohar \cite{mohPers}; see \Sr{intBW}. 

The presentations of Table~\ref{table} have a special structure that is related to the embedding of the corresponding \Cg, and yield geometric information about the corresponding Cayley complex. The ideas of this paper are used in \cite{agmh} to prove that every planar Cayley graph admits such a presentation. This solves the aforementioned problem of \cite{droInf,DrSeSeCon} asking for an effective enumeration; see \Sr{intAP}. 

Motivated by Stallings' celebrated theorem, Mohar conjectured that every  planar \Cg\ with more than 1 end can be obtained from simpler ones by a glueing operation reminiscent of group amalgamation. Some of the entries  of Table~\ref{table} disprove Mohar's conjecture, but we will show that the conjecture becomes true in the cubic case after a slight modification. This modified version might be true for all \Cg s, not just the planar ones, yielding a refinement of Stallings' theorem. See \Sr{intEx} for details. 

In \cite{fleisch} I asked for a characterization of the \lf\ \Cg s that admit a \defi{Hamilton circle}, i.e.\ a homeomorphic image of $S^1$ in the end-compactification of the graph containing all vertices.
Mohar and I conjectured that every \tcon\ planar \Cg\ does. As explained in \Sr{secHam}, perhaps the hardest case for this conjecture is the cubic case, and our classification constitutes significant progress in this direction.


\subsection{Some examples and Mohar's conjecture} \label{intEx}

Let us consider some examples. Suppose \g is a finite or 1-ended \Cg\ embedded in the plane,  that some generator $z$ of \g spans a finite cyclic subgroup, and the corresponding cycles of \g bound faces in this embedding, see \fig{fiamal} (i). Then, considering the amalgamation product \wrt\ the subgroup $\left<z\right>$, and using the generators of \G, we obtain a multi-ended \Cg\ $G'$ which can also be embedded in the plane: the face that was bounded by some coset $C$ of $\left<z\right>$ in $G$ can be used to recursively accommodate the copy $K$ of \g glued along $C$ and the further copies sharing a $z$ cycle with $K$ and so on; see \fig{fiamal} (ii). 

\showFig{fiamal}{A \twam.}

This kind of amalgamation can be used to produce new planar \Cg s from simpler ones, but it cannot yield cubic graphs since the degree of a vertex is increased. To amend this, Mohar \cite{mohPers} proposed the following variant of this operation. For every $z$ cycle $C$ of \fig{fiamal} (ii), rotate one of its sides in such a way that the edges incident with $C$ on either side do not have common endvertices, but appear in an in-out alternating fashion instead; see  \fig{fiamal} (iii). It is at first sight not clear why the new graph $G''$ produced like this is a \Cg, but in fact it is, and its group is an overgroup of the group of \G, see \cite{am}. We call the operation of \fig{fiamal} a \defi{\twam}. 

Our next example is slightly more complicated. Let this time \g be the \Cg\ of a finite dihedral group shown in the left part of \fig{fidih}. Then, \fe\ 4-cycle $C$ of \g bounding a face $F$, we embed a copy of \g in $F$, with $C$ being glued with a corresponding cycle of the copy. As in our last example, this glueing does not identify vertices with vertices, but rather puts some vertices of one copy of \g midway along some edges of the other, see the right part of \fig{fidih}. 

\showFig{fidih}{A \twsqam.}

Unlike the previous example, where every edge of $C$ was subdivided into two, this time every other edge of $C$ is subdivided into three while every other edge is left intact. We recursively repeat this kind of glueing operation for the newly appeared face-bounding 4-cycles. Again, we obtain a new planar \Cg. 
We call the operation of \fig{fidih} a \defi{\twsqam}. 


These two examples are special cases of a more general, and more complicated, phenomenon: the hardest task addressed in this paper is to show that for every multi-ended cubic planar \Cg\ $G'$, it is possible to obtain a presentation of $G'$ from one of a finite or 1-ended cubic \Cg\ $G$ embedded in $G'$, by replacing some of the generators by new ones, and replacing each occurrence of an old generator $z$ by a word $W_z$ in the new generators. We call this operation a \defi{word extension of \G}. In our two examples it was enough to replace just one generator, and the corresponding word had length two or three. There are many cases where this is enough: the entries  \ref{Aiii}, \ref{AIa2i} and \ref{AIIa2i} of Table~\ref{table} for example were obtained like that. However, \ta\ much harder cases, where one needs to replace all generators by new ones, and there is no upper bound to the length of the replacing words $W_z$ needed. This is the case for the last five entries of Table~\ref{table}.

In our two examples we saw how a \Cg\ $G'$ with infinitely many ends can be obtained from a finite or 1-ended \Cg\ $G$ by glueing copies of $G$ together.
Mohar's  aforementioned conjecture \cite[Conjecture 3.1]{mohTre}  was that every planar \Cg\ graph with more than one end can be obtained in a similar way:
\begin{conjecture} \label{conjBM}
Let \G\ be a planar Cayley graph. Then \G\ can be obtained as the tree amalgamation of (subdivisions of) one or more planar Cayley graphs, each of which is either finite or 1-ended.
Moreover, the identifying sets in these amalgamations correspond to cosets of finite subgroups.
\end{conjecture}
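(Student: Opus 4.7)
The natural line of attack is via Stallings' theorem combined with the Bass--Serre tree of the resulting splitting. Since \G\ has more than one end, the underlying group $\Gam$ splits over a finite subgroup $H$ as either an amalgamated free product $A *_H B$ or an HNN extension. The associated Bass--Serre tree $T$ carries a $\Gam$-action, and at the level of \Cg s this translates into a tree-like arrangement of copies of the \Cg s of the factor groups, glued along cosets of $H$. This is already precisely the shape of a tree amalgamation with identifying sets corresponding to cosets of a finite subgroup. My first step would therefore be to formalise this dictionary: starting from \G, produce a single-level tree amalgamation whose factors are the \Cg s of the Stallings factors of $\Gam$.

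The second step is to iterate. Each factor group is again planar, since it acts on a planar subgraph of \G, and one proceeds by induction. The iteration must terminate with factors that are either finite or 1-ended, because by Stallings' theorem a 1-ended group admits no further splitting over a finite subgroup, and finite groups are obviously terminal. To legitimise the induction one needs an accessibility statement: the recursive decomposition must halt after finitely many steps. For finitely presented groups this is Dunwoody's accessibility theorem, and in the planar setting one hopes to leverage planarity itself to bound the complexity of the finite subgroups appearing as edge stabilisers.

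The real work — and the main obstacle — is to reconcile the algebraic splitting with the planar embedding of \G. Stallings' theorem delivers a splitting but says nothing about whether the resulting pieces can be re-embedded planarly in a compatible way, or whether the cosets of $H$ appear in \G\ as tidy face-bounding cycles or finite cut-sets that can serve as glueing sets. I would therefore try to produce the splitting directly from a graph-theoretic analysis: find a well-behaved finite cut in \G\ (guaranteed to exist since \G\ has more than one end and is \lf), show via the embedding that the two sides of the cut are separated by a coset of a finite subgroup, and use automorphism transitivity to propagate this single cut into the full $\Gam$-orbit of cuts defining the tree amalgamation. The principal difficulty is that a priori the finite cuts available in a planar \lf\ graph with many ends need not correspond to cosets of finite subgroups in this clean way: the relative positions of the two sides inside the face structure may interlock so as to force a twisted glueing in which vertices on one side land midway along edges on the other. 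This is precisely where the original conjecture is expected to break down, and where the \twam\ and \twsqam\ refinements introduced in \Sr{intEx} must enter to rescue at least the cubic case.
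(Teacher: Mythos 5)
This statement is a \emph{conjecture} that the paper does not prove --- it \emph{disproves} it (in part), so any proof attempt is doomed from the start. The paper explicitly states that ``some of the entries of Table~\ref{table} disprove Mohar's conjecture'' and that ``the second sentence of \Cnr{conjBM} is easily shown to be false'' by the \twsqam\ example of \fig{fidih}. In that example the copies of the dihedral Cayley graph are glued so that some vertices of one copy land \emph{midway along edges} of the other (every other edge of the identifying 4-cycle is subdivided into three, the rest left intact); the identifying set is therefore not a coset of a finite subgroup --- it is not even a subset of the vertex set of the original graph. Your proposed route through Stallings' theorem and the Bass--Serre tree structurally produces glueings along honest cosets of the finite edge stabiliser, which is exactly the feature that fails. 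You sense this in your final sentences (``this is precisely where the original conjecture is expected to break down''), but you do not draw the necessary conclusion: the obstruction is not a technical difficulty to be overcome, it is a counterexample. No amount of care in choosing the finite cuts will make the second sentence true.

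Concerning the first sentence alone: the paper only \emph{corroborates} it, in the cubic case, after inserting the phrase ``subdivisions of'' (which the paper notes was missing from Mohar's original formulation and without which even the first sentence is false, again by the examples of \fig{fiamal} and \fig{fidih}). The corroboration comes not from an abstract Stallings-type argument but from the explicit classification: each multi-ended entry of Table~\ref{table} is exhibited as a tree amalgamation of subdivisions of an earlier (finite or 1-ended) entry via the \twam\ and \twsqam\ constructions, with the glueing cycles being monochromatic cycles, 2-coloured cycles, or shortest dividing cycles rather than cosets. If you want to salvage your write-up, the honest deliverable is (i) the counterexample to the second sentence, and (ii) a proof of the amended first sentence restricted to the cubic case, which is essentially the content of Sections~\ref{secI} and~\ref{secII} and cannot be shortcut by Bass--Serre theory alone.
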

We will refrain from repeating the definition of Mohar's \defi{tree amalgamation}, which can be found in  \cite[Section~2]{mohTre}, and contend ourselves with examples.
The phrase ``subdivisions of'' is missing in Mohar's conjecture, but apparently it was intended. In any case, omitting it makes the first sentence false as indicated by the above examples (the first of which is due to Mohar). 

The results of this paper corroborate  the first sentence of \Cnr{conjBM}. For example, the graph of \fig{fiamal} (iii) is the tree amalgamation of the graph $H$ obtained from \fig{fiamal} (i) by subdividing every $z$ edge into two, the amalgamations taking place along the facial cycles. Similarly, the graph of  \fig{fidih} (ii) can be obtained from that of \fig{fidih} (i) by subdividing every directed edge into three, and then amalgamating copies of the resulting graph along the trapezoid cycles. 

The second sentence of \Cnr{conjBM} is easily shown to be false by the latter example.

\subsection{The conjecture of Bonnington and Watkins} \label{intBW}

Bonnington and Watkins \cite{BoWaPla} made the following conjecture.
\begin{conjecture}[\cite{BoWaPla}] \label{bowa}
No locally finite vertex-transitive graph of connectivity 3 admits a planar embedding wherein some vertex is incident with more than one infinite face-boundary.
\end{conjecture}

Mohar \cite{mohTre} disproved this using his aforementioned amalgamation construction to obtain vertex-transitive graphs of connectivity 3 with arbitrarily many  infinite face-boundaries incident with each vertex. This motivated Bonnington and Mohar \cite{mohPers} to ask whether \ti\ a planar \tcon\ vertex-transitive graph all face-boundaries of which are infinite. 

In this paper we show that, surprisingly, such graphs do exist: we construct \tcon\ planar cubic \Cg s in which no face is bounded by a finite cycle. Note that a \tcon\ planar graph has an essentially unique embedding (see \Tr{imrcb} below), which makes the existence of such examples more surprising.

In fact, we describe all \tcon\ planar cubic \Cg s with the property that  no face is bounded by a finite cycle: they are precisely the members of the families \ref{AIIciii} and \ref{AIId2iii} of Table~\ref{table}. See \Cr{monsters} and \Cr{monsters2}.

\subsection{Planar presentations and  effective enumeration} \label{intAP}

Some of the groups appearing in Table~\ref{table} were already known and well-studied, namely those admitting a Cayley complex   that can be embedded in the plane after removing some redundant simplices, see \cite{vapf}. These are the 1-ended ones, those of connectivity 1, and some of those appearing in Table~\ref{table} with connectivity 2 \cite{cay2con}. This paper is mainly concerned with the remaining ones. It turns out that none of the \Cg s of Table~\ref{table} from \ref{AIIa2i} on admits a  Cayley complex as above\footnote{This does not immediately imply that their groups are distinct from the groups of earlier entries, as a group can have various planar \Cg s of different nature; in fact, for very small values of the involved parameters it can happen that the corresponding group coincides with one from an earlier entry. For example, as pointed out by M.~Dunwoody (personal communication), the group of case \ref{AIId2i} for $n=2$ and $m=1$ coincides with that of case \ref{Avi} for $n=2$ and $m=4$. However, for larger values of the parameters all groups from \ref{AIIa2i} on should be distinct from the previous ones; a proof of this fact is in progress.} \cite{vapf}. 

However, it follows from our results that they admit a Cayley complex $X$ for which \ti\ a mapping $\sig: X \to \R^2$ \st\ for every two 2-simplices of $X$, the images of their interiors under \sig\ are either disjoint or one of these images is contained in the other, or their intersection is a 2-simplex bounded by the two parallel edges corresponding to some involution in the generating set. We call such an $X$ an \defi{almost planar} Cayley complex.
\begin{theorem} \label{thmAPX}
Every cubic planar \Cg\ is the 1-skeleton of an almost planar Cayley complex of the same group. 
\end{theorem}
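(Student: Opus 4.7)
The plan is to use the complete classification provided by \Tr{main}: every planar cubic \Cg\ \G\ admits one of the presentations listed in Table~\ref{table}, each of which is constructed to encode explicitly a planar embedding of \G. For each entry, I would construct the desired almost planar Cayley complex by placing the 2-cells corresponding to each relator in the plane so that they either bound a face of the embedding or lie in a nested hierarchy inherited from the amalgamation structure exhibited in the paper. The statement of \Tr{thmAPX} is therefore checked case by case, with the work concentrating on the multi-ended entries.

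\textbf{Step 1: Finite and 1-ended cases.} For the entries of Table~\ref{table} corresponding to finite or 1-ended Cayley graphs, the results of \cite{vapf} and \cite{cay2con} already yield a planar Cayley complex after discarding 2-cells that represent redundant relators of the form $s^2$ for involutions $s$. Reintroducing these relators only amounts to inserting a small bigon between each pair of parallel edges of such an involution, and the resulting map $\sig$ still satisfies the almost planar condition because the only image overlaps are exactly the permitted involution bigons.

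\textbf{Step 2: Multi-ended cases via word extensions.} For the multi-ended entries, the results of the paper exhibit \G\ as a \emph{word extension} of a finite or 1-ended planar cubic \Cg\ $H$ (see \Sr{intEx}): certain generators of $H$ are replaced by words in a new generating set, and each occurrence of such a generator in a relator of $H$ is replaced accordingly. Geometrically this corresponds to a recursive amalgamation: start from a planar embedding of $H$ equipped with the almost planar Cayley complex $X_H$ of Step~1; for each face $F$ whose bounding cycle corresponds to a replaced generator of $H$, nest a further copy of $H$ inside $F$ with its corresponding boundary cycle glued to $\partial F$ according to the word extension; and recurse inside the newly created faces. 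Every 2-cell of the resulting Cayley complex $X_G$ either maps to a face of some copy of $H$ in this nested hierarchy, in which case any two such 2-cells have images that are disjoint or nested according to the amalgamation tree, or maps to a bigon corresponding to an involution relator of the new presentation, which is the permitted exception.

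\textbf{The main obstacle} is expected to be the last five entries of Table~\ref{table}, in which all generators of $H$ are replaced and the replacing words $W_z$ admit no uniform upper bound on their length. There the faces of the embedding of \G\ no longer correspond directly to the relators of $H$, and the placement of 2-cells has to be carried out while tracking the substitution carefully: each 2-cell of $X_G$ must be assigned to a unique region of the nested picture without creating forbidden overlaps, and the new relators (introduced together with the new generators) must be realized as additional 2-cells that fit coherently inside the same hierarchy. The verification relies on the detailed description of the planar embedding of \G\ and the compatibility of its facial cycles with the presentation produced during the proof of \Tr{main}; once this matching is made explicit, the nested arrangement of 2-cells produces the required map $\sig$ and establishes that $X_G$ is almost planar.
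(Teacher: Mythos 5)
Your proposal follows essentially the same route as the paper: Theorem~\ref{thmAPX} is obtained there not by a separate argument but by combining Theorem~\ref{main} with the (case-by-case) verification, distributed over Theorems~\ref{LG2}, \ref{LG5}, \ref{TIa2}, \ref{TIbx}, \ref{TIIa2}, \ref{TIId11}, \ref{TIId12}, \ref{TIIc} and \ref{TIId}, that each presentation in Table~\ref{table} is planar --- the relators before the semicolon induce face boundaries and those after it induce shortest dividing cycles, whose non-crossing (e.g.\ \eqref{LIb6}, \eqref{LII5}, \eqref{LII5d}) yields exactly the nested arrangement of 2-cells you describe. Your identification of the last five entries as the delicate part, and your handling of involution bigons, match the paper's treatment.
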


A \defi{planar presentation} is a group presentation giving rise to an almost planar Cayley complex. This property can be recognised by an algorithm, see \Dr{dncp} for an example or \cite{agmh} for details.

All presentations in Table~\ref{table} are planar. The semi-colons contained in some of them are used to distinguish relators that induce face-boundaries, which appear before the semi-colon, from relators that induce cycles separating the graph in infinite components, and appear after the semi-colon.

\Tr{main} yields an effective enumeration of the planar cubic \Cg s, corroborating the aforementioned conjecture of Droms et.\ al.\ \cite{droInf,DrSeSeCon}. Moreover, our constructions imply that every planar cubic \Cg\ is effectively computable.

Using the ideas of this paper \Tr{thmAPX} is extended in \cite{agmh} to arbitrary planar \Cg s. As planar presentations can be recognised by an algorithm, this settles the general case of the aforementioned conjecture.

\subsection{Hamilton circles in \Cg s} \label{secHam}

The following conjecture was motivated by \cite{fleisch}:

\begin{conjecture}[Georgakopoulos \& Mohar (unbublished)] \label{conGM}
Every finitely generated \tcon\ planar \Cg\ admits a {Hamilton circle}.
\end{conjecture}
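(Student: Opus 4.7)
The plan is to leverage the classification given by \Tr{main} and establish \Cnr{conGM} in the cubic case, handling each family of Table~\ref{table} separately; a Hamilton circle in the general \tcon\ case should then follow by a reduction step (e.g.\ contracting parallel generators or splitting off a pendant involution while preserving planarity and 3-connectivity). For the finite entries of Table~\ref{table}, which are planar cubic \Cg s of finite groups, I would exhibit a Hamilton cycle directly using vertex-transitivity together with the explicit face structure read off from the presentation. For the 1-ended families, which arise from discontinuous group actions on the plane, I would lift a suitable simple closed curve from the quotient orbifold to a spanning curve in the universal cover, using a Fuchsian-type argument to ensure the curve meets every vertex exactly once.

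The main obstacle is the multi-ended families, especially the last five entries of Table~\ref{table}, where \Tr{main} forces word-extensions of unbounded length. Here I would proceed inductively along the tree-amalgamation structure described in \Sr{intEx}: in each copy $K$ of the base graph $G$, construct a ``Hamilton segment'' that enters and leaves $K$ through identifying cycles in an order prescribed by a spanning scheme of the underlying tree of copies, and then splice these segments together. Turning the resulting arc in the $1$-skeleton into a Hamilton circle of the Freudenthal compactification $|G'|$ requires showing that the segments close up through the ends consistently, i.e.\ that the resulting point set in $|G'|$ is a homeomorphic image of $S^1$ containing every vertex; the compactness and convergence machinery summarised in the auxiliary lemmas cited in the introduction should be the right tool for this closure argument.

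The key technical hurdles, which I expect to be the real difficulty, are: (a)~choosing the entry/exit pattern on each identifying cycle so that distinct segments glue into a single topological circle rather than a disjoint union of arcs, which requires a coherent global choice across infinitely many copies, and (b)~handling the families \ref{AIIciii} and \ref{AIId2iii} (cf.\ \Cr{monsters}, \Cr{monsters2}), in which every face is infinite and so no short facial detours are available; the Hamilton circle is then forced to traverse long portions of infinite face-boundaries, and I anticipate one must analyse the action of the group on the space of ends and extract an end-transitive invariant spanning curve. This last step is where I expect the argument to be most delicate, and where the special geometric information encoded in the planar presentations of Table~\ref{table} will have to be used most heavily.
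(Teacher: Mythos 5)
The statement you are trying to prove is \Cnr{conGM}, which the paper itself presents as an \emph{open conjecture}: there is no proof of it anywhere in the paper. Section~\ref{secHam} says only that the classification of \Tr{main} ``constitutes significant progress'' and ``an important step towards its proof,'' and that for one particular family (the graphs of \fig{fiamal}~(iii)) the authors know how to build a Hamilton circle from Hamilton circles of the base graph. So there is no paper proof to compare against, and your proposal must be judged on its own terms as an attempted proof. On those terms it is a research programme rather than a proof: every genuinely hard step is deferred with ``should follow,'' ``I expect,'' or ``I anticipate,'' and none is carried out.

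Concretely, three gaps are fatal as written. First, the opening reduction of the general \tcon\ case to the cubic case is unjustified; the paper's own discussion in \Sr{secHam} shows that for degree $\geq 4$ one is thrown back either on a 4-connected case that currently rests on the unproven strengthening of the theorem of \cite{CWY} (dropping \vapf ness), or on separate structural arguments --- there is no known operation that contracts a higher-degree planar \Cg\ to a cubic one while preserving planarity, 3-connectivity, and Hamiltonicity in both directions. Second, your step (a) --- choosing entry/exit patterns on the identifying cycles so that infinitely many Hamilton segments splice into a single circle through the ends of $|G'|$ --- is precisely the unsolved combinatorial core of the conjecture for the multi-ended entries; asserting that ``the compactness and convergence machinery'' will close the curve does not address the real issue, which is that a careless choice yields a disjoint union of double rays or a non-injective closed curve, and no coherent global selection rule is exhibited. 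Third, for the families \ref{AIIciii} and \ref{AIId2iii} (\Cr{monsters}, \Cr{monsters2}), where no face is bounded by a finite cycle, no Hamiltonicity technique is currently known even conjecturally beyond what you gesture at; ``extract an end-transitive invariant spanning curve'' names the desired object but gives no construction. Until these three steps are actually carried out, the conjecture remains open.
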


A \defi{Hamilton circle} of a graph \g is a homeomorphic image of the circle $S^1$ in the end-compactification of \g containing all vertices. It has been proved \cite{CWY} that every 4-connected \lf\ \vapf\ planar graph has a Hamilton circle, and it is conjectured that the \vapf ness requirement can be dropped in that theorem; this generalises a classical theorem of Tutte for finite graphs. A \Cg\ of degree 4 or more will either be 4-connected, in which case one can try to apply the above result, or its small separators will give away information about its structure; e.g.\ we know that a non-4-connected \Cg\ of degree 5 or more cannot be 1-ended \cite[Lemma 2.4.]{BaGro}. 

This means that the cubic case plays an important role for \Cnr{conGM}. Thus this paper constitutes an important step towards its proof, as it describes the structure of these graphs in a way that can be exploited to prove hamiltonicity. Indeed, in graphs like the one in \fig{fiamal} (iii) Mohar and I found a way to construct a Hamilton circle of the whole graph using Hamilton circles of the basis graph \fig{fiamal} (i).

\comment{
	Stallings' celebrated theorem \cite{stallings68,stallings71} states that a group with more than one end can be decomposed in a certain way into simpler subgroups. We would like to refine this theorem to obtain a statement describing how each of the \Cg s of such a group can be decomposed as a union of simpler graphs. Such a statement was conjectured by Mohar \cite{mohTre} for the planar \Cg s, see below. Dunwoody \cite{dunPla} proved an assertion similar to Mohar's conjecture but with the focus still lying on the corresponding groups rather than the \Cg s themselves. Still, in this paper we will disprove Mohar's conjecture. However, we will prove a similar assertion in the class of planar  \Cg s that are \defi{cubic}, which means that every vertex is incident with precisely three edges (involutions counted once). In fact, we will completely describe this class of graphs, which turns out to be a very rich one:

\begin{theorem} \label{main}
Let \g be a planar cubic \Cg. Then \g has precisely one of the presentations listed in Table~\ref{table}. Conversely, each of these presentations, with parameters chosen in the specified domains, yields a non-trivial planar cubic \Cg.
\end{theorem}

Some of the \Cg s appearing in Table~\ref{table} were already well-known: types \ref{Aoi} to \ref{Aziv} yield the cubic, semi-regular, archimedian tilings of the euclidean or hyperbolic plane and the sphere. Types \ref{Aici} to \ref{Aicii} correspond to free products of finite groups. The focus of this paper lies in the remaining ones, especially the \tcon\ multi-ended ones, which are much harder to analyse.

Using the classification of \Tr{main} we do achieve, in the cubic case, the aforementioned aim of refining Stallings' theorem: we prove that if \g is a planar cubic Cayley graph with more than one end, then \ti\ a finite or 1-ended cubic \Cg\ $G_2$ of some subgroup of the group of \g \st\ $G_2$ has an embedding into \G. By \defi{embedding} here we mean a topological one, between the two 1-complexes $G$ and $G_2$. In graph-theoretical parlance, $G_2$ is thus a topological minor of \G. In a sense, this is indeed a Stallings' like theorem for graphs rather than groups. We conjecture that this is true in general:
\begin{conjecture} \label{conjTM}
Let $\g= Cay(\Gam, S)$ be a $d$-regular Cayley graph with more than one end. Then \g embeds a finite or 1-ended $d$-regular \Cg\ $G_2= Cay(\Gam_2, S_2)$ where $\Gam_2$ is subgroup of \Gam.
\end{conjecture}
(A graph is \defi{$d$-regular} if every vertex is incident with precisely $d$ edges.) 

Let us consider some examples. Suppose \g is a finite or 1-ended \Cg\ embedded in the plane,  that some generator $z$ of \g spans a finite cyclic subgroup, and the corresponding cycles of \g bound faces in this embedding, see \fig{fiamal} (i). Then, considering the amalgamation product \wrt\ the subgroup $\left<z\right>$, and using the generators of \G, we obtain a multi-ended \Cg\ $G'$ which can also be embedded in the plane: the face that was bounded by some coset $C$ of $\left<z\right>$ in $G$ can be used to recursively accommodate the copy $K$ of \g glued along $C$ and the further copies sharing a $z$ cycle with $K$ and so on; see \fig{fiamal} (ii). 

\showFig{fiamal}{}

This kind of amalgamation can be used to produce new planar \Cg s from simpler ones, but it cannot yield cubic graphs since the degree of a vertex is increased. To amend this, Mohar proposed the following variant of this operation. For every $z$ cycle $C$ of \fig{fiamal} (ii), rotate one of its sides in such a way that the edges incident with $C$ on either side do not have common endvertices, but appear in an in-out alternating fashion instead; see  \fig{fiamal} (iii). It is at first sight not clear why the new graph $G''$ produced like this is a \Cg, but in fact it is, and its group is an overgroup of the group of \G, see \cite{am}. Thus, both these examples corroborate \Cnr{conjTM}: in the first one $G'$ is a subgraph of $G$, and in the second $G''$  fits within \g if we subdivide each edge $e$ of the former into two by putting a new vertex on the midpoint of $e$.

Mohar's aforementioned conjecture (\cite{mohTre}, personal communication) was that every planar vertex-transitive graph with more than one end can be obtained by operations like those of \fig{fiamal}. Our next example however suggests that more operations are needed. Let this time \g be the \Cg\ of a finite dihedral group shown in the left part of \fig{fidih}. Then, \fe\ 4-cycle $C$ of \g bounding a face $F$, we embed a copy of \g in $F$, with $C$ being glued with a corresponding cycle of the copy. As in our last example, this glueing does not identify vertices with vertices, but rather puts some vertices of one copy of \g midway along some edges of the other, see the right part of \fig{fidih}. 

\showFig{fidih}{}

Unlike the previous example, where every edge of $C$ was subdivided into two, this time every other edge of $C$ is subdivided into three while every other edge is left intact. We recursively repeat this kind of glueing operation for the newly appeared face-bounding 4-cycles. Again, we obtain a new planar \Cg\ \cite{am} that corroborates \Cnr{conjTM}. \sss disproves Mohar \sss

In fact, for the graphs studied here we can prove more than is asked for by \Cnr{conjTM}. We will use the examples of \fig{fiamal} and \fig{fidih} again to illustrate this. For a \Cg\ $G''$ as in \fig{fiamal} we show that if $S\ni z$ is the set of generators of the original \Cg\ \G, and $R$ is a set of defining relations for the group of \G, then $G''$ is the \Cg\ of the group $\left<S''\mid R''\right>$ \wrt\ to the generating set $S''$ obtained from $S$ by replacing $z$ by a new letter $w$, where $R''$ is obtained from $R$ by replacing each occurrence of the symbol $z$ by $w^2$. Similarly, a presentation for the \Cg\ of the right part of \fig{fidih} can be obtained from one of the original graph \g by replacing each occurrence of the symbol $z$ by ... . 

These two examples are special cases of a more general, and more complicated, phenomenon: the hardest task addressed in this paper is to show that for every multi-ended cubic planar \Cg\ $G'$, it is possible to obtain a presentation $G'$ from one of a finite or 1-ended cubic \Cg\ $G$ embedded in $G'$ (as in \Cnr{conjTM}) by replacing some of the generators by new ones, and replacing each occurrence of an old generator $z$ by a word $W_z$ in the new generators. In our two examples it was enough to replace just one generator, and the corresponding word had length two or three. There are many cases where this was enough: the entries  \ref{Aiii}, \ref{AIa2i} and \ref{AIIa2i} of Table~\ref{table} for example were obtained like that. However, \ta\ much harder cases, where one needs to replace all generators, and there is no upper bound to the length of the replacing words $W_z$ needed. This is the case for the last five entries of Table~\ref{table}. Such entries provide further counterexamples ... Mohar. Moreover, some of these graphs, namely those corresponding to entries \ref{AIIciii} and \ref{AIId2iii}, contradict the following conjecture of Bonnington and Watkins \cite{BoWaPla}:
\begin{conjecture}[\cite{BoWaPla}] \label{bowa}
No locally finite vertex-transitive graph of connectivity 3 admits a planar embedding wherein some vertex is incident with more than one infinite face-boundary.
\end{conjecture}
Indeed, the aforementioned graphs have the surprising property that, although they are 3-connected, they have no finite face boundary.

Letter replacements as the ones described above motivate the following definition. Call a group presentation $G = Cay\left<s_1,\ldots, s_k \mid \mathcal R' \right>$ \defi{$k$-contractible}, if there are words $S_1,\ldots, S_k$ in the alphabet $s_1,\ldots, s_k$, such that every relator in $\mathcal R'$ is a concatenation of the words $S_i$. It follows from our proof of \Tr{main} that
\begin{theorem} \label{thmCP}
Let $G$ be a planar cubic \Cg\ with >1 end. Then \g has a {$k$}-contractible presentation where $k\in \{2,3\}$ is the number of generators of \G.
\end{theorem}

It would be very interesting to decide whether this is true in general, even for non-planar \Cg s:
\begin{conjecture} \label{conjCP}
Let $G = Cay\left<s_1,\ldots, s_k \mid \mathcal R \right>$ be a \Cg\ with more than one end. Then \g has a $k$-contractible presentation.
\end{conjecture}

	Conjectures \ref{conjTM} and \ref{conjCP} are closely related, but it is not clear if they are equivalent. The former would follow from the latter if one could in addition choose the words  $S_i$ in such a way that no two paths in \g induced by such words meet at some interior vertex of one of the paths. The latter would follow from the former if one could embed the second \Cg\ $G_2$ into \g in such a way that if two edges of $G_2$ bear the same label then they are embedded onto similar paths.

	Some of the groups appearing in Table~\ref{table} were already known and well-studied, namely those admitting a planar Cayley complex, see \cite{ZVC}. These are the 1-ended ones and some of those appearing in Table~\ref{table} with connectivity 2 \cite{cay2con}. This paper is mainly concerned with the remaining ones, particularly the entries of Table~\ref{table} from 
\ref{AIIa2i} on. It turns out that none of these groups possesses a planar Cayley complex \cite{vapf}. However, it follows from our results that they do posses a Cayley complex $X$ that admits a mapping $\sig: X \to \R^2$ \st\ for every two 2-simplices of $X$, the images of their interiors under \sig\ are either disjoint or one of these images is contained in the other. We call such an $X$ an \defi{almost planar} Cayley complex.
\begin{theorem} \label{thmAPX}
	Every cubic planar \Cg\ is the 1-skeleton of an almost planar Cayley complex of the same group. 
	\end{theorem}
	Using the ideas of this paper \Tr{thmAPX} is extended in \cite{agmh} to arbitrary planar \Cg s.

	This paper is structured as follows. 
} 

\begin{table}[h!b!p!] 

\renewcommand{\labelenumi}{\arabic{enumi}.}
\renewcommand{\theenumi}{(\arabic{enumi})}
\newcommand{\row}[2]{\begin{minipage}[c]{0.32\textwidth} #1 \end{minipage}\ignorespacesafterend & 	\begin{minipage}[c]{0.99\textwidth} \begin{enumerate} \setcounter{enumi}{\value{enumii_saved}} \addtolength{\itemsep}{-0.63\baselineskip} #2 \setcounter{enumii_saved}{\value{enumi}} \end{enumerate}
 \end{minipage}\ignorespacesafterend}

\begin{tabular}{ll}
\newcounter{enumii_saved}

\row{$\kap(G)=1$}{
		\item \label{Aici} $\left<a,b\mid b^2, a^n\right>$, $n\in \{\infty, 2, 3, \ldots\}$
		\item \label{Aicii} $\left<b,c,d\mid b^2,c^2,d^2, (bc)^n \right>$, $n\in \{\infty, 1, 2, 3, \ldots\}$
		}					
\\
\hline
\row{$\kap(G)=2$}{
\item \label{Ai} $G \isom Cay \left<a,b\mid b^2, (ab)^n\right>$, $n\geq 2$ 
\item \label{Aii} $G \isom Cay \left<a,b\mid b^2, (aba^{-1}b^{-1})^n\right>$, $n\geq 1$
\item \label{Aiii} $G\isom Cay \left<a,b \mid b^2, a^4, (a^2b)^n \right>, n\geq 2$;

\item \label{Aiv} $G \isom Cay \left< b,c,d \mid b^2, c^2, d^2, (bc)^2, (bcd)^m\right>$, $m\geq 2$
\item \label{Av} $G \isom Cay \left<b,c,d \mid b^2, c^2, d^2, (bc)^{2n}, (cbcd)^m\right>$, $n,m\geq 2$ 
\item \label{Avi} $G \isom Cay \left<b,c,d\mid b^2, c^2,d^2, (bc)^n, (bd)^m\right>$, $n,m\geq 2$
\item \label{Avii} $G \isom Cay \left<b,c,d\mid b^2, c^2,d^2, (b(cb)^nd)^m\right>, n\geq 1, m\geq 2$
\item \label{Aviii} $G \isom Cay \left<b,c,d\mid b^2, c^2,d^2, (bcbd)^m \right>$, $m\geq 1$
\item \label{Aix} $G \isom Cay\left<b,c,d\mid b^2, c^2, d^2, (bc)^n, cd\right>$, $n\geq 1$ 
}
\\
\hline
\row{$\kap(G)=3$,\\ 
 \g is 1-ended or finite,\\
 with two generators}{
\item \label{Aoi} $G \isom Cay \left<a,b\mid b^2, a^n, (ab)^m\right>$, $n\geq 3$, $m\geq 2$ 
\item \label{Aoii} $G \isom Cay \left<a,b\mid b^2, a^n, (aba^{-1}b)^m\right>$, $n\geq 3, m\geq 1$ 
\item \label{Aoiii} $G \isom Cay \left<a,b\mid b^2, (a^2b)^m\right>$, $m\geq 1$ 
\item \label{Aoiv} $G \isom Cay \left<a,b\mid b^2, (a^2ba^{-2}b)^m\right>$, $m\geq 1$ 
}
\\
\hline
\row{$\kap(G)=3$,\\ 
 \g is 1-ended or finite,\\
  with three generators}{
	\item \label{Azi} $G \isom Cay\left<b,c,d\mid b^2, c^2, d^2, (bcd)^n \right>$, $n\geq 1$ 
\item \label{Azii} $G \isom Cay\left<b,c,d\mid b^2, c^2, d^2, (cbcdbd)^n \right>$, $n\geq 1$ 
\item \label{Aziii} $G \isom Cay\left<b,c,d\mid b^2, c^2, d^2, (bc)^n, (bdcd)^m\right>$, $n\geq 2, m\geq 1$ 
\item \label{Aziv} $G \isom Cay\left<b,c,d\mid b^2, c^2, d^2, (bc)^n, (cd)^m, (db)^p \right>$, $n,m,p \geq 2$ 
}
\\
\hline
\row{$\kap(G)=3$,\\ 
 \g is multi-ended,\\
  with two generators}{
			\item \label{AIa2i} $G \isom Cay \left<a,b\mid b^2,  (a^2b)^m; a^{2n}\right>, n\geq 3, m\geq 2$
			\item \label{AIa2ii} $G \isom Cay \left<a,b\mid b^2,  (a^2ba^{-2}b)^m; a^{2n}\right>$, $n\geq 3, m\geq 1$
			\item \label{Abi} $G \isom Cay\left<a,b\mid b^2, a^2ba^{-2}b;  (baba^{-1})^n \right>$, $n\geq 2$
			\item \label{Abii} $G \isom Cay\left<a,b\mid b^2, (a^2 ba^{-2} b)^m; (baba^{-1})^n \right>$, $n,m,p \geq 2$
			\item \label{Abiii} $G \isom Cay \left<a,b\mid b^2, (a^2b)^2; (ab)^{2m} \right>$, $m\geq 2$
			\item \label{Abiv} $G \isom Cay \left<a,b\mid b^2, (a^2b)^{n}; (ab)^{2m} \right>$, $n\geq 3$, $m\geq 2$
}
\\
\hline
\row{$\kap(G)=3$,\\ 
 \g is multi-ended,\\
  with three generators}{
	\item \label{AIIa2i} $G \isom Cay \left<b,c,d\mid b^2,c^2,d^2,(bcd)^m; (bc)^n\right>$, $n\geq 2$, $m\geq 2$
	\item \label{AIIa2ii}$G \isom Cay \left<b,c,d\mid b^2,c^2,d^2, (bcdcbd)^m; (bc)^n\right>$, $n\geq 2, m\geq 1$
	\item \label{AIId11} $G \isom Cay\left<b,c,d\mid b^2, c^2, d^2, (cd)^m, (dbcb)^p; (bc)^{2n} \right>$, $n,m,p \geq 2$
	\item \label{Addi} $G \isom Cay\left<b,c,d\mid {b^2}, c^2, d^2, (bcbdcd)^m; (bc)^{2n} \right>, n\geq 2, m\geq 1$
	\item \label{Addii} $G \isom Cay\left<b,c,d\mid b^2, c^2, d^2, (bcbdcd)^p ; (dc)^{2n}, (bc)^{2m}\right>$, $n,m,p \geq 2$
	\item \label{Addiii} $G \isom Cay\left<b,c,d\mid b^2, c^2, d^2, (bcbdcd)^p; (dbcb)^{2n}, (bc)^{2m} \right>$, $n,m,p \geq 2$ 
	
	\item \label{AIIci} $G \isom Cay\left<b,c,d\mid {b^2}, c^2, d^2, (bcd)^2; (bcdc)^n \right>, n\geq 2$ 
	\item \label{AIIcii} $G \isom Cay\left<b,c,d\mid {b^2}, c^2, d^2, (bcd)^k; \cp \right>$,  $k\geq 3$,\\  \text{ \cp\ is a \ncp} 
	\item \label{AIIciii} $G \isom Cay\left<b,c,d\mid {b^2}, c^2, d^2; \cp \right>$,\\ \text{ \cp\ is a non-regular \ncp} 

	\item \label{AIId2i} $G \isom Cay\left<b,c,d\mid {b^2}, c^2, d^2, (bdb cdc)^k; (c(bc)^nd)^{2m}\right>, k\geq 2,\\ n,m\geq 1, n+m\geq 3$
	\item \label{AIId2ii} $G \isom Cay\left<b,c,d\mid {b^2}, c^2, d^2, (bdb cdc)^q; (c(bc)^{n-1}d)^{2m}, (c(bc)^{n} d)^{2r} \right>$,\\  $n,r,m,q\geq 2$
	\item \label{AIId2iii} $\g \isom Cay \left< b,c,d\mid b^2, c^2, d^2;  (c(bc)^{n-1}d)^{2m}, (c(bc)^{n} d)^{2r} \right>$, $n,r,m\geq 2$
	}
\end{tabular}

\caption{Classification of the cubic planar Cayley graphs. All presentations are \defi{planar} in the sense of \Sr{intAP}.}
\label{table}
\end{table}

\comment{

	\begin{problem}[Droms \cite{DroInf}]
 Is there an effective enumeration of the groups that admit planar Cayley graphs?
\end{problem}

	\begin{theorem}
 (roughly) Let \g be a planar cubic Cayley graph with more than 1 end. Then \g is the $L$-pattern \fa\ of a cubic \vapfg. {not quite: see \Sr{secIId1}}

	Conversely, for every pair $(C,f)$ where $C$ is an $L$-pattern and $f\in 3\N \cup \{\infty\}$ there is a unique planar cubic \Cg\ with \psc s of type $C$ and all faces of length $f$.

Moreover, \g is \tcon\ iff ...
\end{theorem}

	\begin{corollary}
 There is an effective enumeration of the planar cubic Cayley graphs. 
\end{corollary}


	\begin{theorem}
 Every planar cubic Cayley graph with more than 1 end can be obtained from finite and 1-ended graphs by \fa.
\end{theorem}

	\begin{corollary}
 Every planar cubic Cayley graph is accessible.
\end{corollary}


A presentation of a planar group is called \defi{planar \wrt\ an assignment $f$} of spin flags if no two relations cross in $f$. It is called just planar if there is an assignment of spin flags \wrt\ which it is planar.
\begin{theorem}
 Every planar cubic Cayley graph has a \plpr.
\end{theorem}

\begin{corollary}
 Every planar cubic Cayley graph has an almost planar Cayley complex.
\end{corollary}
\defi{almost planar}: \ti\ an ``embedding'', in which any two 2-simplices are nested.

\begin{conjecture}
Every planar Cayley graph has an almost planar Cayley complex.
\end{conjecture}

\begin{conjecture}
Every planar Cayley graph has a \plpr. Conversely, every \plpr\ gives rise to a planar Cayley graph.
\end{conjecture}

\begin{theorem}
Let $G = Cay \left< a,b,c \mid R \right>$ be a planar cubic Cayley graph. Then \ti\ a presentation $\left< a,b,c \mid R' \right>$ of \Gam\ and a choice of at most three words $A,B,C \in \{a,b,c\}^*$ \st\ $\left< A,B,C \mid R' \right>$ is a finite or 1-ended subgroup of \Gam. Moreover, the corresponding \Cg\ is topologically embeddable in \G; in particular, it is planar.
\end{theorem}
Conjecture this in general.

\begin{conjecture}
 Every planar 3-connected \Cg\ has a \hcy.
\end{conjecture}
} 

 


\subsection{Structure of the proof} \label{secSke}

This paper is structured as follows. After some definitions and basic facts, we handle the finite and 1-ended case, corresponding to entries \ref{Aoi} to \ref{Aziv} of Table~\ref{table}, in \Sr{secfin}; entries \ref{Aici} and \ref{Aicii} are easy and entries \ref{Ai} to \ref{Aix} were handled in \cite{cay2con}; see \Sr{lowk}. Most of this paper is concerned with entries from \ref{AIa2i} on. They are divided naturally into cases according to the number of generators (2 or 3), the existence of cycles avoiding one of the generators, and the spin behaviour (see \Sr{secDem}) in the corresponding embedding; they occupy Sections \ref{secI} and \ref{secII}.

These cases vary considerably in difficulty, in general becoming more difficult as we progress towards the end of Table~\ref{table}. However, there is a common structure: \fe\ graph \g as in the entries from \ref{AIa2i} on, we begin by finding a subgroup, typically finite or 1-ended, that has a \Cg\ \gt\ topologically embedded in \G. This yields a Dunwoody structure tree whose nodes are the copies of \gt\ in \G, in which two nodes are adjacent if the corresponding copies share a cycle of \G. These cycles are typically cycles of minimal length bounding two infinite components of \G. We then show that the desired presentation of \G\ can be derived from a presentation of \gt\ by translating each generator $g$  of \gt\ into a word in the generators of \G, which word can be read off the path in \g onto which $g$ is mapped when embedding \gt\ into \G. The presentation of \gt\ is obtained from some earlier entry of Table~\ref{table}, and many of the entries, even multi-ended ones, find themselves embedded in other entries, giving rise to a rich structure.

In \Sr{afterIId} we pose some further problems, one of which seeks for a generalisation of this proof-structure to a general multi-ended \Cg.

\section{Definitions}

\subsection{\Cg s and group presentations}
We will follow the terminology of \cite{diestelBook05} for graph-theoretical terms and that of \cite{bogop} for group-theoretical ones. Let us recall the definitions most relevant for this paper.

Let \Gam\ be group and let $S$ be a symmetric generating set of \Gam. The Cayley graph $Cay(\Gam,S)$ of $\Gam$ \wrt\ $S$ is a coloured directed graph $\g= (V,E)$ constructed as follows. The vertex set of \g is \Gam, and the set of colours we will use is $S$.  For every $g\in \Gam, s\in S$ join $g$ to $gs$ by an edge coloured $s$ directed from $g$ to $gs$. Note that $\Gam$ acts on \g by multiplication on the left; more precisely, \fe\ $g\in \Gam$ the mapping from $V(G)$ to $V(G)$ defined by $x \mapsto gx$ is a \defi{\auto} of \G, that is, an automorphism of \g that preserves the colours and directions of the edges. In fact, \Gam\ is precisely the group of \auto s of \G. Any presentation of \Gam\ in which $S$ is the set of generators will also be called a presentation of $Cay(\Gam,S)$.

If $s\in S$ is an \defi{involution}, i.e.\ $s^2=1$, then every vertex of \g is incident with a pair of parallel edges coloured $s$ (one in each direction). However, when calculating the degree of a vertex of a \Cg\ we will count only one edge for each such pair, and we will draw only one, undirected, edge in our figures. For example, if $S$ consists of three involutions then we consider the corresponding \Cg\ to be cubic. This convention is common in the literature, and it is necessary if one wants to study the property of being a \Cg\ as a graph-theoretical invariant, like e.g.\ in Sabidussi's theorem \cite[Proposition~3.1]{BaAut}. 

Given a group presentation $\left< S \mid \mathcal R \right>$ we will use the notation $Cay \left< S \mid \mathcal R \right>$ for the \Cg\ of this group \wrt\ $S$.

If $R\in \mathcal R$ is any relator in such a presentation and $g$ is a vertex of $G=Cay \left< S \mid \mathcal R \right>$, then starting from $g$ and following the edges corresponding to the letters in $R$ in order we obtain a closed walk $W$ in $G$. We then say that $W$ is \defi{induced} by $R$; note that for a given $R$ \ta\ several walks in \g induced by $R$, one for each starting vertex $g\in V(G)$. If $R$ induces a cycle then we say that $R$ is \defi{\red}; note that this does not depend on the choice of the starting vertex $g$. A presentation $\left<a,b,\ldots \mid R_1,R_2,\ldots\right>$ of a group \Gam\ is called \defi{\red}, if $R_i$ is \red\ \fe\ $i$. In other words, if for every $i$ no proper subword of any $R_i$ is a relation in $\Gamma$. 

Define the (finitary) \defi{cycle space} \ccfg\ of a graph $G=(V,E)$ to be the vector space over $\Z_2$ consisting of those subsets of $E$ such that can be written as a sum (modulo 2) of a finite set of {circuits}, where a set of edges $D\subseteq E$ is called a \defi{circuit} if it is the edge set of a cycle of \G. Thus \ccfg\ is isomorphic to the first simplicial homology group of \g over $\Z_2$. The \defi{circuit} of a closed walk $W$ is the set of edges traversed by $W$ an even number of times. Note that the direction of the edges is ignored when defining circuits and \ccfg. The cycle space will be a useful tool in our study of \Cg s because of the following well-known fact which is easy to prove.
\begin{lemma} \label{relcc}
Let $G= Cay \left< S \mid R \right>$ be a \Cg\ of the group \Gam. Then the set of circuits of walks in \g induced by relators in $R$ generates $\cc_f(G)$. 

Conversely, if $R'$ is a set of words, with letters in a set $S\subseteq \Gam$ generating \Gam, such that the set of circuits of cycles of $Cay(G,S)$ induced by $R'$ generates \ccfg, then $\left< S \mid R' \right>$ is a presentation of \Gam.
\end{lemma}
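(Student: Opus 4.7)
My plan for the forward direction is: since by definition $\ccfg$ is spanned by circuits of cycles, it suffices to express each cycle-circuit as a mod-$2$ sum of $R$-induced ones. Given a cycle $C$, pick a basepoint $v\in V(C)$ and orient $C$ to read off a word $W\in F(S)$; since $C$ closes up, $W=1$ in $\Gam$, so because $R$ defines $\Gam$ I may write $W = \prod_{i=1}^{k}u_iR_{j_i}^{\epsilon_i}u_i^{-1}$ inside $F(S)$. The walk at $v$ induced by each factor $u_iR_{j_i}^{\epsilon_i}u_i^{-1}$ runs along $u_i$ to $vu_i$, traverses the $R_{j_i}$-induced cycle there, and then retraces $u_i^{-1}$; the two $u_i$-walks cancel modulo $2$, leaving as circuit of this factor precisely that of the $R_{j_i}$-induced cycle at $vu_i$. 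Concatenating the $k$ factors reproduces $\omega_W$ up to free cancellations in $F(S)$ which only insert or delete $ss^{-1}$-backtracks and so preserve the mod-$2$ edge count. Hence $E(C)$ equals the mod-$2$ sum of the $k$ relator-induced circuits, as required.

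For the converse, set $N := N(R') \trianglelefteq F(S)$ and $\Gam' := F(S)/N$; since every $r\in R'$ vanishes in $\Gam$ there is a canonical surjection $\phi:\Gam'\twoheadrightarrow\Gam$, and the goal is to prove $\phi$ injective, i.e.\ that every $W\in F(S)$ with $W=1$ in $\Gam$ lies in $N$. Given such a $W$, the hypothesis lets me write the circuit of $\omega_W$ as $\sum_{i=1}^{k}D_i$, with $D_i$ the circuit of an $r_i$-induced cycle at a vertex $v_i$. Choosing words $u_i$ representing $v_i$ and setting $W^{*} := \prod_i u_i r_i^{\pm 1}u_i^{-1}\in N$, the forward calculation shows $\omega_{W^{*}}$ has circuit $C_W$ as well. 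Replacing $W$ by $W\cdot (W^{*})^{-1}$ modulo $N$, I may thus assume the walk $\omega_W$ is a closed walk at $e$ of empty circuit --- i.e.\ every edge is traversed an even number of times.

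The remaining sublemma --- that a word $U\in\ker(F(S)\to\Gam)$ whose induced walk has empty circuit lies in $N$ --- is where the main obstacle lies. My plan is to induct on the length of $U$ in freely-reduced form: free cancellations preserve both the empty-circuit property and the $N$-coset, so one may assume $U$ is reduced; if $U$ is non-empty then $\omega_U$ must revisit some vertex before closing up, since otherwise it would be a simple cycle, whose circuit is its own non-empty edge set. Cutting at the earliest such revisit produces two proper closed subwords, to each of which the first step of the previous paragraph applies to peel off elements of $N$, shrinking $U$ within its coset. The delicate bookkeeping is that the two halves may themselves have nontrivial (though summing to zero) circuits, so the recursion has to alternate between the circuit-reduction step and the subword-extraction step. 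The most transparent rigorous packaging I can envisage, which I would use as a sanity check, is van~Kampen's lemma: it identifies $N$ with the set of words whose walks bound a disc in the $2$-complex built from $G$ by filling in every $R'$-induced cycle, thereby reducing the sublemma to the simply-connectedness of that $2$-complex, which the cycle-space hypothesis encodes at the level of $\mathbb{Z}_2$-homology.
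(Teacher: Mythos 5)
The paper states this lemma without proof, calling it a well-known fact, so there is no in-paper argument to compare against; I assess your proposal on its own terms. Your forward direction is correct and is the standard argument: writing the word of a cycle as a product of conjugates of relators in $F(S)$, noting that the conjugating segments cancel modulo $2$ and that free reduction only deletes backtracks, does express $E(C)$ as a mod-$2$ sum of circuits of relator-induced walks.

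The converse is where the real content lies, and your argument does not close it. The reduction to the sublemma (``a relation whose induced walk has empty circuit lies in $N$'') is fine, but neither of your routes through the sublemma works. The length induction has no decreasing measure: multiplying by $W^*$ and cutting at a revisited vertex can both increase the freely reduced length. More importantly, the van~Kampen ``packaging'' you fall back on exposes the gap rather than filling it. Van~Kampen identifies $N$ with the words whose walks are null-homotopic in the $2$-complex $X$ obtained by filling in the relator-induced cycles, so what you need is $\pi_1(X)=1$; the cycle-space hypothesis only gives $H_1(X;\mathbb{Z}_2)=0$, and $\mathbb{Z}_2$-homology does not detect the fundamental group. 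Concretely, $X$ is the cover of the presentation complex of $\left<S\mid R'\right>$ corresponding to the subgroup $K:=\ker\bigl(\left<S\mid R'\right>\to\Gam\bigr)$, so $\pi_1(X)\cong K$ while $H_1(X;\mathbb{Z}_2)\cong K/[K,K]K^2$; your hypothesis therefore cannot exclude, say, a perfect kernel. A degenerate illustration of the tension: for $\Gam=\mathbb{Z}_3=\left<a\right>$ and $R'=\{a^9\}$, the walk induced by $a^9$ wraps the triangle three times, its circuit is the whole triangle and spans \ccfg, yet $\left<a\mid a^9\right>\neq\mathbb{Z}_3$. This is blocked in the lemma only because that walk is not a \emph{cycle} --- a hypothesis your argument never invokes. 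So the final step is a genuine gap, not delicate bookkeeping: a correct proof of the converse must exploit either the word ``cycle'' or additional geometric structure (in this paper the lemma is only ever applied to planar graphs with the generating circuits arising from face boundaries and dividing cycles, where such structure is available).
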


\subsection{Graph-theoretical concepts}
Let $G=(V,E)$ be a connected graph fixed throughout this section. Two paths in \g are \defi{independent}, if they do not meet  at any vertex except perhaps at common endpoints. 
If $P$ is a path or cycle we will use $|P|$ to denote the number of vertices in $P$ and  $||P||$ to denote the number of edges of $P$. Let $xPy$ denote the subpath of $P$ between its vertices $x$ and $y$.

A cycle $C$ of $G$ is \defi{induced} if every edge of \g that has both endvertices on $C$ is an edge of $C$.

A \defi{\sepe} of \g is an edge $e=xy$ \st\ the removal of the pair of vertices $x,y$  disconnects \G. A \sepe\ should not be confused with a \defi{bridge}, which is an edge whose removal separates \g  although its endvertices are not removed.

The set of neighbours of a vertex $x$ is denoted by \defi{$N(x)$}.

\G\ is called \defi{$k$-connected} if $G - X$ is connected for every set $X\subseteq V$ with $|X | < k$. Note that if \g is $k$-connected then it is also $(k-1)$-connected. The \defi{connectivity $\kappa(G)$} of \g is the greatest integer $k$ such that \G\ is $k$-connected.

A $1$-way infinite path is called a \defi{ray}, a $2$-way infinite path is a \defi{\dray}. Two rays are {equivalent} if no finite set of vertices separates them. The corresponding equivalence
classes of rays are the \defi{ends} of $G$. A graph is \defi{multi-ended} if it has more than one end. Note that given any two finitely generated presentations of the same group, the corresponding \Cg s have the same number of ends. Thus this number, which is known to be one of $0,1,2, \infty$, is an invariant of finitely generated groups.

\subsection{Embeddings in the plane} \label{secDem}	

An \defi{embedding} of a graph \g will always mean a topological embedding of the corresponding 1-complex in the euclidean plane $\R^2$; in simpler words, an embedding is a drawing in the plane with no two edges crossing.

A \defi{face} of an embedding $\sig: G \to \R^2$ is a component of $\R^2 \sm \sig(G)$. The \defi{boundary} of a face $F$ is the set of vertices and edges of \g that are mapped by \sig\ to the closure of $F$. The \defi{size} of $F$ is the number of edges in its boundary. Note that if $F$ has finite size then its boundary is a cycle of \G.

A walk in \g is called \defi{facial} \wrt\ \sig\ if it is contained in the boundary of some face of \sig. 

An embedding of a \Cg\ is called \defi{\pr} if, intuitively, it embeds every vertex in a similar way in the sense that the group action carries faces to faces. Let us make this more precise.
Given an embedding \sig\ of a \Cg\ $G$ with generating set $S$, we consider \fe\ vertex $x$ of \g the embedding of the edges incident with $x$, and define the \defi{spin} of $x$ to be the cyclic order of the set $L:=\{xy^{-1} \mid y\in N(x)\}$ in which $xy_1^{-1}$ is a successor of $xy_2^{-1}$ whenever the edge $xy_2$ comes immediately after the edge $xy_1$ as we move clockwise around $x$. Note that the set $L$ is the same \fe\ vertex of $G$, and depends only on $S$ and on our convention on whether to draw one or two edges per vertex for involutions. This allows us to compare spins of different vertices. Note that if \g is cubic, which means that $|L|=3$, then \ta\ only two possible cyclic orders on $L$, and thus only two possible spins. Call an edge of \g \defi{spin-preserving} if its two endvertices have the same spin in \sig, and call it \defi{spin-reversing} otherwise. Call a colour in $S$ \defi{\pr} if all edges bearing that colour are  spin-preserving or all edges bearing that colour are spin-reversing in \sig. Finally, call the embedding \sig\ \defi{\pr} if every colour is \pr\ in \sig\ (this definition is natural only if \g is cubic; to extend it to the general case, demand that every two vertices have either the same spin, or the spin of the one is obtained by reversing the spin of the other). 

It is straightforward to check that \sig\ is \pr\ \iff\ every \auto\ of \g maps every facial walk to a facial walk. 

It follows from Whitney's theorem mentioned in the introduction that if \g is \tcon\ then its essentially unique embedding must be \pr. \Cg s of connectivity 2 do not always admit a \prem\ \cite{DrSeSeCon}. However, in the cubic case they do; see \cite{cay2con}.

An embedding is \defi{Vertex-Accumulation-Point-free}, or \defi{\vapf} for short, if the images of the vertices have no accumulation point in $\R^2$.


\section{Known facts}

In this section we recall some easy facts about \Cg s that we will use later. The reader may choose to skip this section and the next.

We begin with a well-known characterisation of \Cg s. Call an edge-colouring of a digraph \g \defi{proper}, if no vertex of \g has two incoming or two outgoing edges with the same colour.
\begin{theorem}[Sabidussi's Theorem \cite{sab,BaAut}] \label{sab}
A properly edge-coloured digraph is a Cayley
graph \iff for every $x, y\in V(G)$ there is a
\auto\  mapping $x$ to $y$.
\end{theorem}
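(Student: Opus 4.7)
The forward direction is quick: if $G = Cay(\Gamma, S)$, then for any $x,y \in V(G) = \Gamma$, left multiplication by $yx^{-1}$ is a colour-automorphism of $G$ sending $x$ to $y$, because for any generator $s \in S$ and any $g \in \Gamma$ the edge $g \to gs$ of colour $s$ is carried to the edge $(yx^{-1}g) \to (yx^{-1}g)s$, again of colour $s$.

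For the converse, suppose $G$ is a properly edge-coloured digraph such that for every pair $x,y$ there is a colour-automorphism mapping $x$ to $y$. Let $\Gamma$ be the group of colour-automorphisms of $G$ under composition; by hypothesis, $\Gamma$ acts transitively on $V(G)$. The plan is to identify $V(G)$ with $\Gamma$ by fixing a basepoint $v_0 \in V(G)$ and sending each $\gam \in \Gam$ to $\gam(v_0)$, then to read off a generating set $S \subseteq \Gamma$ from the coloured edges leaving $v_0$, and finally to verify that the resulting $Cay(\Gam, S)$ matches the original colouring of $G$.

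The key step — and the main technical obstacle — is to show that this action of $\Gamma$ on $V(G)$ is \emph{free}, equivalently that any $\phi \in \Gamma$ fixing $v_0$ is the identity. This rests on the proper-colouring hypothesis: if $\phi(v_0) = v_0$ and $e$ is the outgoing (resp.\ incoming) edge of colour $s$ at $v_0$ ending at $w$, then $\phi(e)$ is also an outgoing (resp.\ incoming) edge of colour $s$ at $v_0$, hence equals $e$ by propriety, so $\phi(w) = w$. Iterating over a walk from $v_0$ to any vertex gives $\phi = \mathrm{id}$; here one uses that $G$ is connected (as an underlying undirected graph), which is implicit in the conclusion that $G$ is a Cayley graph. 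Combined with transitivity, this shows $V(G) \leftrightarrow \Gamma$ via $\gam \mapsto \gam(v_0)$ is a bijection.

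For each colour $s$ appearing on an edge of $G$, use transitivity to note that $s$ appears at every vertex, and define $\sig(s) \in \Gamma$ to be the unique colour-automorphism sending $v_0$ to the head of the $s$-edge out of $v_0$; set $S = \{\sig(s) : s \text{ a colour}\}$. Since $G$ is connected and $\Gam$ acts transitively, $S$ generates $\Gam$. It remains to check that, under the identification $\gam \leftrightarrow \gam(v_0)$, the $s$-coloured edge of $G$ from $\gam(v_0)$ to some $w$ corresponds to the edge from $\gam$ to $\gam \cdot \sig(s)$ in $Cay(\Gam, S)$. This is a direct consequence of $\gam$ being colour-preserving: it carries the $s$-edge $v_0 \to \sig(s)(v_0)$ to the $s$-edge $\gam(v_0) \to \gam(\sig(s)(v_0)) = (\gam \sig(s))(v_0)$, which matches the Cayley edge by construction. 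Thus $G \cong Cay(\Gam, S)$ as coloured digraphs, completing the proof.
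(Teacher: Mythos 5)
The paper does not prove this statement: it is quoted as a known fact and attributed to \cite{sab,BaAut}, so there is no in-paper argument to compare against. Your proof is the standard, correct one: left multiplications give the forward direction, and for the converse the proper colouring forces the stabiliser of a vertex to be trivial (a colour-automorphism fixing $v_0$ must fix the unique outgoing and incoming edge of each colour, hence every vertex reachable from $v_0$), so the colour-automorphism group acts regularly and the graph is its Cayley graph with respect to the automorphisms $\sig(s)$ read off the edges at $v_0$. You are also right to flag that connectedness of $G$ is implicitly assumed (it is forced by the conclusion, since a Cayley graph over a generating set is connected); apart from the cosmetic point that the paper's convention takes $S$ symmetric, so one should close your $S$ under inverses, the argument is complete.
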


The following classical result was proved by Whitney \cite[Theorem 11]{whitney_congruent_1932} for finite graphs and by Imrich \cite{ImWhi} for infinite ones.

\begin{theorem} \label{imrcb}
Let \g be a \tcon\ graph embedded in the sphere. Then every automorphism of \g maps each facial path to a facial path.
\end{theorem}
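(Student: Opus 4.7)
The plan is to reduce the statement to the following purely combinatorial characterisation of face-bounding cycles, due essentially to Whitney: in a \tcong\ $G$ embedded in the sphere, a cycle $C$ bounds a face if and only if $C$ is induced and $G-V(C)$ is connected. Both properties are intrinsic to the abstract graph $G$, so any automorphism $\varphi$ of $G$ permutes the induced non-separating cycles, hence the face-bounding cycles. Since every facial path is a subpath of such a cycle (in the finite case) or of a face-bounding double ray (infinite case), this will yield the theorem.

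To prove the characterisation, I would first handle the ``only if'' direction. If $C$ bounds a face $F$, a chord $e$ of $C$ would lie in one of the two closed discs cut off by $C$ in $S^2$ and would split $F$ into two smaller regions of the embedding, contradicting that $F$ is a face; so $C$ is induced. For non-separation, the Jordan Curve Theorem tells us that $C$ cuts $S^2$ into exactly two open discs, one of which is $F$; all vertices of $G-V(C)$ therefore lie in the closure of the other disc, and any two of them can be joined inside that disc using small perturbations of paths in $G$ (formally, using that $G - V(C)$ inherits a planar embedding into an open disc, which is connected because every vertex has a neighbour in the same disc by \tcon ness). For the converse, suppose $C$ is induced and non-separating but does \emph{not} bound a face. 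Then by the Jordan Curve Theorem $C$ bounds two open discs $D_1,D_2$ each containing at least one vertex of $G$; any path between vertices of $D_1$ and $D_2$ must pass through $V(C)$. Since $|C|\ge 3$ and $C$ is induced, one can find two vertices $u,v\in V(C)$ whose removal separates $D_1$-vertices from $D_2$-vertices in $G$, contradicting \tcon ness.

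The main obstacle is extending this to \emph{infinite} planar graphs, since face boundaries may then be infinite and an automorphism $\varphi$ need not obviously preserve such boundaries. The cleanest fix is to pass to the Freudenthal compactification $|G|$: every face boundary becomes a topological circle in $|G|$, and the combinatorial characterisation above extends to the statement that a topological circle $\sigma\subseteq |G|$ bounds a face iff $\sigma$ has no chord and $|G|\setminus\sigma$ is connected. The automorphism $\varphi$ extends continuously to $|G|$ because it permutes ends (this uses only that $\varphi$ preserves the collection of finite vertex separators), and the same two-disc argument, now with the Jordan--Schoenflies theorem applied in $S^2$ to the image of $\sigma$, completes the proof. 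Finally, passing from facial cycles to facial paths is automatic: a path is facial iff it is contained in some face-bounding topological circle, a property visibly preserved by $\varphi$.
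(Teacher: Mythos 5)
The paper does not prove this statement at all: it is quoted as a classical result, with Whitney \cite{whitney_congruent_1932} credited for the finite case and Imrich \cite{ImWhi} for the infinite one. So the only thing to assess is your argument itself. For \emph{finite} \tcong s your route is the standard one: face boundaries in a finite \tcon\ plane graph are exactly the induced non-separating cycles (Tutte's characterisation), this is an abstract graph property, hence automorphisms permute face boundaries and therefore facial paths. Apart from two small slips (a chord of a face-bounding cycle $C$ lies in the closed disc \emph{other} than the face and splits \emph{that} disc, not $F$; and in the converse direction you do not need to hunt for a $2$-separator --- if both discs contain vertices then $G-V(C)$ is already disconnected, and if one disc contains only edge-interiors those edges are chords), this half is fine.

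The genuine gap is the infinite case, which is precisely the case this paper needs. Your reduction rests on the claim that every face boundary ``becomes a topological circle in $|G|$'' and that the non-separating/chordless characterisation transfers to such circles. Neither is available for free. Face boundaries of an infinite plane graph need not close up into circles in the Freudenthal compactification (that they do, for suitable embeddings of suitably connected locally finite graphs, is itself a nontrivial theorem of Bruhn and Stein, not something you may assume); moreover, the given embedding of $G$ in $S^2$ does not automatically extend to an embedding of $|G|$ in $S^2$, so you cannot invoke Jordan--Schoenflies for the image of a circle of $|G|$. Note that the graphs for which this matters most in the present paper --- the \tcon\ examples of types \ref{AIIciii} and \ref{AIId2iii}, which have \emph{no} finite face boundaries --- are exactly those for which your finite-cycle characterisation says nothing and the whole weight falls on the unproved compactification step. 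A correct treatment of the infinite case either follows Imrich's combinatorial argument (reducing to the finite case via suitable finite $3$-connected minors/subgraphs and a compactness argument to assemble the local rotation data), or proves uniqueness of the embedding directly and then observes that $\sigma\circ\varphi$ is another embedding with facial walks the $\varphi$-preimages of those of $\sigma$. As written, your proof establishes the theorem only for finite \G.
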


This implies in particular that if \sig\ is an embedding of the \tcon\ \Cg\ \G, then the cyclic ordering of the colours of the edges around any vertex of \g is the same up to orientation. In other words, at most two spins are allowed in \sig. Moreover, if two vertices  $x,y$ of \g that are adjacent by an edge, bearing a colour $b$ say, have distinct spins, then any two vertices $x',y'$ adjacent by a $b$-edge also have distinct spins. We just proved
\begin{lemma} \label{lprem}
 Let \g be a \tcon\ planar \Cg. Then every embedding of \g is \pr.
\end{lemma}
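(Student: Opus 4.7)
The plan is to combine Theorem~\ref{imrcb} (Whitney--Imrich) with the transitive action of the group of \auto s on $V(G)$, exploiting the fact that a homeomorphic embedding of $G$ into $\R^2$ has a well-defined global orientation.

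First I would fix a vertex $x_0\in V(G)$ and let $\rho$ denote its spin in $\sig$. For an arbitrary vertex $x$, I choose a \auto\ $\phi$ of $G$ with $\phi(x_0)=x$; such a $\phi$ exists because the group acts transitively on $V(G)$ by left multiplication. By Theorem~\ref{imrcb}, $\phi$ maps every facial walk of $\sig$ to a facial walk of $\sig$, so the cyclic order of edges around $x_0$ is carried to the cyclic order of edges around $x$. Since $\phi$ preserves colours, and since the induced homeomorphism of the plane is either everywhere orientation-preserving or everywhere orientation-reversing, the spin of $x$ must be either $\rho$ or its reverse $\rho^{-1}$. Consequently only two spins appear in $\sig$.

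Next, fixing any colour $b\in S$, I would show that $b$ is \pr. Given two $b$-edges $e=xy$ and $e'=x'y'$, there is a \auto\ $\phi$ with $\phi(x)=x'$, and since $\phi$ preserves colours and directions it must send $y$ to $y'$. Because $\phi$ has a uniform global orientation behaviour, the spin of $x$ agrees with the spin of $x'$ and that of $y$ with that of $y'$, both comparisons being governed by the same orientation flag of $\phi$. Hence $x$ and $y$ have the same spin \iff\ $x'$ and $y'$ do, i.e.\ $e$ is spin-preserving \iff\ $e'$ is. Therefore either every $b$-edge is spin-preserving or every $b$-edge is spin-reversing, so $b$ is \pr. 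As $b\in S$ was arbitrary, $\sig$ is \pr.

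The only subtle step, which I view as the main (but minor) obstacle, is rigorously justifying the ``global orientation dichotomy'' used in both parts: one needs to argue that the permutation of facial cycles induced by $\phi$ can be realised by a homeomorphism of the sphere that is either everywhere orientation-preserving or everywhere orientation-reversing, so the comparison of spins cannot flip from vertex to vertex. Once this is in hand, the two steps above fit together into the statement of the lemma.
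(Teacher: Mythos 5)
Your argument is correct and follows essentially the same route as the paper, which derives the lemma in exactly this way from \Tr{imrcb} together with the transitive action of the \auto s of \g. The ``global orientation dichotomy'' you single out is precisely what the paper extracts from \Tr{imrcb}: since a facial walk through an edge $xy$ pins down simultaneously the successor of that edge in the rotation at $x$ and at $y$, preservation of facial walks couples the orientation behaviour of an automorphism at adjacent vertices, so it is globally consistent.
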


Finally, we recall the following fact that is reminiscent of MacLane's planarity criterion.

\begin{theorem}[\cite{cay2con}] \label{Macay}
Let  $\left< S \mid R \right>$ be a \red\ presentation and let $G= Cay \left< S \mid R \right>$ be the corresponding \Cg. If no edge of \g appears in more than two  circuits  induced by relators in $R$, then $G$ is planar and has a \vapf\ embedding the facial cycles of which are precisely the cycles of \g induced by relators in $R$. 
\end{theorem}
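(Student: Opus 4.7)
The plan is to bootstrap MacLane's theorem (\Tr{ThmMacLane}) from finite graphs to the locally finite Cayley graph $G$ by an exhaustion argument combined with \Lr{LemKonig}. Since $S$ is finite, $G$ is locally finite. By \Lr{relcc}, the family $\mathcal{F}$ of circuits of cycles induced by relators in $R$ generates $\ccfg$; the hypothesis then says that $\mathcal{F}$ is a \emph{simple} generating set in MacLane's sense, since each edge lies in at most two of its members. Reducedness of the presentation ensures each element of $\mathcal{F}$ is the circuit of an actual cycle, not merely a closed walk.

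Next, I would take an increasing exhaustion $G_0 \subset G_1 \subset \cdots$ of $G$ by finite connected subgraphs with $\bigcup_n G_n = G$, chosen so that every member of $\mathcal{F}$ meeting $G_n$ is contained in $G_{n+1}$. For each $n$ let $H_n$ denote the union of all members of $\mathcal{F}$ that are contained in $G_n$, and set $\mathcal{F}_n := \{C \in \mathcal{F} : C \subseteq H_n\}$. Then $\mathcal{F}_n$ is a simple generating set of $\mathcal{C}(H_n)$ (after, if necessary, adjoining a single ``outer'' cycle to account for the truncation), so \Tr{ThmMacLane} yields a planar embedding $\sigma_n$ of $H_n$ whose bounded facial cycles are exactly $\mathcal{F}_n$. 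The simplicity of $\mathcal{F}_n$ (together with a Whitney-type uniqueness statement for such embeddings) leaves only finitely many combinatorial choices for $\sigma_n$, and the choices that extend $\sigma_{n-1}$ on $H_{n-1}$ are always non-empty.

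I would then apply \Lr{LemKonig} to the tree whose level-$n$ nodes are the coherent embeddings of $H_n$ extending a chosen embedding of $H_{n-1}$, and extract a single combinatorial embedding of $G$ — i.e., a consistent choice of cyclic rotation at every vertex. To realise this as an actual topological embedding in $\mathbb{R}^2$, I would glue a closed disc along each cycle of $\mathcal{F}$ face by face, starting from one fixed face and extending outwards. Because each vertex of $G$ lies on only finitely many members of $\mathcal{F}$ (local finiteness plus each cycle being finite), this produces a well-defined locally finite CW-structure, so the images of vertices form a discrete subset of $\mathbb{R}^2$ and the embedding is \vapf. The facial cycles are by construction exactly the members of $\mathcal{F}$, while the simplicity condition together with the cycle-space generation forbids any additional finite face boundary.

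The main obstacle is coherence across the exhaustion: since $G$ is not assumed \tcon, the finite embeddings $\sigma_n$ are not unique, and one must verify that the flips/reflections available at each stage can always be synchronised with the earlier stages — this is where simplicity of $\mathcal{F}$ is crucial, as it rigidifies the facial structure and makes the ambiguity bounded enough for \Lr{LemKonig} to apply. A secondary subtlety is ensuring the limiting combinatorial embedding actually realises topologically in the plane (rather than some other surface); here one uses that $\mathcal{F}$ generates \emph{all} of $\ccfg$, so every finitary cycle bounds in the embedding, yielding genus zero.
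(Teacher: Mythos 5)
First, note that the paper does not prove this statement at all: Theorem~\ref{Macay} is imported verbatim from \cite{cay2con}, so there is no in-paper proof to compare yours against, and your argument has to stand on its own.

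As it stands it does not. Your overall strategy (MacLane for finite pieces, a compactness argument, then realise the limit by gluing discs) is the natural one, but two of its load-bearing steps have genuine gaps. \textbf{(a)} You need each $\mathcal F_n$ to be a simple generating set of the cycle space of the finite piece $H_n$ before you can invoke MacLane, but a cycle of $H_n$ is in general only a sum of members of $\mathcal F$ \emph{some of which leave $H_n$}, so $\mathcal F_n$ need not generate $\cc(H_n)$; the deficiency is not ``a single outer cycle'' --- $H_n$ can have many holes, their boundaries need not be cycles, and adjoining enough extra circuits to restore generation can destroy the property that every edge lies in at most two members. \textbf{(b)} The K\"onig setup requires that every ``good'' embedding of $H_{n+1}$ (one whose bounded facial cycles are exactly $\mathcal F_{n+1}$) restrict to a good embedding of $H_n$, and this fails: a bounded face of $H_n$ in the restricted embedding is in general a union of several faces of $H_{n+1}$ together with vertices and edges of $H_{n+1}\setminus H_n$, so its boundary need not belong to $\mathcal F_n$ at all. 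Without that compatibility there is no tree for K\"onig's lemma to act on. Finally, the last step is asserted rather than proved: local finiteness of the disc-complex does not by itself make the vertex images discrete in $\R^2$ (the discs could shrink towards an accumulation point), and for the complex to be a surface you must also check that the link of each vertex is an arc or a circle --- the hypothesis only controls edges, not the cyclic arrangement of the faces around a vertex. A cleaner route, and the one your own closing paragraph gestures at, is to skip the exhaustion entirely: fill each relator-cycle with a disc, use the at-most-two-circuits-per-edge condition plus the link condition to see the resulting complex is a surface, use \Lr{relcc} (the circuits generate \ccfg) to see it is simply connected, and conclude it is a subsurface of the sphere, which yields the \vapf\ embedding with the prescribed facial cycles directly.
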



\section{General facts regarding connectivity}

\comment{next 2 probably not needed:
	\begin{lemma} \label{LG1}
 Let $G,H$ be \tcon\ \pccg s each vertex of any of which is incident with two faces of length $m$ and one face of length $n$ with $m,n < \infty$. Then $G \isom H$
\end{lemma}

\begin{lemma} \label{LG1p}
 Let $G,H$ be \pccg s each vertex of any of which is incident with two faces of length $m< \infty$ and one infinite face. Then $G \isom H$
	\end{lemma} 
}

Call a relation of a group presentation \defi{cyclic}, if it induces a cycle in the corresponding \Cg. An involution is by convention not cyclic. Cyclic relations are useful because they allow us to formulate the following lemma. 
\begin{lemma} \label{Liicon}
A cubic \Cg\ is \iicon\ \iff\ each of its generators $a$ is in a cyclic relation.
\end{lemma}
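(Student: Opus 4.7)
The plan is to reduce the lemma to the statement that a connected cubic graph is 2-connected if and only if it has no bridges, and then to translate ``bridge of colour $a$'' into ``$a$ lies on no cycle'' using the group action.

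First I would dispatch the purely graph-theoretic step: for a connected cubic graph $G$, being 2-connected is equivalent to being bridgeless. One direction is standard (any 2-connected graph on at least three vertices has no bridges). For the converse, suppose $v$ is a cut vertex; since $\deg(v)=3$, at least one component of $G-v$ meets $v$ in exactly one edge, and that edge is a bridge of $G$. This is where the cubic hypothesis is essential: in general 2-edge-connectivity is strictly weaker than 2-vertex-connectivity, so this reduction is the only point in the proof where I can't argue for arbitrary graphs.

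Next, I would use left multiplication, which by Sabidussi's theorem is a transitive group of colour-automorphisms, to observe that for any fixed generator $a$ all $a$-coloured edges are equivalent: one lies on a cycle if and only if every $a$-edge does, and the same statement for bridges. So ``$G$ has a bridge'' is equivalent to ``there exists a generator $a$ such that no $a$-edge lies on a cycle'', and contrapositively, ``$G$ is bridgeless'' is equivalent to ``for every generator $a$, some (equivalently, every) $a$-edge lies on a cycle of $G$.''

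Finally I would translate the cycle condition into the language of cyclic relations. If an $a$-edge lies on a cycle $C$, then traversing $C$ from one of its endvertices yields a closed walk which is in fact a cycle, hence a \red\ relator in the sense of \Lr{relcc}, containing the letter $a$; that is, $a$ lies in a cyclic relation. Conversely, by the very definition of a cyclic relation in which $a$ appears, the corresponding relator induces a cycle of $G$ through an $a$-edge. Concatenating the three equivalences proves the lemma. The only non-trivial point throughout is the reduction carried out in the first paragraph; everything else is a direct application of the definitions and the transitivity of the colour-automorphism group.
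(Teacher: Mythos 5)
Your proof is correct and follows essentially the same route as the paper's: both reduce 2-connectedness to bridgelessness via the cubic degree condition and then identify bridges with generators lying in no cyclic relation. The paper's version is just terser, omitting the explicit pigeonhole count at a cutvertex and the appeal to transitivity (which, incidentally, you get for free from left multiplication without invoking Sabidussi).
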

\begin{proof}
Firstly, note that if \g is a cubic \Cg\ that is not \iicon, then it must have a bridge $e$. But then the generator $a$ corresponding to $e$ cannot be in any cyclic relation because a cycle cannot contain a bridge.

Conversely, an edge corresponding to a generator $a$ that is in no cyclic relation cannot lie in any cycle; thus it is a bridge, which means that any of its endpoints separates the graph.
\end{proof}

In many occasions we will use some of the graphs of Table~\ref{table} as building blocks in order to construct more complicated ones. Our next lemma will be useful in such cases, as it will allow us to deduce the fact that the new graphs are \tcon\ from the fact that the building blocks were. 
Let  $K,K_1,K_2$ be subsets of $V(G)$. We will say that $K$ is \defi{$k$-connected in \G}, if for every vertex set $S\subseteq V(G)$ with $|S|<k$, there is a component of $G - S$ containing $K - S$. Similarly, we will say that $K_1$ is \defi{$k$-connected  to} $K_2$ in \G, if for every vertex set $S\subseteq V(G)$ with $|S|<k$, there is a path from $K_1$ to $K_2$ in $G -S$; in particular, $|K_1|,|K_2|\geq k$.

\begin{lemma}[\cite{am}] \label{lkcon}
Let \g be a graph,  
and 
let $\fml{K}$ be a family of subsets of $V(G)$ such that the following three assertions hold:
\begin{enumerate}\addtolength{\itemsep}{-0.5\baselineskip}
\item \label{coi} $\bigcup_I K_i = V(G)$;
\item \label{coii} \Fe\ $i\in I$, $K_i$ is $k$-connected in $G$, and
\item \label{coiii} \Fe\ $i,j\in I$ \ti\ a finite sequence $i= n_0, \ldots, n_r = j$ \st\ ${K_{n_m}}$ is $k$-connected to ${K_{n_{m+1}}}$ in \G\ \fe\ relevant $m$.
\end{enumerate}
Then \g is $k$-connected.
\end{lemma}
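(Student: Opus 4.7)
The plan is to verify the definition of $k$-connectivity directly: fix an arbitrary vertex set $S \subseteq V(G)$ with $|S| < k$, pick two vertices $u, v \in V(G) \setminus S$, and exhibit a $u$--$v$ path in $G - S$. By assumption (i) I can choose indices $i, j \in I$ with $u \in K_i$ and $v \in K_j$, and by assumption (iii) there is a finite chain $i = n_0, n_1, \ldots, n_r = j$ along which consecutive terms $K_{n_m}, K_{n_{m+1}}$ are $k$-connected in $G$. The strategy is to track ``the component of $G - S$ containing $K_{n_m} \setminus S$'' and show it is the same for all $m$.

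The key observation is that assumption (ii) forces $K_{n_m} \setminus S$ to lie in a single component $C_m$ of $G - S$ for each $m$. The definition of ``$k$-connected to'' silently carries the size constraint $|K_{n_m}|, |K_{n_{m+1}}| \geq k > |S|$ whenever $n_m$ appears as an endpoint of a $k$-connected-to assertion in the chain, so $K_{n_m} \setminus S$ is nonempty and $C_m$ is genuinely defined.

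Next I verify $C_m = C_{m+1}$ for every relevant $m$. Applying the ``$k$-connected to'' hypothesis to this specific $S$, I obtain a path $P \subseteq G - S$ with one endpoint $x \in K_{n_m}$ and the other $y \in K_{n_{m+1}}$. Since $P$ avoids $S$, its endpoints satisfy $x \in K_{n_m} \setminus S \subseteq C_m$ and $y \in K_{n_{m+1}} \setminus S \subseteq C_{m+1}$, and the path $P$ itself witnesses that these two components coincide. Chaining the equalities yields $u \in K_i \setminus S \subseteq C_0 = C_1 = \cdots = C_r \supseteq K_j \setminus S \ni v$, so $u$ and $v$ lie in the same component of $G - S$. (The degenerate case $i = j$ is handled by the trivial chain $r = 0$ using (ii) alone.)

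No serious obstacle is expected. The only subtle point is extracting the implicit size condition $|K_{n_m}| \geq k$ from the ``$k$-connected to'' hypothesis in order to know that each intermediate set in the chain is not swallowed entirely by $S$; once this is in place, the argument is a routine chaining of components.
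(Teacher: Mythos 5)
Your argument is correct: fixing $S$ with $|S|<k$, condition (ii) places each $K_{n_m}\setminus S$ inside a single component of $G-S$, condition (iii) supplies a path in $G-S$ identifying consecutive components (its endpoints already witness that $K_{n_m}\setminus S$ is non-empty, so the size clause you extract is a harmless extra safeguard), and chaining gives that $u$ and $v$ lie in the same component. The paper itself states this lemma without proof, citing \cite{am}, but the later reference to ``the proof of Lemma~\ref{lkcon}'' as a component-chaining argument indicates that the intended proof is exactly the one you give.
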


\section{Graphs of connectivity 1 or 2} \label{lowk}

It follows easily from \Lr{Liicon} that if \g is a cubic \Cg\ of connectivity $\kappa(G)=1$, then its group $\Gam(G)$ has one of the following presentations. 
\begin{enumerate}\addtolength{\itemsep}{-0.5\baselineskip}
\item \label{ici} $\left<a,b\mid b^2, a^n\right>$, $n\in \{\infty, 2, 3, \ldots\}$
\item \label{icii} $\left<b,c,d\mid b^2,c^2,d^2, (bc)^n \right>$, $n\in \{\infty, 1, 2, 3, \ldots\}$,
\end{enumerate}
where $n=\infty$ means that the corresponding relator is omitted.

All \Cg s corresponding to these presentations are planar; this follows easily from $\kappa(G)=1$ and the assumption that \g is cubic.

\medskip

The planar cubic \Cg s of connectivity 2 were completely analysed in \cite{cay2con}, yielding the following classification. 
\begin{theorem} \label{main2}
Let \g be a planar cubic \Cg\ of connectivity 2. Then precisely one of the following is the case:
\begin{enumerate}\addtolength{\itemsep}{-0.5\baselineskip}
\item \label{i} $G \isom Cay \left<a,b\mid b^2, (ab)^n\right>$, $n\geq 2$; 
\item \label{ii} $G \isom Cay \left<a,b\mid b^2, (aba^{-1}b^{-1})^n\right>$, $n\geq 1$;
\item \label{iii} $G\isom Cay \left<a,b \mid b^2, a^4, (a^2b)^n \right>, n\geq 2$;

\item \label{iv} $G \isom Cay \left< b,c,d \mid b^2, c^2, d^2, (bc)^2, (bcd)^m\right>$, $m\geq 2$;
\item \label{v} $G \isom Cay \left<b,c,d \mid b^2, c^2, d^2, (bc)^{2n}, (cbcd)^m\right>$, $n,m\geq 2$; 
\item \label{vi} $G \isom Cay \left<b,c,d\mid b^2, c^2,d^2, (bc)^n, (bd)^m\right>$, $n,m\geq 2$;
\item \label{vii} $G \isom Cay \left<b,c,d\mid b^2, c^2,d^2, (b(cb)^nd)^m\right>, n\geq 1, m\geq 2$;
\item \label{viii} $G \isom Cay \left<b,c,d\mid b^2, c^2,d^2, (bcbd)^m \right>$, $m\geq 1$;
\item \label{ix} $G \isom Cay\left<b,c,d\mid b^2, c^2, d^2, (bc)^n, cd\right>$, $n\geq 1$ (degenerate cases with redundant generators).
\end{enumerate}
Conversely, each of the above presentations, with parameters chosen in the specified domains, yields a planar cubic \Cg\ of connectivity 2.
\end{theorem}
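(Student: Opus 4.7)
The plan is to classify \G\ by the local structure at vertices and the nature of its 2-cuts. Since \G\ is cubic and \iicon\ with $\kap(G)=2$, \ti\ at least one 2-cut, and vertex-transitivity makes the ``cut geometry'' the same at every vertex. The first step is to decide whether \G\ contains a \sepe, i.e.\ an edge $xy$ with $\{x,y\}$ separating. Because \G\ is cubic, each endpoint of a \sepe\ has exactly one further edge on each side of the cut, and this rigidly dictates the colour pattern of the cut. If no \sepe\ exists, a minimal 2-cut $\{x,y\}$ must consist of non-adjacent vertices, and the two edges of \G\ at $x$ together with the two such edges at $y$ going into one side form a natural ``window'' between the two sides whose colours control the global picture.

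Next I would split into cases according to whether the generating set $S$ has size $2$ or $3$ (involutions counted once). In the two-generator case $S=\{a,b\}$ with $b^2=1$, the only candidates for being a \sepe\ are $b$-edges or pairs of $a$-edges incident with the same pair of vertices; a careful analysis of the short cyclic words that can appear as relators without violating cubicity and without trivialising the group yields cases \ref{i}--\ref{iii}. In the three-generator case $S=\{b,c,d\}$, all generators are involutions; one picks a colour, say $d$, playing the role of a ``cut'' colour, and the subgraph spanned by $\{b,c\}$ consists of $(bc)$-cycles of some length $n$ (or a double ray). Cases \ref{iv}--\ref{ix} then arise from how the $d$-edges connect these $(bc)$-subgraphs and which words in $b,c,d$ close up into faces of a \prem. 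A cubic-case analogue of \Lr{lprem}, asserting that a planar cubic \iicon\ \Cg\ admits a \prem, is essential here: it forces the spin at every vertex and hence the cyclic colour pattern, reducing the analysis to a finite enumeration.

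Having pinned down the local picture in each case, the next step is to promote it to a full presentation. For each colour configuration I would identify a finite set of facial walks around a fixed base vertex; by vertex-transitivity these give a finite list of candidate relators $\mathcal R$, and \Tr{Macay} confirms that $\left<S\mid \mathcal R\right>$ is a presentation of $\Gam(G)$ whose \Cg\ is \G, because no edge lies in more than two of the induced circuits. Matching configurations to entries of the theorem yields exactly one of \ref{i}--\ref{ix}. The main obstacle is this forward step of ruling out further colour patterns; the existence of a \prem\ is what does the heavy lifting, as it reduces the problem to enumerating cyclic orderings of the (at most $3$) generator labels around a vertex together with the shortest face-bounding words through each pair of colours.

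For the converse, each listed presentation is handled uniformly: the relators are by construction \red, every edge of the resulting \Cg\ lies in at most two relator-induced circuits, so \Tr{Macay} gives a planar \vapf\ embedding with the induced cycles as face-boundaries. The graph is \iicon\ by \Lr{Liicon} because every generator appears in a cyclic relator; and one exhibits an explicit 2-cut in each case (typically a \sepe\ formed by one of the listed generators, or a pair of non-adjacent vertices lying on two face-bounding short cycles) to verify that $\kap(G)=2$ rather than $3$.
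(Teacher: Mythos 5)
The paper does not actually prove Theorem~\ref{main2}: it imports the classification wholesale from \cite{cay2con} (``The planar cubic \Cg s of connectivity 2 were completely analysed in \cite{cay2con}''), so there is no in-paper argument to compare yours against line by line. Judged on its own terms, your sketch identifies the right ingredients but leaves the two hardest steps as gaps.

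First, you lean on ``a cubic-case analogue of \Lr{lprem}, asserting that a planar cubic \iicon\ \Cg\ admits a \prem'' and say it ``does the heavy lifting.'' That is exactly right --- and that is the problem. \Lr{lprem} is derived from Whitney/Imrich (\Tr{imrcb}), which requires 3-connectedness: a \iicong\ has many inequivalent embeddings, so spin consistency cannot be forced by uniqueness of the embedding, and indeed the paper notes that for connectivity~2 a \prem\ need not exist in general (\cite{DrSeSeCon}) and that the cubic case is a theorem of \cite{cay2con}. Establishing that \prem\ (essentially by analysing the tree of 3-blocks/hinges and re-embedding across 2-cuts) is the main content of the classification; assuming it is assuming most of the proof.

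Second, your forward direction misapplies \Tr{Macay}. That theorem runs from a presentation to a planar embedding; it cannot ``confirm that $\left<S\mid \mathcal R\right>$ is a presentation of $\Gam(G)$'' from a list of facial walks. To certify completeness of the relator set you need the facial circuits to generate $\ccfg$ (the role played by \Lr{facecc}), and that lemma is proved only for finite or 1-ended graphs. Most of the graphs in cases \ref{iv}--\ref{viii} are multi-ended, so you would need either a \vapf\ embedding together with a generalised \Lr{facecc}, or a decomposition of \G\ into pieces along the 2-cuts with an argument in the spirit of \Lr{ccfdec} --- the same structure-tree machinery the paper deploys for the 3-connected multi-ended case. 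Relatedly, the actual enumeration (why precisely $(cbcd)^m$ paired with $(bc)^{2n}$ in \ref{v}, or $(b(cb)^nd)^m$ in \ref{vii}, and nothing else) is asserted rather than carried out. Your converse direction, via \Tr{Macay} plus \Lr{Liicon} plus exhibiting an explicit 2-cut, is sound and matches the standard route.
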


The above presentations are planar; see \citeIICorplpr.

We use \Tr{main2} to obtain the entries \ref{Ai}--\ref{Aix} of Table~\ref{table}, but we also use these graphs as important building blocks in later constructions of \tcon\ \Cg s. 

\medskip
The following sections are devoted to the \tcon\ case.

\section{Crossings of \mpsc s} \label{secPsc}

Given a plane graph \g and a cycle $C$ of \G, the Jordan curve theorem yields two distinct regions \defi{$IN(C), OUT(C)$} of $\R^2 \sm C$ which we call the \defi{sides} of $C$. Define the  \defi{closed sides} $\overline{IN(C)}, \overline{OUT(C)}$ of $C$ to be the respective union of $IN(C), OUT(C)$ with $C$. 

Call a cycle $C$ of \g a  \defi{\psc} if both sides of $C$ contain infinitely many vertices, and call $C$ a  \defi{\mpsc} if its length is minimal among all \psc s of \G. The \mpsc s will play a very important role in this paper: in most of the group presentations we construct, any relator that does not induce a face boundary will induce a \mpsc. In this section we provide some general facts, that will be useful later, about how pairs of \mpsc s can meet.  

We will say that two cycles $C,C'$ of \g \defi{cross} each other if none of them is contained in the closure of a side of the other; equivalently, if each of $IN(C), OUT(C)$ meets both $IN(C'), OUT(C')$. It turns out that the  ways in which \mpsc s can cross are very restricted.

We will first consider the simplest case, when two \mpsc s $C,C'$ cross only once, that is, when $C - C'$ consists of two components and so does $C' - C$. In this case,  $\R^2 \sm (C \cup C')$ consists of four regions $A,B,D,F$ as in \fig{crossing}.

\showFig{crossing}{The four regions of $\R^2 \sm (C \cup C')$.}

Since $C,C'$ are \psc s, both the inside and the outside of each of them contain infinitely many vertices. This immediately implies that either both $A,D$ are infinite or both $B,F$ are infinite. We may assume \obda\ that 
\labtequ{AD}{both $A,D$ are infinite}
since otherwise we could modify the embedding of \g so as to fix $C$ but exchange its inside with its outside, which would have the effect of renaming the regions $A,B,D,F$. 

As \g is cubic, every time $C$ and $C'$ intersect they  must have at least one common edge. This gives rise to the three cases displayed in \fig{crossing3}. In two of those cases (upper half of  \fig{crossing3}) we immediately obtain a contradiction: in each case we obtain two new cycles $K,K'$ (dashed lines) both of which are \ps\ by \eqref{AD}. Now an easy double counting argument shows that the length at least one of $K,K'$ must be less than $\ell:=|C|=|C'|$, the length of a \mpsc\ of \G: indeed, we have $|K|+|K'| < |C|+|C'|$.

\epsfxsize=1\hsize

\showFig{crossing3}{The three ways in which two \psc s $C,C'$ can cross. The dashed cycles $K,K'$ are also \ps.}

Thus, whenever two \mpsc s $C,C'$ cross only once we must have the situation in the lower half of  \fig{crossing3}, with each of $A$ and $D$ containing infinitely many vertices. Moreover,
\labtequ{BF}{at least one of $B,F$ is finite,}
since otherwise one of the cycles $P_1 \cup P'_1$ and $P_2 \cup P'_2$ is \ps\ and has length less that $\ell$ by a double-counting argument as above.

If $C,C'$ cross more than once then more or less the same arguments apply, but we have to be a bit more careful. We define the regions $A,B,D,F$ as follows. We let $A:= IN(C) \cap OUT(C')$, $B:= IN(C) \cap IN(C')$, $D:= OUT(C) \cap IN(C')$, and   $F:= OUT(C) \cap OUT(C')$. Now each time $C,C'$ cross, the corresponding subpaths must be arranged as in the lower half of  \fig{crossing3} as can be shown by repeating the above arguments, except that this time $K,K'$ need not be cycles but could be more complicated closed walks. Still, we can decompose each of them into a finite collection of cycles, one of which will have to be \ps\ and shorter that $\ell$.

Note that $|P_1|= |P'_1|$ must hold in \fig{crossing3} because if, say, $|P_1|< |P'_1|$ then we can replace $P'_1$ by $P_1$ in $C'$ to obtain the cycle $K'$ which is shorter than $C'$. But by \eqref{AD} $K'$ would then be a \psc, contradicting the fact that $C'$ is shortest possible. Similarly, we obtain $|P_2|= |P'_2|$. Thus, both dotted cycles in the lower half of  \fig{crossing3} have the same length as $C$, and by \eqref{AD} again they are \mpsc s. 

This means that whenever we have two crossing \mpsc s $C,C'$ in a cubic graph \G, 
\labtequ{CII3m}{\ti\ a \mpsc\ $K$ which is the union of a non-trivial subpath of each of $C,C'$ which subpaths witness the fact that $C,C'$ cross.} 
This will imply the following tool that we will use in many cases.
\begin{corollary}\label{CII3p}
Let $C,C'$ be two crossing \mpsc s in a cubic graph \G. Let $Q$ be a maximal common subpath of $C,C'$, and let $F$ be a facial subpath of $C$ or $C'$ that has maximal length among all facial subpaths of all \mpsc s of \G. Then if $Q$ and $F$ have a common endvertex $y$, their edges incident with $y$ are distinct.
\end{corollary}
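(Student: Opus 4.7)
I would argue by contradiction: suppose $Q$ and $F$ share the same edge $e$ at $y$, and aim to produce a facial subpath of a \mpsc\ strictly longer than $F$. Without loss of generality $F \subseteq C$, and since $G$ is cubic the three edges incident with $y$ are $e$, the other $C$-edge $h$, and the other $C'$-edge $h'$. A first observation would be that $F$ is maximal as a facial subpath of $C$ itself, because any extension of $F$ within $C$ would be a strictly longer facial subpath of the \mpsc\ $C$, contradicting the choice of $F$. Applied at $y$, this forces the face $\phi$ bounded by $F$ to continue past $y$ via an edge other than $h$, hence along $h'$. Writing $F^{+}$ for the path obtained by appending $h'$ to $F$ at $y$, $F^{+}$ is facial of length $|F|+1$, and it suffices to exhibit a \mpsc\ containing $F^{+}$.

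Next, I would record that $Q$ itself is a facial subpath of $C$: by the crossing configuration established in \Sr{secPsc}, at each endvertex of $Q$ the outgoing $C$- and $C'$-edges lie on opposite sides of $Q$, so one of the two faces wedged between $C$ and $C'$ alongside $Q$ has $Q$ on its boundary. Maximality of $|F|$ then yields $|F| \geq |Q|$. Since $F$ and $Q$ are both subpaths of $C$ starting at $y$ via $e$, one contains the other; in the case $F \subseteq Q$, the length inequality forces $F = Q$, whence $F \subseteq C'$ and $h'$ is the $C'$-edge at $y$ outside $Q$, so $F^{+}$ is a subpath of $C'$ itself, a \mpsc, giving the contradiction.

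The remaining case, and the main expected obstacle, is $Q \subsetneq F$, where $F$ extends past $Q$'s far endvertex $x$ into an arc $P \subseteq C \setminus C'$. Here I would invoke the swap construction underlying \eqref{CII3m}: the crossing furnishes a second maximal common subpath $Q'$ of $C, C'$, and if $P' \subseteq C'$ denotes the arc of $C'$ from $Q'$ back to $y$ starting at $y$ with the edge $h'$, the cycle $K := Q \cup P \cup Q' \cup P'$ has length $\ell$ by the identity $|P|=|P'|$ established just before \eqref{CII3m}, and is dividing by \eqref{AD} applied to the crossing of $C, C'$; hence $K$ is a \mpsc. The main remaining step is to verify $F^{+} \subseteq K$, i.e.\ that $F$ does not wrap past $Q'$ into the opposite arc of $C$; for this I would combine the connectedness of $\phi$ with the fact that the position of $C'$ relative to $C$ flips across each crossing, forcing such a wrapping $F$ to make $\phi$ appear on inconsistent sides of $C$ at $Q$ and at $Q'$, a contradiction. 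With $F^{+} \subseteq K$ in hand, $F^{+}$ is a facial subpath of the \mpsc\ $K$ of length $|F|+1$, contradicting the maximality of $|F|$ and completing the proof.
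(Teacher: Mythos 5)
Your overall strategy is the same as the paper's: observe that the face $\phi$ of $F$ must continue past $y$ along the $C'$-edge $h'$ (it cannot continue along $h$ by maximality of $F$ within the \mpsc\ $C$, and $G$ is cubic), so that $F\cup h'$ is a longer facial path, and then exhibit a \mpsc\ containing it via the swap cycle $K$ of \eqref{CII3m}. Two remarks before the main point. First, your intermediate claim that $Q$ is facial is not established: the justification you give concerns only the endvertices of $Q$, whereas faciality depends on the hanging edges at the \emph{interior} vertices of $Q$ all lying on the same side, which nothing guarantees. Fortunately the claim is also unnecessary — whenever $F\subseteq Q$ you have $F\subseteq C'$ and hence $F\cup h'\subseteq C'$ directly, whether or not $F=Q$.

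The genuine gap is in the main case. The step "$F^{+}\subseteq K$" is where the entire content of the corollary lies, and your proposed argument for it does not work as stated: a face lies entirely in one side of the cycle $C$ by the Jordan curve theorem, so "$\phi$ appears on inconsistent sides of $C$ at $Q$ and at $Q'$" is not a contradiction one can reach — there is nothing inconsistent about a face being adjacent to $C$ along two separate stretches. Moreover, reducing the problem to "$F$ does not wrap past $Q'$" tacitly assumes that $C$ and $C'$ cross exactly once, so that there are just two maximal common subpaths; the corollary makes no such assumption. The paper closes this step by a different side argument: if $K\not\supseteq F$, then the $C'$-arc $P'$ of $K$ must attach to $C$ at an interior vertex of $F$ and hence contain the hanging edge there; but $P'$ lies in a single side of $C$, namely the side containing $\phi$ (since $f'=h'\in P'$ is incident with $\phi$), whereas every hanging edge at an interior vertex of the facial path $F$ lies in the \emph{opposite} side of $C$ from $\phi$. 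That argument about the position of $P'$ relative to $C$ — not about the position of $\phi$ — is the missing ingredient, and it works for any number of crossings.
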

\begin{proof}
Suppose, to the contrary, that such a path $Q$ finishes with the edge $e=xy$ which is also the last edge of the facial subpath $F$ of $C$ that has maximum length. Since $Q$ was a maximal common subpath of $C,C'$, the edge  $f = y z$ of $C$ following $Q$ is different from the edge  $f' = y z'$ of $C'$ following $Q$. By \eqref{CII3m}, we can combine $C$ and $C'$ into a new \mpsc\ $K$ that comprises a subpath $P$ of $C$ ending with $e$ and a subpath $P'$ of $C'$ starting with $f'$. 
Recall that the path $F$ is facial, and let  $H$ denote its incident  face. Since $F$ was a maximal facial subpath of $C$, $H$ is not incident with the edge $f\in E(C)$. Thus, as \g is cubic, $H$ is incident with $f'$. Now note that $K$ contains $f'$ by construction, as well as the last edge $e$ of $F$. If $K$ contains all of $F$, then it contains the facial path $F \cup f'$, which contradicts the maximality of the length of $F$ among all facial subpaths of all \mpsc s. Thus $P'$ must interrupt $F$. This however yields a contradiction to the embedding. Indeed, since $H$ is a face, it is contained in either $IN(C)$ or $OUT(C)$. Recall also that $P'$ must by definition be contained in $IN(C)$ or $OUT(C)$. But as $P'$ contains the edge $f'$ which is incident with $H$, both $H$ and $P'$ must be accommodated in the same side of $C$. Thus $P'$ cannot contain any of the edges incident with $F$, since these edges lie in the other side of $C$. This shows that $P'$ cannot interrupt $F$, and we obtain a contradiction that completes the proof.
\end{proof}


\section{The finite and 1-ended cubic planar \Cg s} \label{secfin}

In this section we analyse the cubic planar \Cg s that are either finite or infinite but with only 1 end. Many, perhaps all, of these graphs were already known. They appear as entries \ref{Aoi}--\ref{Aziv} of Table~\ref{table}, but also provide building blocks for many of the more interesting entries. 

We begin with some general properties of planar \Cg s with at most 1 end.

\begin{theorem} \label{endtcon}
 Every finite or 1-ended cubic \Cg\ is \tcon.
\end{theorem}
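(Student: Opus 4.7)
The plan is to prove the contrapositive using the classifications of low-connectivity cubic planar \Cg s already collected in \Sr{lowk}. Suppose for contradiction that \g is a finite or 1-ended cubic \Cg\ with $\kappa(G)\le 2$; we will derive a contradiction in each of the subcases $\kappa(G)=1$ and $\kappa(G)=2$.

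First, I would rule out $\kappa(G)=1$. By \Lr{Liicon} together with the classification listed at the start of \Sr{lowk} as \ref{ici} and \ref{icii}, any such \g has presentation either $\langle a,b\mid b^2, a^n\rangle$ or $\langle b,c,d\mid b^2,c^2,d^2,(bc)^n\rangle$. In both cases the group is a free product of two non-trivial finite or cyclic factors, and the associated \Cg\ is plainly an infinite ``tree of blocks'', the blocks being the finite cycles arising from $a^n$ or $(bc)^n$, joined pairwise by single (bridge) edges. Hence \g is infinite with infinitely many ends, contradicting the assumption that \g is finite or 1-ended.

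Next, for $\kappa(G)=2$, \Tr{main2} forces \g into one of the nine families \ref{i}--\ref{ix}. For each entry I would verify two things uniformly: that the underlying group is infinite for every parameter in the specified range, and that \g has at least two ends. The most efficient unified witness is that each of these graphs was built in \cite{cay2con} as a tree amalgamation of finite or 1-ended pieces along a finite separating cycle $C$, so that $C$ itself furnishes an explicit 2-cut whose removal produces at least two sides each containing a ray of \G, certifying simultaneously that \g is infinite and has at least two ends. Alternatively, for most entries one reads off the presentation a non-trivial free-product splitting of the group, which by Stallings' theorem forces infinitely many ends; and the few remaining entries (for instance \ref{ii} at $n=1$, which is the $2$-ended ladder of $\Z\times\Z_2$) still have more than one end. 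In every case \g is neither finite nor 1-ended, contradicting the hypothesis.

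The main obstacle is the entry-by-entry end-count verification in the $\kappa(G)=2$ step, which I expect to be routine rather than genuinely hard: the whole point of the structural classification of \Sr{lowk} is that each graph comes equipped with an explicit separating cycle whose two sides are themselves obtained by further amalgamation of non-trivial pieces, so the existence of at least two ends is built into the construction. Combining the two cases gives $\kappa(G)\ge 3$, and since \g is cubic we trivially have $\kappa(G)\le 3$, so $\kappa(G)=3$ as required.
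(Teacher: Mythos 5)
Your proof is not the paper's proof, and as it stands it has genuine gaps. The paper does not derive \Tr{endtcon} from the classification at all: it simply cites classical results on vertex-transitive graphs (Watkins/Mader-type bounds giving $\kappa\geq\frac{2}{3}(d+1)$ for finite connected vertex-transitive graphs, hence $\kappa=3$ when $d=3$, and Babai's lemma for the infinite 1-ended case). Those results need neither planarity nor any structure theory. Your argument, by contrast, runs the logic backwards in two respects. First, \Tr{endtcon} carries no planarity hypothesis, but your $\kappa(G)=2$ step rests entirely on \Tr{main2}, which classifies only the \emph{planar} cubic \Cg s of connectivity 2; a non-planar cubic \Cg\ of connectivity 2 is simply not covered, and ruling such graphs out is not free. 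Second, there is a circularity risk: the classification you invoke (from \cite{cay2con}, and the finite/1-ended classification of \Sr{secfin} that interlocks with it) is itself built on top of the 3-connectivity of finite and 1-ended cubic \Cg s. A theorem whose role in the paper is to be a foundational input to the classification cannot safely be re-derived from that classification.

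There is also a concrete counterexample to your uniform claim that every entry of \Tr{main2} is infinite and multi-ended: entry \ref{ix}, $Cay\langle b,c,d\mid b^2,c^2,d^2,(bc)^n,cd\rangle$, has $c=d$ as group elements, so its group is the finite dihedral group of order $2n$ and the graph is a finite cycle with doubled edges --- finite, of connectivity 2. So ``in every case \g\ is neither finite nor 1-ended'' is false as stated; you would have to argue separately that such degenerate multigraphs are excluded by the intended reading of ``cubic'' (as the paper does for the analogous cases \ref{ov} and \ref{zv} of Theorems \ref{LG2} and \ref{LG5}). Your $\kappa(G)=1$ case is essentially fine (free products of non-trivial factors are multi-ended), but the theorem should be proved the way the paper does it: directly from vertex-transitivity, independently of, and prior to, the classification.
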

This is proved in \cite[Chapter 27, Theorem 3.7.]{BaAut}
for finite \g and in \cite[Lemma 2.4.]{BaGro} for infinite \G.

Combining this with \Lr{lprem} easily yields the following well-known fact.
\begin{lemma} \label{noinff}
 In an 1-ended plane \Cg\ all face-boundaries are finite.
\end{lemma}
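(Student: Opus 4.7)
The plan is to combine Theorem~\ref{endtcon} (which gives 3-connectedness of $G$) with Lemma~\ref{lprem} (which then yields a \pr\ embedding \sig), and then derive a contradiction from the assumption that some face has infinite boundary. Since \sig\ is \pr, every element of the group \Gam\ of $G$ permutes the faces of \sig, so the \Gam-orbit of any face consists of faces of the same combinatorial type (in particular, of the same boundary length).

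Suppose for contradiction that some face $F$ has infinite boundary. Because $G$ is 3-connected and cubic, the boundary walk of $F$ is an induced simple closed walk visiting each vertex at most twice; being connected and infinite in a locally finite graph, it contains a double ray $D$. The two tails of $D$ are rays in $G$, and since $G$ is 1-ended, they both converge to the unique end $\omega$. By \pr ness and vertex-transitivity, every vertex of $G$ lies on some face in the orbit $\Gam F$, so every vertex is incident with at least one infinite face. In particular, choosing a vertex $v$ sufficiently far (in graph distance) from $\partial F$, $v$ lies on an infinite face $F'\neq F$ whose boundary contains its own double ray $D'$, both tails of which again converge to $\omega$.

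The next step is to combine $D$ and $D'$ into a finite cycle whose induced cut separates the ends of the two double rays. Concretely, I would pick long finite subpaths $D_n\subseteq D$ and $D'_n\subseteq D'$, and connect their endpoints by two finite paths $P_1,P_2$ in $G$ to form a finite cycle $K = D_n \cup P_1 \cup D'_n \cup P_2$. Because $F$ and $F'$ are distinct faces, the planar embedding forces one tail of $D$ and one tail of $D'$ to lie in different sides $IN(K), OUT(K)$ of $K$ (this is where the topology of \sig\ is used). Applying Lemma~\ref{LemFinCut} to the bipartition of $V(G)$ determined by these two sides of $K$, one obtains a finite edge cut separating two disjoint rays both converging to $\omega$, which is impossible for a 1-ended graph.

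The main obstacle is the middle paragraph's topological claim: verifying rigorously that the cycle $K$, together with the planar embedding, really does place an infinite tail of $D$ in one side and an infinite tail of $D'$ in the other. This requires carefully choosing the routing of $P_1,P_2$ so as to stay outside the two infinite faces $F,F'$, and then reading off from \sig\ that the two rays exit $K$ into opposite sides. Once this is in place, Lemma~\ref{LemFinCut}(ii) converts the planar separation into the required separation in $G$, giving the desired contradiction with 1-endedness.
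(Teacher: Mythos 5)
Your setup is the right one and matches what the paper intends: \Tr{endtcon} gives 3-connectedness, \Lr{lprem} and \Tr{imrcb} give that automorphisms carry faces to faces, hence every vertex is incident with an infinite face as soon as one is; the contradiction is then to come from a finite vertex set separating two infinite pieces of $G$. The gap sits exactly at the step you flag as the ``main obstacle'', and it is not a matter of careful routing --- the claim is simply not forced by your construction. A face is a connected open set disjoint from the point set of $G$, so each of $F$ and $F'$ lies entirely in one side of any cycle $K$ of $G$; a tail of $D$ (resp.\ $D'$) avoiding $V(K)$ therefore lies in the side containing $F$ (resp.\ $F'$), and your cut separates the two tails only if $F$ and $F'$ lie in \emph{opposite} sides of $K$. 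Which side contains $F$ is decided by the local position of $F$ along the arc $D_n\subseteq K$, and nothing prevents it from coinciding with the side containing $F'$: for instance, if $F$ and $F'$ shared a boundary edge not lying on $K$ they would necessarily be in the same side, and more generally both faces could simply sit ``outside'' $K$. Choosing $P_1,P_2$ ``outside $F,F'$'' cannot help, since paths in $G$ are automatically disjoint from open faces. (A minor further point: the cut lemma you cite is never actually stated in this paper, but once the separation is in place one can simply use $V(K)$ and the definition of ends.)

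The repair is the local construction the paper itself uses to prove \eqref{pscex}. Take a facial walk $v\,e\,x\,e'\,w$ of the infinite face $F$ and let $f=xx'$ be the third edge at $x$; by 3-connectedness there is a \pth{w}{v}\ $P$ avoiding both $x$ and $x'$, and $C:=vxwPv$ is a cycle. Now the two infinite faces are on opposite sides of $C$ \emph{by construction}: at $x$ the curve $C$ consists locally of $e\cup e'$, with $F$ in the angular sector between them and $f$ in the complementary sector, so $F$ lies in one side of $C$ while $x'$, and hence any infinite face $F''$ incident with $x'$ (one exists by transitivity), lies in the other. Each side then contains all but finitely many vertices of the infinite boundary $\partial F$ resp.\ $\partial F''$, so the finite set $V(C)$ leaves two infinite components, contradicting 1-endedness. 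The point is that $C$ passes through $x$ \emph{between} the two infinite faces, which is precisely the property your cycle $K$ is missing.
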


Conversely, we have 

\begin{lemma}[\citVapfL] \label{LG3}
 A plane \iicon\ graph with no \psc\ and no infinite face-boundary is either finite or 1-ended.
\end{lemma}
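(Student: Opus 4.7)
The plan is to argue by contradiction: assume $G$ is infinite with at least two ends and derive the existence of a \psc, contradicting the hypothesis. First I would produce a finite edge cut with both sides infinite. Since $G$ has at least two ends, there is a finite vertex set $S$ such that $G - S$ has two infinite components $K_1, K_2$; setting $X := V(K_1)$ and $Y := V(G)\setminus X$ yields infinite $X$ and $Y$ with $F := E(X,Y) \subseteq E(X,S)$. The hypotheses imply that $G$ is locally finite, so $F$ is finite; I would then pick such an $F$ of minimum cardinality. A standard minimality argument shows that both $G[X]$ and $G[Y]$ are connected: a finite component of $G[X]$ would have no edges to $Y$ and get cut off by a single vertex, violating 2-connectedness, while two infinite components inside $G[X]$ would immediately contradict the minimality of $|F|$. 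Thus $F$ is a finite bond separating two infinite connected subgraphs.

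Next I would use the finite face-boundaries around $F$ to build a \psc. Let $f_1,\ldots,f_m$ be the (finitely many) faces of $G$ incident with an edge of $F$; by hypothesis each has a finite boundary cycle $C_{f_i}$, and in the plane they arrange themselves cyclically around the bond (equivalently, the dual edges of $F$ form a cycle in the dual graph). Removing the $F$-edges from each $C_{f_i}$ leaves a disjoint union of arcs, each lying entirely in $G[X]$ or entirely in $G[Y]$, since they use no edge of the cut $F$. Concatenating the $X$-arcs in the natural cyclic order inherited from $f_1,\ldots,f_m$ yields a closed walk $W_X$ in $G[X]$ that, in the planar embedding, runs along the $X$-side of the bond and separates $F \cup Y$ from $X \setminus V(W_X)$. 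Since $W_X$ contains edges, it contains a cycle $C$; choosing $C$ so that one of its sides contains $Y$ makes that side infinite, and the other side contains $X \setminus V(W_X)$, which is infinite because $V(W_X)$ is finite while $X$ is infinite. Hence $C$ is a \psc, the desired contradiction.

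The main obstacle is this last step: verifying that one can actually extract, from the closed walk $W_X$ running along the bond, a cycle $C$ whose two sides both contain infinitely many vertices. The walk $W_X$ may revisit vertices, and in the general case (where a single face may be incident with more than two edges of $F$) one has to describe carefully how the $X$-arcs are strung together so that the cyclic concatenation makes topological sense. Once this is done, planarity of the embedding together with the hypothesis that every face-boundary is finite guarantees that the extracted cycle separates the plane into two regions each containing infinitely many vertices, completing the proof.
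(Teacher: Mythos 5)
The paper offers no proof of this lemma at all --- it is quoted verbatim from \citVapfL\ --- so there is nothing in-paper to compare you against; what follows judges your argument on its own terms. Your skeleton (reduce to a finite bond $F=E(X,Y)$ with both sides infinite and connected, then use the finite face-boundaries around $F$ to manufacture a \psc) is the right one, and the reduction to a bond is essentially fine, apart from one slip: a finite component $X_0$ of $G[X]$ certainly \emph{does} send edges to $Y$ (as $G$ is connected), so it is not ``cut off by a single vertex''; the correct move is to shift $X_0$ across the partition, which strictly shrinks the cut because $E(X_0,Y)\neq\emptyset$ while $E(X_0,X\setminus X_0)=\emptyset$.

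The genuine gap is the one you flag yourself, and it is not a routine verification --- it is where the entire content of the lemma lives. Two specific problems. First, the claim that the faces meeting $F$ arrange themselves cyclically, each incident with exactly two edges of $F$ (``the dual edges of $F$ form a cycle''), is imported from finite planar duality and is not free here: what comes cheaply is only that each (finite, hence cyclic) face boundary meets the cut $F$ in an \emph{even} number of edges. To get exactly two per face one must show that any subset of $F$ meeting every face boundary evenly is itself a cut, so that minimality of the bond forces a single dual circuit; the natural proof of that uses the no-\psc\ hypothesis (every cycle then has a finite side and is the sum of the face boundaries inside it, so face boundaries generate $\mathcal C_f(G)$). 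Second, ``Since $W_X$ contains edges, it contains a cycle $C$; choosing $C$ so that one of its sides contains $Y$\dots'' does not work as stated: distinct faces in your list may share boundary edges inside $X$, so the closed walk $W_X$ may traverse edges twice --- its odd part can in principle vanish, a closed walk with edges need not contain any cycle, and even a cycle lying in $E(W_X)$ has no a priori reason to have $Y$ on one side and infinitely much of $X$ on the other. A workable repair: let $B:=\bigcup_i\partial f_i$, a finite connected plane subgraph; show that the infinite components $X'$ of $G[X]-V(B)$ and $Y'$ of $G[Y]-V(B)$ lie in \emph{distinct} faces of $B$ (using the Jordan curve that runs through $f_1,\dots,f_m$ and crosses each edge of $F$ once); and then prove that any two distinct faces of a finite plane graph are separated by a cycle of that graph (a crossing-parity argument: a dual cut separating the two faces is an element of the cycle space of $B$, hence a disjoint union of cycles, one of which must be crossed an odd number of times by any arc joining the two faces). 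That cycle is a \psc\ of $G$, giving the contradiction. As it stands, the proposal is a correct plan with the decisive step missing.
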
 

Our last lemma is

\begin{lemma} \label{facecc}
Let $G= Cay \left< S \mid \mathcal R \right>$ be finite or 1-ended and planar \note{generalisable to a graph with a \vapf\ embedding}, and let $\mathcal R'$ be a set of relations of $\Gamma(G)$ such that every face boundary of \g is induced by some relation in $\mathcal R'$. Then $\Gamma(G) \isom \left< S \mid \mathcal R' \right>$.
\end{lemma}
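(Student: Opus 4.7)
The plan is to reduce the statement to a cycle-space generation claim via \Lr{relcc}. Indeed, by the converse part of that lemma, to prove that $\left<S\mid \mathcal R'\right>$ presents $\Gamma(G)$ it is enough to show that the circuits of the cycles of $G$ induced by the words in $\mathcal R'$ generate the finitary cycle space $\cc_f(G)$. Since every face-boundary of $G$ is induced by some element of $\mathcal R'$, the claim will follow as soon as we show that the set of face-boundaries alone already generates $\cc_f(G)$.

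To verify this last assertion, first fix a planar embedding $\sigma$ of $G$, which in the 1-ended case we may take to be \vapf\ (a fact established earlier for finite and 1-ended planar cubic \Cg s). By \Lr{noinff} combined with the finite case, every face of $\sigma$ has a finite boundary that is a cycle of $G$, so face-boundaries are genuine elements of $\cc_f(G)$. Now let $D$ be an arbitrary circuit of $G$; we must express $D$ as a finite sum of face-boundaries. Since $D$ corresponds to a finite cycle $C$ of $G$, the Jordan curve theorem produces two sides $IN(C),OUT(C)$ of $C$; using finiteness of $G$ in the finite case, or 1-endedness together with the \vapf ness of $\sigma$ in the 1-ended case, at least one of these sides meets $\sigma(G)$ in only finitely many vertices, and therefore contains only finitely many faces of $\sigma$. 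Denote this collection of faces by $\mathcal F_C$.

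Next, take the symmetric difference $D':=\sum_{F\in \mathcal F_C}\partial F$ in $\cc_f(G)$, where $\partial F$ is the boundary circuit of $F$. Every edge of $G$ whose $\sigma$-image lies in the interior of the chosen side is incident with exactly two faces of $\mathcal F_C$ and hence contributes $0$ to $D'$, while every edge of $C$ is incident with exactly one face of $\mathcal F_C$ and therefore contributes to $D'$; no other edge contributes. Thus $D'=D$, so face-boundaries generate $\cc_f(G)$. Combining this with the first paragraph yields $\Gamma(G)\cong\left<S\mid \mathcal R'\right>$.

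The only non-routine ingredient is the topological argument that every finite cycle of $G$ has a side containing only finitely many faces, which is why the 1-ended hypothesis (or the \vapf\ property noted in the statement) is essential; without it one could in principle have a cycle both of whose sides contain infinitely many vertices, which is precisely the situation the paper analyses separately via \mpsc s in \Sr{secPsc}. Once this topological fact is in hand, the rest of the proof is a short cycle-space computation together with \Lr{relcc}.
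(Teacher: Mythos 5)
Your proposal is correct and follows essentially the same route as the paper: reduce via \Lr{relcc} to showing that face-boundaries generate $\cc_f(G)$, use the at-most-1-ended hypothesis to find a side of an arbitrary cycle containing only finitely many vertices (hence finitely many faces), and observe that the sum of the boundaries of those faces equals the cycle. The paper's proof is just a terser statement of exactly this argument.
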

\begin{proof}
It suffices to show that the edge-set of every cycle $C$ of \g is a sum of edge-sets of finite face-boundaries. This is indeed the case, for as \g is at most 1-ended, there must be a side $A$ of $C$ containing only finitely many vertices, and so $E(C)$ is the sum of the edge-sets of the  face-boundaries lying in $A$. 
\end{proof}

We can now proceed with the main results of this section.


\begin{theorem} \label{LG2}
 Let $G= Cay\left<a,b\mid b^2, \ldots \right>$  be planar and finite or 1-ended. Then precisely one of the following is the case:
\begin{enumerate}\addtolength{\itemsep}{-0.5\baselineskip}
\item \label{oi} $G \isom Cay \left<a,b\mid b^2, a^n, (ab)^m\right>$, $n\geq 3$, $m\geq 2$ (and $a,b$ preserve spin);
\item \label{oii} $G \isom Cay \left<a,b\mid b^2, a^n, (aba^{-1}b)^m\right>$, $n\geq 3, m\geq 1$ (and only $a$ preserves spin);
\item \label{oiii} $G \isom Cay \left<a,b\mid b^2, (a^2b)^m\right>$, $m\geq 2$ (and only $b$ preserves spin);
\item \label{oiv} $G \isom Cay \left<a,b\mid b^2, (a^2ba^{-2}b)^m\right>$, $m\geq 1$ (and $a,b$ reverse spin);
\item \label{ov} $G \isom Cay \left<a,b\mid b^2, a^2, (ab)^n\right>$, $n\geq 2$ or $G \isom Cay \left<a,b\mid b^2, ab\right>$ (degenerate cases in which \g is not cubic).
\end{enumerate}
All presentations above are planar.

Conversely, each of the presentations \ref{oi}--\ref{oiv}, with parameters chosen in the specified domains, yields a planar, finite or 1-ended, non-trivial cubic \Cg.
\end{theorem}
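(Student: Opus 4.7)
By \Tr{endtcon}, every finite or 1-ended cubic planar \Cg\ is \tcon, so by \Lr{lprem} the essentially unique planar embedding is \pr; that is, each of $a$ and $b$ either preserves spin at every vertex or reverses it at every vertex. Moreover, \Lr{noinff} ensures that in the 1-ended case (and trivially in the finite case) every face-boundary is a finite cycle. If $a$ has order at most $2$, then $a$ is itself an involution and \g is not cubic under the usual convention, placing us in case~\ref{ov}. We may therefore assume $a$ has order at least $3$, and the four combinations of spin behavior of $a$ and $b$ yield the four main cases.

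The plan is to determine, in each of the four cases, the face-types incident with a reference vertex by tracing facial walks using the standard rule: at each vertex, continue along the edge immediately following (clockwise, say) the reverse of the arrival edge in the local cyclic spin. Since the spin at every vertex is forced by the spin behavior of the generators along any path to it from the reference vertex, these walks are fully determined. In the both-preserving case one obtains a pure-$a$ face of size $n=\mathrm{ord}(a)$ together with an alternating face yielding $(ab)^m$. In the only-$a$-preserving case the pure-$a$ face survives but the other becomes $(aba^{-1}b)^m$. In the only-$b$-preserving case no pure-$a$ face exists and all faces have type $(a^2b)^m$. In the both-reversing case all faces have type $(a^2ba^{-2}b)^m$. \Lr{facecc} then upgrades this set of face-boundary relations, together with $b^2$, to a full presentation of $\Gamma(G)$, giving \ref{oi}--\ref{oiv}.

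The parameter ranges listed are precisely those required to avoid degeneracies: $n\geq 3$ is the standing assumption on $\mathrm{ord}(a)$; $m\geq 2$ in \ref{oi} excludes $ab=1$ (which would identify $b$ with $a^{-1}$), and $m\geq 2$ in \ref{oiii} excludes $a^2b=1$ (which would force $b=a^{-2}$ and land us in case~\ref{ov}); analogous checks rule out the other small values. Planarity of each presentation in the sense of \Sr{intAP} is immediate from the identification of relators with face-boundaries.

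For the converse, given any presentation \ref{oi}--\ref{oiv} with parameters in the stated range, one checks by direct enumeration at a vertex that each edge of the associated \Cg\ lies in at most two circuits induced by the non-involution relators, so \Tr{Macay} produces a \vapf\ planar embedding whose facial cycles are exactly those induced by the relators. Since \vapf ness forces every cycle to bound a finite region on one side, no \psc\ exists; together with the finiteness of face-boundaries and 2-connectivity (given by \Lr{Liicon}, since $a$ lies in a cyclic relation), \Lr{LG3} yields that the graph is finite or 1-ended. The main obstacle is the facial-walk analysis in cases~\ref{oiii} and~\ref{oiv}, where no pure-$a$ face exists and the walk alternates between spin-preserving and spin-reversing steps; one must verify that the walk actually closes up exactly on the advertised pattern and that every face at every vertex has this common type.
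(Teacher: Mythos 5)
Your forward implication follows the paper's own argument essentially verbatim: reduce to the \tcon\ case via \Tr{endtcon}, get a \prem\ from \Lr{lprem}, trace the facial walks determined by the spin of $a$ and $b$, use \Lr{noinff} to see that they close into finite face-bounding cycles, use \Tr{imrcb} to see that every face boundary is a translate of one found at the reference vertex, and finish with \Lr{facecc}. No issues there.

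Your converse, however, takes a genuinely different route from the paper. The paper proves the converse of \Tr{LG2} by explicit construction: it builds the graph of type \ref{oi} from the regular tiling with $n$ $m$-gons per vertex by blowing up vertices into $a$-cycles, verifies via Sabidussi's \Tr{sab} that the result is a \Cg, and then derives types \ref{oii}--\ref{oiv} from it by reversing orientations or contracting the other colour class; the presentation is then recovered by applying the already-proved forward implication. You instead invoke \Tr{Macay} to get a \vapf\ embedding with the prescribed facial cycles and then apply \Lr{LG3} --- which is exactly the method the paper reserves for the three-generator analogue \Tr{LG5}. This route is legitimate and arguably cleaner, but it has one genuine gap as written: \Tr{Macay} requires the presentation to be \red\ (simple), i.e.\ that each relator actually induces a \emph{cycle}, which here amounts to showing that $a$ really has order $n$ and $ab$ (resp.\ $aba^{-1}b$, $a^2b$, $a^2ba^{-2}b$) really has order $m$ in the presented group. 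You never verify this, and your edge-circuit count presupposes it. This is not a formality: in the paper's own use of this method for \Tr{LG5}, checking simplicity is where most of the work goes (via passing to suitable subgroups and reducing to known cases). Either supply that verification for the four presentations, or fall back on the paper's explicit tiling constructions, which sidestep simplicity entirely by building the embedding first and reading the presentation off the forward implication.
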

\begin{proof}
For the forward implication, let $G= Cay\left<a,b\mid b^2, \ldots \right>$ be planar, with at most one end. \g is \tcon\  by \Tr{endtcon}, unless $a$ is an involution too in which case we have one of the degenerate cases of \ref{ov}. In all non-degenerate cases, the (essentially unique) embedding is consistent \wrt\ spin by \Lr{lprem}. Thus we have precisely one of the following cases.  

{\bf Case \ref{oi}:} both $a,b$ preserve spin. 

Since $a$ preserves spin the walk $P:=\zero, a, a^2, a^3, \ldots$ is facial. By \Lr{noinff} $P$ cannot be a ray, so it spans a finite cycle $C$ of length $n$ say. Similarly, the walk $Q:=\zero, a, ab, aba, abab, \ldots$ is facial because $b$ also preserves spin, and by the above argument it must span a finite cycle $C'$ with edges alternating between $a$ and $b$. Note that both $C, C'$ are face boundaries. Moreover, by \Tr{imrcb} every face-boundary of $G$ is a translate of one of $C, C'$. By \Lr{facecc} this means that the relations $a^n, (ab)^m$, inducing $C$ and $C'$ respectively, combined with $b^2$ yield a presentation of $\Gam(G)$. Thus
$G \isom Cay \left<a,b\mid b^2, a^n, (ab)^m\right>$ with $m\geq 2$ as claimed. 

{\bf Case \ref{oii}:} $a$ preserves spin and $b$ reverses spin.

As in Case \ref{oi} we conclude that \g has finite $a$-coloured faces, induced by a relation $a^n$. As $b$ reverses spin now, a walk $Q$ as above is not facial any more, and instead $ \zero, a, ab, aba^{-1}, aba^{-1}b, \ldots$ is facial. By similar arguments we obtain the desired presentation $G \isom Cay \left<a,b\mid b^2, a^n, (aba^{-1}b)^m\right>$, $n\geq 3, m\geq 1$.

{\bf Case \ref{oiii}:} $b$ preserves spin and $a$ reverses spin.

The walk $ \zero, a, aa, aab, aaba,  \ldots$ is now facial, and spans a finite cycle $C$ of length $3m$ for some $m \geq 2$. It is also straightforward to check that every facial path of \g is of that form, in other words, every face boundary is a translate of $C$. Again by \Lr{facecc} we obtain the desired presentation $G \isom Cay \left<a,b\mid b^2, (a^2b)^m\right>$, $m\geq 1$.

{\bf Case \ref{oiv}:} both $a,b$ reverse spin. 

This case is similar to the previous one, except that the $a$ edges on a facial walk do not all have the same direction now, but instead their directions alternate after each $b$ edge. We thus obtain $G \isom Cay \left<a,b\mid b^2, (a^2ba^{-2}b)^m\right>$, $m\geq 1$.

\medskip

For the converse assertion, given any presentation of  the form \ref{oi}, let us show that the \Cg\ \g is planar. We begin with an auxiliary plane graph $H$, namely, the graph of the regular tiling of the sphere, euclidean plane, or hyperbolic plane with $n$ $m$-gons meeting at every vertex. The existence of $H$ is well-known and not hard to prove \cite{MitCon}. Note that $H$ is a vertex-transitive graph: for any two vertices $x,y\in V(H)$ it is straightforward to inductively construct an isomorphism between the balls of radius $r$ around these vertices. To obtain \g from $H$, replace every vertex $x$ of $H$ with a cycle $C_x$ of length $n$, and join each edge incident with $x$ to a distinct vertex of $C_x$, keeping the cyclic ordering, so that the graph remains planar. Then, assign to each edge of $C_x$ the label $a$, and direct it in such a way that $C_x$ is oriented clockwise. Moreover, assign to every other edge, coming from an edge of $H$, the label $b$. Let \g be the resulting coloured graph. We claim that \g is the \Cg\ corresponding to presentation \ref{oi}. Indeed, the fact that \g is a \Cg\ follows easily from Sabidussi's \Tr{sab} and the fact that $H$ was vertex-transitive. The fact that \g has the desired presentation now follows from the forward implication which we have already proved since, by construction, all edges of \g preserve spin. 

Given any presentation of  the form \ref{oii}, it is now easy to construct an embedding of the corresponding \Cg\ \G: we can start with a graph $G'$ of type \ref{oi} with parameters $n, m':=2m$, and then reverse the orientation of `every other' $a$-cycle to obtain $G$. More precisely, let $G' \isom Cay \left<a,b\mid b^2, a^n, (ab)^{2m}\right>$, and define a bipartition $\{X,Y\}$ of the $a$-cycles of $G'$ by letting $X$ (resp.\ $Y$) be the set of those $a$-cycles that can be reached from a fixed vertex $o\in V(G)$ by a path containing an even (resp.\ odd) number of $b$-labelled edges. To see that this is indeed a bipartition, note that every relation in the group of  $\left<a,b\mid b^2, a^n, (ab)^{2m}\right>$ contains an even number of appearances of the letter $b$. Note moreover, that any two $a$-cycles that are connected by a $b$ edge lie in distinct classes of this bipartition. Thus, if we reverse the orientation of every $a$-cycle lying in $X$ we obtain a plane  graph \g in which only the $a$  edges preserve spin. Again, we can check that \g is a \Cg\ using Sabidussi's theorem, and we apply the forward implication to show that \g has the presentation \ref{oii}.

We handle case \ref{oiii} similarly to case \ref{oi}, except that now the auxiliary graph $H$ is obtained by contracting the $b$-edges instead of the $a$-labelled ones. To achieve the desired orientation, replace every $b$-edge by a $2$-cycle, and orient all these cycles clockwise; then make sure that for every face boundary of type $(a^2b)^m$ all edges are oriented in the same direction. 

A graph of type \ref{oiv} can be obtained by one of type \ref{oiii} by reversing the orientation of every other $a$-path, similarly to the above reduction of type \ref{oii} to type \ref{oi}.

Finally, the \Cg s of type \ref{ov} are finite and easy to construct.

It follows easily from our construction that all our presentations are planar.
\end{proof}


We now proceed with the case where \g is generated by three generators.

\begin{theorem} \label{LG5}
Let $G= Cay\left<b,c,d\mid b^2, c^2, d^2, \ldots \right>$ be planar and finite or 1-ended. Then precisely one of the following is the case:
\begin{enumerate}\addtolength{\itemsep}{-0.5\baselineskip}
\item \label{zi} $G \isom Cay\left<b,c,d\mid b^2, c^2, d^2, (bcd)^n \right>$, $n\geq 1$ (all colours preserve spin);
\item \label{zii} $G \isom Cay\left<b,c,d\mid b^2, c^2, d^2, (cbcdbd)^n \right>$, $n\geq 1$ (only $c,d$  preserve spin);
\item \label{ziii} $G \isom Cay\left<b,c,d\mid b^2, c^2, d^2, (bc)^n, (bdcd)^m\right>$, $n\geq 2, m\geq 1$ (only $d$  preserves spin);
\item \label{ziv} $G \isom Cay\left<b,c,d\mid b^2, c^2, d^2, (bc)^n, (cd)^m, (db)^p \right>$, $n,m,p \geq 2$ (all colours reverse spin);
\item \label{zv} $G \isom Cay\left<b,c,d\mid b^2, c^2, d^2, (bc)^n, cd\right>$, $n\geq 1$ (degenerate, non-\tcon\ cases with redundant generators).
\end{enumerate}
All presentations above are planar.

Conversely, each of the above presentations, with parameters chosen in the specified domains, yields a planar, finite or 1-ended, non-trivial cubic \Cg.
\end{theorem}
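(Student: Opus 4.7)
The plan is to mirror the proof of \Tr{LG2} essentially line by line. First, if two of the generators coincide as elements of the group then $G$ is not cubic under our convention and we land in the degenerate case \ref{zv}; otherwise \g is \tcon\ by \Tr{endtcon}, and so by \Lr{lprem} its (essentially unique) embedding is \pr. Since \g is cubic with three involution generators $b,c,d$, each vertex has a spin in one of two cyclic orders on $\{b,c,d\}$, and each colour is either spin-preserving or spin-reversing. This gives four sub-cases, according to whether three, two, one, or zero of $b,c,d$ preserve spin; after relabelling these correspond to \ref{zi}--\ref{ziv} respectively.

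In each sub-case I would start at $\zero$, fix a side, and trace the facial walk $W$ bounding the corresponding face, using the rule that after entering a vertex $v$ along an edge coloured $s$, the next edge of $W$ is dictated by the cyclic order at $v$ (which is read off from the spin at $v$, known from the spin behaviour of the colour just used). By \Lr{noinff} each such walk closes up into a finite face-bounding cycle. In case \ref{zi} all vertices have the same spin, so $W$ just repeats the letters $b,c,d$ in cyclic order, yielding a face induced by $(bcd)^n$. In case \ref{zii} the reversal along $b$ alternates the local cyclic order at every other vertex, and a short computation shows $W$ spells $(cbcdbd)^n$. In case \ref{ziii}, $b$ and $c$ both reverse spin while $d$ preserves it, so one obtains two distinct orbits of facial walks, spelt $(bc)^n$ and $(bdcd)^m$. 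In case \ref{ziv} every colour reverses spin, which forces each facial walk to use only two colours (a third colour would require a spin-preserving edge to return), giving three families $(bc)^n,(cd)^m,(db)^p$. By \Tr{imrcb} every face-boundary is a translate of one of these walks, and \Lr{facecc} then yields the claimed presentation as its face relators together with $b^2,c^2,d^2$.

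For the converse, for each of \ref{zi}--\ref{ziv} I would construct $G$ from an auxiliary vertex-transitive tiling $H$ of the sphere, euclidean, or hyperbolic plane, replacing each vertex of $H$ with a triangle whose edges are labelled $b,c,d$ and gluing edges of $H$ to match the prescribed facial cycles. For \ref{zi} the tiling $H$ is the one with three $2n$-gons meeting at every vertex; for \ref{ziii} and \ref{ziv} one takes the tilings whose faces correspond to the two or three face-families respectively. Cases \ref{zii} and \ref{ziii} further require reversing orientation of `every other' triangle in a bipartition determined by parity of $b$-edges crossed (exactly as in the reduction of \ref{oii} to \ref{oi} in the proof of \Tr{LG2}); this is well-defined because every relator in the target group contains an even number of the spin-reversing letters. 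Sabidussi's \Tr{sab} then identifies the labelled graph with a \Cg, and the forward implication already proved identifies its presentation. Planarity of the presentations is immediate from the construction, as each relator corresponds to a face-bounding cycle at $\zero$ and no edge lies on more than two such cycles.

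The main obstacle is the bookkeeping in the mixed-spin cases \ref{zii} and \ref{ziii}: one has to verify that the facial walk computed at $\zero$ is not just some cyclic conjugate or mirror of the claimed relator, and, crucially, that \emph{every} face is a translate of the one(s) found at $\zero$, rather than there being a second orbit of faces hidden by the spin behaviour. Both points rely on the \pr ness guaranteed by \Lr{lprem} together with \Tr{imrcb}, so once the forward step is set up carefully the conclusion follows by the same mechanism used in \Tr{LG2}.
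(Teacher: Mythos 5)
Your forward implication is essentially the paper's own argument: \Tr{endtcon} plus \Lr{lprem} to get a \prem, a four-way case split on which colours preserve spin, tracing facial walks, and then \Tr{imrcb} together with \Lr{facecc} to extract the presentation. That part is fine.

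The converse is where there is a genuine gap. You propose to transplant the explicit constructions from \Tr{LG2}, but the two gadgets you rely on do not survive the passage to three involution generators. First, ``replace every vertex of $H$ with a triangle labelled $b,c,d$'' produces a face bounded by a $bcd$-triangle at every former vertex, i.e.\ a relator $(bcd)^1$; none of the presentations \ref{zi}--\ref{ziv} has such a face except the degenerate $n=1$ instance of \ref{zi}, so the gadget builds the wrong graph. (In \Tr{LG2} the vertex-expansion worked because the expanded cycle $C_x$ \emph{was} the monochromatic face $a^n$; here there is no monochromatic relator to expand into.) Second, ``reversing the orientation of every other triangle'' has no meaning here: all three generators are involutions, drawn as single undirected edges, so there is no orientation to reverse. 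The analogous move in the paper is not an orientation flip but a \emph{label swap}: case \ref{ziii} is obtained from a graph of type \ref{ziv} by exchanging the labels $b$ and $c$ on the edges incident with the index-2 subgroup of even-length words, which is what turns $d$ from spin-reversing into spin-preserving. Finally, even if one found the correct tiling and gadgets, the nontrivial point is the existence of a consistent $3$-edge-colouring of the tiling realising the prescribed spin behaviour at every vertex; this is exactly what your sketch asserts without argument. The paper deliberately avoids all of this by taking a different route for the converse: it verifies that each presentation is \red\ (via subgroup arguments, e.g.\ passing to the even subgroup $F\isom\left<a',b'\mid (a')^n,(b')^m,(a'b')^q\right>$ for \ref{ziv}), applies the MacLane-type criterion \Tr{Macay} to obtain a \vapf\ planar embedding whose faces are induced by the relators, and then invokes \Lr{LG3} to conclude finiteness or 1-endedness. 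If you want to keep an explicit construction, you would need to replace your gadgets by the paper's label-swap argument and supply the edge-colouring of the auxiliary tiling; otherwise the \Tr{Macay} route is the cleaner repair.
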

\begin{proof}

By the arguments of \Tr{LG2} \g uniquely embeds in the sphere, unless we are in the degenerate case \ref{zv}, and so the colours behave consistently \wrt\ spin. We thus have the following non-degenerate cases. Recall that by \Lr{noinff} all face boundaries of \g are finite; we are going to tacitly make use of this fact in all cases. 

{\bf Case \ref{zi}:} all colours preserve spin. 

Consider the path $\zero, b, bc$, which is facial for some face boundary $F$ since it only has two edges. Since $c$ preserves spin, the next edge on $F$ must be coloured $d$. Similarly, the edge after that must be coloured $b$ since $d$ preserves spin. Continuing like that we conclude that the edges of $F$ follow the pattern $bcdbcdb\ldots$, in other words, $F$ can be induced by the relation $(bcd)^n$. Since all edges preserve spin, any two faces of \g that share an edge $e=xy$ can be mapped to each other by the automorphism of \g exchanging $x$ with $y$, where we are using the fact that all edges correspond to involutions. Thus all face boundaries of \g have the same form as $F$. Similarly to \Tr{LG2}, we can now apply  \Lr{facecc} to conclude that $G \isom Cay\left<b,c,d\mid b^2, c^2, d^2, (bcd)^n \right>$, $n\geq 1$.

{\bf Case \ref{zii}:} precisely two colours, $c ,d$ say, preserve spin.

In this case, at every vertex of \g the situation looks locally like \fig{fzii}, and it is straightforward to check that every face boundary is of the form $(cbcdbd)^n$. Indeed, as $c ,d$ preserve spin, every facial walk of the form $cd$ is a subwalk of a facial walk of the form $bcdb$. Moreover, as $b$ reverses spin, every facial walk of the form $cb$ is a subwalk of a facial walk of the form $cbc$ and every facial walk of the form $db$ is a subwalk of a facial walk of the form $dbd$. Furthermore, there is no facial walk of the form $bcb$ or $bdb$. Combining these facts one obtains that every face boundary is indeed of the form $(cbcdbd)^n$. Thus \Lr{facecc} yields $G \isom Cay\left<b,c,d\mid b^2, c^2, d^2, (cbcdbd)^n \right>$, $n\geq 1$.

\showFig{fzii}{The local situation around any vertex in case \ref{zii}.}

{\bf Case \ref{ziii}:} precisely one colour, $d$ say, preserves spin.

Then a walk alternating in $b,c$ is facial, and so \g has two coloured faces, induced by a relation $(bc)^n$ for some $n\geq 2$. It is straightforward to check, by observing the spin behaviour, that  every facial walk containing a $d$-edge is of the form $bdcdbd\ldots$. Thus, by \Lr{facecc} again, $G \isom Cay\left<b,c,d\mid b^2, c^2, d^2, (bc)^n, (bdcd)^m\right>$, $n\geq 2, m\geq 1$.

{\bf Case \ref{ziv}:} all colours reverse spin. 

Similarly to Case \ref{ziii} it follows that any two-coloured walk is facial, and so at every vertex we have three different kinds of incident face boundaries, each a two-coloured cycle. By \Lr{facecc} we obtain $G \isom Cay\left<b,c,d\mid b^2, c^2, d^2, (bc)^n, (cd)^m, (db)^p \right>$, $n,m,p \geq 2$.

\medskip

The converse assertion can again be proved using explicit constructions similarly to what we did in the proof of \Tr{LG2}, but these constructions become more complicated now and so we will follow a different, and interesting, approach that makes use of \Tr{Macay}. Given any presentation of one of these four forms, we will check that the presentation is simple. Moreover, it \istc\ that each of these presentations satisfies the conditions of \Tr{Macay}, with every edge of \g appearing in precisely two  circuits  induced by the specified relators. Thus, by that theorem, the corresponding \Cg\ is planar, and has a \vapf\ embedding \sig\ in which the finite face boundaries are induced by those relators, and each edge lies in precisely two such boundaries. This means that all face boundaries in \sig\ are finite. Moreover, \sig\ has no \psc s because it is \vapf. Thus we can apply \Lr{LG3} which yields that \g is 1-ended if it is infinite.

It only remains to check that our presentations are simple indeed. To see that \ref{zi} is simple, note that even if we set $d=1$ we can prove no subword of $(bc)^n$ to be a relation since  $(bc)^n$ is simple in the subgroup generated by $b,c$. Similarly, consider the subgroups of \ref{zii} generated by $b,d$ and $b,c$ to see that the presentation is simple. To see that \ref{ziv} is simple, consider the subgroup $F$ consisting of all elements that can be presented by a word of even length in the letters $b,c,d$; this is indeed a subgroup since every relation of the original group contains an even number of letters. Note that $F$ is generated by $bc,cd,db$, and in fact one of these generators is redundant. Thus $F \isom \left< a',b' \mid (a')^n, (b')^m, (a'b')^q\right>$, where $a':=bc, b':=cd$.
Note that if we impose $b'^2=1$ then this presentation reduces to \ref{oi} of \Tr{LG2}, and we implicitly checked there that $(a')^n$ is simple in that group. Thus $(a')^n$ must be simple in $F$, which means that $(bc)^n$ is simple in our original group \Gam\ as the element $bc$ cannot have a smaller order in \Gam\ than in a subgroup of \Gam. By symmetry, all relations in \ref{ziv} are simple. Instead of showing that presentation \ref{ziii} is also simple, let us rather explicitly construct the desired \Cg\ from one of type \ref{ziv}. For this, let $G'= Cay\left<b,c,d\mid b^2, c^2, d^2, (bc)^n, (cd)^{2m}, (db)^{2m} \right>$. We have already proved that $G'$ has an embedding with all edges reversing spin, and we have observed  that the set of elements that can be presented by a word of even length forms a subgroup $F$, which of course has index 2. Note that any two adjacent vertices of $G'$ lie in distinct left cosets of $F$. Now let $G$ be the graph obtained from $G'$ by exchanging, \fe\ $f\in F$, the labels of the edges incident with $f$ labelled $b$ and $c$. Note that every vertex of $G$ is still incident with all three labels by the above observation. It is easy to prove that \g is a \Cg\ using Sabidussi's theorem. Moreover, the $d$-edges now preserve spin. Thus, by the forward implication, \g has the desired presentation \ref{ziii}. Finally, presentation \ref{zv} gives rise to a finite \Cg\ which is easy to embed. 

It follows easily from our construction that all our presentations are planar.

\end{proof}

\section{The planar multi-ended \Cg s with 2 generators} \label{secI}

Having already characterised the 1-ended cubic planar \Cg s in the previous section, we turn our attention to our main object of interest, the planar, \tcon, multi-ended \Cg s. In this section we consider those generated by two generators, one of which must be an involution. We will distinguish two cases according to whether the other generator has finite or infinite order, discussed separately in the following two subsections. We will obtain \plpr s\ for each of those graphs, as well as explicit constructions of their embeddings. Our results are summarised in Theorems \ref{TIa2} and \ref{TIbx} below.

\subsection{Graphs with monochromatic cycles} \label{secIa}

In this section we consider the case when $a^m=1$ for some $m>2$.
It turns out that in this case 
\labtequ{arevI}{$a$ reverses spin,}
because of the following lemma.

\begin{lemma} \label{apres}
Let $G= Cay\left<a,b\mid b^2, \ldots \right>$ be a \iicon\ \Cg\ with a \prem\ in which $a$ preserves spin and \st\ $a$ has finite order. Then \g has at most one end.
\end{lemma}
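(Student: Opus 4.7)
The plan is to determine all facial walks of \G\ from the spin behaviour, recognise \G\ as an instance of \Tr{LG2}, and conclude. Since $a$ preserves spin and has finite order $m$, the walk $v,va,\ldots,va^m{=}v$ is facial at every vertex, so the \emph{$a$-cycles} are facial $m$-cycles bounding one of the three faces incident with each vertex. By consistency, $b$ either preserves or reverses spin. A short combinatorial analysis of the face-tracing rule at each vertex (using that $a$ preserves spin) shows that the two facial walks at $v$ other than the $a$-cycle spell $(ab)^*$ in the first subcase and $(aba^{-1}b)^*$ in the second; in either case, such a walk is a finite cycle or an infinite double ray, according as the corresponding element of $\Gam(G)$ has finite or infinite order.

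The crux is to rule out the infinite-order case. Suppose for contradiction that $ab$ (respectively $aba^{-1}b$) has infinite order. Then every facial walk other than an $a$-cycle is an infinite double ray, so every \emph{finite} face boundary of \G\ is an $a$-cycle; in particular, no finite face boundary contains a $b$-edge. On the other hand, by \Lr{Liicon} the 2-connectivity of \G\ forces $b$ to appear in some cyclic relation, which induces a cycle $C$ of \G\ containing at least one $b$-edge. Viewing $C$ as a Jordan curve in the plane, it bounds a bounded disc $D\subseteq \R^2$; every face of the embedding contained in $D$ is itself bounded, hence, by local finiteness of \G, has finite boundary, so must be an $a$-cycle. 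But each $b$-edge of $C$ is incident in the embedding with a face lying in $D$, which would then be an $a$-cycle containing a $b$-edge---a contradiction.

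Hence $ab$ (resp.\ $aba^{-1}b$) has finite order $n$, all facial walks of \G\ close up, and every face of \G\ is finite. The same Jordan-curve argument applied to an arbitrary cycle shows that \ccfg\ is generated by the facial cycles, so by \Lr{relcc} the group $\Gam(G)$ is presented by $\langle a,b\mid b^2,a^m,(ab)^n\rangle$ or $\langle a,b\mid b^2,a^m,(aba^{-1}b)^n\rangle$. These are cases~\ref{oi} and~\ref{oii} of \Tr{LG2}, which are finite or 1-ended, as required.

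The main obstacle is the planarity step that a bounded disc enclosed by a finite cycle of \G\ contains only finitely many faces, each of them finite. This needs some care since we do not a~priori know that the consistent embedding is VAP-free; the argument uses local finiteness to bound the vertices and edges of \G\ lying in any compact subset of $\R^2$, after which all the cycle-space reasoning becomes routine.
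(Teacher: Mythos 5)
There is a genuine gap, and you have correctly identified where it sits, but the patch you sketch does not work. The step ``every face of the embedding contained in $D$ is itself bounded, hence, by local finiteness of \G, has finite boundary'' is false: local finiteness bounds vertex degrees, not the number of vertices meeting a compact subset of $\R^2$. The \prem\ is not known to be \vapf, so the embedding may have vertex accumulation points inside $D$, in which case a face that is bounded as a subset of $\R^2$ can still have infinitely many edges in its boundary (embed the infinite ladder spiralling into a point: its two infinite facial double rays then bound faces contained in a disc). What you actually need at this point is that the bounded side of $C$ contains only finitely many vertices, i.e.\ that $C$ is not a \psc\ --- and that is essentially the content of the lemma you are proving. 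If \g really were multi-ended, the cycle $C$ you produce from \Lr{Liicon} could well be \ps, and then nothing forces the face incident with a $b$-edge of $C$ from inside to be finite. The same objection applies to your closing cycle-space argument, which again needs one side of every cycle to contain only finitely many face boundaries (compare \Lr{facecc}, which is stated only for finite or 1-ended graphs for exactly this reason).

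The paper closes this gap with a completely different mechanism: it first proves directly that \g has no \psc\ at all, by taking a \mpsc\ $C$ with a facial subpath $F$ of maximum length and producing, in each of three cases according to the colour and spin behaviour of the end-edges of $F$, a translate of $C$ that crosses $C$ in a way forbidden by \Cr{CII3p}; only then does it deduce that every face boundary is finite and invoke \Lr{LG3}. Your reduction to cases \ref{oi} and \ref{oii} of \Tr{LG2} at the end would be fine once these two facts are in hand, but the crossing argument (or some substitute for it) is the real content of the lemma and cannot be replaced by an appeal to local finiteness.
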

\begin{proof}
We begin by showing that 
\labtequ{nops}{\g does not have a \psc.}
Indeed, suppose to the contrary that $C$ is a \mpsc, and choose $C$ so that it has a facial subpath $F$ that has maximum length among all \mpsc s and all their facial subpaths. We distinguish three cases, all of which will lead to a contradiction.

{\bf Case I:} one of the end-edges $e$ of $F$ is labelled $a$.

In this case, we can rotate the finite $a$-cycle containing $e$ by a \auto\ of \g  to translate $C$ to a further \mpsc\ $C'$ that crosses $C$ in such a way that $e$ is the last edge of a common subpath of $C$ and $C'$; see \fig{fIa12} (left). Indeed, since $e$ is the last edge of $F$, the edge of $F$ before it must have been labelled $b$, and the edge of $C$ following $e$ is labelled $a$ too, so that such a rotation is possible. This crossing immediately contradicts \Cr{CII3p}.

\epsfxsize=1\hsize
\showFig{fIa12}{Creating a crossing in Case I (left) and Case II (right).}

{\bf Case II:} both end-edges of $F$ are labelled $b$, and $b$ reverses spin.

In this case we can apply the \auto\ of \g that exchanges the endvertices of some end-edge $e$ of $F$ to translate $C$ to a new \mpsc\ $C'$ that crosses $C$;  see \fig{fIa12} (right). Again, the crossing we obtain  contradicts \Cr{CII3p}.


{\bf Case III:} both end-edges of $F$ are labelled $b$, and $b$ preserves spin.

Let $e=yz$ be an end-edge of $F$. Since $b$ is an involution, the edge $g=xy$ of $F$ preceding $e$ is labelled $a$, and so $g$ is not an end-edge of $F$, which means that $F$ has at least one more edge. The edge $f=wx$ of $F$  preceding $g$ is labelled $b$ again, because otherwise $F$ would not be facial, see \fig{fIa3}. We claim that the \auto\ $T$ of \g that maps $z$ to $x$ translates $C$ to a cycle $C'=T(C)$ that crosses $C$. To see this, let \cf\ be the face whose boundary contains $F$, and note that the edge $h=zv$ following $F$ on $C$ is not incident with \cf, for $F$ is maximally facial.  Assume \obda\ that $g$ is directed from $x$ to $y$. Then, as $b$ preserves spin, $h$ is directed from $v$ to $z$. This implies that $T(h)\neq g$, and so $C'$ has an edge, namely $T(h)$, that lies in the side of $C$ not containing \cf. 

\showFig{fIa3}{Creating a crossing in Case III.}

On the other hand, $T(F)$, which is a subpath of $C'$ since $F\subseteq C$, is facial by \Tr{imrcb}.
Since $T(e)= f$ and we are assuming that both $a$ and $b$ preserve spin, it follows that $T$ `maps' \cf\ to itself, more precisely, that $T(F)$ is contained in the boundary of \cf. As $F$ is also  contained in the boundary of \cf, and $|F| = |T(F)|$, this means that if we start at $x$ and walk around \cf\ in the direction of $w$, then $T(F)$ spans more edges of the boundary of \cf\ than $F$ does. Since $T(F)\subseteq C'$, this proves that $C'$ has an edge in the side of $C$ that contains \cf. Combined with our earlier observation that $C'$ also meets the other side of $C$ proves that $C'$ crosses $C$, and in fact so that one of their common subpaths ends with $f$. As $f$ is also the final edge of $T(F)$, a facial subpath of maximum length, this contradicts \Cr{CII3p} again.

Thus, in all three cases we obtained a contradiction, and so we have established \eqref{nops}. We can now exploit this fact to prove our next claim.
\labtequ{finfIa}{Every face boundary of \g is finite.}
To see this, let $C$ be a cycle containing an edge $e$ coloured $b$; such a cycle exists because \g is \iicon. Now as $C$ cannot be \ps\ by \eqref{nops}, one of its sides contains only finitely many vertices. Thus, all faces contained in that side have a finite boundary. This means that $e$ itself is on a finite face boundary $F$. Now consider one of the $a$-cycles $D$ incident with $e$. Rotating along $D$ by \auto s of \g we can map $F$ to the other face incident with $e$. Since we can also map $e$ to any other $b$ edge by a \auto of \G, \Tr{imrcb} now implies that every face boundary containing a $b$ edge is finite. As we are assuming that $a$ preserves spin, any face boundary not containing a $b$ edge is a finite $a$-cycle of length $m$, the order of $a$. These two observations together prove \eqref{finfIa}.

We can now apply \Lr{LG3}, using \eqref{nops} and \eqref{finfIa}, to prove that \g 
has at most one end. 
\end{proof}

It follows from \eqref{arevI} that the order $m$ of $a$ is even, since the $b$ edges incident with any $a$-cycle $C$ must alternate between the two sides of $C$ (\fig{fIa2}).

\showFig{fIa2}{An $a$-cycle  and some of its translates. Every $a$-edge reverses spin.}

Now consider the subgroup $\Gam_2$ of \Gam\ spanned by $a^2$ and $b$. We claim that  $\Gam_2$ is a proper subgroup of \Gam; in fact, that for every $a$-cycle $C$ of \Gam\ spanned by $a$, at most half of the vertices of $C$ lie in $\Gam_2$. To see this, note that if $x,y$ are two elements of $\Gam_2$, then there is an \pth{x}{y}\ $P$ in \g the $a$ edges of which can be decomposed into incident pairs. Now using \fig{fIa2} it \ises\ that whenever such a path $P$ meets an $a$-cycle $C$ of \G, the two edges of $P$ incident with $C$ lie in the same side of $C$. In other words, $P$ cannot cross any $a$-cycle $C$ of \G. This fact easily implies our claim.

By the same token, given an embedding \sig\ of \G, we can modify \g and \sig\ to obtain a \Cg\ $G_2$ of $\Gam_2$, \wrt\ the generating set $\{a^2, b\}$, and an embedding $\sig_2$ of $G_2$ as follows. For every $a$-cycle $C$ of \g that contains a vertex in $\Gam_2$, delete all vertices and edges in the side of $C$ that does not meet $\Gam_2$; such a side exists by the above argument. Let $G'_2$ be the graph obtained after doing so for every such cycle. Then, suppress all vertices of $G'_2$ that now have degree two; that is, replace any $a$-labelled path $xPy$ of length two whose middle vertex now has no incident $b$ edge by a single $x$-$y$~edge directed the same way as $P$, bearing a new label $z$ (corresponding to the generator $a^2$), to obtain $G_2$. 

We will soon see that $G_2$ uniquely determines \G. But let us first look at $G_2$ more closely. To begin with, using \Lr{Liicon} it is easy to show that 
\labtequ{g2con}{$G_2$ is \iicon.}
%
Note that the embedding $\sig_2$ is by construction a \prem\ of $G_2$, but the $z$-labelled edges now preserve spin. Applying \Lr{apres} to $G_2$ thus implies that 
\labtequ{g21end}{$G_2$ has at most one end.}
It now follows from our characterization of such graphs in \Sr{secfin} that, 
\labtequ{presg2}{either $G_2  \isom Cay \left<z,b\mid b^2, z^n, (zb)^m\right>$, $n\geq 3$, $m\geq 2$\\ or $G_2 \isom Cay \left<z,b\mid b^2, z^n, (zbz^{-1}b)^m\right>$, $n\geq 3, m\geq 1$.}
%
Indeed, since \g has a monochromatic cycle, it must belong to one of the types \ref{oi}, \ref{oii} or \ref{ov} of \Tr{LG2}. However, the third type can immediately be eliminated, as it would either imply that a pair of vertices of $G_2$ adjacent by a $z$ edge disconnects $G$, which cannot be the case as \g is assumed to be \tcon, or it would imply that \g is a graph on four vertices. (If we drop the assumption that \g be \tcon\ though then such a graph does exist, and it is unique: it is described in \citeCayIIFigtarget.)

We are going to use the presentation \eqref{presg2} of $G_2$ to obtain a  presentation of $G$. To achieve this, we are going to find some relations of \Gam\ \st\ the set of cycles induced by these relations generates $\ccfg$. In fact, these relations are just the ones that appear in the presentation \eqref{presg2}. Intuitively, one way to prove this is as follows. Given an arbitrary cycle $C$ of \G, consider the finitely many translates $G_1, G_2, \ldots G_{k_C}$ of $G'_2$ in \g that intersect $C$, and observe that $C$ can be written as a  sum of cycles of the $G_i$. Now as any cycle in $G_2$ (and $G'_2$) is a sum of cycles induced by the relators in \eqref{presg2}, our claim follows by \Lr{relcc}. We are going to use a similar argument in several occasions throughout this paper, and rather than repeating the argumentation each time, we are going to use the following more abstract to obtain a rigorous proof of the fact that \g has the claimed presentation. 
\begin{lemma}[\cite{am}] \label{ccfdec}
Let \g be any graph, and let $\cx$ be a set of subgraphs of \g with the following properties:
\begin{enumerate}
\item \label{cci} $\bigcup_{H\in \cx} H = G$,
\item \label{ccii} no edge of \g lies in infinitely many elements of \cx; and
\item \label{cciiip} there is a tree $T(\cx, E_T)$ on \cx\ \st\ \fe\ edge $e\in E_T$, joining $H_i$ to $H_j$ say, \ti\ a common cycle $F_i$ of $H_i$ and $H_j$ \st\ $F_i$ separates $\bigcup_{H \in V(T_1)} H$ from $\bigcup_{H \in V(T_2)} H$ where $T_1,T_2$ are the two components of $T - e$.
\end{enumerate}
Then, \fe\ choice $(\cf_H)_{H\in \cx}$ of a generating set $\cf_H$ of $\cc_f(H)$ for each element $H$ of \cx, the union $\bigcup_\cx \cf_H$  generates $\ccfg$.
\end{lemma}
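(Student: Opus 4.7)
The plan is to show that every circuit $C$ of $G$ is a finite sum of elements of $\bigcup_\cx \cf_H$; this suffices since $\ccfg$ is by definition generated by its circuits. By property (ii), $C$ meets only finitely many elements of $\cx$, so some finite subtree $T' \subseteq T$ satisfies $C \subseteq \bigcup_{H \in V(T')} E(H)$; choose $T'$ of minimum size and induct on $|V(T')|$. The base case $|V(T')| = 1$ is immediate: if $T' = \{H\}$ then the cycle underlying $C$ lies in $H$ and $C \in \cc_f(H)$, so $C$ is a sum of elements of $\cf_H$.

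For the inductive step, pick a leaf $H_\ell$ of $T'$, let $H_i$ be its neighbour in $T'$, and let $F$ be the common cycle associated to the tree edge $H_\ell H_i$ by property (iii). The separating condition partitions $V(G)$ into the three pieces $V^\ell := V(H_\ell) \setminus V(F)$, $V(F)$, and $V^r := V(G) \setminus V(H_\ell)$, with no edge of $G$ between $V^\ell$ and $V^r$; hence $C \cap E(H_\ell)$ has even degree at every vertex of $V^\ell$, so the set $U$ of its odd-degree vertices is contained in $V(F)$ and has even cardinality. Since $F$ is a cycle one can choose $A \subseteq E(F)$ whose odd-degree vertex set on $F$ is precisely $U$ (take the alternating arcs of $F$ cut at $U$), so $D_\ell := (C \cap E(H_\ell)) + A$ lies in $\cc_f(H_\ell)$ and is thus a sum of elements of $\cf_{H_\ell}$. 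Setting $C' := C + D_\ell$, by construction $C' \cap E(H_\ell) = A \subseteq E(F) \subseteq E(H_i)$, while $C'$ coincides with $C$ off $E(H_\ell)$, so every edge of $C'$ lies in $\bigcup_{H \in V(T') \setminus \{H_\ell\}} E(H)$. As $H_\ell$ is a leaf of $T'$, the subtree $T' - H_\ell$ of $T$ is a strictly smaller cover of $C'$, so the induction hypothesis expresses $C'$ as a sum of elements of $\bigcup_\cx \cf_H$, and then so is $C = C' + D_\ell$.

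The main step requiring care is extracting the clean edge-partition used above from the rather loose wording of property (iii): one must verify that every edge of $G$ with an endvertex in $V^\ell$ has its other endvertex in $V(H_\ell)$, and that edges of $E(F)$ are correctly placed on both sides of the separation. Both should follow from the intended meaning of ``$F$ separates $\bigcup_{V(T_1)} H$ from $\bigcup_{V(T_2)} H$'', but pinning down this definition is essentially the only non-mechanical piece of the argument.
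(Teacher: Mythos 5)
The paper does not actually prove this lemma --- it quotes it from \cite{am} --- so your argument can only be judged on its own terms. Its architecture (peel a leaf $H_\ell$ off a finite covering subtree $T'$, repair the parity defect of $C\cap E(H_\ell)$ by an edge set $A\subseteq E(F)$, recurse on $C+D_\ell$) is sound and is surely the intended proof. But the step you yourself flag as delicate hides a genuine error: the asserted partition of $V(G)$ ``with no edge of $G$ between $V^\ell$ and $V^r$'' is false, and it cannot be extracted from property (iii) as you hope. The graph $H_\ell$ may be incident with further edges of $T$ besides $H_\ell H_i$, i.e.\ it may share other cycles with members of $\cx$ lying on its \emph{own} side $T_1$ of $T-e$; a vertex of $V(H_\ell)\setminus V(F)$ on such a cycle is incident with edges of those members whose other endvertex lies outside $V(H_\ell)$. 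So ``every edge of $G$ with an endvertex in $V^\ell$ has its other endvertex in $V(H_\ell)$'' is simply not true, and no reading of the separation hypothesis for the single tree edge $H_\ell H_i$ will deliver it.

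What is true, and what rescues the parity claim, is a statement about the edges of $C$ only. Since $H_\ell$ is a leaf of $T'$, the $T'$-path from any other vertex of $T'$ to $H_\ell$ ends with the edge $H_iH_\ell$; as $T'$ is a subtree of $T$, this is also a $T$-path, so $V(T')\setminus\{H_\ell\}$ lies entirely in the component $T_2$ of $T-e$ containing $H_i$. Hence every edge of $C\setminus E(H_\ell)$ lies in $A_2:=\bigcup_{H\in V(T_2)}H$, and the separation hypothesis gives $V(A_1)\cap V(A_2)\subseteq V(F)$ for $A_1:=\bigcup_{H\in V(T_1)}H\supseteq H_\ell$. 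Therefore no vertex of $V(H_\ell)\setminus V(F)$ meets an edge of $C\setminus E(H_\ell)$, which is exactly the parity statement you need; the placement of $E(F)$ in both $E(H_\ell)$ and $E(H_i)$ is immediate from ``common cycle'' and needs no verification. With this substitution your proof goes through, modulo one cosmetic repair: $C'=C+D_\ell$ need not be a circuit, so either run the induction over finite elements of $\cc_f(G)$ covered by a subtree of a given size, or first decompose $C'$ into circuits before applying the induction hypothesis.
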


%

Now in order to apply this to our particular \Cg\ \G, consider \fe\ vertex $x\in V(G)$ the left coset of $\Gam_2$ in \Gam\ containing $x$, and let $H^x\subseteq G$ be the corresponding copy of $G'_2$; in other words, $H^x$ is the image of $G'_2$ under the \auto\ $x$. We are now going organize these copies into an auxiliary tree $T$ satisfying \ref{cciiip}. The vertex set of $T$ is the set of graphs $H^x$, for all vertices $x$ of $G$. Two vertices $H^x, H^y$ of $T$ are joined by an edge \iff\ they share an $a$-coloured cycle of \G. It follows easily from the definition of $G'_2$ that $T$ is indeed a tree, and that it satisfies \ref{cciiip}. The interested reader will be able to check that $T$ is in fact a Dunwoody structure tree \cite{dicks_dunw} of \G.

It is clear that  this set $\{H^x \mid x\in V(G)\}$ satisfies \ref{cci} and \ref{ccii} of \Tr{ccfdec}. 
Before we apply \Tr{ccfdec}, it remains to define the generating sets $\cf_x$. For this, let $\cf$ be the set of cycles of \g induced by the relators in the presentation \eqref{presg2} ---for which have distinguished two cases--- after replacing each appearance of the letter $z$ by the word $a^2$. Note that all these cycles are contained in $G'_2$. Moreover, \cf\ generates $\cc_f(G'_2)$ by \Lr{relcc} since  \eqref{presg2} is a presentation of $\Gam_2$. 

Thus, defining $\cf_x$ to be the image of \cf\ under the automorphism of \g that maps $G'_2$ to its copy $H^x$, we meet the requirement that $\cf_x$ generate $\cc_f(H^x)$ \fe\ $x$, and \Tr{ccfdec} yields that $\bigcup \cf_x$  generates $\ccfg$. By the second sentence of \Lr{relcc} and the definition of \cf\ it now follows that 
%
{either $G \isom Cay \left<a,b\mid b^2, a^{2n}, (a^2b)^m\right>$, $n\geq 3$, $m\geq 2$ or $G \isom Cay \left<a,b\mid b^2, a^{2n}, (a^2ba^{-2}b)^m\right>$, $n\geq 3, m\geq 1$.}
The first (respectively second) case occurs if $b$ preserves (resp.\ reverses) spin, as can be seen by applying \Tr{LG2} to $G_2$. Thus we have 
\begin{theorem} \label{TIa2}
Let $G= Cay\left<a,b\mid b^2, \ldots \right>$ be a \tcon\ \Cg\ with more than one end 
and suppose that $a$ has finite order. Then $a$ reverses spin. If $b$ preserves spin then\\ $G \isom Cay \left<a,b\mid b^2,  (a^2b)^m; a^{2n}\right>, n\geq 3, m\geq 2$.\\ If $b$ reverses spin then\\ 
$G \isom Cay \left<a,b\mid b^2,  (a^2ba^{-2}b)^m; a^{2n}\right>$, $n\geq 3, m\geq 1$.

In both cases, the presentation is planar. 

Moreover, \g is the Mohar amalgamation of $G_2 \isom Cay \left<z,b\mid b^2, z^n, (zb)^m\right>$ or $G_2 \isom Cay \left<z,b\mid b^2, z^n, (zbz^{-1}b)^m\right>$ with itself.

Conversely, each of these presentations, with parameters chosen in the specified domains, yields a \Cg\ as above.
\end{theorem}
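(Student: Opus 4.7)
The forward direction essentially synthesizes the analysis already developed in the section. First I would verify that $a$ reverses spin: since $G$ is \tcon, \Lr{lprem} gives that the embedding is \pr; since $a$ has finite order, \Lr{apres} shows that if $a$ preserved spin then $G$ would have at most one end, contradicting the assumption. From the picture of an $a$-cycle $C$ with $b$-edges alternating between its two sides (as in \fig{fIa2}), the order of $a$ must be even; write it as $2n$ with $n\geq 3$, the lower bound being forced by \tcon ness (if $n\le 2$ a pair of vertices on some $a$-cycle would separate $G$, excepting the degenerate small graph ruled out by \tcon ness).

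Next I would pass to the subgroup $\Gam_2:=\langle a^2,b\rangle$ and its \Cg\ $G_2=Cay(\Gam_2,\{z,b\})$ with $z:=a^2$, constructed as in the discussion: for each $a$-cycle $C$ meeting $\Gam_2$, delete the side of $C$ disjoint from $\Gam_2$ to obtain $G'_2$, then suppress degree-2 vertices. The resulting embedding $\sig_2$ of $G_2$ is \pr\ with $z$ preserving spin, and $G_2$ remains \iicon\ by \Lr{Liicon} (every generator lies in a cyclic relation, inherited from those of $G$). Applying \Lr{apres} to $G_2$ gives that $G_2$ has at most one end. Invoking \Tr{LG2} and observing that $G_2$ has a monochromatic $z$-cycle, only types \ref{oi} and \ref{oii} are possible (the degenerate \ref{ov} is excluded by \tcon ness of $G$), yielding either $G_2\isom Cay\langle z,b\mid b^2,z^n,(zb)^m\rangle$ or $G_2\isom Cay\langle z,b\mid b^2,z^n,(zbz^{-1}b)^m\rangle$, corresponding to $b$ preserving or reversing spin respectively.

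To lift this to a presentation of $G$, I would organise the copies $\{H^x : x\in V(G)\}$ of $G'_2$ (one per left coset of $\Gam_2$) into a Dunwoody-type structure tree $T$ whose edges correspond to shared $a$-cycles. It is routine that $T$ is a tree and that distinct copies are separated by their shared $a$-cycle, so \ref{cciiip} of \Tr{ccfdec} holds; \ref{cci} and \ref{ccii} are immediate. Substituting $z\mapsto a^2$ in the relators from \eqref{presg2} produces cycles of $G$ inside $G'_2$ that generate $\cc_f(G'_2)$ by \Lr{relcc}, and translating by \auto s yields generating sets $\cf_{x}$ for each $\cc_f(H^x)$. \Tr{ccfdec} therefore gives a generating set of \ccfg, so the second half of \Lr{relcc} delivers the claimed presentations $\langle a,b\mid b^2,(a^2b)^m;a^{2n}\rangle$ or $\langle a,b\mid b^2,(a^2ba^{-2}b)^m;a^{2n}\rangle$. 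The Mohar amalgamation statement is then a direct reading of the tree $T$: $G$ is glued from copies of $G_2$ along the cyclic subgroup $\langle a\rangle/\langle a^2\rangle$ at each $a$-cycle. Planarity of the presentation follows from the construction, since the relator $a^{2n}$ is the only one not bounding a face.

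For the converse, given a presentation from either family, I would construct $G$ directly by starting with an embedding of $G_2$ provided by \Tr{LG2} and reversing the deletion procedure: inside each $z$-face of $G_2$ place a new copy of $G'_2$ glued along the $a^{2n}$-cycle obtained by subdividing each $z$-edge in two, iterating in each newly created $a^{2n}$-face. Sabidussi's \Tr{sab} applied to the resulting coloured digraph confirms it is a \Cg, and \Lr{lkcon} applied to the family of embedded copies of $G_2$ (each \tcon\ by \Tr{endtcon}, pairwise linked through shared $a$-cycles of length $2n\ge 6$) gives \tcon ness of $G$. The existence of infinitely many disjoint finite separating $a^{2n}$-cycles yields infinitely many ends. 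The main subtlety I expect is the bookkeeping in the tree-amalgamation argument — specifically, checking that the separating-cycle condition \ref{cciiip} really holds for our $T$, and that no two copies $H^x, H^y$ share more than a single $a$-cycle — but both reduce to the observation (used to define $G'_2$) that an \pth{x}{y}\ with $x,y\in\Gam_2$ never crosses an $a$-cycle.
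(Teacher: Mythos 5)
Your proposal follows essentially the same route as the paper: \Lr{apres} and \Lr{lprem} to force $a$ to reverse spin, passage to $\Gam_2=\langle a^2,b\rangle$ and the truncated copies $G'_2$, \Tr{LG2} for the presentation of $G_2$, the structure tree plus \Tr{ccfdec} and \Lr{relcc} to lift the presentation, and the Mohar-amalgamation construction with Sabidussi's theorem and \Lr{lkcon} for the converse. The only (minor) point to add is the closing step of the converse: having verified that the constructed graph is a \tcon\ multi-ended \Cg\ in which $a$ has finite order, one applies the already-proved forward implication to conclude that its group has precisely the presentation one started from.
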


Here, a \defi{Mohar amalgamation} is the operation of \fig{fiamal} (iii) described in the Introduction.


\begin{proof}
The forward implication was proved in the above discussion. It remains to prove the converse and the fact that these graphs can be obtained by the claimed Mohar amalgamations. We will prove both these assertions simultaneously. For this, given one of the above presentations, consider the auxiliary presentation  $\Gam_2 = \left<z,b\mid b^2, z^n, (zb)^m\right>$ or $\Gam_2 = \left<z,b\mid b^2, z^n, (zbz^{-1}b)^m\right>$ obtained by replacing $a^2$ by $z$ throughout. Applying \Tr{LG2} \ref{oi} or \ref{oii} to this presentation shows that the corresponding \Cg\ $G_2$ is finite or 1-ended, and has an embedding in which all monochromatic cycles induced by $z^n$ bound faces. The other relation also induces facial cycles but we will not use this fact. Now construct a graph \g as the Mohar amalgamation of $G_2$ with itself \wrt\ the $z$-monochromatic cycles, orienting the pasted discs in such a way that all $b$ edges preserve (respectively, reverse) spin if the presentation we started with was of the first (resp.\ second) kind.

It follows easily from Sabidussi's theorem that the plane edge-coloured graph \g\ just constructed is a \Cg. Moreover, \g has infinitely many ends: by construction, any $z$-coloured cycle separates two infinite components. We claim that  \g is \tcon. To prove this, suppose $\{x,y\} \subset V(G)$ separates \G. 
It \ises\ that by the construction of \g the vertices of any $z$-coloured cycle cannot be separated by $\{x,y\}$. Moreover, as $G_2$ is itself \tcon\ by \Tr{endtcon}, no two $z$-coloured cycles that lie in a common copy of $G_2$ can be separated by $\{x,y\}$. But for any two  $z$-coloured cycles $Z,Z'$ of \g \ti\ by construction a finite sequence $Z_1= C_1, C_2, \ldots, C_k= Z'$ \st\ $C_i, C_{i+1}$ are $z$-coloured cycles lying in a common copy of $G_2$. Applying \Lr{lkcon} using these two facts yields that \g is \tcon\ as claimed. 

We can now apply the forward implication of \Tr{TIa2} to this graph \G; we thus obtain that the corresponding group has the desired presentation, namely the one we used for the construction of \G.
\end{proof}

\subsection{Graphs without monochromatic cycles} \label{secIb}

We now consider the case when $a$ has infinite order, and so \g has no monochromatic cycles. Instead, the $a$ edges span double rays in \G. Also in this case we will be able to prove that 
\labtequ{arevIb}{$a$ reverses spin,}
because of the following lemma which is similar to \Lr{apres}.

\begin{lemma} \label{apres2}
Let $G= Cay\left<a,b\mid b^2, \ldots \right>$ be a \tcon\ planar \Cg\ in which $a$ has infinite order. Then $a$ reverses spin.  
\end{lemma}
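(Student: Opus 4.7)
The approach is to mirror the strategy of \Lr{apres}, arguing by contradiction: assume that $a$ preserves spin. By \Lr{lprem}, since $G$ is \tcon, its embedding is consistent, so we may fix one. Because $a$ preserves spin and has infinite order, the $a$-orbit $R = \{a^i : i \in \Z\}$ through $\zero$ is an embedded facial double ray, and its incident face $F$ on one side is an infinite face with infinite boundary.

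The plan is then to split into cases by the number of ends of $G$. If $G$ has at most one end, the existence of the infinite face $F$ immediately contradicts \Lr{noinff}, which says that every face-boundary in a $1$-ended plane \Cg\ is finite. So we may assume $G$ is multi-ended, in which case $G$ contains \psc s. Let $C$ be a \mpsc\ of $G$ chosen so that $C$ has a facial subpath $F^{*}$ of maximum length among all facial subpaths of all \mpsc s of $G$, exactly as in the opening setup of the proof of \Lr{apres}.

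Next, I would repeat the three-case analysis from \Lr{apres} on the end-edges of $F^{*}$: (I) one end-edge of $F^{*}$ is labelled $a$; (II) both end-edges are labelled $b$ and $b$ reverses spin; (III) both end-edges are labelled $b$ and $b$ preserves spin. In cases (II) and (III) the original arguments transfer almost verbatim, since they use only the spin behaviour of $a$ and $b$ and the action of a \auto\ that fixes an end-edge of $F^{*}$ or extends $F^{*}$ along an incident face; in each case one produces a translate $C'$ of $C$ that crosses $C$ and thereby contradicts \Cr{CII3p}. The real adjustment occurs in case (I): in \Lr{apres} one rotated a finite $a$-cycle by a single step to translate $C$, but here the relevant $a$-orbit is the infinite double ray $R$; instead one uses the \auto\ given by left multiplication by $a^{\pm 1}$, which slides $R$ along itself by one edge and produces the required crossing, because the edge of $C$ immediately after the $a$-labelled end-edge of $F^{*}$ is forced (by maximality of $F^{*}$ and cubicness) to create exactly the configuration needed to apply \Cr{CII3p}.

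The step I expect to be the main obstacle is case (III), where the spin-preserving generator $b$ forces a delicate interaction between the face $H$ incident with $F^{*}$ and the translate $T(F^{*})$ used to extend $F^{*}$ inside $H$. In the original proof the finiteness of the $a$-orbit was convenient but not strictly essential; what really matters is that $T$ can be chosen as left multiplication by a group element that sends an end-edge of $F^{*}$ to the adjacent edge of $F^{*}$ along $H$'s boundary, and that the resulting $T(C)$ is still a \mpsc\ meeting both sides of $C$. Verifying this, together with checking that $T(F^{*})$ and $F^{*}$ genuinely share the face $H$ under the consistent embedding, should complete the contradiction and hence the proof.
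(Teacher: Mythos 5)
Your second half --- choosing a \mpsc\ with a longest facial subpath and running the three-case crossing analysis of \Lr{apres}, with the finite $a$-cycle rotation in Case I replaced by a translation along the infinite $a$-coloured \dray\ --- is exactly what the paper does (it literally says ``imitating the proof of \Lr{apres}''), and your adjustments there are sound. The gap is earlier: you assert that since $G$ is multi-ended it ``contains \psc s'', and this is precisely the step that carries the real content of the lemma. The implication is false as a general principle: the grid strip $\Z\times\{0,1,2\}$ is planar, \tcon\ and two-ended, yet every one of its cycles bounds a finite region, so it has no \psc\ at all. Nor can you fall back on \Lr{LG3}, because that lemma requires all face-boundaries to be finite, and you have just established the opposite --- the facial $a$-coloured \dray\ bounds a face with infinite boundary. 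So nothing in your argument produces the \psc\ that the three-case analysis is supposed to demolish.

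The paper fills this hole with an argument that uses the hypothesis ``$a$ preserves spin'' a second time, not just multi-endedness: pick $x,y$ far apart on an $a$-coloured \dray\ $R$, take three independent \pths{x}{y}\ $P_1,P_2,P_3$ by Menger (this is where \tcon ness enters), and choose the pair, say $P_1,P_2$, whose union is a cycle $C$ with a tail of $R$ on one side and $P_3$ on the other. A vertex $z$ of $P_3$ interior to that side lies on an $a$-coloured \dray\ $R'$ which, being facial, is trapped in the closure of that side of $C$; hence both sides of $C$ are infinite and $C$ is \ps. You need to supply this (or an equivalent) construction before the crossing argument can be launched; without it the proof does not get off the ground.
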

\begin{proof}
Suppose, to the contrary, that $a$ preserves spin.  We will show that \g must have a \psc. For this, pick two vertices $x,y$ that lie in the same $a$-coloured double ray $R$ of \G. As \g is \tcon, there are three independent \pths{x}{y}\ $P_1,P_2,P_3$ by Menger's theorem \cite[Theorem~3.3.1]{diestelBook05}. By an easy topological argument, there must be a pair of those paths, say $P_1,P_2$, whose union is a cycle $C$ \st\ some side of $C$ contains a tail of $R$ and the other side of $C$ contains $P_3$, see \fig{fIb2}. We may assume \obda\ that $P_3$ is not a single $b$ edge, for we are allowed to choose $x$ and $y$ far apart. Thus the side of $C$ containing $P_3$ contains at least one vertex $z$. Now as all $a$ edges preserve spin, the $a$-coloured double ray $R'$ incident with $z$ is facial, and so it cannot exit the cycle $C$. This means that $C$ is \ps, since one of its sides contains $R'$ and the other contains $R$. 
\showFig{fIb2}{Finding a \psc\ for the proof of \Lr{apres2}.}

Now imitating the proof of \Lr{apres} we can obtain a contradiction to the fact that \g has a \psc. Thus  $a$ must reverse spin.
\end{proof}

Using \eqref{arevIb} and \Tr{imrcb} it follows easily that any two face boundaries can be mapped to each other by a \auto\ of \G. Thus all faces of \g have the same size $N$. This implies that \g must have a \psc: if $N$ is infinite, then any induced cycle is \ps. If $N$ is finite, then the existence of a \psc\ follows immediately from \Lr{LG3}.

We now turn our attention to the \mpsc s of \G. We will be able to describe these cycles precisely, but in order to do so we have to start with a more modest task, namely to prove that
\labtequ{LIb1}{no \mpsc\ of \g has an $a$-labelled subpath comprising more than two edges.}
To show this, let $p$ be the maximum length of an $a$-labelled subpath of a  \mpsc\ of \G, and let $P$ be a an instance of such a path with $||P||=p$. We need to prove that $p\leq 2$. We distinguish two cases according to the parity of $p$. 

If $p$ is even, and at least 4, then we can shift a \mpsc\ $C$ containing $P$ by two edges of the $a$-coloured double ray containing $P$ to obtain a translate $C'$ of $C$ that crosses $C$ as in \fig{crossIb}. It follows from our discussion in \Sr{secPsc} that one of the regions, $D$, say resulting from this crossing contains only finitely many vertices, while the regions $B$ and $F$ must each contain infinitely many vertices. Moreover, the paths $xC'y$ and $xCy$ in  \fig{crossIb} must have equal lengths, for otherwise the shortest of them provides a shortcut for the $C$ or $C'$, contradicting the minimality of the latter. Thus we can replace $xCy$ by $xC'y$ in $C$ to obtain a new \mpsc\ $C''$ with an $a$-labelled subpath containing $P$ and two more $a$-edges. This contradicts the maximality of $P$. 
\showFig{crossIb}{A crossing in the case that $p$ is even.}

If $p$ is odd, and at least 3, then we can again shift a \mpsc\ $C$ containing $P$ along the $a$-coloured double ray containing $P$, this time shifting only by one edge, to obtain a translate $C'$ of $C$ that crosses $C$ as in \fig{crossIbb}. By the same arguments, one of the regions $A,D$ must be finite, and replacing a subpath of $C'$ for a subpath of $C$ we obtain a new \mpsc\ with a longer $a$ coloured subpath than $P$.
\showFig{crossIbb}{A crossing in the case where $p$ is odd.}

Thus in both cases we obtained a contradiction to the maximality of $P$, which proves our claim \eqref{LIb1}.

Next, we prove that
\labtequ{LIb2}{no \mpsc\ of \g has a facial subpath comprising more than three edges.}
Indeed, let $F$ be a maximal facial subpath of the \mpsc\ $C$ and suppose that $||F||\geq 4$. Then $F$ does not contain a $bab$ subpath because $a$ reverses spin, so $F$ must contain an $aa$ subpath. It then follows from \eqref{LIb1} that any such subpath of $C$ lies within a $baab$ subpath of $C$. Since any $baab$ path is facial in our case,  $F$ has to contain a $baab$ subpath too. We distinguish two cases, according to the colour of the last edge $e$ of $F$. If that colour is $a$ then we have the situation on the left half of \fig{crossIbc}, while if it is $b$ then we have the situation on the right half of \fig{crossIbc}; here we are using the fact that $F$ cannot finish with an $aa$ subpath followed by an $a$-edge on $C$ because of \eqref{LIb1}. 
\showFig{crossIbc}{Proof that a facial subpath of length four forces a crossing.}

In both cases, the \auto\ of \g mapping $x$ to $y$ translates $C$ to some other \mpsc\ $C'$ that intersects $C$. We would like to show that $C'$ crosses $C$. Note that this \auto\ maps $F$ to a facial subpath $F'$ of $C'$ that is incident with the same face \cf\ as $F$ was. It is now easy to see that $F'$ must meet both sides of $C$, for otherwise $C'$ has more edges along the boundary of \cf\ than $C$ has, which would contradict the maximality of $F$. This proves that $C'$ crosses $C$ indeed. As we could have chosen $F$ to have maximum length among all facial subpaths of all \mpsc s of \g \obda, we can apply \Cr{CII3p} to this crossing to obtain a contradiction that proves \eqref{LIb2}.

Combining \eqref{LIb2} with \eqref{LIb1} easily implies that a \mpsc\ cannot even contain an $a$-labelled subpath comprising more than one edge, which means that
\labtequ{LIb3}{The colours of the edges of every \mpsc\ of \g alternate between $a$ and $b$.}
%

\subsubsection{Tidy cycles} \label{secTidy}
Now consider a $b$ edge $e$ of a \mpsc\ $C$. We will say that $e$ is \defi{tidy} in $C$ if the two edges incident with $e$ that do not lie in $E(C)$ lie in the same side of $C$. We will say that $C$ is \defi{tidy} if all its $b$ edges are tidy in $C$. The reason why we are interested in tidy \mpsc s is the following proposition, which will help us obtain the desired presentation of \G:
\labtequ{LIb5}{A \mpsc\ is tidy \iff\ it is induced by a word of the form $(ab)^n$ and $b$ preserves spin, or it is induced by a word of the form $(aba^{-1}b)^n$ and $b$ reverses spin.}
Indeed, this follows immediately from the fact that $a$ reverses spin and the definition of \defi{tidy}.

In order to be able to exploit this fact we need to show that 
\labtequ{LIb7}{\g has a tidy \mpsc.}
In fact, we will show that every \mpsc\ is tidy, unless every face of \g has size 6. For this, we first have to show the following.

\labtequ{LIb6}{
If \g has a face of size greater than 6 then no two \mpsc s of \g cross.}
To prove this, note first that since $a$ reverses spin, any two faces of \g can be mapped to each other, and so every face has size greater than 6 in this case. 

Now suppose that two \mpsc s $C,C'$ cross. Then by \eqref{BF} there is a subpath 
$P$ of $C$ and a subpath $P'$ of  $C'$ \st\ $P \cup P'$ is a cycle $K$ bounding a finite region $B$. We will show that such a region contradicts Euler's formula $n - m + f = 2$ for the sphere. To see this, let $H$ be the finite plane subgraph of $G$ spanned by $K$ and all vertices in $B$. Note that for a cubic finite graph $J$, using the fact that $|E(J)| = \frac{3}{2} |V(J)|$ and that every edge lies in precisely two faces, Euler's formula can be rewritten as 
\labtequ{euler}{(Euler's formula for a cubic graph) \hspace*{1cm} $\sum_{k\ge3} c_k |F_k| = 12$,}
where $|F_k|$ is the number of $k$-gonal faces of $J$, and $ c_k:= 6-k$ is the \defi{curvature} of each $k$-gonal face. This means in particular that a cubic plane graph must have some faces of size $k$ less that 6. 

Our graph $H$ almost contradicts \eqref{euler} since all faces of \g have size greater that 6, except that it has some vertices of degree two on its boundary $K$. To amend these degrees, consider the graph $H'$ obtained from two copies of $H$ by joining corresponding vertices of degree two by an edge, and note that $H'$ is cubic. Consider an embedding of $H$ in the sphere \st\ the two copies of $H$ occupy two disjoint discs $D_1,D_2$, and the newly added edges and their incident faces lie in an annulus $Z$ that joins these discs. Now note that all faces within these discs still have size greater than 6, contributing a negative curvature to \eqref{euler}, but $Z$ can contain 4-gons. Still, we will show that the number of 4-gons is not enough to balance the deficit in curvature.

To begin with, note that every face in $Z$ has even size. Moreover, since $a$ reverses spin and each of $P ,P'$ in the construction of $H$ was $a,b$ alternating by \eqref{LIb3}, it is easy to see that an $a$-edge at the boundary of  $D_1$ or $D_2$ cannot be  incident with a 4-gon in $Z$ unless it was one of the four end-edges of $P$ and $P'$ (\fig{feuler}). 
\showFig{feuler}{}
On the other hand, a $b$ edge at the boundary of  $D_1$ or $D_2$ can be incident with a 4-gon in $Z$, however, the fact that $a$ reverses spin easily implies that if $e,f$ are two $b$ edges incident with a 4-gon and both lying on $P$, say,  then there must be a $b$ edge between them on $P$ that is incident with an 8-gon in $Z$, and the same holds for $P'$. These two observations together imply that the number $|F_4|$ of 4-gons in $Z$ is bounded from above by $|F_k|\leq 4 + 2 + |F_8| = 6  + |F_8|$. But as $c_4 + c_8 = 0$, this means that the total curvature contributed to \eqref{euler} by the faces in $Z$ is at most $6 f_4 = 12$, and as $D_1,D_2$ only contain faces of size greater than 6 each of which contributes a negative curvature, we obtain a contradiction to  \eqref{euler}. This proves \eqref{LIb6}. 

\comment{
		Note that the only property of $H$ we used in the above argument was that the edges on the boundary are alternating in two colours, at least one of which colours reverses spin. This allows us to formulate the following corollary, which we will use later in a similar case when we study the \Cg s generated by three involutions:
{perhaps not needed}
	\labtequ{corIb}{If $C,C'$ are two crossing \mpsc s of a plane cubic \Cg\ \g \st\ their edges alternate between two colours one of which reverses spin, then \g has a face of size at most 6.}
}

This argument also explains why we have to treat the case when every face is a hexagon separately. 
\medskip

We now return to the proof of \eqref{LIb7}. In fact, we are going to show something stronger: in the case where \g has  a face of size greater than 6, if a \mpsc\  is untidy then it must cross some other \mpsc, and so by \eqref{LIb6} every \mpsc\ is tidy.

\showFig{untidy}{Finding a crossing at an untidy edge $e$.}

For this, suppose there is a \mpsc\ $C$ with an untidy $b$ edge  $e$. If $b$ reverses spin then exchanging the endvertices of $e$ by a \auto\ translates $C$ to a cycle $C'$ that crosses $C$ and we are done (\fig{untidy}, top). If $b$ reverses spin, then note that the two $a$-edges of $C$ incident with $e$ both point towards $e$, or both point away from $e$ (\fig{untidy}, bottom). Now suppose we walk around $C$ starting at $e$. Every time we come to a tidy $b$ edge, its two incident $a$-edges in $C$ point in the same direction. Thus, as the two  $a$-edges in $C$ incident with $e$ point in opposite directions, before we arrive at $e$ again we  must visit a further untidy edge $e'$ the incident edges of which point away from $e'$ if the incident edges of $e$ point towards $e$ and the other way round. Now translating $e'$ to $e$ maps $C$  to a cycle  $C'$ that again crosses $C$. Thus in both cases applying \eqref{LIb6} we prove that
\labtequ{LIb9}{If \g has  a face of size greater than 6 then every \mpsc\ of \g is tidy.}
In particular, we proved \eqref{LIb7} in the case that \g has a face of size greater than 6.

\subsubsection*{The hexagonal grid case}

It remains to consider the case when every face of \g has size 6, since, easily,  no face can have a size smaller than 6 when $a$ reverses spin and has infinite order. In this case, it is easy to see that any two $a$-coloured double rays that are joined by a $b$ edge are joined by infinitely many $b$ edges that together with these double rays form an infinite strip of hexagons. Using this fact and \eqref{LIb3} it is easy to check that the subgroup of \Gam\ spanned by $a$ has finite index $n$, in other words, $V(G)$ is spanned by finitely many $a$-coloured double rays. 

Let us first consider the case when $b$ reverses spin, and so all the $a$ \dray s point in the same direction, see \fig{hex}. Now let $C$ be any cycle in \g \st\ the colours of the edges of $C$ alternate between $a$ and $b$, the directions of the $a$-edges being arbitrary. For instance, $C$ could be a \mpsc\ by \eqref{LIb3}. We claim that 
\labtequ{LIb8}{if $w$ is a word with letters $a,a^{-1},b$ that induces $C$ then the letter $a$ appears as often as the letter $a^{-1}$ in $w$.}
To see this, note first that translating $C$ by $a^2$ we obtain a cycle $C'$ whose union with $C$ bounds a strip of $|C|/2$ many hexagons (\fig{hex}). We call any of the translates of $C$ by  $a^{2i}, \iin$ a \defi{level}. Enumerate the levels by the integers so that neighbouring levels are assigned consecutive numbers, and these numbers increase whenever we apply $a^2$. 

\showFig{hex}{The case when every face is a hexagon and $b$ reverses spin, and a \mpsc\ $C$. The top and bottom rays coincide, and so do the two vertices marked $x$.}

It is straightforward to check, using the spin behaviour of the edges and the structure of $C$, that for any side $A$ of $C$, all ($a$-coloured) edges that have precisely one endvertex in $C$ are directed the same way, that is, either they are all directed from $C$ into $A$ or the other way round. Moreover, if these edges are directed from $C$ say, the edges incident with $C$ on its other side are directed towards $C$. Now pick a vertex $z$ of $C$ and consider the path $P$ starting at $z$ and induced by $w$. We may assume \obda\ that the successor $y$ of $z$ on $P$ is not in $C$, for otherwise we could have started reading $w$ at $y$. Note that now any appearance of $a$ or $a^{-1}$ in $w$ forces a change of level, while any appearance of $b$ leaves us in the same level. Moreover, it follows from the aforementioned property of the direction of the edges incident with $C$ that any appearance of $a$ increases the level by one, while any appearance of $a^{-1}$ decreases it. But as $w$ induces a cycle, $P$ must return to its initial vertex, and so \eqref{LIb8} follows. 
\medskip

Now let $C$ be any \mpsc\ of \G. Using \eqref{LIb8} we will now modify $C$ into a tidy \mpsc. For this, note that we can replace any path of the form $aba^{-1}$ in $C$ by $a^{-1}ba$ (and the other way round), to obtain another \mpsc; indeed, $aba^{-1}a^{-1}ba$ is a relation, as it induces a boundary of a hexagonal face. Applying this operation several times, and using \eqref{LIb8}, we can reshuffle the letters in a word $w$ inducing $C$ to obtain a new word $w'$ that still induces a \mpsc\ and has the form $w'= (aba^{-1}b)^n$. By \eqref{LIb5} any cycle induced by $w'$ is tidy, and so we achieved our aim to show \eqref{LIb7} in case $b$ reverses spin.

If $b$ preserves spin instead, then the same arguments still apply with the following slight modification. In this case, adjacent $a$ \dray s point in different directions. Partition their set into two equal subsets none of which contains two adjacent \dray s, and call the elements of one the two subsets the \defi{even} \dray s, and call the remaining ones \defi{odd}. Now colour every edge that lies in an odd \dray\ with a new colour $o$ and reverse its direction. Pretending that we cannot distinguish the colour $a$ from $o$, and using the fact that every fourth edge on any \mpsc\ must be coloured $a$ and every fourth edge must be coloured $o$, we can apply the above arguments to obtain a \mpsc\ $C$ of the form $(abo^{-1}b)^n$. As $o^{-1}$ is an alias for $a$, 
\eqref{LIb5} implies again that $C$ is tidy, so we have proved  \eqref{LIb7} in this case too. 

Thus  \eqref{LIb7} holds in all cases.

\subsubsection{Structure and presentations} \label{Ib1pres}

It now follows from \eqref{LIb5} that every $a$-edge of \g\ lies in a unique tidy \mpsc. Similarly, every $b$ edge of \g\ lies in precisely two tidy \mpsc s. In other words, if we represent the involution $b$ by two parallel edges, then the tidy \mpsc s form a decomposition of $E(G)$ into edge-disjoint cycles.

Moreover, 
\labtequ{Ibinout}{the tidy \mpsc s incident with (the $b$ edges of) a given tidy \mpsc\ $C$ lie alternately in its inside and outside.}
This allows us to obtain a presentation, as well as a precise description of the embedding of \g by methods similar to those of \Sr{secIa}, where we had \ps\ $a$-coloured cycles and the $b$ edges incident with any of them lied alternately in its inside and outside.

So similarly to what we did there, we now define the subgroup $\Gam_2$ of \Gam\ to be the subgroup spanned by  $b$, $a^{-1}ba$ and $aba^{-1}$ if $b$ reverses spin, or the subgroup spanned by $b$ and $aba$ (and the inverse $a^{-1}ba^{-1}$ of $aba$) if $b$ preserves spin. We claim that  $\Gam_2$ is a proper subgroup of \Gam, and will prove this by showing that if $C$ is a tidy \mpsc\ containing the identity then only one of the sides of $C$ meets $\Gam_2$.
To see this, note that by \eqref{Ibinout} any path in \g composed as a concatenation of subpaths induced by the words $b, aba$ or $b,aba^{-1}$ and their inverses generating $\Gam_2$ can never cross from the inside of a tidy cycle to its outside. Thus, only vertices that lie in one of the sides of $C$, or any other tidy \mpsc, meet $\Gam_2$.

Given an embedding \sig\ of \G\ we can, still similarly to what we did in \Sr{secIa}, modify \g and \sig\ to obtain a \Cg\ $G_2$ of $\Gam_2$, \wrt\ the above generating set and an embedding $\sig_2$ of $G_2$ as follows. For every tidy \mpsc\ $C$ of \g that contains a vertex in $\Gam_2$, delete all vertices and edges in the side of $C$ that does not meet $\Gam_2$; such a side exists by the above argument. Let $G'_2$ be the graph obtained after doing so for every such cycle. Then, suppress all vertices of $G'_2$ that now have degree two to obtain $G_2$. 

It is easy to prove that
%
$G_2$ is \iicon.
Indeed, \fe\ vertex $x\in V(G_2)$, the two tidy \mpsc s of \g incident with $x$ form a subgraph of \g that contains the neighbourhood $N(x)$ of $x$ and is connected even after removing $x$, which means that $x$ cannot disconnect $G_2$. With a little bit more effort we can even prove that 
\labtequ{g2tcon}{$G_2$ is \tcon\ unless $V(G_2)$ is contained in a tidy \mpsc\ of \G.}
For this, suppose that $G_2 - \{x,y\}$ is disconnected. If the vertices $x,y$ lie in no common tidy \mpsc\ of \G, then by the above argument their neighbourhoods are connected, a contradiction. So let $C$ be an \mpsc\ of \g containing both vertices. 

It could happen that $V(C)= V(G_2)$, which is the case when \g has hexagonal faces (for example, when it is the graph of \fig{hex}). In this case $G_2$ is not \tcon: it is a finite cycle with parallel edges. Now assume that this is not the case, which means that some $b$ edge $e$ of $C\cap G_2$ is incident with a further tidy \mpsc\ $C_e\neq C$ that contains vertices that lie in $G_2 \sm C$. But then, given two $b$-edges $e,e' \in C \cap G_2$ it is straightforward to check that there is a \auto\ $g$ of \g that rotates $C$, fixing it set-wise, and maps $e$ to $e'$, and so every $b$ edge  in $C \cap G_2$ has this property. 

We claim that if $e,e'$ are consecutive $b$ edges of $C\cap G_2$ then there is a path in $G - C$ connecting their incident cycles $C_e, C_{e'}$. To see this, note that as \g is \tcon, removing $e$ and its endvertices $p,q$ does not disconnect \G, and so there is a \pth{C_e}{C}\ $P$ in $G - \{p,q\}$. Let $f$ be the $b$ edge in $C$ incident with the endpoint of $P$. If $f=e'$ then $P$ is the path we were looking for and we are done. If $f\neq e'$, then consider the \auto\ $g$ of \g that rotates $C$ and maps $e$ to $e'$, and let $P':= gP$ be the image of $P$ under $g$. Since the endvertices of both $e$ and $e'$ lie in $G_2$, it follows by the construction of $G_2$ that $P$ and $P'$ lie in the same side of $C$. Thus, as $g$ rotates $C$, $P'$ must meet $P$ at some vertex $z$, say. Now combining the subpaths of $P,P'$ from their starting point up to $z$ we obtain the desired path joining $C_e$ to $C_{e'}$.

In fact, we can now see that $e$ and $e'$ do not have to be consecutive in the above assertion: For if $e=e_1, e_2, \ldots, e_k=e'$ is a sequence of consecutive $b$ edges of $C \cap G_2$, then combining the \pths{C_{e_i}}{C_{e_{i+1}}}\ we just constructed with subpaths of the cycles $C_{e_i}$ we can construct a \pth{C_{e}}{C_{e'}}\ in $G - C$.

We will now use this kind of path to complete our proof of \eqref{g2tcon}. For this, let $C_x\neq C$ be the other tidy \mpsc\ of \g containing $x$, and define $C_y$ similarly for $y$. Note that 
each of $C_x \cap G_2,C_y \cap G_2$ is connected to $C$ in $G_2 - \{x,y\}$. Thus, if $\{x,y\}$ separates $G_2$ then it has to separate $C$ into two subarcs $P_1, P_2$ that lie in distinct components of $G_2 - \{x,y\}$. Pick vertices $v\in P_1 \cap G_2$ and $w\in P_1 \cap G_2$, and let $e_v,e_w$ be the $b$ edges containing $v,w$ respectively. By our last observation there is a  \pth{C_{e_v}}{C_{e_w}}\ $P$ in $G - C$. If we could transform $P$ into a \pth{C_{e_v}}{C_{e_w}}\ $P$ in $G_2 - C$ we would be done, since such a path would contradict the fact that $x,y$ separate $P_1$ from $P_2$. But this is easy to do: for every tidy \mpsc\ $C'\neq C_{e_v},C_{e_w}$ of \g visited by $P$, if $P$ enters the side $A$ of $C'$ that does not meet $G_2$ then it has to exit that side again revisiting $C'$, and we can replace the subpath of $P$ that lies in $A$ by a subarc of $C'$ with the same endpoints. Doing so for every such cycle $C'$, we transform $P$ into a path $P'$ in the auxiliary graph $G'_2$ (see the definition of $G_2$), and it is straightforward to transform $P'$ into a \pth{v}{w}\ in $G_2 - C$. This completes the proof of \eqref{g2tcon}.
\medskip

We now consider separately the cases when $b$ preserves or reverses spin in \G.

If $b$ reverses spin, then recall that $\Gam_2$ was spanned by $b$, $c:=a^{-1}ba$ and $d:=aba^{-1}$, and note that all these generators are involutions. Note moreover that, by construction, $\sig_2$ is a \prem, and that all edges reverse spin in $\sig_2$. Graphs of this kind are characterised in \Sr{secIIa1} below, and it turns out (\Cr{CIIa1}) that if $G_2$ is \tcon\ then it is finite or 1-ended, so we can use our characterisation of those graphs from \Sr{secfin}. We obtain that in this case $G_2 \isom Cay\left<b,c,d\mid b^2, c^2, d^2, (bc)^n, (cd)^m, (db)^p \right>$, $n,m,p \geq 2$, i.e.\ possibility \ref{ziv} of \Tr{LG5}.

If $\Gam_2$ is not \tcon, then by \eqref{g2tcon} we have $c=d$ and $G_2 \isom Cay\left<b,c\mid b^2, c^2, (bc)^n\right>$, where $n\geq 2$ for if $n=1$ then the endvertices of any $b$ edge would separate \G.

\medskip

If $b$ preserves spin, then recall that $\Gam_2$ was spanned by $b$ and $a^*:=aba$. Note that $a^*$ is not necessarily an involution, and that it preserves spin in $\sig_2$ by construction. We distinguish two cases, according to whether the order $n$ of $a^*$ is finite or infinite. 

If $n$ is finite, then $G_2$ is one of the graphs we have already handled: by \Lr{apres}, $G_2$ has at most one end, and so it belongs to type \ref{oi} or \ref{ov} of \Tr{LG2}. Thus, if $a^{*2} = 1$ (in which case \g has hexagonal faces) then $G_2 \isom Cay \left<a^*,b\mid b^2, a^{*2}, (a^* b)^m\right>$, and if $a^{*2}\neq 1$ then  $G_2 \isom Cay \left<a^*,b\mid b^2, a^{*n}, (a^* b)^m\right>$, $n\geq 3$, $m\geq 2$.

If $n$ is infinite, then in particular $a^{*2}\neq 1$ and so $G_2$ is \tcon\ by \eqref{g2tcon}. By \Lr{apres2} $G_2$ has at most one end, so we can apply \Tr{LG2} again, but this time the conclusion is that no such graph $G_2$ exists. Thus $n$ must be finite.

\medskip

In all cases, we have succeeded in finding a \plpr\ of $G_2$. Similarly to what we did in \Sr{secIa}, we will now use this presentations and apply \Tr{ccfdec} to obtain a \plpr\ of \G.

In order to apply \Tr{ccfdec} we will, similarly to the proof of \Tr{TIa2}, let \cx\ be the set of all images of the graph $G'_2$ defined above by \auto s of \G. Again, we define an auxiliary tree $T$ with vertex set $\cx$, this time joining two vertices with an edge whenever they share a tidy cycle. Note that \ref{cciiip} of \Tr{ccfdec}  is satisfied, this time $F_i$ being a tidy cycle. We let again $\cf$ be the set of cycles of \g induced by the relators in the presentation of $\Gam_2$ obtained above after replacing the auxiliary letters $c,d$ and $a^*$ by the corresponding words. All these cycles are contained in $G'_2$. Moreover, \cf\ generates $\cc_f(G'_2)$ by \Lr{relcc}. Thus, defining $\cf_H$ to be the image of \cf\ under the automorphism of \g that maps $G'_2$ to its copy $H\in \cx$, we meet the requirement that $\cf_H$ generate $\cc_f(H)$, and \Tr{ccfdec} yields that $\bigcup_{H\in \cx} \cf_H$  generates $\ccfg$. Using the second sentence of \Lr{relcc} and the definition of \cf\ we thus obtain presentations of $\Gam$ as follows.

If $b$ reverses spin and $G_2$ is \tcon\ then the presentation\\ $G_2 \isom Cay\left<b,c,d\mid b^2, c^2, d^2, (bc)^n, (cd)^m, (db)^p \right>$ obtained above translates into $G \isom Cay\left<a,b\mid b^2, (baba^{-1})^n, (a^2 ba^{-2} b)^m, (baba^{-1})^p  \right>$. Note however that the second and fourth relations both induce the tidy \mpsc s of \G, and so $n=p$ and the two relations coincide, so one of them can be dropped. If $b$ reverses spin and $G_2$ is not \tcon\ then similarly we obtain $G \isom Cay\left<a,b\mid b^2, a^2ba^{-2}b,  (baba^{-1})^n \right>$, where the relation $a^2ba^{-2}b$ is tantamount to $c=d$, and the relation $c^2$ was dropped as it translates into the trivial $a^{-1} b a a^{-1} b a=1$.

If $b$ preserves spin, then the presentations $G_2 \isom Cay \left<a^*,b\mid b^2, a^{*2}, (a^* b)^m\right>$ and $G_2 \isom Cay \left<a^*,b\mid b^2, a^{*n}, (a^* b)^m\right>$ obtained above easily translate into $G \isom Cay \left<a,b\mid b^2, (a^2b)^2, (aba b)^m \right>$ and $G \isom Cay \left<a,b\mid b^2, (a^2b)^{n}, (aba b)^m \right>$ respectively (note that  $(aba)^n= (a^2b)^{n}$). We thus have

\begin{theorem} \label{TIbx}
Let $G= Cay\left<a,b\mid b^2, \ldots \right>$ be a \tcon\ planar \Cg\ with more than one end 
and suppose that $a$ has infinite order. Then $a$ reverses spin. If $b$ reverses spin then either 
\begin{enumerate}\addtolength{\itemsep}{-0.5\baselineskip}
\item \label{bi} $G \isom Cay\left<a,b\mid b^2, a^2ba^{-2}b;  (baba^{-1})^n \right>$, $n\geq 2$, which is the case when \g has hexagonal faces (and $G_2$ is not \tcon), or
\item \label{bii} $G \isom Cay\left<a,b\mid b^2, (a^2 ba^{-2} b)^m; (baba^{-1})^n \right>$, $n,m,p \geq 2$.

\medskip
\hspace*{-\leftmargin}
 If $b$ preserves spin then either
\item \label{biii} $G \isom Cay \left<a,b\mid b^2, (a^2b)^2; (ab)^{2m} \right>$, $m\geq 2$, which is the case when \g has hexagonal faces, or
\item \label{biv} $G \isom Cay \left<a,b\mid b^2, (a^2b)^{n}; (ab)^{2m} \right>$, $n\geq 3$, $m\geq 2$.

\end{enumerate}
In all cases, the presentation is planar. 

Conversely, each of the above presentations, with parameters chosen in the specified domains, yields a planar, \tcon\ \Cg\ with more than one end.
\end{theorem}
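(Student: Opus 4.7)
The forward direction is, in effect, already complete by the end of \Sr{Ib1pres}: Lemma \ref{apres2} gives that $a$ reverses spin, the tidy-\mpsc\ analysis yields \eqref{Ibinout}, the auxiliary subgroup $\Gam_2$ and its \Cg\ $G_2$ are constructed, and $G_2$ is shown to be finite or 1-ended (with the one exceptional bipartite-cycle case covered by \eqref{g2tcon}). The corresponding candidate presentations of $\Gam_2$ are read off \Tr{LG2} or \Tr{LG5}, depending on the spin of $b$ and whether $a^*=aba$ has order $2$ or greater. So my plan is: organise these cases into exactly the four items (i)--(iv) of the theorem, and in each case use \Lr{ccfdec} applied to the tree $T$ whose vertices are the \auto-translates of $G'_2$ (joined by an edge when two such copies share a tidy \mpsc) to transfer the presentation of $\Gam_2$ back to \Gam. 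When rewriting, one replaces each occurrence of $c,d$ or $a^*$ by the corresponding word in $\{a,b\}$; a few relators that become tautologies in \g (such as $c^2$ becoming $a^{-1}baa^{-1}ba=1$) are discarded, and duplicated relators (such as the two tidy-cycle relators appearing as images of $(bc)^n$ and $(db)^p$ when $b$ reverses spin, which force $n=p$) are identified.

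For the planarity of each presentation in the sense of \Sr{intAP}, my plan is to exhibit the embedding directly from the construction: the relators before the semicolon will be precisely those inducing face-boundaries of the natural embedding obtained above, while the relators after the semicolon induce tidy \mpsc s. From \eqref{LIb5} and the edge-decomposition observed at the start of \Sr{Ib1pres}, every $a$-edge lies in exactly one tidy \mpsc\ and every $b$-edge in exactly two; together with the fact that each edge lies on exactly two face-boundaries, this shows no two relators' induced cycles cross in the embedding, so the presentation is planar.

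For the converse, I would mirror the construction in the second half of the proof of \Tr{TIa2}. Given any presentation of one of the four forms, build first the auxiliary group $\Gam_2$ by substituting $c:=a^{-1}ba$, $d:=aba^{-1}$ (case $b$ reverses spin) or $a^*:=aba$ (case $b$ preserves spin); the resulting presentation of $\Gam_2$ is one of those in \Tr{LG5} or \Tr{LG2}, so $G_2$ is a planar finite or 1-ended cubic \Cg\ whose embedding is understood. Then construct \g by amalgamating copies of $G_2$ along the tidy cycles in a tree-like pattern in the plane, orienting each paste so that $a$ reverses spin and so that $b$ has the prescribed spin behaviour; the labels $a^{\pm 1}$ on the suppressed degree-2 paths are assigned to be compatible across the amalgamation. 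Sabidussi's \Tr{sab} shows the result is a \Cg, and \Lr{lkcon} applied to the family of copies of $G_2$ (each $\tcon$ by \Tr{endtcon}, except the degenerate bipartite-cycle case which has to be handled by hand via local inspection of neighbourhoods of tidy cycles) yields that \g is \tcon. The tree of copies forces infinitely many ends. Finally, the forward direction certifies that the constructed \g has precisely the given presentation.

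The main obstacle will be the two hexagonal cases \ref{bi} and \ref{biii}: in \ref{bi} the subgroup $\Gam_2$ degenerates (via $c=d$) so that $G_2$ is essentially a $2n$-cycle with parallel $b$-edges, and one cannot directly invoke \Tr{endtcon}; in \ref{biii} the element $a^*$ is an involution and $G_2$ is a finite cubic graph of type \ref{oi}, so the amalgamation must be organised along \emph{two} incident tidy cycles per copy rather than one. In both cases I would establish \tcon ness of the whole \g by a direct argument using the hexagonal-strip structure observed after \eqref{LIb9}, verifying that removal of any two vertices leaves every hexagon-strip intact and that the strips collectively span \G.
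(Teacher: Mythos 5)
Your proposal is correct and follows essentially the same route as the paper: the forward implication is indeed already contained in the preceding discussion of \Sr{secIb}, and the converse is proved by building \gt\ from the substituted presentation, amalgamating copies along the $(bc)^n$- or $(a^*b)^n$-cycles (the paper's \twsqam), and invoking Sabidussi's theorem, \Lr{lkcon} and the forward implication. The only divergence is in the hexagonal cases \ref{bi} and \ref{biii}, where the paper sidesteps the degenerate $G_2$ by constructing \g\ directly as the explicit graph of \fig{hex} rather than verifying \tcon ness of an amalgamation by hand as you propose; both routes work, the paper's being shorter.
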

\begin{proof}
The forward implication was proved in the above discussion.

For the converse implication we follow the approach the proof of \Tr{TIa2}. Given a presentation \cp\ as in \ref{bi}--\ref{biv}, we construct a \Cg\ \g as follows.

If \cp\ is of type \ref{bii}, then we begin by constructing the auxiliary \Cg\ $G_2 \isom Cay\left<b,c,d\mid b^2, c^2, d^2, (bc)^n, (cd)^m, (db)^n \right> $, where the parameters $n,m$ here coincide with those in \cp. By \Tr{LG5} \ref{ziv}, $G_2$ has an embedding \sig\ in which, in particular, the 2-coloured cycles induced by $(bc)^n$ bound faces. Let \g be the graph obtained from the \twsqam\ (as defined in the Introduction, recall \fig{fidih}) of $G_2$ \wrt\ those cycles. 

Similarly, if \cp\ is of type \ref{biv}, we let $G_2 \isom Cay\left< a^*,b\mid b^2, a^{*n}, (a^* b)^m\right>$, and apply \Tr{LG2} \ref{oi} to obtain an embedding \sig\ of $G_2$ in which the 2-coloured cycles induced by $(a^* b)^n$ bound faces. Let \g be the graph obtained from the \twsqam\ of $G_2$ \wrt\ those cycles. It follows easily from Sabidussi's \Tr{sab} that \g is a \Cg; see \cite{am} for details.

By \Lr{lkcon}  \g is \tcon\ in both above cases; this can be shown by arguments similar to those of the proof of \Tr{TIa2}.

Next we claim that $a$ has infinite order in \G. To see this note that, by construction, any $a$-coloured component of \g meets infinitely many of the cycles along which the \twsqam\ took place. 

If \cp\ is of type  \ref{bi} or  \ref{biii} instead, then our task is easier. Although we could again follow the same approach, starting with a finite graph $G_2$, it is simpler to construct \g directly as in \fig{hex}: choose the number of parallel monochromatic double rays to be twice the parameter $n$ or $m$ in \cp, and direct all edges of every second monochromatic double ray the other way if \cp\ is of type  \ref{biii}.

This completes the construction of \g in all cases. The fact that \g has indeed the desired presentation \cp\ now follows from the forward implication which we have already proved.
\end{proof}

It follows easily from the proof of \Tr{TIbx} that \g has precisely two ends if it is of type \ref{bi} or  \ref{biii}, and it has infinitely many ends if it is of one of the other two types.

We have now completed our analysis of the case where \g has 2 generators $a,b$. Let us remark the following, which is perhaps interesting in view of our discussion in \Sr{intBW}. 

\begin{corollary} 
Let $G= Cay\left<a,b\mid b^2, \ldots \right>$ be a \tcon\ planar \Cg. Then every face of \g has a finite boundary.
\end{corollary}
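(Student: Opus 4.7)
The plan is to invoke the classification of 2-generator \tcon\ planar \Cg s obtained in this section and verify the claim case by case. First, if \g is finite or 1-ended then Lemma~\ref{noinff} immediately gives the result. Otherwise \g is multi-ended and, by Theorems~\ref{TIa2} and~\ref{TIbx}, it must fall into one of the six families \ref{AIa2i}, \ref{AIa2ii}, \ref{bi}, \ref{bii}, \ref{biii}, \ref{biv}.

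For families \ref{bi} and \ref{biii} I would point to the explicit construction in the proof of Theorem~\ref{TIbx}, which embeds \g as a hexagonal pattern (the graph of \fig{hex}, possibly with orientations reversed on alternate $a$-rows), in which every face is a hexagon and hence finite. For the remaining four families, the proofs of Theorems~\ref{TIa2} and~\ref{TIbx} construct a planar embedding of \g by Mohar- or twist-squeeze-amalgamating copies of a finite or 1-ended \tcon\ planar \Cg\ $G_2$ along the cycles induced by the relators appearing after the semicolon in the presentation of \G. The strategy is then to argue that the face boundaries of \g are exactly (translates of) the face boundaries of the copies of $G_2$, which by Lemma~\ref{noinff} are all finite.

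The main technical point to verify is that the amalgamating cycles $C$ themselves are never face boundaries of \G, and that no face of \g straddles such a $C$. Both facts should follow directly from the construction: each amalgamating cycle $C$ is a cycle of \g and therefore part of its image in the plane, so it cannot lie in the interior of a face; and the two open discs of $\R^2 \sm C$ are each accommodated by a copy of $G_2$ in the amalgamation, subdividing them into faces inherited from those copies of $G_2$. Combining these three cases with the 1-ended case handled by Lemma~\ref{noinff} completes the proof.
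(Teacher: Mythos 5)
Your argument reaches the right conclusion and is workable, but it takes a genuinely different route from the paper's. The paper does not revisit the constructions at all: it observes that every multi-ended presentation produced by Theorems~\ref{TIa2} and~\ref{TIbx} contains a relator (the one before the semicolon) inducing a face-boundary, and that since $a$ reverses spin in all these cases (by \eqref{arevI} and \Lr{apres2}) and the embedding is \pr, any two face-boundaries of \g\ can be mapped to one another by a \auto; hence every face-boundary is a translate of the finite cycle induced by that relator. This is a uniform two-line homogeneity argument requiring no case distinctions. Your route instead passes through the converse (construction) halves of the two theorems plus the essential uniqueness of the embedding, identifying each face of \G\ with a non-amalgamating face of some copy of $G_2$ and applying \Lr{noinff} to $G_2$. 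That can be made to work, but it costs more than it buys: beyond the two points you flag, you also need to rule out faces not captured by any single copy of $G_2$ in the infinite recursion (components of the complement arising at accumulation points of nested amalgamating cycles), and to check that a face incident with an amalgamating cycle from either side is a \emph{non-amalgamating} face of the corresponding copy --- true here because the amalgamating cycles of a copy are pairwise disjoint, but this is exactly the sort of bookkeeping the paper's automorphism argument avoids. Similarly, in the hexagonal cases \ref{bi} and \ref{biii} it is not literally true that every face of the plane embedding is a hexagon (the graph is a two-ended cylinder, so there may be one or two exceptional faces at its ends), although those faces still have finite boundary; the transitivity argument sidesteps this as well.
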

\begin{proof}
If \g has only one end then this follows from our results of \Sr{secfin} (in fact, this is known and holds no matter what the vertex degree is, see \cite{KroInf}). 

If \g has more than one end, then recall that all our presentations contained a relator inducing a face-boundary. As $a$ always reverses spin in this case (see \eqref{arevI} and \Lr{apres2}), and our embeddings are consistent, it follows easily that any two face-boundaries of \g can be mapped to each other by a \auto. Thus all face-boundaries are induced by that relator, and so they are finite.
\end{proof}

\section{The planar multi-ended \Cg s generated by 3 involutions} \label{secII}

Having already described all planar cubic \Cg s on two generators, we proceed to the planar cubic \Cg s on three generators. Recall that it only remains to describe those that are \tcon\ and multi-ended. We divide them into two subclasses, those that have 2-coloured cycles and those that do not, since different arguments are needed in these two cases.

\subsection{Graphs with 2-coloured cycles} \label{secIIa}
 
We have to distinguish three further subcases, according to how the  2-coloured cycles behave \wrt\ spin.

\subsubsection{2-coloured cycles in two spin-reversing colours} \label{secIIa1}

We start this section by pointing out that certain choices of spin behaviour cannot give rise to \Cg s of the type we are studying. We will make use of these results in subsequent subsections, where we characterise the choices that do give rise to \Cg s.

\begin{lemma}\label{LIIaPsc}	
Let $G\isom Cay\left<b,c,d\mid b^2, c^2, d^2, \ldots \right>$ be planar, multi-ended and \tcon. If $(bc)^n=1$ for some $\nin$, and both $b,c$ reverse spin, then \g has a \psc. 
\end{lemma}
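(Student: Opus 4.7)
Plan.  The strategy is contrapositive: assume $G$ has no dividing cycle and derive a contradiction from the hypothesis that $b$ and $c$ both reverse spin and there is a $\{b,c\}$-cycle.

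First I would establish that every $\{b,c\}$-cycle $C$ of $G$ bounds a face. Fix a vertex $v$ on $C$ and consider the face-corner at $v$ flanked by its $b$- and $c$-edges. Trace the incident face: moving from $v$ to $vb$, the spin has been reversed (since $b$ reverses spin), and the tracing convention brings us to the corner of $vb$ flanked by the $c$- and $b$-edges; this is the next face-corner on the same face, and exiting via the $c$-edge takes us to $vbc$. Iterating (each edge traversed is a $b$ or a $c$, each of which flips the spin) the trace returns to $v$ after exactly $2n$ steps, following the cycle $C$ itself. Thus the face occupying the $(b,c)$-corner at every vertex of $C$ is bounded by $C$, and in particular is finite. (We have $n\ge 2$, since $n=1$ would force $b=c$.)

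Now suppose for contradiction that $G$ has no dividing cycle. Since $G$ is multi-ended and 2-connected, the contrapositive of \Lr{LG3} yields an infinite face-boundary; by \tcon\ and \Lr{lprem}/\Tr{imrcb} this boundary is a double ray $R$ bounding an infinite face $F$. At every vertex $v\in V(R)$ the face $F$ occupies one of the three corners at $v$, but by the previous paragraph the $(b,c)$-corner at $v$ is occupied by a finite $\{b,c\}$-face and cannot be $F$. Therefore the corner occupied by $F$ at $v$ is one of the two corners flanked by the $d$-edge and a $b$- or $c$-edge, and in particular the $d$-edge at $v$ belongs to $R$. Hence $R$ uses the $d$-edge at every vertex it visits, and alternates $d$-edges with $b$- or $c$-edges along its length.

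Finally, I would imitate the Menger-style argument used in the proof of \Lr{apres2} to produce a dividing cycle and obtain the required contradiction. Pick two vertices $x,y$ of $R$ at large distance apart along $R$; by \tcon\ and Menger's theorem there are three internally disjoint $x$-$y$ paths, and one of them may be taken to be the arc $R_{xy}$ of $R$ between $x$ and $y$, the other two, $P_2$ and $P_3$, lying on the side of $R$ opposite to $F$. By a standard topological argument, two of these three paths bound a cycle $K$ such that the two tails $R^-$ and $R^+$ of $R$ lie in one side of $K$ (giving infinitely many vertices there), while the third path has an interior vertex $z$ in the other side of $K$ (choosing $x,y$ far enough apart that this third path is not a single edge). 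The key point provided by the hypothesis is that the $\{b,c\}$-cycle $C_z$ through $z$ bounds a face and therefore cannot straddle $K$: all $2n$ of its vertices must lie on the same side of $K$ as $z$. Pushing this trapping argument --- any further $\{b,c\}$-cycle reachable from $z$ without crossing $K$ is also confined to that side --- one obtains infinitely many vertices on both sides of $K$, so that $K$ is a \psc, contradicting our assumption.

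Main obstacle. The heart of the argument is the final step, ensuring that the cycle $K$ built from two of the three independent $x$-$y$ paths has \emph{both} sides infinite. This is exactly where the $b,c$-reverse-spin hypothesis is used: because the $\{b,c\}$-cycles bound faces, they cannot cross $K$, which is the device that lets one pump infinitely many vertices onto the side of $K$ containing the third path --- the role played by the facial $a$-coloured double rays in the analogous argument of \Lr{apres2}.
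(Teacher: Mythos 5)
Your opening moves match the paper's: both of you observe that the $\{b,c\}$-cycles bound faces because $b$ and $c$ reverse spin, and both dispose of the case where all faces are finite via \Lr{LG3}. The divergence, and the problem, is in the final step. After building the cycle $K$ from two of the three Menger paths, you must show that the side $B$ of $K$ containing the interior vertex $z$ of the third path is infinite. In \Lr{apres2} this works because the object trapped in that side is an \emph{infinite} facial double ray; here the objects you trap are $\{b,c\}$-cycles, each of which has only $2n$ vertices. Confining finitely many finite cycles to $B$ certifies nothing, and the ``pushing this trapping argument'' step supplies no mechanism that produces infinitely many of them: if $B$ happened to be finite, every $\{b,c\}$-cycle reachable from $z$ would indeed be confined to $B$, there would simply be finitely many of them, and no contradiction would arise. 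So the hypothesis you single out as the ``key point'' does no work at the place where you deploy it.

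The gap is repairable, but by a different device. Since the embedding is \pr\ (\Lr{lprem}, \Tr{imrcb}) and some face is infinite, \emph{every} vertex is incident with an infinite face; the infinite face at $z$ is a connected open set disjoint from $G\supseteq K$, hence lies entirely in $B$, and its boundary --- which is infinite --- lies in $\overline{B}$, giving infinitely many vertices in $B$. This is essentially what the paper does in a more economical form: it builds a cycle $D$ through an edge of a $bc$-cycle incident with the infinite face $F$ and uses the \auto\ rotating that $bc$-cycle to exhibit a second infinite face $F'$ on the other side of $D$, so that both sides contain an infinite face. A second, minor, flaw: you cannot in general prescribe that one of the three independent \pths{x}{y}\ be the arc of $R$ between $x$ and $y$ --- Menger gives three independent paths, not one of your choosing. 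This is harmless, since (as in \Lr{apres2}) you only need that a tail of the infinite boundary walk is trapped in one side of the cycle formed by two of the paths, which holds for any cycle through $x$ and $y$; but the claim as stated is unjustified.
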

\begin{proof}
Note that under these assumptions, $b,c$ span finite cycles that bound faces of \G, see \fig{fIIa1}. If the remaining faces are also finite then we are done by \Lr{LG3}.

If there is an infinite face $F$, then it must be incident with one of the $bc$ cycles $C$, so let $e=zy$ be an edge of $C$ incident with $F$, and assume \obda\ that $e$ is coloured $c$, see \fig{fIIa1}. Let $v=zb$ be  the vertex of $C - y$  adjacent with $z$. Since \g is \tcon, there is a path $P$ in $G - \{z,v\}$ from $y$ to the neighbour $x$ of $z$ outside $C$. Then $yPxzy$ is a cycle $D$. Note that there is a side $A$ of $D$ containing $F$, and so $A$ is infinite. 
\showFig{fIIa1}{Finding a \psc\ in the proof of \Lr{LIIaPsc}.}

We would like to show that $G - A$ is also infinite, which would mean that $D$ is \ps. To show this, consider the \auto\ $g$ of \g mapping $z$ to $v$. Then $g$ maps $e$ to some other $c$ edge $f$ of $C$, and it maps $F$ to some infinite face $F'$ incident with $f$. Note that $D$ cannot contain $f$ by its construction. Thus $F'$ and the finite face $F''$ bounded by $C$ lie in the same side of $D$. But that side cannot coincide with $A$, because as $e\in E(C)$, $C$ separates $F''$ from $F\subseteq A$. Thus both sides of $D$ are infinite, since they contain the distinct infinite faces $F,F'$.

\end{proof}



\begin{lemma} \label{LIIab1}
Let $G\isom Cay\left<b,c,d\mid b^2, c^2, d^2, \ldots \right>$ be planar, multi-ended and \tcon.  
Then at least one of the colours, $d$ say, preserves spin. Moreover, if both $b,c$ reverse spin then either $(bd)^n=1$ or $(cd)^n=1$ holds for some $n \geq 2$.
\end{lemma}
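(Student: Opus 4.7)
The plan is to argue both assertions by contradiction, combining the facial-walk analysis of Theorem~\ref{LG5} with the crossing-shortening machinery of Section~\ref{secPsc}.

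For the first assertion, suppose that $b,c,d$ all reverse spin. The facial analysis of Case~\ref{ziv} of Theorem~\ref{LG5}, which uses only the local spin pattern, shows that every 2-coloured walk of length two at any vertex is facial, so every face-boundary of \g is 2-coloured: either a finite cycle $(xy)^k$ when the pair $xy$ has finite order, or an infinite 2-coloured double ray otherwise. Since \g is multi-ended and \tcon, \Lr{LG3} yields that \g contains a \psc; let $C$ be a \mpsc\ of \g, chosen so that its longest facial subpath $F$ has maximum length among all facial subpaths of all \mpsc s of \g. I then run a crossing-shortening argument modelled on Cases~I--III of the proof of Lemma~\ref{apres} and the derivations of \eqref{LIb1}--\eqref{LIb2}: translating $C$ by the \auto\ that rotates a vertex inside the finite 2-coloured face containing $F$ (or that swaps the endpoints of an end-edge of $F$) produces a translate $C'$ crossing $C$ in a configuration forbidden by Corollary~\ref{CII3p}. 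Thus no such $C$ can exist, contradicting the multi-endedness of \g.

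For the second assertion, assume $b,c$ reverse spin; by~(1), $d$ preserves spin. The facial analysis of Case~\ref{ziii} of Theorem~\ref{LG5} shows that every face-boundary of \g is $(bc)$-alternating or $(bdcd)$-alternating, finite or infinite. By \Lr{LG3}, \g contains a \mpsc\ $C$; choose $C$ so that its longest facial subpath $F$ has maximum length among all facial subpaths of all \mpsc s. Suppose for contradiction that both $bd$ and $cd$ have infinite order, so that the $\{b,d\}$- and $\{c,d\}$-monochromatic subgraphs consist entirely of infinite double rays and in particular \g contains no finite $(bd)$- or $(cd)$-alternating cycle. Since any $(bc)$-alternating cycle of \g is a $(bc)$-face-boundary (hence not \ps), $C$ cannot be 2-coloured, so $C$ contains edges of all three colours. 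I then exploit the spin-preservation of $d$: translating $C$ by the \auto\ swapping the endpoints of a suitable $d$-edge of $F$, or rotating around a vertex of $F$ that sits in a $(bdcd)$-face of maximum length, produces a translate $C'$ of $C$ crossing $C$ in a configuration violating Corollary~\ref{CII3p}, giving the desired contradiction. Hence at least one of $bd, cd$ has finite order $n\ge 2$, as required.

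The main obstacle in both parts is the case-by-case verification that the postulated \auto\ really produces a genuine crossing of $C$ with $C'$: one must distinguish whether the relevant subpath $F$ lies in a finite or infinite face, and whether the end-edges of $F$ are of the spin-preserving or spin-reversing colour. These case-splits closely parallel the crossing analyses of Sections~\ref{secIa} and~\ref{secIb} and the derivations of \eqref{LIb1}--\eqref{LIb2}, but the mixed colour palette and the possibility of infinite ambient faces introduce additional subcases which must be checked individually.
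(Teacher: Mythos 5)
Your overall strategy (take a \mpsc\ $C$ whose longest facial subpath $F$ is maximal, and force a crossing forbidden by \Cr{CII3p}) is the paper's strategy, but two steps in your sketch do not go through as stated. First, the existence of the \psc: you note yourself that face-boundaries may be infinite 2-coloured double rays, and then invoke \Lr{LG3} to produce a \psc. But \Lr{LG3} only says that a \iicong\ with \emph{no infinite face-boundary} and no \psc\ is at most 1-ended; once an infinite face-boundary is present it gives nothing. This is precisely why the paper proves \Lr{LIIaPsc} (a separate argument using 3-connectedness to build a cycle with both sides infinite when $(bc)^n=1$ and $b,c$ reverse spin) and, when no two colours span finite cycles, imitates the proof of \Lr{apres2}. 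The same gap recurs in your second part, where you again cite \Lr{LG3} without ruling out infinite faces.

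Second, and more seriously, the crossing argument in your second part is not the right move. You propose to obtain the crossing by ``swapping the endpoints of a suitable $d$-edge of $F$''; but $d$ is the spin-\emph{preserving} colour there, and exchanging the endvertices of a spin-preserving end-edge of $F$ does not force a translate of $C$ into both sides of $C$ --- indeed, if it did, the same move would also dispose of Case~III in the trichotomy (end-edges of $F$ coloured $d$ with $d$ preserving spin), showing that \emph{no} \mpsc\ exists and hence that \g\ is at most 1-ended, contradicting the hypothesis. The actual argument has to extract more structure first: since $e$ (the last edge of $F$) is a spin-preserving $d$-edge ending a maximal facial path, the edge of $C$ after $e$ must bear the same colour as the edge of $F$ before $e$; from this one shows $C$ has no $db$ subpath, hence (being 3-coloured) a subpath of the form $dc(bc)^kd$; the crossing is then produced by exchanging the endvertices of the first, spin-\emph{reversing}, $c$-edge of that subpath, and the contradiction comes from a maximality argument via \eqref{CII3m} in the style of \eqref{LIb1}, not directly from \Cr{CII3p}. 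None of these intermediate claims appear in your sketch, and without them the second assertion is not proved.
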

\begin{proof}
If at least two of the colours preserve spin then there is nothing to show, so suppose \g is such a graph in which two colours reverse spin. We distinguish two cases. If two of the colours, $b$ and $c$ say, span finite cycles, that is, if  $(bc)^n=1$ holds, then we can assume that  both $b,c$  reverse spin for otherwise our claim is already proved. We can then apply \Lr{LIIaPsc}, which yields that \g has a \psc. If no two of the colours span finite cycles, then we can imitate \Lr{apres2} (see \fig{fIb2}) to prove that \g has a \psc. Thus \g has a \psc\ in both cases. Moreover, we can assume in both cases that  $b$ and $c$ reverse spin.

Choose a \mpsc\ $C$ with a facial subpath $F$ of maximum length among all \mpsc s of \G. We distinguish three cases according to how $F$ ends, which are very similar to the cases in \Lr{apres}: either $F$ ends with an edge coloured $b$ or $c$, or both end-edges of $F$ are coloured $d$ and $d$ reverses spin, or both end-edges of $F$ are coloured $d$ and $d$ preserves spin. 
In the first two cases it is easy to obtain a \mpsc\ $C'$ that crosses $C$ similarly to the first two cases of  \Lr{apres} (see \fig{fIa12}): consider the \auto\ of \g exchanging the endvertices of the last edge $e$ of $F$. But this crossing contradicts \Cr{CII3p}. Thus the third case must occur; in particular, we have shown the first part of our claim, asserting that $d$ must preserve spin.

Now suppose that the second part of our claim is false, that is, $d$-edges are in no 2-coloured cycles. Then $C$ must be 3-coloured. Now let $e$ be the last edge of $F$, and recall that $e$ is coloured $d$. Assume \obda\ that the colour of the edge $f$ of $F$ preceding $e$ is $c$. Note that the edge $h$ following $e$ on $C$ must be coloured $c$ too, since $e$ is the last edge of a facial path and $e$ preserves spin (\fig{fIIab1}). We can use this fact to show that $C$ has no subpath $P$ of the form $db$: for then we could map the $d$-edge of $P$ to $e$ by a \auto\ of \g to obtain a translate $C'$ of $C$ that crosses $C$. Indeed, the image of the $b$ edge $g$ of $P$ would then lie in the side of $C$ containing the face \cf\ incident with $F$. Moreover, $C'$ would have to leave $F$ with an edge that lies in the other side of $C$, because $C'$ is not allowed to have a path longer than $F$ incident with \cf\ and $g\in E(C')$ is incident with $F$ (\fig{fIIab1}). But this kind of crossing contradicts  \Cr{CII3p}, which proves our claim that $C$ has no subpath of the form $db$.

\showFig{fIIab1}{The facial path $F$ in the proof of \Lr{LIIab1}.}

As $C$ must be 3-coloured, this implies that $C$ has a subpath of the form $dc(bc)^kd$ for some $k\geq 1$. Now exchanging the endvertices of the first $c$ edge of such a subpath by a \auto\ of \g yields again a crossing. We can now use an argument similar to the the proof of \eqref{LIb1} for $p$ even (\fig{crossIb}) to obtain a contradiction: choosing $C$ so as to maximise the length of a subpath $P\subseteq C$ of the form $dc(bc)^kd$, and considering a crossing as above, \eqref{CII3m} implies the existence of a \mpsc\ with a longer path of this kind. This completes the proof of the second part of our claim.
\end{proof}

With \Lr{LIIab1} we immediately obtain

\begin{corollary} \label{CIIa1}
There is no planar \tcon\ multi-ended \Cg\ of the form $Cay\left<b,c,d\mid b^2, c^2, d^2, \ldots \right>$ 
in which all edges reverse spin.
\end{corollary}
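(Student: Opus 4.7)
The plan is to derive this as an immediate consequence of \Lr{LIIab1}. Suppose, for contradiction, that such a \Cg\ $G \isom Cay\left<b,c,d\mid b^2, c^2, d^2, \ldots \right>$ exists with all three of $b,c,d$ reversing spin. Then \g satisfies the hypotheses of \Lr{LIIab1} (it is planar, \tcon, and multi-ended, with the standard involution presentation), so the lemma applies.

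The first conclusion of \Lr{LIIab1} is that at least one of $b,c,d$ must preserve spin in the embedding. This directly contradicts our assumption that all three colours reverse spin, yielding the desired contradiction.

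Thus no such \Cg\ exists, and no further work is required. In essence, the corollary is just the contrapositive of the first sentence of \Lr{LIIab1}; the real content lies in that lemma, whose proof already produced a \mpsc\ (via \Lr{LIIaPsc} or an imitation of \Lr{apres2}) and then ruled out the all-reversing case by the crossing argument using \Cr{CII3p}. There is no separate obstacle to address here.
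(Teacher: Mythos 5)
Your proof is correct and is exactly the paper's argument: the corollary is stated as an immediate consequence of \Lr{LIIab1}, whose first conclusion (at least one colour preserves spin) directly contradicts the all-reversing hypothesis. Nothing further is needed.
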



\comment{
	The graphs in the following assertion do not belong to this section, but we state it here because it follows from the same arguments

	\begin{corollary}\label{CIIb1}
	There is no planar \tcon\ multi-ended \Cg\ of the form $Cay\left<b,c,d\mid b^2, c^2, d^2, \ldots \right>$ that has no 2-coloured cycle and at least two colours reversing spin.
\end{corollary}
\begin{proof}
	Repeat the proof of \Lr{LIIab1} letting, \obda, the colours $b,c$ be spin-reversing.
	\end{proof}
}

\subsubsection{2-coloured cycles in two spin-preserving colours} \label{secIIa2}

In this section we study the planar \tcon\ \Cg s of the form $G =Cay\left<b,c,d\mid b^2, c^2, d^2, \ldots \right>$ that have a cycle induced by a relation of the form $(bc)^n$ and both $b$ and $c$ preserve spin.

It is not hard to check that a graph of this kind must be infinite, because every such cycle has a translate of itself in each of its sides, and, by the same argument, every such cycle is \ps\ (\fig{fIIa2}); in particular, \g is multi-ended.
\showFig{fIIa2}{A 2-coloured cycle with spin-preserving edges, and some of its translates.}

We will again consider a subgroup $\Gam_2$ of $\Gam(G)$ as we did in Sections \ref{secIa} and \ref{Ib1pres}: this time we let  $\Gam_2$ be the subgroup of $\Gam(G)$ generated by $bc, (bc)^{-1}, d$. Let $a:= bc$, and note that $a^n=1$. Again, we will define the auxiliary subgraph  $G'_2\subseteq G$ and use it to obtain a \Cg\ $G_2$ of $\Gam_2$ with an embedding induced by that of \G\ using a construction and arguments very similar to those of \Sr{secIa}. 

To begin with, note that if $x,y$ are two elements of $\Gam_2$, then there is an \pth{x}{y}\ $P$ in \g the $b$ and $c$ edges of which can be decomposed into incident pairs. Thus, since $b$ and $c$ preserve spin, whenever such a path $P$ meets a $bc$ cycle $C$ of \G, the two edges of $P$ incident with $C$ lie in the same side of $C$ (\fig{fIIa2}). In other words, $P$ cannot cross any $bc$ cycle $C$ of \G. Now given the embedding \sig\ of \G, we can modify \g and \sig\ to obtain a \Cg\ $G_2$ of $\Gam_2$, \wrt\ to the generating set $\{a, d\}$, and an embedding $\sig_2$ of $G_2$ as follows. For every $bc$ cycle $C$ of \g that contains a vertex in $\Gam_2$, delete all vertices and edges in the side of $C$ that does not meet $\Gam_2$. Let $G'_2$ be the graph obtained after doing so for every such cycle. Then, suppress all vertices of $G'_2$ that now have degree two; that is, replace any $bc$ path $xPy$ of length two whose middle vertex now has no incident $d$-edge by a single $x$-$y$~edge, directed the same way as $P$ and bearing the colour $a$, to obtain the \Cg\ $G_2$ of $\Gam_2$.  

Using \Lr{Liicon} it \ises\ that $G_2$ is \iicon\ since \g was.

We are now in the fortunate situation of having obtained a \Cg\ of a type that we have already handled: $G_2$ is generated by two elements, and has monochromatic cycles induced by the relation $a^n$. Moreover, by the construction of the embedding $\sig_2$, the $a$-edges preserve spin. Thus we can apply \Lr{apres}, which yields that $G_2$ has at most one end. So $G_2$ is one of the graphs in \Tr{LG2}, and as $a$ preserves spin cases \ref{oiii} and \ref{oiv} can be eliminated. The degenerate case \ref{ov} of \Tr{LG2} cannot occur, because it would imply that \g is not \tcon\ as opposite vertices of a $bc$ cycle would separate in that case.

Having obtained a \plpr\ of $\Gam_2$, we can now use the same method as in \ref{secIa}, namely to apply \Tr{ccfdec}, to yield a \plpr\ of $\Gam$. 

\begin{theorem} \label{TIIa2}
Let $G= Cay\left<b,c,d\mid b^2, c^2, d^2, \ldots \right>$ be a planar \tcon\ \Cg\ with more than one end, and suppose that $bc$ has a finite order $n$ and both $b,c$ preserve spin. 
If $d$ preserves spin then

$G \isom Cay \left<b,c,d\mid b^2,c^2,d^2, (bcd)^m; (bc)^n \right>$, $n\geq 3$, $m\geq 2$.\\ 
If $d$ reverses spin then

$G \isom Cay \left<b,c,d\mid b^2,c^2,d^2, (bcdcbd)^m; (bc)^n\right>$, $n\geq 3, m\geq 1$.\\
In both cases, the presentation is planar. 

Moreover, \g is the Mohar amalgamation of $G_2 \isom Cay \left<a,b\mid b^2, a^n, (ab)^m\right>$ or
 $G_2 \isom Cay \left<a,b\mid b^2, a^n, (aba^{-1}b)^m\right>$ with itself along the $a$ coloured cycles.

Conversely, each of these presentations, with parameters chosen in the specified domains, yields a \Cg\ as above.
\end{theorem}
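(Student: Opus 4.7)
The plan is to follow the blueprint already set up in the preceding discussion, which closely parallels the proof of \Tr{TIa2} in \Sr{secIa}. The argument naturally splits into a forward implication (recovering the presentation from the hypothesis) and a converse (constructing $G$ from the given presentation and verifying its properties).

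For the forward direction, I would first formalise the construction of $G'_2$ and $G_2$ sketched in the discussion above the theorem. Given a \prem\ \sig\ of $G$, for each $bc$-cycle $C$ meeting $\Gam_2 = \langle bc,(bc)^{-1},d\rangle$ I delete the side of $C$ disjoint from $\Gam_2$; then I suppress each degree-$2$ vertex, labelling the resulting edge $a=bc$ with its natural orientation. One verifies that $G_2$ is the Cayley graph of $\Gam_2$ with respect to $\{a,d\}$, and that \sig\ induces a \prem\ $\sig_2$ of $G_2$ in which the $a$-edges preserve spin (because both $b$ and $c$ do, so the ``macroscopic'' spin agrees). \Lr{Liicon} gives $2$-connectedness of $G_2$, and \Lr{apres} then yields that $G_2$ has at most one end. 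Now \Tr{LG2} applies: since $a$ preserves spin cases \ref{oiii},\ref{oiv} are excluded; case \ref{ov} is excluded because it would force a separating pair in $G$ (antipodal vertices of a $bc$-cycle), contradicting \tcon ness. Thus we fall into case \ref{oi} when $d$ preserves spin, yielding $G_2\isom Cay\langle a,b\mid b^2,a^n,(ab)^m\rangle$, or case \ref{oii} when $d$ reverses spin, yielding $G_2\isom Cay\langle a,b\mid b^2,a^n,(aba^{-1}b)^m\rangle$.

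To pass from a presentation of $G_2$ to one of $G$, I would organise the left cosets of $\Gam_2$ in $\Gam$ into the family $\cx:=\{xG'_2 : x\in V(G)\}$ and define a tree $T$ on $\cx$ by joining $H,H'\in\cx$ whenever they share a $bc$-cycle. The verification that $T$ is indeed a tree (a Dunwoody-style structure tree) and satisfies condition \ref{cciiip} of \Lr{ccfdec} is standard: any $bc$-cycle separates $G$ into two infinite sides, and each copy of $G'_2$ lies in a single ``branch''. Then \Lr{ccfdec}, applied with $\cf_H$ taken to be the translate of the set of relator cycles from the $\Gam_2$ presentation (after substituting $a\mapsto bc$), yields that these cycles generate $\cc_f(G)$. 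By the second sentence of \Lr{relcc}, the corresponding words form a presentation of $\Gam$, giving exactly $Cay\langle b,c,d\mid b^2,c^2,d^2,(bcd)^m;(bc)^n\rangle$ or $Cay\langle b,c,d\mid b^2,c^2,d^2,(bcdcbd)^m;(bc)^n\rangle$ respectively (the word $(bcdcbd)^m$ arising by substituting $a=bc$ and $a^{-1}=cb$ into $(aba^{-1}b)^m$). Planarity of both presentations is immediate from the construction, since each relator corresponds to a facial or dividing cycle in \sig.

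For the converse, given one of the two presentations, I form the corresponding $G_2$ via \Tr{LG2}, whose embedding has the monochromatic $a^n$-cycles bounding faces. I then take the Mohar amalgamation of $G_2$ with itself along these cycles, as in \fig{fiamal}(iii), obtaining a plane coloured graph $G$. That $G$ is a Cayley graph follows from Sabidussi's \Tr{sab} applied to the symmetries furnished by the amalgamation (cf.\ \cite{am}). The main delicate point will be verifying \tcon ness: I would apply \Lr{lkcon} with $\{K_i\}$ the vertex sets of the various copies of $G_2$ inside $G$, using that each copy is \tcon\ by \Tr{endtcon}, that any two copies sharing a $bc$-cycle are $3$-connected to each other along that cycle, and that the auxiliary tree on copies is connected. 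Finally, applying the forward implication to the constructed $G$ recovers the given presentation. The main obstacle I anticipate is the \tcon ness verification in the converse, since one must carefully track how the in-out alternation of the Mohar amalgamation prevents small separators from arising at the amalgamating $bc$-cycles; the rest is formal bookkeeping analogous to \Tr{TIa2}.
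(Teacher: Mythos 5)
Your proposal is correct and follows essentially the same route as the paper: the same subgroup $\Gam_2=\langle bc,d\rangle$, the same construction of $G'_2$ and $G_2$ with $a$-edges preserving spin, \Lr{apres} plus \Tr{LG2} (with cases \ref{oiii}, \ref{oiv}, \ref{ov} excluded for the same reasons) to pin down $G_2$, then \Lr{ccfdec} and \Lr{relcc} over the coset tree for the presentation of $\Gam$, and the converse via the Mohar amalgamation, Sabidussi's theorem and \Lr{lkcon}. No gaps worth flagging.
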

\begin{proof}
For the forward implication we apply \Tr{ccfdec} as in the previous sections. This time the common cycles giving rise to the edges of the auxiliary tree $T$ on the copies of $G'_2$ are the 2-coloured cycles induced by $(bc)^n$. Recall that we obtained a presentation of \gt\ in the above discussion from \Tr{LG2}: we have $G_2 \isom Cay \left<a,d\mid d^2, a^n, (ad)^m\right>$, $n\geq 3$, $m\geq 2$ if $d$ preserves spin and
$G_2 \isom Cay \left<a,d\mid d^2, a^n, (ada^{-1}d)^m\right>$, $n\geq 3, m\geq 1$ 
if $d$ reverses spin, with $a=bc$ in both cases. Applying \Tr{ccfdec}, and replacing $a$ back yields the desired \plpr s.

To prove the converse implication, given one of these presentations of the first type we construct the  Mohar amalgamation $G$ of $G_2 := Cay \left<a,b\mid b^2, a^n, (ab)^m\right>$ with itself along the $a$ coloured cycles ---see Introduction. By Sabidussi's theorem \g is a \Cg; see \cite{am} for details. \Lr{lkcon} yields that \g is \tcon\ since, by \Tr{endtcon}, $G_2$ is \tcon. We can thus apply the forward implication to prove that the presentation we started with is indeed a presentation of \G. If we are given a  presentations of the second type instead, then we proceed similarly, except that we now let 
 $G_2 := Cay \left<a,b\mid b^2, a^n, (aba^{-1}b)^m\right>$. 

\end{proof}


\subsubsection{2-coloured cycles with mixed spin behaviour} \label{secIId1}

In this section we study the planar, infinite, \tcon\ \Cg s of the form $G =Cay\left<b,c,d\mid b^2, c^2, d^2, \ldots \right>$ that have a cycle of the form $(bc)^r$ and precisely one of $b,c$ preserves spin. Let us assume that $b$ preserves spin while $c$ reverses spin (\fig{fIId1}). 
\showFig{fIId1}{A 2-coloured cycle with mixed spin behaviour.}


We proceed similarly to \Sr{secIIa2}: let  $\Gam_2$ be the subgroup of $\Gam(G)$ generated by $bcb, c, d$. Let $b^*:= bcb$, and note that $b^{*2}=1$ and $(b^*c)^{n}=1$ where $n = r/2$ ($r$ must be even). Define the auxiliary subgraph  $G'_2\subseteq G$ and use it to obtain a \Cg\ $G_2$ of $\Gam_2$ with an embedding $\sig_2$ induced by \sig\ similarly to what we did in \Sr{secIIa2} and \Sr{secIa}: for every $bc$ cycle meeting $\Gam_2$ delete all vertices in its side not meeting $\Gam_2$, then suppress vertices of degree 2. Note that in this case we replace paths of length 3 by edges when suppressing, while in \Sr{secIIa2} the corresponding paths had length 2.

By the construction of $\sig_2$ the new edges, coloured $b^*$, reverse spin while the $c$ and $d$-edges retain their spin behaviour of \sig. This means that the new $b^*c$ cycles have all their edges reversing spin. Thus $G_2$ is the kind of graph we studied in \Sr{secIIa1} or \Sr{secfin} if it is \tcon. Let us check that this is indeed the case. 


\begin{proposition} \label{kG2}
$G_2$ is \tcon.
\end{proposition}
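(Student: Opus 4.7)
The plan is to adapt the proof of \eqref{g2tcon} from Section \ref{Ib1pres}, with the $bc$-cycles of $G$ now playing the role that the tidy \mpsc s played there. Suppose for contradiction that a pair $\{x,y\}$ disconnects $G_2$; I will derive a contradiction from the \tcon ness of $G$. First, for every vertex $v\in V(G_2)$ the three $G_2$-neighbours $vc$, $vd$ and $vb^*=vbcb$ satisfy $vc,vb^*\in V(C_v)$, where $C_v$ denotes the $bc$-cycle of $G$ through $v$ (which, after suppression, yields the $b^*c$-cycle of $G_2$ through $v$). In particular, $C_v-v$ is a path in $G_2$ joining $vc$ to $vb^*$, so single-vertex deletions never isolate anything locally; this pins down the local structure to be exploited.

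I would then split into two cases. In \textbf{Case A}, $x$ and $y$ lie on distinct $bc$-cycles $C_x,C_y$ of $G$; in this case each of $C_x-x$ and $C_y-y$ remains a connected subgraph of $G_2-\{x,y\}$ containing the respective local neighbourhoods. To show that $G_2-\{x,y\}$ is globally connected, given $u,v\in V(G_2)\setminus\{x,y\}$ I use the \tcon ness of $G$ to produce a \pth{u}{v} $P$ in $G-\{x,y\}$, and then transform $P$ into a path in $G_2-\{x,y\}$ by the now-standard detour trick used repeatedly in \Sr{Ib1pres}: whenever $P$ enters the discarded side of some $bc$-cycle $C'$, it must exit $C'$ again, and the portion in between can be replaced by an arc of $C'$ lying inside $V(G_2)$.

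In \textbf{Case B}, both $x$ and $y$ lie on a common $bc$-cycle $C$, and $\{x,y\}$ separates $V(C)\cap V(G_2)$ into two arcs $P_1,P_2$; I must connect these in $G_2-\{x,y\}$. I pick $v\in P_1$ and $w\in P_2$ and apply \tcon ness of $G$ to obtain a path in $G-\{x,y\}$ from $vd$ to $wd$, extending it at the ends with the $d$-edges $v\,vd$ and $wd\,w$, then patch it into a \pth{v}{w} in $G_2-\{x,y\}$ by the same detour trick. The main thing to verify is that these detours never force a pass through $x$ or $y$: for $bc$-cycles $C'\neq C$ this is automatic by Case B's hypothesis, and for detours along $C$ itself the arc avoiding $\{x,y\}$ can be chosen once we take $v$ and $w$ to lie close to $x$ and $y$ on $C$ respectively. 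A rotation automorphism of $C$ mapping the relevant $d$-edges onto one another, as used in the analogous argument of \Sr{Ib1pres}, will provide the required flexibility.

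The main obstacle I foresee is ruling out a degenerate configuration analogous to the hexagonal exception appearing in \eqref{g2tcon}, in which $V(G_2)$ might collapse onto a single $bc$-cycle of $G$, forcing $G_2$ to be merely a finite cycle with parallel edges rather than \tcon. The current hypotheses --- one of $b,c$ preserving spin and the other reversing it, together with \tcon ness of $G$ and $G$ being multi-ended --- should be incompatible with such a collapse, since it would force the $d$-translates of $C$ to coincide with $C$ itself and yield a graph separated by any antipodal pair of $C$, contradicting the \tcon ness of $G$. Verifying this boundary case rigorously will be the most delicate point of the argument; everything else is a straightforward transcription of the Case B / Case A pattern established in \Sr{Ib1pres}.
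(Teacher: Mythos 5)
Your proposal takes a genuinely different route from the paper: the paper first invokes the classification of connectivity-$2$ cubic planar Cayley graphs (Theorem~\ref{main2}, together with the observation from \citeObsfinfac\ about which of those types can have a finite $2$-coloured face boundary, which applies because all edges of the $(b^*c)^n$-cycles reverse spin in $\sig_2$). This reduces the problem to showing that the endvertices of a \emph{\sepe} of \gt\ would also separate \G\ --- and only \emph{then} does it run the detour argument, in a situation where $x$ and $y$ are adjacent on $C$, so that $C-\{x,y\}$ is a single arc and every replacement subarc of $C$ automatically avoids $\{x,y\}$. You attempt to bypass the classification entirely and lift paths from $G-\{x,y\}$ for an \emph{arbitrary} pair $\{x,y\}$.

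This is where the gap lies, concretely in your Case~B. When $x$ and $y$ are non-adjacent vertices of $C\cap V(G_2)$, a \pth{vd}{wd}\ $P$ in $G-\{x,y\}$ may cross into the deleted side of $C$ at a vertex of (the $G$-arc containing) $P_1$ and re-emerge at a vertex of $P_2$: the deleted side of $C$ contains an entire copy of $G'_2$ attached along the dual \soc, so such tunnelling paths genuinely exist. Both subarcs of $C$ joining the entry point to the exit point then contain one of $x,y$, so the detour trick produces no path in $G_2-\{x,y\}$, and the implication ``$G-\{x,y\}$ connected $\Rightarrow$ $G_2-\{x,y\}$ connected'' simply does not follow. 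Your two proposed fixes do not close this: choosing $v,w$ close to $x,y$ does not constrain where $P$ meets $C$, and a rotation \auto\ of $C$ moves $x$ and $y$ along with everything else, so the rotated path avoids the \emph{images} of $x,y$ rather than $x,y$ themselves. (The analogous step in \eqref{g2tcon} is rescued there by constructing paths in $G-C$, i.e.\ avoiding \emph{all} of $C$, between the second tidy cycles at consecutive $b$-edges --- an argument you would need to rebuild in the present setting, where there is no obvious analogue of the second tidy cycle through a $d$-edge.) Your Case~A and the final degenerate case are essentially fine, but Case~B as written does not prove the proposition.
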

\begin{proof}

Using \Lr{Liicon} and the fact that $(b^*c)^{n} $ is a relation in $\Gam_2$ easily implies that $G_2$ is \iicon\ since \g is.
Recall that $G_2$ has 2-coloured cycles of the form $({b^*}c)^n$, both ${b^*},c$ reverse spin in $\sig_2$, so that every such cycle is a face boundary in $\sig_2$. But by the results of \cite{cay2con}, if $\kap(G_2)=2$ then  $G_2$ belongs to one of the types \ref{iv}, \ref{v}, \ref{vi} or \ref{ix} of \Tr{main2}, and of those types, only \ref{vi} and \ref{ix} can have a finite 2-coloured face boundary; see \citeObsfinfac. 

If $G_2$ belongs to type \ref{ix}, which means that it is a finite cycle with some additional parallel edges, then \ta\ three cases to be considered. If $n=1$, which means that ${b^*}=c$ and ${b^*},c$ span 2-cycles, then any two vertices separating $G_2$ also separate \G, contrary to our assumption that the latter is \tcon. If ${b^*}=d$ or  $c=d$ instead, then it is straightforward to check that $G$ must be finite, which we are also assuming is not the case
 
Thus $G_2$ belongs to type \ref{vi}, and so either $G_2 \isom Cay \left<{b^*},c,d\mid {b^*}^2, c^2,d^2, ({b^*}c)^n, ({b^*}d)^m\right>$, $n,m\geq 2$ or $G_2 \isom Cay \left<{b^*},c,d\mid {b^*}^2, c^2,d^2, ({b^*}c)^n, (cd)^m\right>$, $n,m\geq 2$. In the former case the $b^*$ edges are \sepe s and in the latter the $c$ edges are \sepe s. We claim that the endvertices of such a \sepe\ also separate \G. 

For this, let $\{x,y\}$ be the endpoints of a \sepe\ of $G_2$, let $C$ be the ${b^*}c$ cycle of $G_2$ containing $x,y$, and let $K$ be the component of $G_2 - \{x,y\}$ that does not meet $C$; such a component exists because $\{x,y\}$ cannot separate $C$ as $xy$ is an edge of $C$. If $G - \{x,y\}$ has a \pth{K}{C} $P$, then $P$ can be modified into a \pth{K}{C} in $G_2 - \{x,y\}$ as follows: for every $bc$ cycle $D$ met by $P$ that is disjoint from $C$, note that both endpoints of $P$ lie in the same side of $D$, for $G_2$ cannot meet both sides of any such cycle by the construction of $G_2$. This means that if $P$ enters the side $A$ of $D$ not meeting $G_2$, then it must exit that side again. Thus, we can replace a subpath of $P$ that has endvertices $v,w$ on $D$ and whose interior lies in $A$ by a $v$-$w$~subarc of $D$ to obtain a path that does not meet the `wrong' side $A$ of $D$. Similarly, if $P$ meets the side of $C$ not containing $K$, then we replace the part of $P$ in that side by a subarc of $C$, this time being careful enough to pick that subarc that does not contain $x$ and $y$. Performing such a modification recursively as long as $P$ meets both sides of a $bc$ cycle $D$, we modify $P$ into a path $P'$ with the same endvertices that meets at most one of the sides of any $bc$ cycle. It follows easily that $P'$ is a path in $G'_2$. Moreover, $P'$ does not meet $x,y$ by construction. But then $P'$ contradicts the fact that $\{x,y\}$ separates $C$ from $K$ in $G_2$. This completes the proof of our claim that \gt\ is \tcon.

\end{proof}

Now $G_2$ might be finite or 1-ended, in which case we can use our classification of \Sr{secfin}, or multi-ended, in which case we can apply  \Lr{LIIab1}, which yields that $d$ must preserve spin. Thus, in the case where $d$ reverses spin, we obtain the following classification.

\begin{theorem} \label{TIId11}
Let $G= Cay\left<b,c,d\mid b^2, c^2, d^2, \ldots \right>$ be an infinite planar \tcon\ \Cg\ 
\note{redundant: with more than one end} 
such that $bc$ has a finite order $n$ and precisely one of $b,c$ preserves spin ($b$ say) and $d$ reverses spin. Then $G \isom Cay\left<b,c,d\mid b^2, c^2, d^2, (cd)^m, (dbcb)^p; (bc)^{2n} \right>$, $n,m,p \geq 2$. 
\comment{Then precisely one of the following is the case:
		\begin{enumerate}\addtolength{\itemsep}{-0.5\baselineskip}
	\item \label{di} $G \isom Cay \left<b,c,d\mid b^2, c^2,d^2, (bc)^{2n}, (bcbd)^m\right>$, $n\geq 1,m\geq 2$  
	\item \label{dii} $G \isom Cay \left<b,c,d\mid b^2, c^2,d^2, (bc)^{2n}, (cd)^m\right>$, $n\geq 1,m\geq 2$ (dummy case: can be drawn with all edges reversing spin as $c$ edges are separating (\vapf\ graph)). 
	\item \label{diii} $G \isom Cay\left<b,c,d\mid b^2, c^2, d^2, (bc)^{2n}, (cd)^m, (dbcb)^p \right>$, $n\geq 1,m,p \geq 2$ 
	\end{enumerate}
}
This presentation is planar. 

Moreover, \g is a \twsqam\ of $Cay\left<b^*,c,d\mid b^{*2}, c^2, d^2, (b^*c)^n, (cd)^m, (db^*)^p \right>$ with itself.

Conversely, each of these presentations, with parameters chosen in the specified domains, yields a planar \tcon\ multi-ended \Cg.
\end{theorem}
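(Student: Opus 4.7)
The plan is to follow the strategy already established for the earlier multi-ended cases (compare Sections \ref{secIa}, \ref{Ib1pres}, and especially \ref{secIIa2}). The set-up is already in place: we have the subgroup $\Gam_2 = \left< b^*, c, d \right>$ with $b^* := bcb$, the induced subgraph $G'_2 \subseteq G$, the contracted \Cg\ $G_2$ of $\Gam_2$ with generating set $\{b^*, c, d\}$, and its induced embedding $\sig_2$. \Prr{kG2} already gives that $G_2$ is \tcon, and by the construction of $\sig_2$ the generator $b^*$ reverses spin, while $c$ and $d$ retain their spin-reversing behaviour from \sig. Hence $G_2$ is a \tcon\ planar \Cg\ on three involutions, all reversing spin, possessing a 2-coloured cycle of the form $(b^*c)^n$.

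First I would identify a presentation of $G_2$. Since all three generators reverse spin, \Cr{CIIa1} forbids $G_2$ from being multi-ended, so $G_2$ is finite or 1-ended. Applying case \ref{ziv} of \Tr{LG5} (all colours reversing spin), with $(b^*c)^n$ as one of the required two-coloured cycles, we obtain
\[
G_2 \isom Cay\left< b^*, c, d \mid b^{*2},\, c^2,\, d^2,\, (b^*c)^n,\, (cd)^m,\, (db^*)^p \right>
\]
for some $n,m,p \geq 2$, and this presentation is planar by \Tr{LG5}.

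Next I would transfer this to \Gam\ via \Lr{ccfdec}, exactly as in the proofs of \Tr{TIa2} and \Tr{TIIa2}. Take \cx\ to be the family of images of $G'_2$ under left multiplication by elements of \Gam, and let $T$ be the auxiliary tree on \cx\ whose edges join copies that share a $bc$ cycle of \G. Condition \ref{cciiip} of \Lr{ccfdec} holds with these common $bc$ cycles in the role of $F_i$, while \ref{cci} and \ref{ccii} are immediate. For each $H \in \cx$ let $\cf_H$ be the translate to $H$ of the set of cycles in $G'_2$ induced by the relators of the presentation of $G_2$ above, with the letter $b^*$ replaced throughout by the word $bcb$. \Lr{relcc} ensures that this generates $\cc_f(G'_2)$, so \Lr{ccfdec} yields that $\bigcup_H \cf_H$ generates \ccfg, and the second sentence of \Lr{relcc} then converts this into a presentation of \Gam. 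The substitution $b^* \mapsto bcb$ makes $b^{*2}$ collapse modulo $b^2,c^2$; turns $(b^*c)^n$ into $(bcbc)^n = (bc)^{2n}$; leaves $(cd)^m$ unchanged; and turns $(db^*)^p$ into $(dbcb)^p$. This yields exactly the presentation claimed. The $(bc)^{2n}$ relator, which induces the dividing cycles along which the copies of $G'_2$ meet, is placed after the semicolon; the relators $(cd)^m$ and $(dbcb)^p$ induce face-boundaries inherited from the face-inducing relators of $G_2$, so the resulting presentation is planar.

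For the converse I would construct \g explicitly as a \twsqam. Given a presentation of the claimed form with $n,m,p \geq 2$, form $G_2 := Cay\left<b^*, c, d \mid b^{*2}, c^2, d^2, (b^*c)^n, (cd)^m, (db^*)^p\right>$; by case \ref{ziv} of \Tr{LG5} this is a planar, finite or 1-ended, \tcon\ \Cg\ embedded so that all edges reverse spin and every $(b^*c)^n$ cycle bounds a face. Build \g by \twsqam\ of $G_2$ with itself along these cycles, subdividing each $b^*$ edge into a $bcb$ path and choosing the twist so that the new $b$ edges preserve spin while $c$ continues to reverse spin. Sabidussi's theorem (\Tr{sab}, see also \cite{am}) confirms that \g is a \Cg, and \Lr{lkcon}, combined with the fact that $G_2$ is \tcon\ by \Tr{endtcon}, yields that \g is \tcon. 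Since every $bc$ cycle of \g separates two infinite components, \g is multi-ended. Applying the forward implication completes the converse and simultaneously verifies the ``moreover'' clause. The main obstacle is essentially bookkeeping: checking that the substitution $b^* \mapsto bcb$ introduces no dependencies beyond those absorbed by $b^2,c^2$, and on the converse side checking that the spin-twist performed during the \twsqam\ is coherent so that $b$ ends up preserving spin while $c$ and $d$ both continue to reverse spin. Once these verifications are in place the argument is entirely parallel to those of the preceding sections.
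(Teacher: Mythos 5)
Your proposal is correct and follows essentially the same route as the paper: identify $G_2$ as finite or 1-ended via the all-spin-reversing case (the paper cites \Lr{LIIab1}, of which \Cr{CIIa1} is the corollary you use), read off its presentation from \Tr{LG5}~\ref{ziv} excluding the degenerate case by \Prr{kG2}, transfer to $\Gam$ via \Lr{ccfdec} with the substitution $b^*\mapsto bcb$, and prove the converse by constructing \g as a \twsqam\ and invoking Sabidussi's theorem, \Lr{lkcon} and the forward implication.
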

\begin{proof}
By the above discussion, $G_2$ cannot be multi-ended since we are assuming that  $d$ reverses spin. Since all edges of  $G_2$ reverse spin, we obtain a presentation of $G_2$ from \Tr{LG5}~\ref{ziv} or \ref{zv}, and by \Prr{kG2} we can exclude \ref{zv}. Thus we are left with
$G_2 \isom Cay\left<b^*,c,d\mid b^{*2}, c^2, d^2, (b^*c)^n, (cd)^m, (db^*)^p \right>$, $n,m,p \geq 2.$

For the forward implication we apply \Tr{ccfdec} as in the last section. This yields that we can obtain a presentation of \g by replacing $b^*$ with $bcb$ in the above presentation of \gt, except that we replace the relation $b^{*2}$ by $b^2$. It \ises\ that the asserted presentation is planar.

For the converse implication we proceed as in the proof of \Tr{TIIa2}: let  $G_2 := Cay\left<b^*,c,d\mid b^{*2}, c^2, d^2, (b^*c)^n, (cd)^m, (db^*)^p \right>$ and let \g be the \twsqam\ of \gt\ with itself along the $b^*c$ cycles. Again \g is a \Cg\ by Sabidussi's theorem; see \cite{am} for details. \Lr{lkcon} yields that \g is \tcon\ since, by \Tr{endtcon}, $G_2$ is \tcon. We can thus apply the forward implication to prove that \G\ has the desired presentation.
\end{proof}

It remains to consider the case when $d$ preserves spin. Again we have to distinguish various cases. The most interesting case is when $G_2$ is \tcon\ and multi-ended, and so applying \Lr{LIIab1} to $G_2$ we obtain that $G_2$ has 2-coloured cycles containing $d$; we then have to distinguish two subcases according to which of ${b^*},c$ participates in those cycles. Let us first consider the case when this is $c$, and so $(cd)^p$ is a relation for some $p$. Interestingly, we can now apply \Tr{TIId11} to $G_2$ rather than \G: recall that $G_2$ is \tcon\ by \Prr{kG2}, and that $\sig_2$ has spin behaviour as in the requirements of that theorem, except that the roles of the letters are now interchanged. Thus, substituting $d$ by $b$ and ${b^*}$ by $d$ we can apply \Tr{TIId11} to obtain a \plpr\ of $G_2$, namely
\labtequ{bst1}{$G_2 \isom Cay\left<d,c,{b^*}\mid d^2, c^2, {b^*}^2, (dc)^{2n}, (c{b^*})^m, ({b^*}dcd)^p \right>$, $n,m,p \geq 2.$}

In the second subcase, when $(b^*d)^p$ is a relation rather than $(cd)^p$, we can repeat the same arguments to obtain a similar presentation but with $c$ and $b^*$ interchanged. Thus, in this case we have  
\labtequ{bst2}{$G_2 \isom Cay\left<d,c,{b^*}\mid d^2, c^2, {b^*}^2, (d{b^*})^{2n}, ({b^*}c)^m, (cd{b^*}d)^p \right>$, $n,m,p \geq 2.$}

Note that these two cases are distinct: if the order of $b^*d$ is finite then the order of $dc$ is infinite and vice-versa. Indeed suppose that the order of $b^*d$ is finite, and recall that $d$ preserves spin while $b^*,c$ reverse spin in $G_2$. We will show that an infinite $cdcdc\ldots$ walk $W$, starting at an arbitrary vertex, meets infinitely many of the finite $b^*d$-cycles. Indeed, suppose there is a last $b^*d$-cycle $C$ met by $W$. Then $W$ cannot have met a $b^*d$-cycle in each side of $C$, for $C$ separates its sides. But $W$ arrived at $C$ along a $c$ edge, traversed a $d$-edge of $C$, and left $C$ by another $c$ edge. Now as $d$ preserves spin, those two $c$ edges lie in different sides of $C$, and are incident with $b^*d$-cycles other than $C$. This contradiction proves our claim. The same argument proves the reverse claim.

Thus we have obtained a presentation of \gt\ in the case where $d$ preserves spin too, and can now use this to deduce a presentation of \G.

\begin{theorem} \label{TIId12}
Let $G= Cay\left<b,c,d\mid b^2, c^2, d^2, \ldots \right>$ be a planar \tcon\  \Cg\ 
\note{redundant: with more than one end} and suppose that $bc$ has a finite order $n$, precisely one of $b,c$ preserves spin ($b$ say), and $d$ preserves spin. Then precisely one of the following is the case:
\begin{enumerate}\addtolength{\itemsep}{-0.5\baselineskip}
\item \label{ddi} $G \isom Cay\left<b,c,d\mid {b^2}, c^2, d^2, (bcbdcd)^m; (bc)^{2n} \right>, n\geq 2, m\geq 1.$
\item \label{ddii} $G \isom Cay\left<b,c,d\mid b^2, c^2, d^2, (bcbdcd)^p ; (dc)^{2n}, (bc)^{2m}\right>$, $n,m,p \geq 2$;
\item \label{ddiii} $G \isom Cay\left<b,c,d\mid b^2, c^2, d^2, (bcbdcd)^p; (dbcb)^{2n}, (bc)^{2m} \right>$, $n,m,p \geq 2$. 
\end{enumerate}
These presentations are planar.

Conversely, each of the above presentations, with parameters chosen in the specified domains, gives rise to a planar \tcon\ \Cg\ as above.
\end{theorem}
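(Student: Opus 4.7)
The plan is to complete the strategy already set up in the preceding discussion: analyse $G_2$ using the earlier classification theorems, then lift the resulting presentation to $G$ by Theorem \ref{ccfdec}, substituting $b^*$ by $bcb$. Recall that $G_2$ is \tcon\ by Proposition \ref{kG2}, and that in $\sigma_2$ the colours $b^*, c$ reverse spin while $d$ preserves spin.

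For the forward direction I would split into three cases according to the ends of $G_2$. If $G_2$ is finite or 1-ended, \Tr{LG5} applies; since $d$ is the unique spin-preserving colour, only case \ref{ziii} of \Tr{LG5} is possible, yielding
\[
G_2 \cong Cay\left<b^*,c,d\mid b^{*2}, c^2, d^2, (b^*c)^n, (b^*dcd)^m\right>.
\]
If $G_2$ is multi-ended, \Lr{LIIab1} applied to $G_2$ forces $d$ to lie in a 2-coloured cycle with either $c$ or $b^*$, giving respectively the presentations \eqref{bst1} and \eqref{bst2} already derived above. In each of the three cases I would now apply \Tr{ccfdec} exactly as in Sections \ref{secIa} and \ref{secIIa2}: take \cx\ to be the family of all \auto-translates of $G'_2$ in \G, join two copies in the auxiliary tree $T$ whenever they share a $bc$-cycle (these play the role of the separating cycles, while the old $b^*c$-cycles become $(bc)^{2n}$-cycles after substitution), and generate $\cc_f(G'_2)$ by the cycles induced by the relators of the appropriate $G_2$-presentation read in \g after replacing every letter $b^*$ by the word $bcb$. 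Routine substitution gives $(b^*c)^n\mapsto (bc)^{2n}$, $(b^*dcd)^m\mapsto (bcbdcd)^m$, $(b^*d)^{2n}\mapsto (dbcb)^{2n}$, and $(b^*c)^m\mapsto (bc)^{2m}$, producing presentations \ref{ddi}, \ref{ddii}, \ref{ddiii} respectively. Planarity of each presentation is inherited from the planarity of the corresponding $G_2$-presentation, since the substitution just subdivides the $b^*$-edges inside the already-planar $G'_2$-copies.

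For the converse I would construct \g from a given presentation by first building the appropriate $G_2$. In case \ref{ddi} take
\[
G_2 := Cay\left<b^*,c,d\mid b^{*2}, c^2, d^2, (b^*c)^n, (b^*dcd)^m\right>,
\]
which is planar and at most 1-ended by \Tr{LG5}. In cases \ref{ddii} and \ref{ddiii} obtain $G_2$ by applying \Tr{TIId11} to \eqref{bst1} or \eqref{bst2} respectively; $G_2$ is then planar, \tcon\ and multi-ended. In every case I would form \g by amalgamating copies of $G_2$ along its $(b^*c)^n$-face-cycles, twisting each glueing so that every $b^*$-edge gets subdivided into a $bcb$-path of length three, oriented so that the resulting $b$ preserves spin while $c$ reverses spin (and the $d$-edges of $G_2$ keep their spin behaviour). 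Sabidussi's \Tr{sab} then shows \g is a \Cg, \Lr{lkcon} applied to the family of embedded copies of $G_2$ together with their pairwise common $(bc)^{2n}$-cycles gives \tcon ness, and the forward direction already proved confirms that the presentation of \g obtained is exactly the one we started from.

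The main obstacle will be carrying out the amalgamation in the converse direction with precisely the spin behaviour required: one must orient the pasted discs and choose the bipartition of glueing sites consistently so that $b$ preserves spin, $c$ reverses spin, $d$ preserves spin \emph{and}, in cases \ref{ddii}, \ref{ddiii}, the pair $\{c,d\}$ or $\{b^*,d\}$ spans a $2$-coloured cycle of the prescribed length. This is the same type of bookkeeping as in the converse parts of Theorems \ref{TIa2}, \ref{TIIa2}, \ref{TIId11}, but it must be done twice (once to build $G_2$ from a still smaller graph in cases \ref{ddii}, \ref{ddiii}, and once more to build \g from $G_2$), and the two levels of twisting must be compatible.
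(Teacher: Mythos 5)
Your proposal is correct and follows essentially the same route as the paper: the forward direction splits on whether $G_2$ is at most 1-ended (invoking \Tr{LG5}~\ref{ziii}) or multi-ended (invoking \eqref{bst1} and \eqref{bst2} via \Lr{LIIab1} and \Tr{TIId11}), then lifts the presentation through \Tr{ccfdec} by substituting $b^*\mapsto bcb$; the converse builds $G_2$ from the appropriate earlier theorem and recovers $\G$ as a \twsqam, with Sabidussi's theorem, \Lr{lkcon} and the already-proved forward implication closing the argument. The spin/orientation bookkeeping you flag as the main obstacle is handled in the paper exactly as in the converse of \Tr{TIId11} (deferring the Cayley-graph verification to the amalgamation machinery), so no additional idea is needed.
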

\begin{proof}
The \Cg\ $G_2$ (as defined in the beginning of this section) is \tcon\ by \Prr{kG2}. We distinguish two cases. 

{\bf Case I:} $G_2$ has at most 1 end.

Recall that both ${b^*},c$ reverse spin while $d$ preserves spin in $G_2$. Thus we are in type \ref{ziii} of \Tr{LG5}, and so  $G_2 \isom Cay\left<{b^*},c,d\mid b^{*2}, c^2, d^2, ({b^*}c)^n, ({b^*}dcd)^m\right>$, $n\geq 2, m\geq 1$. Applying \Tr{ccfdec} again we obtain 
$$G \isom Cay\left<b,c,d\mid {b^2}, c^2, d^2, (bcbc)^n, (bcbdcd)^m\right>, n\geq 2, m\geq 1,$$
and rewriting $(bcbc)^n$ as $(bc)^{2n}$ we obtain possibility \ref{ddi} of the statement. Clearly, this is a \plpr.

{\bf Case II:} $G_2$ is multi-ended.

In this case we have already obtained a presentation for $G_2$ in the above discussion; see \eqref{bst1} and \eqref{bst2}. By the same technique as in the first case, and a little bit of rearranging (a relation of the form $(WZ)^n$ is equivalent to $(ZW)^n$), we obtain the claimed presentations~\ref{ddii} and~\ref{ddiii}. It \ises\ that these presentations are planar using the spin behaviour; see \fig{fIId1}.

\medskip
The converse implication can be established as in the proof of \Tr{TIId11}, by explicitly constructing \g as a \twsqam\ of the corresponding \gt. This time we have to apply the converse implication of \Tr{TIId11} to obtain the desired \gt. 
\end{proof}

We observe the following fact, which follows from \Cr{CIIa1} and Theorems \ref{TIIa2}, \ref{TIId11} and \ref{TIId12}, and is interesting in view of the forthcoming counterexamples to \Cnr{bowa}.

\begin{corollary}\label{corIId1}
Let $G= Cay\left<b,c,d\mid b^2, c^2, d^2, \ldots \right>$ be a planar \tcon\  \Cg\ containing a 2-coloured cycle. Then every face of \g is finite.
\end{corollary}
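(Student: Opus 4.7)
The plan is to split into two cases based on the number of ends of \G. If \g has at most one end, then \Lr{noinff} immediately yields that every face-boundary of \g is finite, and there is nothing further to prove.

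Assume then that \g is multi-ended, and without loss of generality let the given 2-coloured cycle be induced by $(bc)^r$. I would split into three subcases according to the spin behaviour of $b$ and $c$ in the essentially unique \prem\ of \g (guaranteed by \Lr{lprem}). If both $b$ and $c$ preserve spin, \Tr{TIIa2} applies and gives one of two explicit presentations. If exactly one of $b,c$ preserves spin, then depending on whether $d$ reverses or preserves spin, either \Tr{TIId11} or \Tr{TIId12} applies. The remaining subcase is when both $b,c$ reverse spin; here I would first invoke \Cr{CIIa1} to rule out the possibility that $d$ also reverses spin, and then \Lr{LIIab1} produces a further 2-coloured cycle, induced by $(bd)^n$ or $(cd)^n$ for some $n\geq 2$. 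Since $d$ preserves spin while its companion colour reverses spin, this new cycle has mixed spin behaviour, reducing this subcase to the previous one.

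In every presentation given by \Tr{TIIa2}, \Tr{TIId11} or \Tr{TIId12}, the relators appearing before the semicolon are finite words. By the planar-presentation convention of \Sr{intAP}, these are precisely the relators inducing the face-boundaries of \G, hence every face of \g has finite boundary. The main conceptual obstacle is the ``both reverse spin'' subcase, where the reduction via \Cr{CIIa1} and \Lr{LIIab1} is needed to transport us into the setting of one of the three classification theorems; once there, the conclusion follows directly by inspecting the face-boundary relators, all of which are manifestly finite words.
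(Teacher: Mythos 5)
Your proof is correct and follows essentially the same route as the paper, which simply asserts that the corollary ``follows from \Cr{CIIa1} and Theorems \ref{TIIa2}, \ref{TIId11} and \ref{TIId12}''; your case analysis by spin behaviour (with the ``both reverse'' subcase reduced via \Cr{CIIa1} and \Lr{LIIab1}, and the 1-ended case dispatched by \Lr{noinff}) is exactly the intended expansion of that one-line justification. The only point worth making explicit in the final step is that \emph{every} face-boundary is induced by a pre-semicolon relator (which follows from the amalgamation constructions, or from transitivity of the \auto s on face-boundaries), but the paper is no more detailed on this than you are.
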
 

\subsection{Graphs without 2-coloured cycles} \label{secIIb}
In this section we consider the cubic multi-ended planar \Cg s $G \isom Cay\left<b,c,d\mid b^2, c^2, d^2, \ldots \right>$ that have no 2-coloured cycles. 
Similarly to our analysis of the graphs on three generators that do have 2-coloured cycles (\Sr{secIIa}), we will have to distinguish cases according to the spin behaviour of the generators.

Recall that our analysis of the graphs with 2-coloured cycles in \Sr{secIIa} was very intimately connected with those cycles: in all non-trivial cases, we used  such cycles to split \G\  by finding a subgraph $G'_2$ in which those cycles bound faces. In the current case, the absence of 2-coloured cycles makes our task harder. However, we will still be able to use similar methods. We will be able to find  a good substitute for the 2-coloured cycles: namely, the minimal \psc s.

\subsubsection{All edges preserve spin} \label{secIIc}

In this section we consider a \iicon\ multi-ended \Cg\ $G \isom Cay\left<b,c,d\mid b^2, c^2, d^2, \ldots \right>$ with no 2-coloured cycles, that has a \prem\ \sig\ in which all edges preserve spin. We do not demand that \g be \tcon\ here, because the results of this section are needed for the characterization of graphs of connectivity 2 in \cite{cay2con}. 

Before we can state our main result of this section we need to define the concept of a \ncp. Intuitively, a \ncp\ is a finite word $\cp$ in the letters $b,c,d$ such that whenever \cp\ is a relation of a planar cubic \Cg\ \g all vertices of which have the same spin, no cycle of \g induced by \cp\ bounds a face, and no two cycles of \g induced by \cp\ cross. Making a formal definition out of this intuitive idea is a bit tricky in the absence of a concrete \Cg \G. 

For this, let $H$ be a plane graph the edges of which are coloured with the colours $b,c,d$ in such a way that no vertex is incident with more than one edge of the same colour, and let $C \subseteq H$ be a cycle in $H$. We will say that $C$ \defi{complies} with \cp\ (in $H$), if one of the words obtained by reading the colours of the edges of $C$, as we cycle once along $C$ once in a straight manner, is \cp\ and moreover all vertices of $C$ that have degree 3 in $H$ have the same spin.
Given two cycles $C,R$ in $H$ both complying with \cp, we will say that $R$ is a \defi{\rota} of $C$ if $R\cap C$ is a (possibly closed) path. The intuition of this definition is derived from the fact that $H$ can be though of as a \Cg\ in which $C,R$ are induced, starting at the same vertex, by relations that are obtained from each other by rotating the letters. 

We can now give the formal definition of a \ncp.
\begin{definition} \label{dncp}
A non-empty word $\cp$ in the letters $b,c,d$ is called a \defi{\ncp} if it satisfies the following conditions:
\begin{enumerate}\addtolength{\itemsep}{-0.5\baselineskip}
\item \label{ncp0} \cp\ contains all three letters $b,c,d$;
\item \label{ncp00} \cp contains no consecutive identical letters; 
\item \label{ncpi} \cp\ is not of the form $(bcd)^n$ up to rotation and inversion, and
\item \label{ncpii} if $C$ is a cycle complying with \cp\ then no \rota\ of $C$ crosses $D$.
\end{enumerate}
\end{definition}

This definition might look somewhat abstract at first sight, but in fact there is an easy algorithm that recognises \ncp s. \sss 

It will be easier to understand the necessity of the requirements of Definition \ref{dncp} if one considers the usage of \ncp s in the following theorem: we impose \ref{ncp0} because $b,c,d$ are involutions. With \ref{ncp00} we prevent 2-coloured cycles, which we have handled in earlier sections. We require \ref{ncpi} to prevent \cp\ from being a face boundary. Finally, \ref{ncpii} is the important property of \cp\ from which our results yield their strength.

We can now state the main result of this section, yielding a complete description the corresponding \Cg s.
\begin{theorem} \label{TIIc}
Let $G= Cay\left<b,c,d\mid b^2, c^2, d^2, \ldots \right>$ be a \iicon\ multi-ended \Cg. 
Suppose \g has a \prem\ in which all edges preserve spin, and that each of the elements $bc, cd,db$ has infinite order. Then precisely one of the following is the case:
\begin{enumerate}\addtolength{\itemsep}{-0.5\baselineskip}
\item \label{IIci} $G \isom Cay\left<b,c,d\mid {b^2}, c^2, d^2, (bcd)^2; (bcdc)^n \right>, n\geq 2$ (faces of size $6$);
\item \label{IIcii} $G \isom Cay\left<b,c,d\mid {b^2}, c^2, d^2, (bcd)^k; \cp \right>$,  $k\geq 3$ (faces of size $3k\geq 9$);
\item \label{IIciii} $G \isom Cay\left<b,c,d\mid {b^2}, c^2, d^2; \cp \right>$, (no finite faces),
\end{enumerate}
where $\cp$ is a \ncp.

Conversely, \fe\ $n$ or $k$ in the specified domains, and every \ncp\ $\cp$, the above presentations yield a planar \Cg\ as above.

In the first 2 cases \g is always \tcon. In case \ref{IIciii} $\kappa(G)=2$ if $\cp$ is regular and $\kappa(G)=3$ otherwise. If $\kappa(G)=2$ then \g has a \sepe\ \iff\ $\cp$ is strongly regular.
\end{theorem}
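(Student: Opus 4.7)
The plan is to follow the template of Sections~\ref{secIa}, \ref{secIIa2}, and \ref{secIId1}, but to use \mpsc s in place of 2-coloured cycles as the ``cutting'' tool, since the hypothesis rules out the latter.

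I would start with the face structure. Since all three generators preserve spin, the argument of Case~\ref{zi} in the proof of \Tr{LG5} applies verbatim: any facial walk must cycle through the colour sequence $bcdbcd\ldots$, so either every face boundary is induced by $(bcd)^k$ for one fixed finite $k \geq 2$, or every face of \g is infinite. Next, because \g is multi-ended and has no 2-coloured cycle, \Lr{LG3} forces the existence of a \psc. Fix a \mpsc\ $C$ chosen together with a maximum-length facial subpath, and let $\cp$ be a word inducing $C$. Properties \ref{ncp0} and \ref{ncp00} of \Dr{dncp} are immediate: all three letters must appear or else $C$ would be 2-coloured (forbidden by hypothesis), and consecutive equal letters are ruled out by the involutive relations. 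Property~\ref{ncpi}, asserting $\cp$ is not of the form $(bcd)^n$, holds because the latter would make $C$ a face boundary rather than \ps. The key property~\ref{ncpii} is obtained via the machinery of \Sr{secPsc}: any \rota\ of $C$ arises by an \auto\ of \g that translates $C$ along a common subpath, so if two \rota s crossed, \Cr{CII3p} would give a contradiction with the maximality of our chosen facial subpath, or \eqref{CII3m} would yield a shorter \psc, contradicting minimality.

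With a \ncp\ in hand, I would extract the presentation by \Tr{ccfdec}, exactly as in the previous sections. Set $G'_2$ to be a fundamental region cut out by a family of \mpsc s through a base vertex, and let $\cx$ be the orbit of $G'_2$ under $\Gam$. Build an auxiliary tree $T$ on \cx\ whose edges correspond to shared \mpsc s; the non-crossing property~\ref{ncpii} is precisely what makes this a genuine tree satisfying condition~\ref{cciiip} of \Tr{ccfdec}. Taking as $\cf_{H}$ the cycles in each $H \in \cx$ induced by the face relator $(bcd)^k$ (if finite faces exist) together with the \mpsc\ relator $\cp$, \Lr{relcc} then produces the claimed presentation. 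The trichotomy corresponds to whether $k = \infty$ (case~\ref{IIciii}), $k \geq 3$ finite (case~\ref{IIcii}), or $k = 2$ (case~\ref{IIci}); the hexagonal case $k = 2$ must be singled out because the pattern \cp\ can then be reshuffled exactly as in the hexagonal argument of \Sr{secIb} (cf.\ \eqref{LIb8}), reducing it to the canonical form $(bcdc)^n$.

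For the converse, given a \ncp\ $\cp$ and a face parameter $k$ (or none), I would construct \g by amalgamating copies of a 1-ended or finite ``brick'' graph (itself taken from an earlier entry, or built directly as in the hexagonal case) along cycles induced by \cp. The non-crossing property ensures that the amalgamation pattern is consistent, the construction is a \Cg\ by Sabidussi's \Tr{sab}, and \Lr{lkcon} yields the asserted connectivity: one takes the family of bricks together with the \mpsc s and verifies that pairwise $k$-connectedness transfers via the tree $T$.

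The main obstacles are in the last sentence of the theorem, concerning the precise connectivity in case~\ref{IIciii}. The distinction between $\kap(G)=2$ and $\kap(G)=3$ hinges on whether some pair of vertices on a \mpsc\ separates \G, which in turn depends on whether \cp\ admits a rotational symmetry identifying a subword with its reverse---this is the ``regular'' condition. The \sepe\ characterisation requires a further strengthening: a \sepe\ $xy$ exists \iff\ a single edge on the \mpsc\ can be mapped to its reverse by a \auto\ fixing the cycle setwise, corresponding to ``strong regularity''. I would work these out by analysing the action of the stabiliser of a \mpsc\ on the dual tree $T$, with the cleanest formulation expected to emerge once the precise definitions of regular and strongly regular \ncp s are made explicit.
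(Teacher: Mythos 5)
Your high-level template is the right one, but the proposal skips the step that carries most of the weight of the paper's argument: showing that the \mpsc s have enough rigid structure to play the role that the 2-coloured cycles played in the earlier sections. You take ``a word inducing a \mpsc'' as the pattern \cp\ and assert that a fundamental region $G'_2$ and a tree decomposition then follow as before. In the paper this is where almost all the work lies: one must first prove that every \mpsc\ has a \emph{dominant} colour appearing on every other edge (\eqref{LIIA}), that all maximal common subpaths of pairs of \mpsc s are induced by prefixes of a single symmetric word $Z$ (\eqref{prefix}), that every \mpsc\ meets each face boundary in exactly a $bcd$-subpath (\eqref{cycfac}), and finally that \emph{every} \mpsc\ is induced by the one word $(AZ)^n$ with $A=bcd$ (\eqref{azaz}). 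Only then can one define the subgroup $\Gam_2=\left<a,z\right>$, the \socs\ of a \mpsc, and the copies of $G'_2$ --- which are subdivided Cayley graphs of $\Gam_2$ overlapping along \mpsc s, not fundamental regions --- and only the independence of metaedges \eqref{indep} makes the tree $T$ and the application of \Tr{ccfdec} legitimate. Without \eqref{azaz} you do not even know that two distinct \mpsc s are induced by the same word, so your \cp\ is not well defined. A second, related gap: you derive the non-crossing property \ref{ncpii} from \Cr{CII3p} alone, but the paper needs the Euler-formula curvature argument \eqref{LII5} (as in \eqref{LIb6}) to exclude crossings when all faces have size greater than $6$; this is also the real reason the hexagonal case is split off, not merely the reshuffling of letters.

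Two further points. For the converse there is no ambient $G$ available, so the \soc\ structure of cycles complying with \cp\ (\eqref{pendside}, \eqref{inoutc}, \eqref{socs}) must be re-derived purely from the definition of a \ncp; your one-sentence appeal to the consistency of the amalgamation hides exactly this. And your guess for the connectivity dichotomy is off: in the paper, regularity of \cp\ means $\cp\isom (d c(bc)^{m})^n$, which by \eqref{nod} is equivalent to the word $Z$ \emph{not} containing the letter $d$; the proofs of \Lr{fiNtcon} and \Lr{Lsimple} turn on whether the overlap $Z$ between adjacent copies of $G'_2$ is rich enough (contains a $d$-edge, respectively has length greater than $1$), not on a rotational symmetry of \cp\ identifying a subword with its reverse.
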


The above presentations are less explicit than the presentations we have obtained so far because of the presence of \cp. This is not a shortcoming of our analysis: there is no word with arithmetic parameters capturing all \ncp s, but all \ncp s are needed to make \Tr{TIIc} true. However, as we will see in the forthcoming proof, \ncp s have a rather simple structure and they are similar to each other. Since there is an algorithm that recognises them, the set of \ncp s, and thus the set of \Cg s described in \Tr{TIIc}, can be effectively enumerated.

The rest of this section is devoted to the proof of \Tr{TIIc}, which is completed in page \pageref{secIIdum}. The reader who does not wish to see the details yet could skip the rest of this section, as well as \Sr{secIId} which is similar, and continue with \Sr{afterIId} in page \pageref{afterIId}.

\subsubsection*{Faces and \psc s}

Let $G$ be a graph as in \Tr{TIIc} fixed throughout this section and let $\Gam$ be its group. Note that as we are assuming that all colours preserve spin, 
\labtequ{bcdbcd}{every facial walk is of the form $\ldots bcd bcd \ldots$ or the inverse (\fig{fIIc}).}
\showFig{fIIc}{The local situation around a vertex in the case that all edges preserve spin and no 2-coloured cycles exist.}

This means that any two face boundaries look locally the same, but in fact more is true: it is easy to check that
\labtequ{samef}{Any two face boundaries of \sig\ can be mapped to each other by a \auto\ of \G.}
Indeed, recall that \sig\ is a \prem\ in which all edges preserve spin. Thus, \fe\ vertex $x$, the three \auto s of \g exchanging $x$ with its neighbours can be used to map any of the three face boundaries incident with $x$ to each other.

In particular, all faces in \sig\ have the same size, which by \eqref{bcdbcd} is a multiple of 3. It cannot be equal to 3 though, for this would mean that $G \isom  Cay\left<b,c,d\mid b^2, c^2, d^2, bcd \right>$ and this graph has \sepe s, which we are assuming is not the case for \G. Thus we have proved that
\labtequ{fsize}{For some $\nin \cup \sgl{\infty}$, every face of \sig\ has size $N=3n+6$.}

As already mentioned, our analysis of the graphs without 2-coloured cycles will be based on their \mpsc s. We begin by showing that they do exist:
\labtequ{pscex}{\g has a \psc.}
Indeed, if all faces in \sig\ are finite, then this follows immediately from \Lr{LG3}. If there is an infinite face in \sig, then since \sig\ is consistent, every vertex is incident with an infinite face by \Lr{imrcb}. Let $F$ be an infinite face of \G, let $v e x e' w$ be a facial walk incident with $F$ comprising two edges $e,e'$, and let $f$ be the third edge incident with $x$. Since we are assuming that \g has no \sepe, there is a \pth{w}{v}\ $P$ in $G$ that avoids both endvertices $x,x'$ of $f$. We claim that the cycle $C = vxwPv$ is \ps. Indeed, by its construction $C$ separates $F$ from $x'$, and as $F$ is infinite and $x'$ must be incident with an infinite face too by the above remark, both sides of $C$ contain infinitely many vertices. This completes the proof of \eqref{pscex}.

So let $C$ be a \mpsc\ of \G. Our next claim is that 
\labtequ{LII2}{any maximal 2-coloured subpath of $C$ containing at least 3 edges contains an odd number of edges.}
This can be proved by an argument very similar to the one we used in the proof of \eqref{LIb1} for the case when $p$ is even.

Using this we are now going to prove that 
\labtequ{LII4}{$C$ has no  facial subpath containing more than 3 edges.}
To see this, let $F$ be a longest facial subpath of $C$, and suppose to the contrary that $||F||>3$. We may assume \obda\ that no other \mpsc\ $C'$ has a facial subpath longer than $||F||$, for otherwise we could have chosen $C'$ instead of $C$.

Recall that, by \eqref{bcdbcd}, every facial walk is of the form $\ldots bcd bcd \ldots$ or the inverse (\fig{fIIc}). Since all colours behave the same way in this case, we may assume \obda\ that $F$ starts with a subpath $F_4$ of the form $bcdb$. Let $\cf$ be the face  incident with $F$. Consider the \auto\ $g$ of \g mapping the fourth vertex $y$ of $F_4$ to its first vertex $x$, and let $C':= gC$; see \fig{fIIcCross}. It is easy to check that $C'$ crosses $C$: indeed, note that $C'$ contains an edge $e$ incident with both $x$ and $\cf$, and so $C'$ must leave $\cf$ before $C$ as no \mpsc\ can have a longer subpath incident with $\cf$ than $F$. This crossing however contradicts \Cr{CII3p}. This contradiction proves \eqref{LII4}.
\medskip

\showFig{fIIcCross}{}

\subsubsection*{The dominant colour.}

Our next assertion shows that even though in this case \g has no 2-coloured cycle, it must have cycles that are not far from being 2-coloured:
\labtequ{LIIA}{There is a colour $a\in \{bcd\}$ \st\ \fe\ \mpsc\ $C$ of \G,  every other edge of $C$ is coloured $a$. In particular, $|C|$ is even.}
Let $P$ be a maximal 2-coloured subpath of $C$. Obviously, $P$ contains at least two edges. Moreover, $P$ cannot consist of precisely two edges, because then the subpath $P'$ of $C$ comprising $P$ and its two incident edges would be  facial by \eqref{bcdbcd}, and this would contradict \eqref{LII4} since $||P'||=4$. Thus, $||P||$ is at least 3, and it is odd by \eqref{LII2}. This means that the first edge of $P$ has the same colour $a$ as its last edge. Assume \obda\ that $a=c$.

The path $P$ is a good starting point in our attempt to prove \eqref{LIIA}: every other edge of $P$ is coloured $c$ since it is 2-coloured. And indeed, we will be able to extend it by adding further 2-coloured subpaths of $C$, retaining the property that every other edge is coloured $c$, until exhausting all of $C$. For this, let $e',e$  be the last two edges of $P$ and let $f,f'$ be the two edges of $C$ succeeding $P$, appearing in $C$ in that order (\fig{fIIcA}). Note that as $P$ was chosen to be maximally 2-coloured, $e'$ and $f$ have different colours. This, combined with  \eqref{LII4}, implies that $f'$ must be coloured $c$, for otherwise the subpath of $C$ spanned by $e',e,f,f'$ is facial. 
\showFig{fIIcA}{The situation around $P$ in the proof of \eqref{LIIA}.}

Thus $e,f,f'$ span a 2-coloured subpath of $C$. Let $P'$ be the maximal 2-coloured subpath of $C$ containing these three edges. By  \eqref{LII2} $||P'||$ is odd. Moreover, it does not contain $e'$ as $f$ and $e'$ have different colours. Thus  $P'$ starts and ends with a $c$-edge. Consider the path $P \cup P'$, and note that every other edge of this path is coloured $c$. Now starting with this path instead of $P$ and repeating the above arguments, we find a longer odd subpath of $C$ every other edge of which is coloured $c$. Continuing like this we prove that every other edge of $C$ bears the same colour. Let us call this colour the \defi{dominant} colour of $C$. Note that the dominant colour of a cycle is always unique in the current case, since no 2-coloured cycles exist. 

We just proved that every \mpsc\ of \G\ has a dominant colour. It remains to prove that they all have the same dominant colour. So suppose that $C,D$ are \mpsc s of \g with distinct dominant colours $c,d$ respectively, say. Since all cycles are 3-coloured, both $C,D$ contain a $b$ edge, and we may assume that this $b$ edge is the same edge $e$ in both cases, for otherwise we could have considered translates of $C,D$ through $e$. Now $e$ is surrounded by two $c$ edges in $C$, and it is surrounded by two $d$-edges in $D$. As $e$ preserves spin, this means that $C$ and $D$ cross each other at $e$. By the remark preceding \Cr{CII3p}, we can obtain a new \mpsc\ $C'$ by combining $C$ and $D$, following one of them up to $e$ and then switching to the other. But $C'$ then contains a $cbd$ subpath (with the edge $e$ in the middle), and can be chosen so that it also contains a $cdc$ subpath of $C$, contradicting the fact that every other edge of $C'$ must bear the same colour. This contradiction proves that the dominant colour is the same for every \mpsc\ of \g indeed, and completes the proof of \eqref{LIIA}.
\medskip

From now on we assume that the dominant colour of the \mpsc s of \g is $c$.

Similarly to the case when \g has two generators, we will have to consider the case when \g has hexagonal faces separately (see \Sr{secTidy}). Recall that by \eqref{fsize} every face of \sig\ has size at least 6.
\labtequ{LII5}{If \g has no face of size 6 then no two \mpsc s of \g cross.}
Indeed, by our discussion in \Sr{secPsc}, in particular by \eqref{BF}, such a crossing gives rise to a finite region bounded by a cycle in $C \cup D$. As this region contains only faces of size larger than 6, we obtain a contradiction to Euler's formula \eqref{euler} in a way similar to the proof of \eqref{LIb6}. This proves \eqref{LII5}.

\subsubsection*{The hexagonal grid case.}

Let us now consider the case when one, and thus by \eqref{fsize} all, of the faces in \sig\ have size 6. We proceed as in \Sr{secTidy} to prove that
\labtequ{hexmpsc}{If \g has a face of size 6 then it has a \mpsc\ induced by $(bcdc)^n$.}
Indeed, similarly to \eqref{LIb8}, we can prove that every \mpsc\ $C$ contains the same amount of edges from each non-dominant colour $b,d$. And again, rerouteing  $C$ around some of its incident hexagons if needed, in other words, replacing subarcs of the form $bcd$ by $dcb$, which we are allowed to do since $bcd (dcb)^{-1}$ is a relation (inducing a hexagonal face), we modify $C$ into a cycle of the same length that is induced by a word of the form $(bcdc)^n$ and is still \ps. This completes the proof of \eqref{hexmpsc}.
\medskip

Note that translates of such a cycle cannot cross, and so we have obtained something similar to \eqref{LII5} for graphs with hexagonal faces.

We already have enough information to finish off the case when the faces of \g are hexagonal. It is now not hard to check that the relation $(bcdc)^n$ we just obtained combined with the one inducing the face boundaries, and of course the involution relations for the generators, yield a \plpr\ of \Gam:
$$G \isom Cay\left<b,c,d\mid {b^2}, c^2, d^2, (bcd)^2; (bcdc)^n \right>, n\geq 2.$$
This can be proved for example by showing that the underlying graph is isomorphic to one of the graphs in case \ref{bi} or \ref{biii} of \Tr{TIbx}; to see this, look at \g through a lens that identifies colours $b$ and $d$. The details are left to the reader.

Note that if we let $n=1$ in the above presentation we would obtain a graph in which $c$ edges are \sepe s, contradicting our assumptions on \G.

Conversely, for every $n\geq 2$ one can show that the above presentation corresponds to a planar \tcon\ \Cg\ with hexagonal faces by explicitly constructing such a graph: it consists of $2n$ `parallel' \dray s coloured $b,d$ in an alternating fashion, joined by edges coloured $c$.

\subsubsection*{Back to the main case: no hexagons.}
For the rest of this section we will be assuming that $N>6$.

\subsubsection*{The weak colour.}

We now turn our attention to the behaviour of the non-dominant colours in the \mpsc s of \G: it turns out that, in general, one of them is more `dominant' than the other. More precisely:
\labtequ{LII6}{Suppose that some \mpsc\ of \g has a $bcb$ subpath. Then no \mpsc\ of \g has a $dcd$ subpath.}
To see this, suppose that some \mpsc\ $C$ of \g has a $bcb$ subpath and some \mpsc\ $D$ of \g has a $dcd$ subpath. We may assume \obda\ that these two subpaths traverse the same $c$-edge $e$, for otherwise we can consider a translate of one of the two. As $e$ preserves spin, $C$ and $D$ cross each other at $e$, contradicting \eqref{LII5}. 
\medskip

Proposition \eqref{LII6} implies that at least one of the non-dominant colours $a\in \{b,d\}$ cannot appear too often in any \mpsc: between any two $a$-coloured edges in any \mpsc, there are edges of both other colours. We call this colour the \defi{weak} colour of \G, and we call the other non-dominant colour the \defi{\sedo} colour of \G, unless both non-dominant colours appear with the same frequency, in an alternating fashion, in which case both non-dominant colours are called \defi{weak}. Note that if this is the case, then every \mpsc\ is induced by a word of the form $(bcdc)^n$. 

The argument of the proof of \eqref{LII6} can be repeated to  prove something stronger: 
\labtequ{LII7}{Suppose that some \mpsc\ of \g has a $bWb$ subpath, where $W$ is any word in the letters $b,c,d$. Then no \mpsc\ of \g has a $dWd$ subpath.}
Indeed, such a word $W$ must start and end with the dominant colour $c$, and it is straightforward to check, using the fact that $c$ preserves spin, that if some \mpsc\ $C$ had a $bWb$ subpath and some \mpsc\ $D$ had a $dWd$ subpath then some translate of $D$ would cross $C$, contradicting \eqref{LII5}.

\subsubsection*{Maximal common subpaths and the word $Z$.}
We have already seen that  two \mpsc s cannot cross each other. However, they may be tangent, that is, they may have a common subpath provided one is contained in the other. Our next assertion restricts the possible common subpaths.
\labtequ{prefix}{Let $P$ be a maximal common subpath of two distinct \mpsc s $C,D$ of \G, and let $P'$ be another maximal common subpath of two distinct \mpsc s $C',D'$. Let $W_P, W'_P$ be words inducing $P,P'$ respectively. Then one of these words is a \defi{prefix}, i.e.\ an initial subword, of the other.}
The word `maximal' here is meant \wrt\ inclusion: $P$ is a maximal common subpath of $C,D$ if $P\subset C,D$ and \fe\ path $Q$ properly containing $P$ either $Q\not \subset C$ or $Q\not \subset D$ holds.

This proposition implies that there is a unique word $Z=Z(G)$, namely the maximal word for which there are two distinct \mpsc s of \G\ and a common subpath of theirs induced by this word, that governs all the possible intersections of \mpsc s: every maximal common subpath of two \mpsc s is induced by a prefix of $Z$ (or $Z$ itself).

To prove \eqref{prefix}, let $P,C,D,P',C',D'$ be as in the assertion, and suppose that none of $W_P, W'_P$ is a prefix of the other. We may assume \obda\ that $P$ and $P'$ have the same initial vertex $x$, for otherwise we can translate $C',D'$ by a \auto\ of \g to achieve this (\fig{fpref}).
\showFig{fpref}{The hypothetical situation when two maximal common subpaths of \mpsc s are not a subpath of each other.}

Easily, both $P$ and $P'$ must begin (and end) with an edge $e$, incident with $x$,  of the dominant colour $c$. Moreover, one member of each of the pairs $C,D$ and  $C',D'$ contains the $b$ edge incident with $x$ and the other member contains the $d$-edge, for $P$ and $P'$ are maximal common subpaths. Now since we are assuming that neither of $W_P, W'_P$ is a prefix of the other, there is a  common  vertex $y\in P\cap P'$ after which the two paths split for the first time. Again, the common edge $f$ leading into $y$  must bear the dominant colour $c$, and so one of the paths $P,P'$ follows the $b$ edge and the other follows the $d$-edge incident with $y$ (\fig{fpref}). Consider the common subpath $Q:= xPy = xP'y$, and note that $Q$ is a subpath of all four cycles $C,D,C',D'$. Moreover, for each of the four ways to choose a non-dominant edge $e'$ incident with $x$ and a non-dominant edge $f'$ incident with $y$, one of these cycles contains $e' Q f'$ as a subpath. In particular, if $W_Q$ is a word inducing $Q$, then both the words $b W_Q b$ and $d W_Q d$ are induce subpaths of \mpsc s. But this contradicts \eqref{LII7}, and so \eqref{prefix} is proved.
\medskip

Proposition \eqref{prefix} implies that $Z(G)$ is symmetric. Indeed, $Z^{-1}$ also induces a common subpath  of \mpsc s, so it must be a prefix of $Z$. This implies that the element of \Gam\ described by $Z$ is an involution.

Next, we claim that
\labtequ{Z}{\Fe\ path $P$ in \g induced by $Z$ \ta\ precisely two \mpsc s $C,D$ containing $P$. Moreover, $C \cap D=P$, that is, $C$ and $D$ have no common edge outside $P$.} 
Indeed, by the definition of $Z$ and $P$ \ta\ at least two \mpsc s $C,D$ containing $P$. If there was a third one $C'$ then, as \g is cubic, $C'$ would have an edge $f$ incident with $P$ in common with one of $C,D$. But this would contradict the maximality of $Z$ as $f\cup P$ would then be a common subpath.

If $C$ and $D$ have a further common edge $e$ outside $P$, then by the above argument $e$ is not incident with $P$. This means that we can choose subpaths $C',D'$ of $C,D$ such that $C' \cup D'$ is a cycle shorter that $C$. Thus this cycle cannot be \ps, and so one of its sides contains only finitely many vertices. This leads to a contradiction as in the proof of \eqref{LII5}.

\subsubsection*{Face incidences and the word $A$.}

Having analysed the ways that \mpsc s can intersect each other let us now see how a \mpsc\ can intersect a face boundary. It turns out that there are \mpsc s nicely arranged around each face boundary $F$, each of them using precisely three consecutive edges of $F$. Recall that by \eqref{bcdbcd} every facial walk is of the form $\ldots bcdbcd \ldots$ or the inverse. 
\labtequ{cycfac}{For every face boundary $F$ and every $bcd$ subpath $P$ of $F$ \ti\ a \mpsc\ $C$ containing $P$ and no other edge of $F$.} 
Indeed, the fact that every cycle is 3-coloured and \eqref{LIIA} imply together that every \mpsc\ contains a $bcd$ subpath. Translating this subpath to $P$ we thus obtain a \mpsc\ $C$ containing $P$. By \eqref{LII4} $C$ contains none of the two edges of $F$ incident with $P$. It remains to check that $C$ can also not contain an edge $e$ of $F$ not incident with $P$. But if this was the case then the \auto\ $g$ of \g mapping the first vertex of $P$ to its last vertex would translate $C$ to a cycle $C'=gC$ that crosses $C$: indeed, note that $g$ fixes $F$ as $F$ is a concatenation of translates of $P$, and so $C'$ must also contain a further edge $e'=ge$ on $F$. An easy topological argument now shows that $C$ and $C'$ cross indeed, contradicting \eqref{LII5}. This proves \eqref{cycfac}.
\medskip 

This motivates us to define the word $A:= bcd$, which will play an important role in the sequel.

\subsubsection*{The word $(AZ)^n$ inducing the \mpsc s.}

It turns out that there is a word inducing all the \mpsc s of \G:
\labtequ{azaz}{For some $n\geq 2$, every \mpsc\ of \g is induced by the word $(AZ)^n$. In particular, any two \mpsc s can be mapped to each other by a \auto\ of \G.}
To prove this, pick a pair $C,C'$ of \mpsc s that have a common path $P$ induced by $Z$, and let $x$ be an endvertex of $P$. Let $F$ be the face-boundary containing the $b$ and $d$-edge incident with $x$. We claim that each of $C,C'$ contains a $bcd$ (or $dcb$) subpath of $F$ incident with $x$; in other words, $C$ and $C'$ are as in \eqref{cycfac} (\fig{fazaz}).

\showFig{fazaz}{The situation arising in the proof of \eqref{azaz}.}

To begin with, note that as $x$ was by definition a last common vertex of $C$ and $C'$, one of them, $C$ say, contains the $b$ edge incident with $x$ and the other contains the $d$-edge. By \eqref{LII4} $C$ cannot have a subpath on $F$ containing more than three edges. So it suffices to show that $C$ does not leave $F$ after having traversed less than three edges. If this is the case though, then the \mpsc\ $D$ containing the $bcd$ subpath $Q$ of $F$ starting at $x$, which cycle is provided by  \eqref{azaz}, is distinct from $C$. Thus, $D$ cannot contain all of $P$ because then it would have a common subpath with $C$ properly containing $P$ and this would contradict the choice of $P$. So $D$ leaves $P$ at some of its interior vertices, which means that $D$ enters a side of one of $C,C'$ not containing $F$. On the other hand, $D$ also meets the side of each of $C,C'$ that does contain $F$; see (\fig{fazaz}). This means that $D$ crosses one of $C,C'$, contradicting \eqref{LII5}.

This proves that, as claimed, $C$ contains the $bcd$ subpath of $F$ incident with $x$. By the same argument we can prove that $C'$ contains the $dcb$ subpath of $F$ incident with $x$, but we will not need this. This is a good start for the proof of \eqref{azaz}: we just proved that $C$ contains an $AZ$ subpath. 

Now consider the \auto\ $g$ of \g mapping $x$ to the other endvertex $y=xbcd$ of $Q$. Repeating the above arguments we see that one of $gC,gC'$, in fact it must be $gC'$, also contains $Q$. We claim that one of $C, gC'$ must contain all three paths $P, Q, gP$. For suppose that $C$ misses part of $gP$ and $gC'$ misses part of $P$. Then, as $C$ is not allowed to cross $gC$ by \eqref{LII5}, it leaves $gP$ entering the side of $gC'$ not containing $F$. Similarly, $gC'$ leaves $P$ entering the side of $C$ not containing $F$. But if each of $C,gC'$ meets the side of the other not containing a face then the two cycles must cross each other contradicting \eqref{LII5}. This proves our claim that one of $C, gC'$ must contain all three paths $P, Q, gP$. Note that by the definition of $Z$ and the fact that both these cycles contain $Q$, this immediately implies that $C = gC'$.

We just proved that $C$ has a $ZAZ^{-1} = ZAZ$ subpath. Now repeating the previous arguments at the other end of $P$ we prove that $C$ has a $AZAZ$ or $A^{-1}ZAZ$ subpath. The latter possibility can however not occur, for it would mean that $C$ crosses $C'$ when leaving $P$; this can be seen by observing the spin of the end-vertices of $P$. Thus $C$ has a $(AZ)^2$ subpath. Continuing like this, we prove that $C$ is induced by $(AZ)^n$. Moreover, $n\neq 1$ for otherwise we would easily obtain, with the above arguments, that the faces have size 6 which we are assuming is not the case.

By the same arguments, we can prove that $C'$ is induced by $(A^{-1}Z)^n$, but as $Z=Z^{-1}$ this means that $C'$ is induced by $(AZ)^n$ too. 

We started with  $C,C'$ being arbitrary \mpsc s having a subpath induced by $Z$. Thus if we could prove that every \mpsc\ of \g has such a subpath this would complete the proof of \eqref{azaz}. This is indeed the case. For let $D$ be a \mpsc\ and let $R$ be a maximum-length subpath of $D$ shared with another \mpsc\ $D'\neq D$. Easily, $R$ has at least one edge. By \eqref{prefix}, $R$ is an initial subpath of some path $P$ induced by $Z$ which is a maximal common subpath of two \mpsc s $C,C'$. Now if $|R|\neq |P|$ then $D\neq C,C'$. But $D$ shares an edge $e$ not contained in but incident with $P$ with one of $C,C'$, for these two cycles together use all edges incident with the first vertex of $P$. This means that a common subpath of $D$ and one of $C,C'$ is $e R$, contradicting the maximality of $R$. This proves that every \mpsc\ of \g has a subpath induced by $Z$ and  completes the proof of \eqref{azaz}.

\subsubsection*{The subgroup $\Gam_2$, \socs, and the subgrpaph $G'_2$.}

Let $a,z$ be the elements of \Gam\ corresponding to the words $A,Z$ respectively, and let $\Gam_2$ be the subgroup of \Gam\ spanned by $a,b$. This subgroup, and the above results relating $A$ and $Z$ to \mpsc s, will allow us to follow an approach similar to that of the previous sections, the \mpsc s now playing the role of the monochromatic cycles of \Sr{secIa} or the 2-coloured cycles of \Sr{secIIa}. 

As in those sections, we are going to show that \g is a union of subdivisions of isomorphic copies of the \Cg\ $G_2$ of $\Gam_2$ \wrt\ the generating set $\{a,z\}$. We would like to define this kind of subdivision $G'_2$ of $G_2$ similarly to previous sections, by deleting for each \mpsc\ $C$ meeting  $\Gam_2$ all vertices in one of the sides of $C$, and then suppress vertices of degree 2 to obtain $G_2$ from $G'_2$. However, things are more complicated now and we need some preparatory work before we can show that this operation yields $G_2$ indeed. They reader may choose to skip this preparatory work and continue reading after \eqref{indep}, perhaps after having a look at the following vital definition.

\begin{definition} \label{defsoc}
Let $C$ be a \mpsc\ of \G, and let $x$ be a vertex of $C$ \st\ the word $(AZ)^n$ induces $C$ if the starting vertex is $x$. Then, we call the set of vertices of $C$ that can be reached from $x$ by subarc of $C$ induced by a prefix of $(AZ)^n$ a \defi{\soc} of $C$.
\end{definition}
Note that, by \eqref{azaz}, every \mpsc\ has at least one \soc. One of the major points of this paper, to be proved in the sequel, is that every \mpsc\ has precisely two \socs, one corresponding to each of its sides as indicated by our next claim:
\labtequ{socside}{For every \mpsc\ $C$ of \g and every \soc\ $S$ of $C$, one of the sides of $C$ contains all edges incident with $C$ at an element of $S$.}
This can be seen by observing the spin behaviour or by using \eqref{LII5}, see \fig{fazaz}.


Let us next check that \mpsc s do not separate cosets of $\Gam_2$ (compare this with the behaviour of 2-coloured cycles in earlier sections).
\labtequ{delta}{For every \mpsc\ $C$ of \g and every left coset $\Delta$ of $\Gam_2$ in \G, at most one of the sides of $C$ contains elements of $\Delta$.}
Suppose to the contrary there are elements $x,y\in \Delta$ in distinct sides of $C$, and let $W=w_1 \ldots w_k$ be a word with letters $w_i\in\{A,A^{-1},Z\}$ inducing  a \pth{x}{y}\ $P=xPy$ in $G$. Assume that $W$ has minimum length among such words. Define the corners of $P$ to be its vertices reachable from $x$ by paths induced by prefixes of $W$, and note that every corner of $P$ lies in $\Delta$ by definition. Moreover, by the minimality of $W$ the only corners of $P$ that do not lie on $C$ are $x$ and $y$.

By the definition of $Z$ and \eqref{cycfac} there is for every letter $w_i$ a \mpsc\ $C_i$ containing the corresponding subpath $P_i$ of $P$ induced by $w_i$. Even more, $C_i$ and $C_{i+1}$ have a common subpath $Q_i$ induced by $Z$ \fe\ relevant $i$, and the endvertices of $Q_i$ are corners of $P$. It might be the case that $C_i=C_{i+1}$. As  $x,y$ lie in distinct sides of $C$, and no $C_i$ can cross $C$ by \eqref{LII5}, $C$ \defi{bounds} $C_1$ from $C_k$, i.e.\ $C_1,C_k$ lie in distinct closed sides of $C$. This means that \ti\ an $i$ \st\ either $C$ bounds $C_i$ from $C_{i+1}$, or $C_{i+1}=C$ and $C_i\neq C$. If the former is the case then $Q_i$ must clearly be a subarc of $C$.. But then one of $C_i,C_{i+1}$ has a common subpath with $C$ that properly contains $Q$ since \g is cubic, and this contradicts the maximality in the definition of $Z$. If the latter is the case, then $Q_i$ is contained in $C=C_{i+1}$, and so $C_i$ must leave $C$ immediately before and after $Q_i$. But \eqref{socside} now implies that $P$ leaves $C$ entering the side from which it approached $C$, contradicting our assumptions. This proves \eqref{delta}.
\medskip

Using this we can prove the following observation. A \defi{metaedge} is a path of \g induced by one of the words $A,Z$.

\comment{
	\labtequ{subdiv}{Let $P$ be a path in \g induced by $A$ or $Z$. If $\Gam_2$ contains one of the endvertices of $P$ then it contains no interior vertex of $P$.}
	This proposition implies in particular that if $\Gam_2$ has a vertex $x$ on some \mpsc\ $C$ then it has a set $S$ of precisely $2n$ vertices on $C$, and $S$ is a \soc. To prove it, suppose $P$ is a path \st\ both an endvertex $x$ and an interior vertex $y$ lie in $\Gam_2$. Let us first consider the case where $P$ is induced by $Z$. Let $C,C'$ be the two \mpsc s containing $P$ provided by \eqref{Z}. Now consider a path $P'$ induced by $Z$ starting at $y$. Again, there are \mpsc s $D,D'$ both containing $P'$ and having $y$ as a last common vertex. This means that one of $D,D'$, the former say, leaves one of $C,C'$, \obda\ the latter say, at $y$. This means that $D$ meets the side of $C$ not containing $C'$. In fact, one of  the vertices $ya$ or $ya^{-1}$ which lies in $D$ by \eqref{Z} and \eqref{azaz}, lies in the closure of that side. That vertex $y'$ cannot lie on $C$ though, because then we would obtain either a \ps\ cycle shorter that $C$, which cannot exist, or a contradiction to Euler's formula as in the proof of \eqref{LII5}. Thus $y'$ lies in the side of $C$ not containing $C'$. Moreover, it lies in $\Gam_2$ since $y$ does. But this contradicts \eqref{delta}, as \eqref{socside} implies that $\Gam_2$ also has an element in the other side of $C$.

	In the other case, when  $P$ is induced by $A$ instead, let $C$ be the \mpsc\ containing $P$ as provided by \eqref{cycfac}, and let $\cf$ be the unique face containing $P$ in its boundary. Then by \eqref{socside} one of the  vertices $xa$ or $xa^{-1}$ lie in the side of $C$ containing \cf\ (and not on $C$ itself),  and one of $ya$ or $ya^{-1}$ lies in the other side of $C$. However, all these vertices belong to $\Gam_2$, and so we obtain again a  contradiction to \eqref{delta}.
\medskip

	Using similar arguments, \eqref{subdiv} can easily be strengthened to demand that if $P,P'$ are paths in \g induced by $A$ or $Z$ and have endvertices in $\Gam_2$, then the interior of $P$ does not meet $P'$ and vice versa. Another way to formulate this is as follows.
}

\labtequ{indep}{Let $P\neq P'$ be metaedges. If both $P,P'$ have endvertices in $\Gam_2$ then they are independent.}
Recall that two paths are called \defi{independent} if their interiors are disjoint.

To prove this, let $C, C'$ be a \mpsc\ containing $P,P'$ respectively, which exists by \eqref{Z} and \eqref{cycfac}. Suppose first that both $P,P'$ are induced by the word $A$. Let $\cf,\cf'$ be the face whose boundary contains $P,P'$ respectively, and note that $\cf\neq\cf'$ since $P\neq P'$. It \ises\ that $C$ separates $\cf$ from $\cf'$, because none of $\cf,\cf'$ can contain the other. But then, $C$ separates the vertices of \gat\ that lie on the boundary of $\cf$ from the vertices of \gat\ that lie on the boundary of $\cf'$, which contradicts \eqref{delta}.

Suppose now that $P$ is induced by the word $Z$. Let $D\neq C$ be a further \mpsc\ containing $P$, provided by \eqref{Z}. By \eqref{LII5} the closure of one of the sides of $C'$ contains both $C,D$. Thus, by an easy topological argument, one of $C,D$, call it $K$, separates the other from $C'$. But then both sides of $K$ meet \gat\ as each of $C',C,D$ contains a vertex in \gat\ not contained in $K$. Again, this contradicts \eqref{delta}, and so \eqref{indep} is established.

\medskip
By \eqref{indep} the \Cg\ $G_2$ of $\Gam_2$ \wrt\ the generating set $\{a,z\}$ has a \topem\ in \G: we can obtain $G_2$ from $G$ by substituting for every two adjacent vertices $x,y$ of  $G_2$ the \pth{x}{y}\ in \g  induced by $A,A^{-1}$ or $Z$ by an $x$-$y$ edge labelled $a$ or $z$ accordingly. This yields indeed a  \topem\ of  $G_2$ in \G\ since by \eqref{indep} all these paths are independent. Starting with $G_2$ and replacing each edge back by the corresponding path induced by $A,A^{-1}$ or $Z$ we obtain the subdivision $G'_2$ of $G_2$ alluded to earlier. Compared with $G_2$ the graph $G'_2$ has the advantage that it is a subgraph of \g while still capturing the structure of $G_2$. As in earlier sections, this will come in handy later, when we will try to yield a presentation of \Gam\ from a presentation of $\Gam_2$.

Moreover, for every coset $\Delta$ of $\Gam_2$ in \Gam\ we find an isomorphic copy of $G'_2$ in \g whose vertices of degree 3 are precisely the elements of $\Delta$: such a copy can be obtained by mapping any vertex of $\Gam_2$ to any vertex in $\Delta$ by a \auto\ of \G. For every such copy define its \defi{corners} to be its vertices of degree 3; in other words, the elements of the corresponding coset.

\subsubsection*{A \plpr\ of $\Gam_2$.}

Note that $G_2$ is a cubic \Cg\ on two generators $a,z$, and our embedding \sig\ of \g induces, when combined with the aforementioned \topem\ of $G_2$ in \G, an embedding $\sig_2$ of $G_2$ in the sphere. It is straightforward to check that both $a,z$ preserve spin in $\sig_2$, for example using the fact that all vertices of \g have the same spin in \sig. It is also not hard to see that $G_2$ is \iicon: apply \Lr{Liicon} using the fact that $(az)^n$ is a relation by \eqref{azaz}. Now if $a$ has finite order, then \Lr{apres} implies that $G_2$ has at most one end, and so we can obtain a \plpr\ of $G_2$ from \Tr{LG2} \ref{oi}: $G_2 \isom Cay \left<a,z\mid z^2, a^k, (az)^n\right>$, $k\geq 3$, $n\geq 2$. Note that $n$ can be read off \eqref{azaz} in this case.

If $a$ has infinite order, then  $G_2$ cannot be \tcon\ by \Lr{apres2}. Thus $\kappa(G_2)=2$, and we can obtain a \plpr\ of $G_2$ from \Tr{main2} \ref{i}: $G_2 \isom Cay \left<a,z\mid z^2, (az)^n\right>$, where again $n$ is as in \eqref{azaz}. Cases \ref{ii} and \ref{iii} of \Tr{main2} cannot arise here, because we already know from \eqref{azaz} that $az$ has finite order and this is not the case in these groups; see \cite{cay2con} for more details.

As in earlier sections we are going to plug these presentations into \Tr{ccfdec} to obtain a presentation of \Gam.

\subsubsection*{Splitting \g into copies of $G'_2$.}

Before we can apply \Tr{ccfdec} we need a couple of further preparatory observations.
Call a cycle of a copy of $G'_2$ \defi{basic} if it is a \mpsc\ of \G. Also call the corresponding cycle of $G_2$ basic. 
\labtequ{basec}{For every pair of distinct copies $H,H'$ of $G'_2$ \ti\ a unique basic cycle $C$ of $H$ that bounds $H$ from $H'$.}
For this, let $e$ be an edge in $E(H')\sm E(H)$, which must exist if  $H,H'$ are distinct, and  let $P$ be a (possibly trivial) \pth{e}{H}\ in \G. Let $f$ be the unique edge in $P\cup e$ incident with a vertex $v$ of $H$. Then $v$ is not a corner of $H$ because $f\not\in E(H)$. Thus $v$ lies in the interior of a path $Q$ of $H$ induced by $A$ or $Z$. If $Q$ is induced by $A$, then the unique basic cycle $C$ of $H$ containing $Q$ bounds $f$ from the face boundary $F$ of \g containing $Q$. By \eqref{socside} $H$ meets the side of $C$ containing $F\sm Q$ as this side contains $Q'$. By \eqref{delta}, $H$ does not meet the other side of $C$ which contains $f$. Applying \eqref{delta} again but this time on the coset of $H'$, we obtain that $H'$ only meets the side of $C$ that does contain $f$ since $P$ must lie in that side. Thus $H -C$ and $H'-C$ lie in distinct sides of $C$, and so $C$ is as desired. 

In the other case, where $Q$ is induced by $Z$, a similar argument applies except that now there are two basic cycles of $H$ containing $Q$, and we have to choose $C$ to be the one bounding $f$ from the other.

The uniqueness of $C$ follows easily from the fact that for every other basic cycle $D$ of $H$ we now know that $C$ bounds $D$ from $H'$.
\medskip

Using \eqref{basec} we can prove the following.
\labtequ{inout}{For every copy $H$  of $G'_2$ and every basic cycle $C$ of $H$ there is a copy $H'\neq H$ of $G'_2$ \st\ $C\subseteq H'$ and $C$ bounds $H$ from $H'$.}
\note{you need $C$ to be a basic cycle of $H'$ too?}
To see this, recall that $C$ bounds a face \cf\ of $H$ by \eqref{delta}.
Let  $P$ be a maximal subpath of $C$ that is contained in a basic cycle $C'$ of some copy $H'$ of $G'_2$ that lies in $\cf \cup C$. To see that such copies exist, note that as $C$ is \ps\ in \G, there is an edge $xy$ of \g \st\ $x\in V(C)$ and $y$ lies in \cf. Easily, $x$ is not a corner of $H$ for otherwise $xy$ would lie in $H$. So let $M$ be the coset of $G_2$ containing $x$, and let $H_M$ be the corresponding copy of $G'_2$. Then $H_M$ must be contained in $\cf \cup C$ by \eqref{delta}, and  it meets $C$ at $x$. Thus any basic cycle of $H_M$ containing $x$ is a candidate for $C'$.

We claim that $P=C$ (in particular, $P$ is a closed path). Suppose to the contrary that $P$ has distinct endvertices $v,w\in V(C)$. Consider the copy  $J$ of $G'_2$ corresponding to the coset of $\Gam_2$ containing $v$, and note that, again by \eqref{delta}, $J$ lies in $\cf \cup C$ too as it contains an edge $e$, incident with $v$, that lies in \cf\ (\fig{finout}). Moreover $J$ contains the edge $f\in E(C)$ incident with $v$ that does not lie in $P$. 
\showFig{finout}{The situation in the proof of \eqref{inout}.}

By \eqref{basec} \ti\ a basic cycle $D$ of $J$ that bounds $J$ from $H$. Thus, easily, $E(H) \cap E(J) \subseteq D$; in particular, $E(C) \cap E(J) \subseteq D$. Let $P'$ be the maximal common subpath of $C$ and $J$ containing $f$, and note that $P'\subseteq D$ too. By the maximality of $P$, and as $P'$ contains the edge $f\not\in E(P)$, we have $P\not\subseteq P'$. By an easy topological argument, $D$ now crosses $C'$ contradicting \eqref{LII5}. This contradiction proves our claim that $P=C$, which immediately implies   \eqref{inout}.
\medskip

It is straightforward to strengthen \eqref{inout} to demand that \ti\ a \auto\ $g$ of \g \st\ $H'=gH$ and $C=gC$; just use the fact that $C'$ in the above proof is a translate of $C$ by \eqref{azaz}. Thus, any \soc\ $S$ of $C$ is mapped by $g$ to a further \soc\ $S'$ of $C$, which is distinct from $S$ as, by \eqref{socside} and the choice of $H'$, it follows that $S$ and $S'$ point into distinct sides of $C$. Note that $C$ cannot have a third \soc, as this would have to share a side with one of $S,S'$ easily yielding a contradiction to \eqref{LII5}. Thus, every \mpsc\ has two \defi{dual} societies, one for each of its sides. This remarkable fact is one of the main ideas of this paper, and it is not peculiar to the current section: dual \socs\ were implicit in all cases we have seen so far. For example, the vertices on the big cycle of \fig{fiamal} (iii) pointing to one of its sides form a \soc, and the reader will easily spot the dual \socs\ of the cycles of \fig{fidih} (ii) corresponding the 4-cycles of \fig{fidih} (i).
\medskip

Back to our analysis of \G, we note that for every vertex $x$ of \g \ti\ a coset of $\Gam_2$ containing $x$, and so there is a copy $H^x$ of $G'_2$ in \g containing $x$ as a corner. We will use these graphs $H^x$ in our application of \Tr{ccfdec}.

For this, let $T$ be the graph with vertex set $\{H^x \mid x\in G\}$ in which two vertices $H^x, H^y$ are joined by an edge if they share a basic cycle bounding $H^x$ from $H^y$. We claim that $T$ is a tree. It is easy to see that $T$ is acyclic, since any cycle would yield a topological impossibility. To prove that $T$ is connected, suppose it is not and choose vertices $H^x, H^y$ in distinct components of $T$ minimizing the distance from $H^x$ to $H^y$ in \G. Let $C_0$ be the basic cycle of $H^x$ bounding it from $H^y$ as provided by \eqref{basec}. Let $H_1$ be the other vertex of $T$, provided by \eqref{inout}, containing $C_0$. If $H_1$ lies in the component of $T$ containing $H^y$ then we obtain a contradiction, since $H_1$ is joined to $H^x$ by an edge of $T$. If not, then we can repeat the above procedure, replacing $H^x$ by $H_1$, to obtain $H_2$ and $C_2$ bounding $H_2$ from $H^y$. Continuing like this, we obtain a sequence $H_0=H^x,H_1,H_2, \ldots$ of vertices of $T$ all of which lie in the component of $H^x$, and a sequence of basic cycles $C_i$ bounding $H^y$ from $H_0, \ldots, H_i$. But after finitely many steps we must obtain some $C_i$ that is disjoint from $C_0$, for all $C_i$ have the same length and \g is \lf. This means that if $P$ is a shortest \pth{H^x}{H^y}\ in \G, then $C_i$ intersects $P$ at a vertex $z\not\in V(H^x)$ for  $C_i$ bounds $H^y$ from $H^x$. But then the distance from $H_i$ to $H^y$ is shorter than the distance from $H^x$ to $H^y$, which contradicts the choice of the pair $H^x, H^y$. This contradiction proves that $T$ is a tree as claimed.

We can now apply \Tr{ccfdec}. 
As in earlier sections, we choose \fe\ $H^x$ the generating set $\cf_x$ induced by the presentation of $\Gam_2$ we obtained above. This easily yields the following presentations: 
\begin{itemize}
\item  $G \isom Cay\left<b,c,d\mid {b^2}, c^2, d^2, (bcd)^k; (bcdZ)^n \right>$,  $k\geq 3$, $n\geq 2$ (faces of size $3k\geq 9$);
\item  $G \isom Cay\left<b,c,d\mid {b^2}, c^2, d^2, (bcdZ)^n \right>$, $n\geq 2$ (no finite faces).
\end{itemize}

Indeed, recall that either $G_2 \isom Cay \left<a,z\mid z^2, a^k, (az)^n\right>$, $k\geq 3$, $n\geq 2$, which is the case if $a$ has finite order, or  $a$ has infinite order and  $G_2 \isom Cay \left<a,z\mid z^2, (az)^n\right>$. By the above discussion, and replacing $bcd$ back for $a$,  we then obtain 
$G \isom Cay\left<b,c,d\mid {b^2}, c^2, d^2, Z^2, (bcd)^k, (bcdZ)^n \right>$, $k\geq 3$, $n\geq 2$ in the first case and 
$G \isom Cay\left<b,c,d\mid {b^2}, c^2, d^2, Z^2, (bcdZ)^n \right>$, $n\geq 2$ in the second. In both cases we can omit the relation $Z^2$ though because, as mentioned earlier, \eqref{prefix} implies that $Z$ is a symmetric word and so $Z^2$ can be deduced from the relations ${b^2}, c^2, d^2$. Our presentations are planar by \eqref{LII5}.

Note that these presentations coincide with \ref{IIcii} and \ref{IIciii} of \Tr{TIIc}. Presentation \ref{IIci} corresponds to the case when \g has hexagonal faces and was obtained in our earlier discussion for that case.
This completes the proof of the forward implication of \Tr{TIIc}.

\subsubsection*{The converse implication} \label{convIIc}

It remains to prove the converse implication of \Tr{TIIc} for presentations of type \ref{IIcii} and \ref{IIciii}: given a non-crossing pattern $\cp$ and a $k\in \{3,4,\ldots \} \cup \sgl{\infty}$, we have to show that the \Cg\  $G= Cay\left<b,c,d\mid {b^2}, c^2, d^2, (bcd)^k; \cp \right>$ has the desired properties.

As in the forward implication, we can prove the existence of a dominant and a weak colour in \cp; this time, instead of using \Cr{CII3p} like in the proof of \eqref{LII4} we can just use the fact that $cp$ is non-crossing by assumption. The existence of the word $Z$ can be proved as above too using this assumption. Thus, similarly to, \eqref{azaz}, we can prove that 
\labtequ{azaz2}{$\cp$  is of the form $(AZ)^n$}
up to rotation and inversion, where again $A=bcd$ and $Z$ is a symmetric word. This means that every cycle $C$ `induced' by $\cp$ has a \soc\ in the sense of Definition~\ref{defsoc}. A \defi{pending edge} of $C$ is an edge incident with but not contained in $C$. Similarly to \eqref{socside} we have:
\labtequ{pendside}{\Fe\ cycle $C$ complying with the pattern $\cp$, and every side $A$ of $C$, \ti\ a \soc\ $S$ of $C$ \st\ all pending edges of $C$ incident with $S$ lie in $A$.}
Indeed, if $s,t$ are subsequent elements of $S$ whose pending edges lie in distinct sides of $C$, then it is easy to construct a rotation of $C$ containing the \pth{s}{t}\ `induced' by $A$ or $Z$.

If $k<\infty$, i.e.\ if our given presentation is of type \ref{IIcii}, we apply the converse of \Tr{LG2} \ref{oi} to obtain a (finite or 1-ended) \tcon\ planar \Cg\ $G_2$ of the group $\Gam_2= \left<a,z\mid z^2, a^{k}, (az)^n\right>$, each face of which is induced by $a^{k}$ or $(az)^n$. If $k=\infty$,  i.e.\ if we are in type \ref{IIciii}, we apply the converse of \Tr{main2} \ref{i} to obtain the planar \iicon\ \Cg\ $G_2$ of the group $\Gam_2= \left<a,z\mid z^2, (az)^n\right>$. The embedding of the latter graph is not unique; we choose an embedding in which  each cycle induced by $(az)^n$ bounds a face. This graph, and the desired embedding, can be obtained from a 3-regular tree by replacing every vertex with a cycle of length $2n$ alternating in the colours $a,z$, and letting the cycles replacing two adjacent vertices of the tree share a single $a$-edge. Note that $G_2$ has infinite faces too, bounded by $a$-coloured \dray s.

In both cases, replace each $a$-edge of $G_2$ by a path of length three with edges coloured $bcd$ respecting the directions of the $a$-edges, and replace each $z$ edge of $G_2$ by a path along which the colours of the edges read as the letters in $Z$. Let $G'_2$ be the resulting edge-coloured graph. Notice the similarity with the $G'_2$ used for the forward implication of \Tr{TIIc}. 

We will use similar ideas as in the proof of the forward implication, except that instead of finding copies $H^x$ of $G'_2$ as subgraphs of a given graph \G, we now have to construct them from scratch. In the end we will have to check that their union is a \Cg, and that it has the desired properties.

Let $H_0:= G'_2$. As earlier, we call any cycle of $G'_2$ a \defi{basic cycle} if the corresponding cycle of $G_2$ was induced by $(az)^n$. We will construct a sequence \seq{H} by inductively glueing a copy of $G'_2$ in each face of $H_{i-1}$ bounded by a basic cycle. In order to be able to do so we will need the following assertion, which is reminiscent of \eqref{inout}.
\labtequ{inoutc}{\Fe\ cycle $C$ complying with the pattern $\cp$ and every side $A$ of $C$, \ti\ a \soc\ $S$ of $C$ \st\ all pending edges of $C$ incident with $S$ lie in $A$.}
We will imitate the proof of \eqref{inout}, except that we now have no underlying graph \g in which we can look for $C'$, and so we will have to make do with \rota s of $C$. Note that by \eqref{azaz2} \ti\ at least one \soc\ $T$ of $C$, so we can assume that the pending edges of $C$ incident with $S$ lie in the wrong side $B\neq A$ of $C$. We may also assume that every vertex $x$ of $C$ is incident with a pending edge, for otherwise we may attach such an edge, embedding it so that $x$ has the right spin and giving it the colour missing from $x$, without affecting the compliance of $C$ with \cp. So let $xy$ be a pending edge of $C$ \st\ $x\in V(C)$ and $y$ lies in $A$, which exists by \ref{ncpi} of Definition \ref{dncp}. Let $t$ be a vertex in $T$ such that the colours of the two edges of $C$ incident with $t$ are the same as the colours of the two edges of $C$ incident with $x$ (one of which colours must be the dominant colour $c$). To see that such a $t$ exists, note that $T$ has by definition two vertices joined by a $bcd$ subpath of $C$, and as $c$ is the dominant colour, one of these vertices is incident with a $b$ and a $c$ edge and the other is incident with a $d$ and a $c$ edge of $C$. Let $D$ be the \rot{t}{x}\ of $C$, and recall that $D$ does not cross $C$. 

\Btco\ $D$, $x$ is a member of a \soc\ of $D$. Thus we can ask for a maximal subpath $P$ of $C$ that is  contained in some \rota\ $D$ of $C$ \st\ some \soc\ member $x$ of $D$ lies in $C$ and has a pending edge in $A$. As in the proof of \eqref{inout} (recall \fig{finout}) we claim that $P=C$. Suppose, to the contrary, that $P$ has distinct endvertices $v,w\in V(C)$. By the previous arguments, we can construct a \rota\ $D'$ of $C$ having $v$ as a \soc\ vertex. Note that $D'$ contains the edge $f\in E(C)$ incident with $v$ that does not lie in $P$. 

Let $P'$ be the intersection of $C$ and $D'$. By the maximality of $P$, and as $P'$ contains the edge $f\not\in E(P)$, we have $P\not\subseteq P'$, and so there is an interior vertex $z$ of $P$ at which $D'$ leaves $C$. It follows that $D'$ crosses $D$, as can be easily seen by looking at the edges incident with the endpoints of the path $P\cap P' = zPv$. But as $D$ and $D'$ share some edges they are a \rota\ of each other, and this crossing contradicts the definition of \cp.  This proves that $P=C$ as claimed. Combined with \eqref{pendside}, this easily implies \eqref{inoutc} as $P=C$ has by definition a \soc\ member with a pending edge in $A$. 
\medskip

Note that a \soc\ as in \eqref{inoutc} is uniquely determined once we fix the side $A$: if there were two distinct such \socs\ $S,T$ on $C$ then it would be easy to find $s,s'\in S$ and $t,t'\in T$ with $s'=sz, t'=tz$ \st\ $s,t,s',t'$ appear in that order on $C$, but then the corresponding \rota s of $C$ would cross. This means that 
\labtequ{socs}{\fe\ cycle $C$ complying with $\cp$ \ta\ precisely two \socs\ on $C$, with pending edges on opposite sides. This \socs\ are disjoint.}
Back to our construction of $H_i$, let $B$ be a  basic cycle of $H_{0}$ one of the sides $\cf_B$ of which is a face of $H_{0}$. Note that all corners of $H_0$ have the same spin since this was the case in $G_2$. 
Thus $B$ complies with \cp, and we may apply \eqref{socs} to obtain two distinct \socs\ of $B$. One of these \socs\ must be the set of corners of $B$ by the construction of $G'_2$; we will use the other \soc\ $S$ to extend $H_{0}$: 
build an isomorphic (respecting edge-colours) copy $H_B$ of $G'_2$ that has $B$ as a basic cycle too, and in which the vertices in $S$ are corners, embedding  $H_B$ in the closure of $\cf_B$. Let $H_i$ be the union of $H_{i-1}$ with all these graphs $H_B$, one for every facial basic cycle $B$ of $H_{0}$ as above. Note that every corner of $H_1$ has the same spin; we use this fact as an induction hypothesis in the construction of \seq{H}. The following steps $H_2,H_3, \ldots$ of this construction are similar to the last one: we consider all basic cycles $B$ bounding a face of $H_{i-1}$, and embed a copy of $G'_2$ in that face attaching it at the yet unused \soc\ of $B$. All basic cycles comply with \cp\ at every step by our induction hypothesis, and so we can always apply \eqref{socs}. Since the newly attached pending edges of any $B$ as above are put in the `right' side of $B$,  our induction hypothesis is preserved.  Note that a basic cycle $B$ considered in step $i$ will not be reconsidered in any subsequent step as it does not bound a face any more.

Let $G:= \bigcup_\iin H_i$. By construction \g is planar. To see that it is cubic, each vertex $x$ being incident with all three edge-colours $b,c,d$, note that if $x$ has an incident pending edge $e$ in $H_i$, then $e$ lies in a face bounded by a basic cycle $B_i$ of $H_i$. Thus in the next step, $e$ will either lie in $H_{B_i}\subseteq H_{i+1}$ or inside a face of some basic cycle $B_{i+1}\neq B_i$. It follows easily that $e$ lies in some $H_j$ for otherwise \ti\ an infinite sequence \seq{B}\ of nested distinct cycles of the same length with a common vertex $x$, which cannot be the case in a \lfg.

Furthermore, all vertices of \g have the same spin since this was the case in each $H_i$.

We claim that \g is a \Cg. This will follow from the following two assertions and Sabidussi's theorem. 

\labtequ{autoa}{For every two basic cycles $C$, $D$ of \G, any \socs\ $S,T$ of $C,D$ respectively, and any members $s\in S, t\in T$, \ti\ a \auto\ $g$ of \g \st\ $gC=D$ and $gS=T$ and $gs=t$.}
To begin with, it easy to construct a \ciso\ $g'$ from $C$ to $D$ with $g'S=T$ and $g's=t$. We will extend $g'$ to a \auto\ of the whole graph \G. By the construction of \g \ta\ copies $H_C$ and  $H_D$ of $G'_2$ in \g in which the members of $S$ and $T$ are corners. Indeed, if $C$ was constructed in step $i$, and $S$ has pending edges in the side of $C$ that is a face of $H_i$, then we explicitly constructed $H_C$ in the definition of $H_{i+1}$. If $S$ has pending edges in the other side, then we take $H_C$ to be the copy of $G'_2$ in the construction of $H_i$ containing $C$. The same goes to $D$. 

Now let $g_0$ be a \ciso\ from $H_C$ to $H_D$ that coincides with $g'$ on $C$. Such a \ciso\ exists because $H_C$ and  $H_D$ are both copies of $G'_2$, and any basic cycle of $G'_2$ can be `rotated' by the action of $\Gam_2$. 

Having defined $g_0$ we proceed inductively, in \oo\ steps, to define extensions $g_1 \subset g_2, \subset, \ldots$, as follows. In each step $i>0$ consider each new basic cycle $B$ in the domain of $g_{i-1}$, let $B':=g_{i-1}B$, and note that precisely one of the two \socs\ $S_B$ of $B$ ---provided by \eqref{socs}--- has the property that its pending edges lie in the domain of $g_{i-1}$. Let $S^B$ be the other \soc\ of $B$. Similarly, let $S^{B'}$ be the \soc\ of $B'$ whose pending edges do not lie in the range of $g_{i-1}$. Then let $g_i$ map $H_B$ to $H_{B'}$, extending $g_{i-1}$, where $H_B$ and $H_{B'}$ are defined as $H_C$ above, but this time \wrt\ the \socs\ $S^B$ and $S^{B'}$. 

Let $g:= \bigcup g_i: G \to G$. To see that $g$ is surjective notice the similarity in the definitions of $g_i$ and $H_i$. Injectivity follows easily by considering the embedding of \g and its subgraphs $H_B$. Thus $g$ is the desired \auto.
\medskip

It \ises\ that
\labtequ{autob}{\fe\ $v\in V(G)$  \ti\ a basic cycle $B$ of some $H_i$ \st\ $v$ is in a \soc\ of $B$.}
Indeed, let $i$ be the first index \st\ all three edges of $v$ lie in $H_i$. Then by the construction of $H_i$ there was a basic cycle $B$ that had $v$ as a \soc\ vertex.
\medskip

Combining \eqref{autoa} with \eqref{autob}, we see that \fe\ $v,w\in V(G)$ \ti\ a \auto\ of \g mapping $v$ to $w$. Sabidussi's theorem thus yields that \g is a \Cg\ as claimed.

It follows immediately from \Lr{Liicon} that \g is \iicon.

Finally, we claim that each pair of colours spans \dray s in \G, in other words, each of $bc, cd, db$ has infinite order. To see this, let  $P$ be a component  of \g spanned by two colours.  Note that \fe\ basic cycle $C$ of \G, if $P$ meets $C$ then $P$ crosses $C$; this is easy to see using the fact that all vertices have the same spin and every other edge of $C$ bears the dominant colour $c$. Pick a basic cycle $C_0$ meeting $P$, and let $x_1$ be the first vertex of $P$ after leaving $C_0$. Let $C_1$ be a basic cycle containing $x_1$,  and  let $x_2$ be the first vertex of $P$ after leaving $C_1$. Continue like this to define infinite sequences  $x_i$ and $C_i$. By our previous remark, each $C_i$ bounds $C_{i-1}$ from $C_{i+1}$; thus the $C_i$ are all distinct, and so $P$ is infinite proving our claim.

To sum up, starting with an arbitrary \ncp\ \cp\ and an arbitrary value $k\in \{3,4,\ldots\} \cup \sgl{\infty}$, we constructed a plane \Cg\ with the properties required by the  forward implication of \Tr{TIIc}. Thus we can now apply the forward implication, which yields that \g has one of the presentations of the assertion. As we constructed \g to have faces of size $3k$, and \psc s induced by \cp, it follows that this presentation is indeed the intended one. This completes the proof of the backward implication of \Tr{TIIc}. It only remains to prove the last statement about $\kap(G)$.

\subsubsection*{The connectivity of \G}

In this section we determine the connectivity of each of the graphs of \Tr{TIIc} of type \ref{IIcii} or \ref{IIciii}; we have already seen that those of type \ref{IIci} are \tcon. The most interesting result is that such a graph can be \tcon\ even if its faces have infinite size.

We will say that our \ncp\ \cp\ is \defi{regular} if it is, up to rotation, of the form $\cp\isom (d c(bc)^{m})^n$. Note that in this case we have
\labtequ{cbcm}{$Z=c(bc)^{m-1}$,}
by \eqref{azaz} and the definition of $A$.

We begin with a basic observation that will be useful later.
\labtequ{nod}{$Z$ contains the letter $d$ \iff\ \cp\ is not regular.}
Indeed, if \cp\ is regular then any $d$-edge $e$ uniquely determines a cycle  through $e$ induced by \cp. As $Z$ induces the intersection of two distinct cycles  induced by \cp, the forward implication follows.

For the backward implication, suppose now that \cp\ is not regular, which means that \cp\ has two 2-coloured $bc$ `subpaths' $P,Q$ of distinct lengths surrounded by $d$-edges. Now given a $d$-edge $e=uv$ of \G, we can let \cp\ induce two cycles $C,D$ of \g by starting at $u$, say, and reading \cp\ once starting at the beginning of $P$ and once at the beginning of $Q$. Then $C,D$ are distinct, and they both contain $e$. The definition of $Z$ and \eqref{prefix} now immediately imply that $Z$ contains the letter $d$ as claimed.
\medskip

Let $N$ be the size of the faces of \G, and recall that $N=3k$ is finite if \g is of type \ref{IIcii} and it is infinite if \g is of type \ref{IIciii}.
The main result of this subsection is
\begin{lemma} \label{fiNtcon}
If $N<\infty$ or \cp\ is not regular then \g is \tcon.
\end{lemma}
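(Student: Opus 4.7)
The plan is to apply \Lr{lkcon} with $k=3$ to the family $\mathcal{K} = \{V(C) \mid C \text{ is a basic cycle of } G\}$. Condition~(i) of \Lr{lkcon} is \eqref{autob}: every vertex of \g lies on some basic cycle. It remains to verify (ii) that every basic cycle is 3-connected in $G$, and (iii) that any two basic cycles are linked by a chain in which consecutive members are 3-linked to each other.

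For (ii), let $C$ be a basic cycle and $\{x,y\} \subseteq V(G)$. Since $C$ has length $2n \geq 4$ and is 2-connected as a cycle, the only nontrivial case is $\{x,y\} \subseteq V(C)$, in which $C - \{x,y\}$ splits into two arcs $P_1,P_2$. Applying \eqref{inout} on both sides of $C$ yields two basic cycles $C',C''$ each sharing with $C$ a path induced by $Z$, and the corresponding societies lie in opposite sides of $C$. A short case analysis according to where $\{x,y\}$ falls on $C$ shows that at least one of $C',C''$ provides a \pth{P_1}{P_2}\ in $G - \{x,y\}$; the hardest subcase is when both $x$ and $y$ straddle the two $Z$-overlaps simultaneously, which can be bypassed by moving one further step in the tree $T$ of copies of $G'_2$.

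For (iii), two basic cycles sharing a copy of $G'_2$ correspond to two basic cycles of $G_2$ via the \topem\ provided by \eqref{indep}: a 3-linkage in $G_2$ lifts to a 3-linkage in $G$ because metaedges are pairwise independent and any two vertices removed from $G$ can block at most two metaedges. When $N < \infty$, $G_2$ is 3-connected by \Tr{endtcon}, so the required 3-linkage is trivial. When $N = \infty$, $G_2$ has connectivity~$2$, and its separating pairs are the endvertices of the $a$-edges shared between consecutive basic cycles of $G_2$. Such a pair lifts to a pair of endvertices $\{u,v\}$ of a metaedge $uPv$ induced by $A = bcd$ in $G$. By \eqref{nod}, non-regularity of \cp\ means that $Z$ contains the letter $d$, so the $d$-edge of $uPv$ also lies in the $Z$-portion of some further \mpsc\ $D$. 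Following $D$ exits $uPv$ through its two interior vertices (which are distinct from $u,v$) and reaches the two components of $G_2-\{u,v\}$, providing the required bypass in $G-\{u,v\}$. Basic cycles lying in different copies of $G'_2$ are then handled by chaining this kind of local argument along the tree $T$.

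The main technical obstacle is the last case: verifying that the \mpsc\ $D$ of \eqref{nod} actually bridges the two components separated by the metaedge, rather than returning to the same component, and that such local bypasses combine consistently across neighbouring metaedges when both $u$ and $v$ lie on shared boundaries of several copies. In the finite-face case this step evaporates because $G_2$ is already 3-connected, clarifying why the hypothesis $N < \infty$ or \cp\ non-regular is the exact dichotomy that controls 3-connectedness of $G$.
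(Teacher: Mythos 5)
Your decomposition is genuinely different from the paper's: you feed \Lr{lkcon} the vertex sets of the basic cycles, whereas the paper uses the (much larger) vertex sets of the copies of $G'_2$ and argues by contradiction that no pair $\{x,y\}$ can disconnect such a copy inside \G. The finer decomposition is not wrong in principle, but it concentrates all of the difficulty into your condition (ii), and that is exactly where the gap is. The claim ``every basic cycle is 3-connected in $G$'' is \emph{false} when $N=\infty$ and \cp\ is regular: by \Lr{Lsimple}, picking one endvertex from each of two $d$-edges of a single basic cycle $C$ yields a 2-separator of \G, and this pair separates $V(C)$ itself into two pieces lying in different components. So any correct verification of (ii) must invoke the hypothesis ``$N<\infty$ or \cp\ non-regular'', yet your sketch of (ii) never does --- you only bring in \eqref{nod} later, in (iii), where it is not the issue. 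The subcase you wave away (``both $x$ and $y$ straddle the two $Z$-overlaps'') is precisely the hard one: when $x,y$ lie in the same \soc\ of $C$, the copy of $G'_2$ attached at the dual \soc\ bypasses them only if it has a corner strictly inside each arc of $C-\{x,y\}$, and guaranteeing this is exactly the paper's assertion \eqref{zadual}, whose proof needs non-regularity via \eqref{nod} together with \eqref{LII7}. ``Moving one further step in the tree $T$'' does not repair this: in the regular case the same two vertices cut every deeper copy in the same way, which is how \Lr{Lsimple} is proved.

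Beyond that, you yourself flag the bridging step in (iii) as unresolved (``the main technical obstacle''), so the proposal is incomplete on its own terms there as well. For comparison, the paper handles the two-corner case by proving \eqref{zadual} explicitly, and handles the case where one of $x,y$ is an interior vertex of a metaedge by iterating \eqref{inout} along a nested sequence of basic cycles through a fixed vertex, which must terminate by local finiteness. If you want to keep the basic-cycle decomposition, you would still have to prove \eqref{zadual} (or an equivalent statement) and deploy it inside your step (ii); at that point the argument essentially collapses back into the paper's proof.
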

\begin{proof}
 
\comment{ 
		We will prove the \tcon ness of \g by proving that any two adjacent vertices can be joined by three independent paths and applying \Lr{Lk1}.

Let $e=uv$ be an edge of \g bearing one of the non-dominant colours $d$ or $b$. Let $P$ be a $Z$-path containing $e$; such a path exists by \eqref{nod} or the fact that $d$ is the weak colour and so any $d$-edge of any basic cycle is followed by a $c$ and then a $b$ edge on either side. Let $C,C'$ be the two basic cycles containing $P$. 

Recall that both edges incident with $e$ in $P$ must bear the dominant colour $c$, and so the two other edges incident with $e$ lie in distinct sides of $P$ since all vertices have the same spin; see \fig{f3paths}.
\showFig{f3paths}{Finding three independent \pths{u}{v}.}

		Consider the copies $H_u,H_v$ of $G'_2$ in which $u$ or $v$ is a corner. Let  $D_u$, respectively $D_v$, be the basic cycle in $D_u$ (resp.\ $D_v$) containing $e$ as well as the $c$ edge incident with $u$ (resp.\ $v$). 
}

If for every copy  $H_C$ of $G'_2$ as in the construction of \g from page~\pageref{socs}, all vertices in  $V(H_C) \sm \{x,y\}$ lie in a common component of $G - \{x,y\}$ for every pair of vertices $\{x,y\}\subset V(G)$,  then  $G$ is \tcon\ by \Lr{lkcon}.
Thus we can assume that $V(H_C) \sm \{x,y\}$ is disconnected in $G - \{x,y\}$ for some $H_C$ and some pair of vertices $\{x,y\}$. This implies in particular that $x,y\in V(H_C)$, for $H_C$ is \iicon.

Let us first consider the case where both $x,y$ are corners of $H_C$. Then $N$ cannot be finite, because in this case $G_2$ in \tcon, and so no two corners of its subdivision $G'_2 \isom H_C$ can disconnect it. Thus we may assume that $N=\infty$, and so $G_2 = Cay \left<a,z\mid z^2, (az)^n\right>$. We claim that $x,y$ must lie on a common basic cycle of $H_C$. For if not, then the vertices of each basic cycle remain connected after removing $x,y$. Moreover, as $H_C$ is the union of basic cycles, of which any two incident ones $D,D'$ have two corners in common, $x,y$ cannot disconnect $D$ from $D'$ unless $x,y \subset V(D)$. But if $x,y$ leaves every basic cycle connected, and every two adjacent ones are connected to each other, then it leaves their union $H_C$ connected. Thus we can indeed assume that $x,y$ lie in $V(D)$ for some basic cycle $D$ of $H_C$, and since they are corners of $H_C$ they lie in a common \soc\ $S$ of $D$. 

It also follows from the previous argument that either $x,y$ are the common corners of two basic cycles $D,D'$, in which case they are joined by a subpath of $D$ induced by $Z$, or both components $R,Q$ of $D - \{x,y\}$ contain a vertex of $S$, in other words, $x,y$ are not neighbouring \soc\ vertices. In both cases, 
\labtequ{zint}{each of $R,Q$ contains the interior of at least one $Z$-path with endvertices in $S$.}
 The following assertion will now easily imply the existence of an \pth{R}{Q}\ in the copy $H_D$ of $G'_2$ containing the dual \soc\ $\overline{S}$ of $S$, provided by \eqref{socs}, as corners.
\labtequ{zadual}{If \cp\ is not regular then the interior of every $Z$-path in some \soc\ $S$ contains an $A$-path between vertices of the dual \soc.}
To prove \eqref{zadual}, note that if it fails then for some $u,v \in S$ and the \pth{u}{v}\ $P$ induced by $Z$, any vertex $z\in \overline{S} \cap P$ is at distance less that four from one of $u,v$ for $A$ has length three. As every second edge of $P$ bears the dominant colour $c$, this distance must be precisely two, so assume that $d(u,z)=2$ (\fig{fzadual}). Easily, $P$ must start and end with the dominant colour $c$ by the definition of $Z$. Similarly, the second edge of  $P$ cannot bear the weak colour $d$, for no  basic cycle has a $dcd$ subpath by \eqref{LII6}. Thus $z= u c b$. Consider the two basic cycles $C,C'$ containing $P$, as well as the basic cycle $D$ containing the $Z$-path starting at $z$. 
\showFig{fzadual}{The cycles  $C,C'$ and $D$ in the proof of \eqref{zadual}.}

Now recall that $P$ contains a $d$-coloured edge by \eqref{nod}, and so we can let $e$ be the $d$-coloured edge of $P$ that is closest to $u$. Let $B,B',B''$ be the maximal $bc$ 2-coloured subpaths of $C,C'$ and $D$ respectively containing $z$. Note that both  $B,B'$ start at $e$ by the definition of the latter, and precisely one of them stops at $u$. Thus $|B|\neq |B'|$. Note also that $B''$ starts at $z$ and cannot go past $e$, so that $|B''|< |B|,|B'|$. This means that $|B|,|B'|,|B''|$ are pairwise distinct, and so one of them contains at least 2 more $b$ edges than some other. But this contradicts \eqref{LII7}. This contradiction proves \eqref{zadual}.
\medskip

Let $T$ be the $Z$-path incident with $x$, say, and assume \obda\ that $T\subseteq R \cup \sgl{x}$, which we may by \eqref{zint}. Then  \eqref{zadual} yields a pair of elements $v,w$ of $\overline{S}$ that both lie in the interior of $T$. Now assume $v$ is closer to $x$ than $w$ on $T$, and let $v'= vZ$ be the element of $\overline{S}$ joined to $v$ by a $Z$-path $T'\subset D$. Note that by the choice of $v'$ and \eqref{zint} we must have $v'\in Q$. This means that the copy $H_D$ of $G'_2$ containing the dual \soc\ $\overline{S}$ as corners contains a \pth{P}{Q}\ $L$ in $G - \{x,y\}$: consider the basic cycle $D'$ of $H_D$ containing $T'$ that is not $D$ itself, and let $L= D' - \kreis{T'}$ be the other ${v}$-${v'}$~subarc of that basic cycle. But this path $L$ easily contradicts our assumption that $H_C$ meets two distinct components of $G - \{x,y\}$.

This contradiction implies that  $x,y$ cannot both be corners of $H_C$, and so we may assume that $y$, say, is not a corner of $H_C$ from now on. 

Note that $x,y$ cannot disconnect any two corners of $H_C$ from each other because, as the reader will easily check,  $G'_2$ cannot be disconnected by removing a vertex and an edge. This implies that $x,y$ lie on a common path $B$ of $H_C$ induced by $Z$ or $A$, disconnecting part of  $B$ from the rest of  $H_C$. Let 
 $D$ be a basic cycle of $H_C$ containing $B$. Note that $D - \{x,y\}$ has two components $R,Q$ one of which, $R$ say, is a proper subpath of $B$. 

Now consider the other copy $H_D \supset D$ of $G'_2$ provided by \eqref{inout}. If $H_D$ has a corner in $R$, then $H_D$ contains an \pth{R}{Q} $L$ as above, and such a path easily implies that $H_C$ cannot meet two distinct components of $G - \{x,y\}$ contrary to our assumptions. If, on the other hand, $H_D$ has no corner in $R$, then  $H_D$ contains a further basic cycle $D_1\neq D$ \st\ $\{x,y\}\cup R$ is contained in some path $B_1$ of $D_1$ induced by $Z$ or $A$. We claim that $\{x,y\}$ must also disconnect $R$ from $H_D - R$ in \G. Indeed, if it does not, then there is a path from $R$ to $(D - R)\subset H_D$ in $G -  \{x,y\}$, and this path contradicts our assumption that $\{x,y\}$ disconnects $R$ from $H_C \supset D$ in $G -  \{x,y\}$. This proves our claim.

Now repeating these arguments on $D_1$ and the corresponding copy of $G'_2$ instead of $D$ and $H_C$, and iterating, we either obtain a contradiction after finitely many such steps, or an infinite sequence $D_i$ of distinct basic cycles containing $R$. But as \g is \lf, such a sequence cannot exist and we have a contradiction in any case. 


\end{proof}

\Lr{fiNtcon} implies that once \cp\ is non-regular, \g is \tcon\ even if all its faces have infinite boundary: 
\begin{corollary} \label{monsters}
\Fe\ non-regular \ncp\ \cp, the graph\\ $Cay\left<b,c,d\mid {b^2}, c^2, d^2; \cp \right>$ is \tcon\ and has no finite face boundary.
\end{corollary}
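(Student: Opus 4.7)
The plan is a straightforward combination of results already established in this section. First I would invoke the converse implication of \Tr{TIIc} applied to case \ref{IIciii}: given the non-regular \ncp\ \cp, the construction in the "Converse implication" subsection (starting page \pageref{convIIc}), with the parameter choice $k = \infty$, produces the planar cubic \Cg\ $G \isom Cay\left<b,c,d\mid b^2, c^2, d^2; \cp\right>$ in which \cp\ induces \psc s and all face-boundaries are infinite (since no $(bcd)^k$ relator is present to force finite $A$-faces). This immediately gives the second conclusion of the corollary.

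For the 3-connectedness I would apply \Lr{fiNtcon} directly: its hypothesis is satisfied because \cp\ is non-regular by assumption. This yields $\kap(G) = 3$, completing the proof.

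There is essentially no obstacle here beyond verifying that the graph constructed in the converse implication of \Tr{TIIc} does coincide with the Cayley graph of the stated presentation --- but this was already the content of that converse implication, where after the explicit inductive construction of $G = \bigcup_i H_i$ one applies the forward direction of \Tr{TIIc} to read off the presentation, and the face-sizes and \psc\ pattern of the constructed graph match \cp\ and $k = \infty$ by design. So the corollary reduces to a two-line citation, and I would present it as such.
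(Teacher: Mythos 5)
Your proposal is correct and matches the paper's own derivation: the paper states \Cr{monsters} as an immediate consequence of \Lr{fiNtcon} (giving 3-connectedness from non-regularity of \cp) together with the construction in the converse implication of \Tr{TIIc} for type \ref{IIciii}, where the absence of a $(bcd)^k$ relator leaves all face boundaries infinite. Nothing further is needed.
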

Since non-regular \ncp s do exist, take for example $ (d\ cbc\ d\ cbcbc)^n$, we obtain examples of planar \tcon\ \Cg s no face of which is bounded by a cycle, which is interesting in view of our discussion of \Sr{intBW}.

\medskip
Our next result is that if \cp\ is regular and the faces have infinite size then \g is not \tcon. This means that \Lr{fiNtcon} is best possible.
\begin{lemma} \label{Lsimple}
Let $G = Cay\left<b,c,d\mid {b^2}, c^2, d^2; (b(cb)^m d)^n \right>$, $m\geq 1$, $n\geq 2$. Then $\kappa(G)=2$. Moreover, \g has a \sepe\ (coloured $c$) \iff\ $m=1$.
\end{lemma}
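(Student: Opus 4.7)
The plan is to establish $\kappa(G)=2$ and characterise when \g has a hinge by piggy-backing on the machinery from the proof of \Tr{TIIc}.

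First, I would dispense with the lower bound $\kappa(G)\ge 2$ by a single invocation of \Lr{Liicon}: each of $b,c,d$ appears in the cyclic relator $(b(cb)^m d)^n$, so \g is \iicon.

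Next, I would use the construction of \g from the converse implication of \Tr{TIIc}. After the relabelling $b\leftrightarrow c$, $\cp=(b(cb)^m d)^n$ is a cyclic rotation of the regular \ncp\ $(dc(bc)^m)^n$, so \g is the union of copies of $G'_2$ --- the subdivision of $G_2:=Cay\left<a,z\mid z^2,(az)^n\right>$ in which each $a$-edge is replaced by an $A$-path of length~$3$ (of colours $bcd$) and each $z$-edge by a $Z$-path of length~$2m-1$ --- glued along basic cycles of the form $(AZ)^n$ and organised by a Dunwoody-type tree $T$ whose vertices are the copies and whose edges are the shared basic cycles. By \Tr{main2}\ref{i}, $\kappa(G_2)=2$, and a direct inspection of the tree-of-$2n$-gons structure of $G_2$ shows that the two endpoints of every $z$-edge form a 2-separator of $G_2$: that edge is shared between exactly two basic $2n$-cycles, which fall into different subtrees of the structure tree when the $z$-edge is cut. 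I would then lift this 2-separator to \G\ by picking a $Z$-path $P$ in some copy $H\cong G'_2$ with corner endpoints $x,y$; these are simultaneously corners of the copy $H'$ across the pair of basic cycles through $P$. The claim is that $\{x,y\}$ separates \G. Inside $H\cup H'$ this is just the lift of the $G_2$-separation; to propagate the separation to the whole of \g one argues that every further copy of $G'_2$ is attached to this union along a single basic cycle, which itself sits wholly on one side of $\{x,y\}$ in $T$, so cannot reconnect the two sides. This yields $\kappa(G)\le 2$.

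The hinge characterisation now falls out of the length of $P$, namely $|Z|=2m-1$: when $m=1$, $Z$ is a single edge and $\{x,y\}$ is joined by this edge, giving a hinge of the appropriate (dominant) colour; when $m\ge 2$, $|Z|\ge 3$ and $x,y$ are non-adjacent, so this 2-separator is not a hinge. To finish the ``only if'' direction I would rule out every other possible hinge when $m\ge 2$: take any edge $vw$ of \g; then $vw$ lies on some basic cycle $C$ as an edge of an $A$-path or of a $Z$-path of length $\ge 3$. In each case, using the \iicon ness of each copy of $G'_2$ and the fact that both $A$- and $Z$-paths then have length at least $3$, I would exhibit a $v$-to-$w$~walk avoiding $\{v,w\}$, built out of $C$ together with the two copies of $G'_2$ adjacent to $C$ and the second basic cycle through $v$ (or $w$).

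The hardest part of the proof will be the last step --- the explicit verification that no edge is a hinge when $m\ge 2$. The difficulty is bookkeeping rather than conceptual: one has to enumerate the edge-types ($A$-edges near a corner, interior $A$-edges, $Z$-edges near a corner, interior $Z$-edges) and construct the required alternative path in each case, invoking vertex-transitivity to cut the number of cases down to a short list.
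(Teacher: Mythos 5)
Your lower bound via \Lr{Liicon}, your identification of $\cp$ as a rotation of a regular pattern with $Z=c(bc)^{m-1}$, your use of the decomposition of \g into copies of $G'_2$ glued along basic cycles, and your treatment of the case $m=1$ all agree with the paper. The critical step, however --- exhibiting a $2$-separator when $m\geq 2$ --- contains a genuine error. You lift the separator of $G_2$ given by the endpoints of a $z$-edge to the endpoints $x,y$ of a $Z$-path $P$ of a copy $H$ of $G'_2$, asserting that $x,y$ are ``simultaneously corners of the copy $H'$ across the pair of basic cycles through $P$''. They are not: by \eqref{socs} the two copies of $G'_2$ sharing a basic cycle attach along \emph{disjoint} dual \socs, so $x$ and $y$ are corners of $H$ only, and are interior (degree-two) vertices of metaedges of the neighbouring copies $H_C$ and $H_D$. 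This twist is fatal to your separator once $m\geq 2$: the interior of $P$ is then non-empty and its vertices have pending edges into $H_C$ and $H_D$, which reconnect the two alleged sides of $\{x,y\}$. Concretely, one interior vertex $v$ of $P$ is a corner of $H_C$ whose $Z$-metaedge in $H_C$ runs along $C$ through $y$ and ends on $C-P$; the second basic cycle of $H_C$ through that metaedge therefore joins $v$ to $C-P$ while avoiding both $x$ and $y$, and a symmetric detour through $H_D$ joins the interior of $P$ to $D-P$. Hence $\{x,y\}$ does not separate \G, and the claimed bound $\kappa(G)\leq 2$ is not established; the hinge analysis built on this separator inherits the gap.

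The paper's proof uses a different separator: one endvertex from each of \emph{two $d$-edges} $e,f$ of a single basic cycle $C$. This is exactly where regularity enters: by \eqref{nod} the word $Z$ contains no $d$, so every other basic cycle meets $C$ in a $Z$-path avoiding $e$ and $f$ and hence lies, together with everything glued beyond it, on one of the two arcs of $C-\{e,f\}$; an induction over the glueing steps then shows that $\{e,f\}$ separates \G. If you replace your $Z$-path separator by this one, the rest of your plan (including the case analysis ruling out hinges for $m\geq 2$, which the paper carries out by routing around a candidate edge through the two basic cycles containing the $Z$-path at one of its endvertices) can be made to work.
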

\begin{proof}
Recall that \g is \iicon\ by \Lr{Liicon}. Let us check that \g is not \tcon.

In our proof of the converse implication of \Tr{TIIc} we constructed \g as the union of the sequence of its subgraphs $H_i$, and we will base our proof on that sequence. Recall that when $N=\infty$, $G'_2=H_0$ was obtained from the \Cg\ $G_2 = Cay \left<a,z\mid z^2, (az)^n\right>$ after replacing each $a$-edge by a $bcd$-path and replacing each $z$ edge by a path induced by $Z= c(bc)^{m-1}$, where we used \eqref{cbcm}. Consider a basic cycle $C$ of $H_0$, and two $d$-edges $e,f$ of $C$. We claim that these two edges separate \G.

To see this, let $P,Q$ be the two components of $C - \{e,f\}$. Note that no basic cycle $D$ of $H_0$ meets both $P$ and $Q$, which can be seen by applying \eqref{nod} to the pair $C,D$. This means that $\{e,f\}$ separates $H_0$, as the latter is the union of its basic cycles. Similarly, $\{e,f\}$ separates $H_C$, the copy of $G'_2$ that we glued along $C$ in the construction of $H_1$, into two components containing $P$ and $Q$ respectively. It now follows easily that $\{e,f\}$ separates $G$ into two components containing $P$ and $Q$ respectively, as \g can be obtained from $H_0 \cup H_C$ by inductively glueing copies of $G'_2$ along some basic cycle, and so none of these  copies meets both $P$ and $Q$. Now choosing one endvertex from each of $e,f$ yields a separator of \G, which means that \g is not \tcon.

It remains to prove that \g has a \sepe\ (coloured $c$) \iff\ $m=1$. Firstly, it \ises\ that, in every case, no edge coloured $b$ or $d$ is a \sepe: for if $uv$ is such an edge, then consider the $Z$-path $P$ starting at $u$, and the two basic cycles $C,C'$ containing $P$ (\fig{fzadual}). Note that three of the four vertices adgecent with $uv$ lie in $C \cup C'$ and are thus in a common component $K$ of $G - \{u,v\}$. Moreover, the fourth vertex $x$, which is adjacent with $v$, is connected to the neighbours of $u$ by a path that avoids $u$ since \g is \iicon. Thus, $x$ also lies in $K$. This proves that $uv$ is not a \sepe\ as it fails to disconnect its neighbourhood. Let us now consider the case when $uv$ is a $c$-edge instead. If $m>1$, then by \eqref{cbcm} the path $P$ defined as above has length greater than 1, and the we can apply the same arguments to show that $uv$ is not a \sepe. If $m=1$ however, we have $Z=c$, and so the endvertices of any $c$ edge separates in $G'_2$ the two basic cycles containing it. This easily implies that any $c$ edge is a \sepe\ of \G.
\end{proof}

\Lr{Lsimple} combined with  \Lr{fiNtcon} determine $\kap(G)$ for each of the graphs \g as in \Tr{TIIc} \ref{IIcii}, \ref{IIciii}.

\subsubsection{All edges reverse spin} \label{secIIdum}

If \g is \tcon\ and all its edges reverse spin, then by \Lr{LIIab1} \g cannot be multi-ended. Thus we can proceed with the next case.

\subsubsection{Mixed spin behaviour} \label{secIId}

The last case we have to consider is that of a planar, multi-ended \tcon\ \Cg\ $G \isom Cay\left<b,c,d\mid b^2, c^2, d^2, \ldots \right>$ with no 2-coloured cycles, with both spin-preserving and spin-reversing colours. 
The main result of this section, characterizing these graphs, is the following.
\begin{theorem} \label{TIId}
Let $G= Cay\left<b,c,d\mid b^2, c^2, d^2, \ldots \right>$ be a planar \tcon\ multi-ended \Cg\ with both spin-preserving and spin-reversing edges. Suppose that all three of $bc, cd$ and $db$ have infinite order. Then precisely one of the following is the case.
\begin{enumerate}\addtolength{\itemsep}{-0.5\baselineskip}
\item \label{IId2i} $G \isom Cay\left<b,c,d\mid {b^2}, c^2, d^2, (bdb cdc)^k; (c(bc)^nd)^{2m}\right>, k\geq 2, n,m\geq 1,\ n+m\geq 3;$ \note{(\gt\ is 1-ended)}
\item \label{IId2ii} $G \isom Cay\left<b,c,d\mid {b^2}, c^2, d^2, (bdb cdc)^q; (c(bc)^{n-1}d)^{2m}, (c(bc)^{n} d)^{2r} \right>, n,r,m,q\geq 2$;
\item \label{IId2iii} $\g \isom Cay \left< b,c,d\mid b^2, c^2, d^2;  (c(bc)^{n-1}d)^{2m}, (c(bc)^{n} d)^{2r} \right>$, $n,r,m\geq 2.$ \note{($\kap(\gt)=2$).} 
\end{enumerate}
All these presentations are planar.

Conversely, \fe\ $k,n,m$ in the specified domains the above presentation yields a \Cg\ as above.
\end{theorem}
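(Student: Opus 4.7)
The plan is to mimic the strategy of Sections \ref{secIIc} and \ref{secIId1}: identify a subgroup $\Gam_2\leq\Gam(G)$ whose \Cg\ \gat\ topologically embeds in \g as a subdivision $G'_2$, establish a \plpr\ of \gat\ from our earlier classification results, and then lift this to a presentation of \g by applying \Tr{ccfdec}. My first step is to pin down the spin configuration. By \Cr{CIIa1}, at least one colour preserves spin; the all-preserving case has already been handled in \Tr{TIIc} and is excluded here by hypothesis. If two colours both reversed spin, then \Lr{LIIab1} would force a 2-coloured cycle involving the third, contradicting the assumption that each of $bc,cd,db$ has infinite order. Hence exactly one colour reverses spin, and by the symmetry of the roles the labels play among the face-relators in \ref{IId2i}--\ref{IId2iii}, I may assume $c$ reverses spin while $b$ and $d$ preserve it.

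Next I would analyse the \mpsc s following the pattern of \Sr{secIIc}. Existence of a \psc\ is obtained as in \eqref{pscex}, using \tcon ness and a facial walk around a vertex. Then, using translate-and-cross arguments in the spirit of \eqref{LII4}--\eqref{LII7} controlled by \Cr{CII3p}, I would establish that $c$ is a dominant colour for \mpsc s (so every other edge of an \mpsc\ is a $c$-edge), and that no \mpsc\ has a subpath of the form $bcdcb$ beyond a prescribed length paired against $dcbcd$. A parity argument parallel to \eqref{LIb1} applied to maximal $bc$- and $dc$-alternating subpaths, together with a tidying procedure analogous to the one in \Sr{secTidy}, would force every \mpsc\ to be induced by a word of the form $(c(bc)^n d)^{2m}$, with $n,m\geq 1$, and further would show that after choosing $n$ maximally, either a single such $n$ occurs (leading to case \ref{IId2i}) or exactly two consecutive values $n-1$ and $n$ occur (leading to cases \ref{IId2ii} and \ref{IId2iii}).

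In the third step, I would define $\Gam_2$ as the subgroup of \Gam\ generated by the involutions $b$ and $d^\ast:=cdc$ together with a third involution (a suitable conjugate of $b$ capturing the longer $(bc)^n$-segment of an \mpsc, depending on the subcase). Imitating \eqref{delta} and \eqref{indep}, I would show that \mpsc s do not separate cosets of \gat, and that metaedges with endvertices in \gat\ are independent; hence \gat\ admits a \topem\ into $G$ whose image $G'_2$ is obtained by deleting, for each \mpsc\ meeting \gat, the side missing \gat, and then suppressing degree-$2$ vertices. By construction, the induced embedding $\sig_2$ of \gat\ has all its edges preserving spin, since the sole spin-reversing colour $c$ has been absorbed into the compound generators $d^\ast=cdc$ (and similarly for the third generator). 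Thus \gat\ falls under \Tr{TIIc}, whose three cases \ref{IIci}--\ref{IIciii} correspond bijectively to the three cases \ref{IId2i}--\ref{IId2iii} of the present theorem.

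Finally, I would build a Dunwoody-type tree on the copies of $G'_2$ in \g, adjacent whenever they share an \mpsc, and apply \Tr{ccfdec} exactly as in the proofs of \Tr{TIIa2} and \Tr{TIIc}: substituting the compound generators back into the presentation of \gat\ obtained from \Tr{TIIc} produces the desired relators, where the $(bcd)^k$-type face-relator of \gat\ translates to $(bdbcdc)^k$ in \g, and the \ncp\ relators translate to the $(c(bc)^n d)^{2m}$-type \mpsc\ relators. Planarity of the resulting presentations follows from the non-crossing behaviour established in the second step. The converse implication is obtained by constructing \g explicitly as a \twsqam\ of \gat\ with itself along the \mpsc s, verifying \tcon ness via \Lr{lkcon} (and the \tcon ness of \gat\ given by \Tr{TIIc}), and then invoking the forward implication to confirm the intended presentation. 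The main obstacle will be the second step: extracting the precise $(c(bc)^n d)^{2m}$-structure of \mpsc s requires a careful case analysis more delicate than that of \Sr{secIIc}, because $b$ and $d$ now play asymmetric roles relative to each other in a single \mpsc, necessitating a treatment analogous to the weak-vs-dominant-non-dominant-colour dichotomy of \eqref{LII6}--\eqref{LII7} but compatible with the $(bdbcdc)^k$ face-structure imposed by the mixed spin.
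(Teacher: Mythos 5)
Your overall architecture (a subgroup $\Gam_2$, a topological embedding of \gat\ in \g, an application of \Tr{ccfdec}, and a converse by amalgamation) is the right one, but the central reduction in your third step is wrong. The subgroup that actually works is generated by $d$ itself together with $b^*=bcdcb$ and $z=c(bc)^{n-1}$, where $d$ is the spin-reversing colour --- and it must be $d$: the spin-reversing colour can never be the dominant colour of a \mpsc, so it cannot be the letter that alternates in $(c(bc)^nd)^{2m}$, which makes your choice ``$c$ reverses spin'' combined with ``$c$ is dominant'' already inconsistent. In the induced embedding of this \gat\ the $z$-edges preserve spin while the $b^*$- and $d$-edges \emph{reverse} it, and the old face boundaries $(bcdcb\,d)^k$ become 2-coloured cycles $(b^*d)^k$ bounding faces of \gat. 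So \gat\ does not satisfy the hypotheses of \Tr{TIIc} (all edges spin-preserving, no 2-coloured cycles), and there is no bijection between the three cases of \Tr{TIIc} and the three cases here; already the face sizes fail to match ($6$ in \Tr{TIIc}~\ref{IIci} versus $6k\ge 12$ in case \ref{IId2i}). The correct trichotomy is governed by the connectivity and end structure of \gat: if \gat\ is \tcon\ with at most one end one applies \Tr{LG5}~\ref{ziii}; if \gat\ is \tcon\ and multi-ended one applies \Lr{LIIab1} and then \Tr{TIId11}; and if $\kap(\gat)=2$ one runs through the types of \Tr{main2} and eliminates all but \ref{v}. Relatedly, your second step cannot produce ``two consecutive values $n-1,n$'' from the \mpsc s of \g: all \mpsc s of \g are induced by one and the same word $(c(bc)^nd)^{2m}$, and the second relator in cases \ref{IId2ii} and \ref{IId2iii} arises from the dividing cycles, respectively the hinges, of \gat\ itself, not from a second family of \mpsc s of \g.

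A smaller but still fatal gap is in your converse for case \ref{IId2iii}: there \gat\ has connectivity $2$ (it is the graph of \Tr{main2}~\ref{v}), so you cannot deduce the \tcon ness of \g from ``the \tcon ness of \gat'' via \Lr{lkcon}. The hypotheses of \Lr{lkcon} have to be verified by hand using specific structural properties of that connectivity-2 graph (which pairs of vertices or edges can separate it, and the existence of detours around the $b^*$-edges of basic cycles); this is where most of the work in the converse direction actually lies.
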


The rest of this section is devoted to the proof of \Tr{TIId}, so let us fix a \Cg\ \g as in its assertion. Our analysis will be similar to that of \Sr{secIIc}.

It follows from \Lr{LIIab1} that only one of the colours $b,c,d$ can be spin-reversing. Assume from now on that this colour is $d$. This implies that
\labtequ{bdbcdc}{every facial walk is of the form $\ldots bdb cdc \ldots$.}

\subsubsection*{The \mpsc s}

As in the previous section our analysis will be based on the \mpsc s of \G.

\begin{proposition} \label{LIIAd}
\Fe\ \mpsc\ $C$ of \G, there is a colour $a\in \{bcd\}$, \st\  every other edge of $C$ is coloured $a$. (In particular, $|C|$ is even.)
\end{proposition}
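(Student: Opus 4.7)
Following the strategy of Proposition \eqref{LIIA} from \Sr{secIIc}, I would prove that some colour $a$ occupies every other edge of $C$ by exhibiting it as the common colour of the first and last edge of a maximal 2-coloured subpath of $C$, and then propagating the pattern around $C$. The asymmetric spin behaviour here (with $d$ reversing spin while $b,c$ preserve it) suggests $a\in\{b,c\}$, except in a special alternating case where $a=d$.

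First I would establish two preparatory claims, both by crossing arguments based on \Cr{CII3p}. Claim~(a): a maximal 2-coloured subpath of $C$ of length at least $3$ has odd length; this follows from the same shift-by-two translation of $C$ along the subpath used in the even-$p$ case of \eqref{LIb1}. Claim~(b): $C$ has no facial subpath of length greater than $3$ whose two end-edges are both coloured $d$. Claim~(b) is the appropriate analogue of \eqref{LII4} here: the translating word between the endpoints of such a facial subpath contains an even number of $d$-edges, hence represents a spin-preserving element of $\Gamma$, so translating $C$ along it produces a translate that crosses $C$ as in the proof of \eqref{LII4}, contradicting \Cr{CII3p}. Note that the stronger version of \eqref{LII4} forbidding \emph{all} long facial subpaths of $C$ does not hold here, since, as inspection of the \mpsc\ $(c(bc)^nd)^{2m}$ shows, $C$ may contain facial 4-subpaths such as $bcdc$.

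Second, if $C$ has no two consecutive edges in $\{b,c\}$, then by simplicity the edges of $C$ alternate between $d$ and elements of $\{b,c\}$, so every other edge is $d$ and the conclusion holds with $a=d$. Otherwise, let $P$ be a maximal 2-coloured subpath of $C$ in the colours $\{b,c\}$. The case $|P|=2$ is ruled out by (b): $P$ together with its two incident edges on $C$ forms a 4-subpath of colours $dbcd$ or $dcbd$, both of which are facial (as subpaths of $bdbcdc$) with both end-edges coloured $d$. Hence $|P|\ge 3$, so by (a) $|P|$ is odd, and its first and last edges share a colour $a\in\{b,c\}$.

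Third, I would propagate the property ``every other edge is $a$'' around $C$ by iteratively applying the same analysis beyond $P$. At each stage the edge immediately following the current segment is forced to be $d$ (by maximality of the 2-coloured subpath and simplicity), and the edge after that is forced to be $a$ rather than $\bar a:=\{b,c\}\setminus\{a\}$; iterating around $C$ then yields the pattern globally, and $|C|$ even as a byproduct. The main obstacle lies precisely in this propagation step: unlike in \Sr{secIIc}, where the facial pattern $bcdbcd$ made the corresponding step automatic via a directly forbidden facial 4-subpath, the current facial pattern $bdbcdc$ permits mixed-end-colour facial 4-subpaths like $bcdc$, so the exclusion of the ``wrong'' continuation $f'=\bar a$ cannot come from a facial obstruction alone, and will likely require a shortening argument: if $f'=\bar a$, one exploits translates of $C$ (together with minimality of $|C|$) to construct a strictly shorter dividing cycle, contradicting the choice of $C$ as a \mpsc.
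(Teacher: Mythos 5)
Your skeleton is the paper's: rule out $||P||=2$ for a maximal $bc$-coloured subpath $P$ by a facial obstruction, deduce that $||P||$ is odd so that $P$ begins and ends in the same colour $a\in\{b,c\}$, and propagate around $C$ (with the degenerate all-$d$-alternating case handled separately). Your claims (a) and (b) correspond to the paper's \eqref{Ldodd} and \eqref{Ld1}, though the paper's version of (b) is the cleaner and stronger statement that no facial subpath of a \mpsc\ contains two $d$-edges at all, and your parity justification for (b) is wrong for longer facial subpaths: a facial subpath such as $dbcdcbd$ has both end-edges coloured $d$ but an odd number of $d$'s, so the translating element is not spin-preserving. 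Only the length-$4$ instances $dbcd$, $dcbd$ are actually needed, and for those the parity is as you say.

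The genuine gap is exactly where you locate it: forcing the edge $f$ two past the end of $P$ to be coloured $a$ rather than $\bar a$. Your proposed repair --- ``construct a strictly shorter dividing cycle'' --- cannot work, because by the analysis of \Sr{secPsc} a crossing of two \mpsc s never produces a shorter \psc: the exchanged subpaths in \eqref{CII3m} have equal length precisely because $C$ is shortest possible, so all recombinations have length $|C|$. The paper's resolution is a parity argument, not a length argument: assuming $f$ is coloured $\bar a$, it exchanges the endvertices of the intervening $d$-edge $e$ to obtain a crossing translate $C'$, translates once more by $x\mapsto xcb$ to a second crossing translate $C''$, and splices $C$ with $C''$ via \eqref{CII3m} into a new \mpsc\ $D$ whose maximal $bc$-coloured subpath there is the concatenation of two odd pieces, hence even, contradicting \eqref{Ldodd}. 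You would additionally need the paper's \eqref{Ld2} (no 2-coloured subpath of length at least $4$ containing a $d$-edge) to keep the iteration going: your claim (a) only gives oddness and so does not exclude a continuation of the form $cdcd$ after $f$.
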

As earlier, we will call this colour $a$ the \defi{dominant} colour of $C$.
\begin{proof}
We begin by showing that
\labtequ{Ld1}{no facial subpath of a \mpsc\ contains two $d$-edges.}
For let $P$ be a facial subpath of an \mpsc\ chosen so as to maximize the number $p_d$ of $d$-edges in $P$, and suppose that  $p_d\geq 2$. Let $e=uv$ be the last $d$-edge of $P$, and let $C$ be a \mpsc\ containing $P$. Then the \auto\ of \g exchanging $u$ and $v$ maps $C$ to a \mpsc\ $D$, and it is not hard to see, using \eqref{bdbcdc}, that $D$ crosses $C$. However, as in the proof of \Cr{CII3p}, such a crossing gives rise to a new \mpsc\ $C'$ containing a facial subpath $P'\supseteq P$ with more than $p_d$ $d$-edges, contradicting the choice of $P$. This proves \eqref{Ld1}.

Now let $C$ be any \mpsc\ of \G. Note that as $C$ contains all three colours, it must contain two adjacent edges coloured $b$ and $c$ unless every other edge of $C$ is coloured $d$. In the latter case our assertion is already proved, so assume from now one that the former is the case. So let $P$ be a maximal $bc$-coloured subpath of $C$ with $||P||\geq 2$. If $||P||= 2$ then both edges $e,f$ of $C$ incident with $P$ are coloured $d$, and so $e \cup P \cup f$ is, by \eqref{bdbcdc}, a facial subpath of $C$ containing two $d$-edges contradicting \eqref{Ld1}. Thus $||P||\geq 3$. 

Similarly to the proof of \eqref{LIb1} we can now prove that
\labtequ{Ldodd}{every maximal $bc$-coloured subpath of a \mpsc\ of \g has odd length.}

It follows that $||P||$ is odd, and so $P$ starts and ends with the same colour, $c$ say. Let $x$ be the last vertex of $P$, let $e$ be the $d$-edge of $C$ following $P$, and let $f$ be the edge of $C$ after that. We claim that the colour of $f$ is $c$. For if it is $b$, then the \auto\ exchanging the endvertices of $e$ maps $C$ to a cycle $C'$ crossing $C$. Note that the maximal $bc$-coloured subpath $Q$ of $C'$ starting at $x$ has odd length by \eqref{Ldodd}. Now let $C''$ be the translate of $C'$ obtained by mapping $x$ to $w:=xcb$ (\fig{fd3}). Note that $C''$ also crosses $C$, and by \eqref{CII3m} we can replace a subpath of $C$ containing $e$ by a subpath of $C''$ not containing $e$ to obtain a new \mpsc\ $D$. As each of $Q$ and $P$ had odd length, $D$ has a maximal $bc$-coloured subpath of even length contradicting \eqref{Ldodd}. This contradiction proves our claim that the colour of $f$ is $c$.
\showFig{fd3}{The construction of $D$ in the proof of \Prr{LIIAd}.}

Our next assertion will allow us to determine the colour of the edge of $C$ following $f$.
\labtequ{Ld2}{No \mpsc\ of \g has a 2-coloured subpath of length at least four containing $d$-edges.}
To prove this, let $Q$ be a 2-coloured subpath of a \mpsc\ containing $d$-edges that maximises $|Q|$ among all such paths, and suppose that $Q$ contains at least four edges. Let $D$ be a \mpsc\ containing $Q$. Then, exchanging the endvertices of one of the $d$-edges $e$ of $Q$ by a \auto\ of \g we obtain one of the two situations depicted in \fig{fd2}. In both cases though, \eqref{CII3m} implies the existence of a \mpsc\ containing a 2-coloured path that extends $Q$ contradicting its maximality. This contradiction proves \eqref{Ld2}.
\showFig{fd2}{The situation in the proof of \eqref{Ld2}.}

Now as $P$ ends with a $c$ edge, and $f$ is also coloured $c$, \eqref{Ld2} implies that the edge of $C$ following $f$ is coloured $b$. Let $P_1$ be the maximal $bc$-coloured subpath of $C$ containing $f$. Then $||P_1||\geq 2$, and by \eqref{Ldodd} $||P_1||$ is odd. Thus, $P_1$ starts and ends with a $c$ edge, as was the case for $P$. This allows us to apply the same arguments to $P_1$ to show that it is followed by a $d$-edge and another odd $bc$-coloured path $P_2$, and so on. This proves that every other edge of $C$ is coloured $c$ as desired.
\end{proof}

As in \Sr{secIIc} it is important to know whether \g has a hexagonal face. Fortunately, it does not:

\begin{proposition} \label{nohex}
\g has no hexagonal face.
\end{proposition}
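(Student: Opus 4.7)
The plan is to assume, for a contradiction, that \g\ has a hexagonal face $F$, deduce that every face of \g\ is hexagonal, and then derive a contradiction with the fact that \g\ is multi-ended.

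By \eqref{bdbcdc}, the boundary of $F$ is induced, up to cyclic rotation, by the word $bdbcdc$. I would first traverse this boundary starting at a vertex $v$ and read off the six successive corners of $F$, each of which I classify by the unordered pair of colours of its two incident edges. A direct check gives the sequence of \emph{corner types}
\[
\{b,c\},\ \{b,d\},\ \{b,d\},\ \{b,c\},\ \{c,d\},\ \{c,d\}.
\]
In particular $F$ realises each of the three possible corner types at some vertex of its boundary.

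Next I would argue that every face of \g\ is hexagonal. For each vertex $w$ and each corner type $X\in\{\{b,c\},\{b,d\},\{c,d\}\}$, there is a unique face of \g\ with corner type $X$ at $w$; denote it $F_X(w)$. Because \Gam\ acts vertex-transitively on \g\ and preserves edge colours, $g\cdot F_X(w)=F_X(gw)$ for every $g\in\Gam$, so all $F_X(w)$ (with $w$ ranging over $V(G)$) lie in a single \Gam-orbit. Since $F$ itself plays the role of $F_X(\cdot)$ at some vertex of its boundary for each of the three types $X$, these three \Gam-orbits all coincide with the orbit of $F$. Hence every face of \g\ lies in the orbit of $F$ and is therefore hexagonal.

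Finally, I would derive a contradiction from multi-endedness. Since every face of \g\ is hexagonal, the curvature $c_k=6-k$ of each face of \g\ vanishes. On the other hand, \eqref{pscex} ensures that \g\ contains a \psc, and the crossing arguments of \Sr{secPsc} (in particular \eqref{BF}) either yield two \mpsc s whose overlap bounds a finite region of \g, or allow one to produce such a finite region by combining subpaths of a \psc\ with a short shortcut. I would then run the curvature bookkeeping of the proof of \eqref{LIb6} on this finite subgraph: with all interior faces having curvature~$0$, the entire deficit of $12$ from Euler's formula \eqref{euler} would have to come from the boundary, which is ruled out by the colour-and-spin structure of \mpsc s established in \Prr{LIIAd}. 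This contradiction shows that no hexagonal face can exist.

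The main obstacle I expect is the last step: carrying out the curvature count cleanly on the finite region produced by the \mpsc\ crossing, in the presence of the mixed spin behaviour. The colour pattern on the boundary of this region is governed by \Prr{LIIAd} (every second boundary edge is dominant) and by the fact that only $d$ reverses spin, so the calculation should parallel, and in fact be slightly cleaner than, the one in \eqref{LIb6}; but it requires some care to identify exactly which small faces (if any) can appear along the boundary of the finite region and to verify that they cannot supply the missing curvature.
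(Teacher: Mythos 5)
Your first two steps are sound: the corner-type/orbit argument correctly shows that a single hexagonal face forces every face to be hexagonal (the paper reaches the same conclusion locally via \Tr{imrcb} and vertex-transitivity). The genuine gap is in your final step. A curvature count in the style of \eqref{LIb6} cannot deliver the contradiction once every face is a hexagon, for two reasons. First, you have no crossing to work with: \eqref{BF} produces a finite region only \emph{if} two \mpsc s cross, and nothing guarantees that any two of them do --- in hexagonal-grid situations the \mpsc s are typically nested ``levels''; your fallback of ``a \psc\ plus a short shortcut'' is not an argument. Second, and more fundamentally, even given such a finite region, doubling it as in \eqref{LIb6} yields a finite cubic sphere graph whose interior faces each contribute curvature $6-6=0$, so Euler's formula \eqref{euler} merely requires the annulus to contribute exactly $12$ --- and the bound established in \eqref{LIb6} is precisely ``at most $12$'', with equality attainable. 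There is nothing to contradict. This is not a technicality: curvature is blind to colours and spin, and planar cubic \tcon\ multi-ended \Cg s with all faces hexagonal genuinely exist (types \ref{bi} and \ref{biii} of \Tr{TIbx}, type \ref{IIci} of \Tr{TIIc}); the paper remarks right after \eqref{LIb6} that this is exactly why the hexagonal case must always be treated separately. Your closing appeal to ``the colour-and-spin structure of \Prr{LIIAd}'' is where all the real work lies, and it is not carried out.

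What actually closes the case are the two hypotheses specific to this section: no 2-coloured cycles, and exactly one spin-reversing colour $d$. The paper splits on the dominant colour $a$ of a \mpsc\ $C$ (supplied by \Prr{LIIAd}). If $a$ is spin-preserving, say $a=c$, then $C$, being 3-coloured, contains a $bcdc$ subpath, and the hexagon relation $bdbcdc=1$ from \eqref{bdbcdc} rewrites $bcdc$ as $db$, shortcutting $C$ and contradicting its minimality --- a two-line argument needing no Euler formula. If $a=d$, the paper superimposes a dummy orientation on the $b,c$-edges and runs the level-counting argument of \eqref{LIb8} to replace $C$ by a \mpsc\ reading $(odo^{-1}d)^n$; because $d$ reverses spin, this cycle uses only two of the colours $b,c,d$, contradicting the standing assumption that \g has no 2-coloured cycle. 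You would need to supply arguments of this kind to finish your proof.
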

\begin{proof}
Suppose, to the contrary, that \g does have a hexagonal face-boundary $H$. Let $C$ be a \mpsc\ of \G. We distinguish two cases according to the spin-behaviour of the dominant colour $a$ of $C$.

If $a$ is the spin-reversing colour $d$, then our approach is similar to that of \Sr{secTidy}. Let $R$ be  $bc$-double-ray incident with $H$. Considering the \auto s mapping $R$ to itself it is easy to check that every face boundary incident with $R$ must be a hexagon too, as it can be mapped to $H$. We can proceed as in \Sr{secTidy} to prove an assertion similar to \eqref{LIb8} there. For this, we superimpose a new dummy colour $o$ to each edge of \g coloured $b$ or $c$, and direct all edges bearing that colour `to the right', as in \fig{hex}. We can now repeat the proof of \eqref{LIb8} to show that $o$ appears in $C$ the same number of times in each direction, and use this fact to construct a new \mpsc\ $C'$ that `reads' $(odo^{-1}d)^n$. Note however that the way we directed the $o$ edges combined with the fact that $d$ reverses spin implies that such a cycle $C'$ contains only two of the colours $b,c,d$, contradicting our assumption that \g has no 2-coloured cycle.


If $a$ is one of the spin-preserving colours, $c$ say, then as $C$ must contain all three colours, it must have a $bcdc$ subpath. But such a subpath can be shortcut by a $db$ path since, by \eqref{bdbcdc}, $bdbcdc$ is the relation inducing the hexagonal face-boundary $H$. This contradicts the minimality of $C$.
\end{proof}

Using this fact we can now enrich our knowledge about the \mpsc s. Let $C$ be a \mpsc\ of \G, and let $\ct$ be the set of translates of $C$ by \auto s of $G$.
\labtequ{LII5d}{No two elements of $\ct$ cross.}
Suppose, to the contrary, two elements $D,D'$ of $\ct$ cross. We will now use the ideas of \Sr{secTidy}  to obtain a contradiction from Euler's formula \eqref{euler}. By \eqref{BF} there is a subpath 
$P$ of $D$ and a subpath $P'$ of $D'$, with common endvertices $u,v$,  \st\ $P \cup P'$ is a cycle $K$ bounding a region $B$ containing finitely many vertices. We repeat the construction of $H'$ of \Sr{secTidy}: let again $H$ be the finite plane subgraph of $G$ spanned by $K$ and all vertices in $B$. Let $H'$ be the graph obtained from two copies of $H$ by joining corresponding vertices of degree two by an edge, and note that $H'$ is cubic. Consider an embedding of $H'$ in the sphere \st\ the two copies of $H$ occupy two disjoint discs $D_1,D_2$, and the newly added edges and their incident faces lie in an annulus $Z$ that joins these discs. Since by \Prr{nohex} all faces within these discs have size greater than 6, contributing a negative curvature to \eqref{euler}, it suffices to show that the total curvature of the faces in $Z$ does not exceed 12 to obtain a contradiction to \eqref{euler}. The faces in $Z$ have even size by construction, so that it suffices to show that $Z$ cannot contain more than six 4-gons. The presence of spin-reversing edges makes this task slightly harder than in \Sr{secTidy}, and we need a new argument.

We are going to reduce $Z$ to a small auxiliary graph $Z'$ by performing operations that leave the curvature, as well as the spin-behaviour of the colours, invariant. We will end up with only few possibilities for $Z'$, in which it will be easy to count the 4-gons. The first of these operations suppresses a pair of spin-preserving edges on opposite sides of $Z$. Let $e$ be an edge of $Z$ coloured $b$ or $c$ which is an interior edge of $P$ or $P'$ \st\ both edges $f_1,f_2$ of $P$ or $P'$ incident with $e$ bear the other spin-preserving colour (this could be the dominant colour for example). Let $e',f_1',f_2'$ be the corresponding edges on the other side of $Z$. Note that because of the spin-behaviour, precisely one of the endvertices of $e$ is adjacent to its counterpart in $e'$ by a $d$-edge $g$. The situation is thus as in the left part of \fig{fdelb}. Now let us delete $e,e'$ and $g$, and identify $f_1$ with $f_2$ and $f_1'$ with $f_2'$ to obtain a new graph $Z^*$. Note that $Z^*$ has the same `curvature' as $Z$: indeed, $Z^*$ has one face less than $Z$, and precisely 6 edges less than $Z$ in the count of the sum of face-sizes of \eqref{euler} ($g$ is counted twice because it lies in two face boundaries). Thus, this operation is indeed neutral as far as Euler's formula is concerned.

We can perform a similar operation whenever $Z$, or any auxiliary graph $Z^*$ obtained after performing the above operation a number of times, has a subpath of the form $cdcdc$ or $bdbdb$ on its boundary: the right part of \fig{fdelb} shows how to remove this kind of subpath and its counterpart affecting neither the total curvature nor the spin-behaviour of the colours on the boundary of $Z$. 

We now distinguish two cases according to which colour is dominant in $C$ (and thus in any element of \ct).

If the dominant colour is a spin-preserving one, let us say $c$, then we can apply the operation of the left half of \fig{fdelb} repeatedly to eliminate from $Z$ each $b$ edge that was an interior edge of $P$ or $P'$. 
\showFig{fdelb}{Two operations that simplify $Z$ without changing its curvature.}
Then, we can apply the operation of the right half of \fig{fdelb} repeatedly to eliminate pairs of $d$-edges, thus leaving an auxiliary graph $Z'$ with very few edges: the vertices $u,v$ (recall that these where the common endpoints of $P$ and $P'$) and their incident $c$ edges split $Z'$ into two parts, each of which contains no $b$ edge that is not incident with $u$ or $v$, and contains at most one pair of $d$-edges not incident with $u,v$. In fact, it must contain precisely one pair of $d$-edges not incident with $u,v$, for any attempt to construct such a part without such $d$-edges leads to a contradiction to the spin-behaviour as displayed in \fig{fZnoparts}. It follows that each of those parts must be one of the three graphs in the upper row of \fig{fZparts}. 
\showFig{fZnoparts}{Some failed attempts to create a building block for $Z'$. Note that the $c$ edges in the middle are not spin-preserving as they should. The dashed edges are not part of $Z'$ and are only a visual aid.}
\showFig{fZparts}{The possible combinations for $Z'$.}
Now as $u$ and $v$ are incident with one edge of each colour, $Z'$ is either the union of the two leftmost graphs of the upper row of \fig{fZparts}, or the union of two copies of the rightmost one. Thus $Z'$ is one of the graphs of the bottom row. In both cases, it has precisely six 4-gons and two 6-gons, accounting for a contribution of 12 `curvature' units to Euler's formula \eqref{euler}. To sum up, the contribution of the faces of $H'$ that lie inside $Z$ to the left part of \eqref{euler} equals 12. The contribution  of the remaining faces of $H'$ is strictly negative, since each such face has size larger than 6. This contradicts \eqref{euler} in the case when the dominant colour is a spin-preserving one.

We now turn to the case when the dominant colour of $C$ is $d$. In this case we can still use the reducing operation in the right half of \fig{fdelb}. In addition, we can use two further operations, shown in \fig{fdelbdc}, that can recursively suppress any path of the form $cdbdc$, or $bdcdcdbdb$, and similarly with the roles of $b$ and $c$ interchanged. After performing such operations as often as possible, we are left with a graph $Z'$ which again we think of as the union of two parts joined at $u$ and $v$. Each of those parts has now at most three pairs of $d$-edges not incident with $u$ or $v$, for if it has more then one of the above operations can be applied. We are left with few possibilities, and an easy case study (for which \fig{fdelbdc} might still be helpful) shows that none of these two parts can contain more than three 4-gons, again leading to a contradiction of \eqref{euler}. This completes the proof of \eqref{LII5d}.
\showFig{fdelbdc}{Further reducing operations for the case that $d$ is the dominant colour of $C$.}
\medskip

We can now apply \eqref{LII5d} to exclude $d$ as a dominant colour:
\labtequ{Ld5}{The spin-reversing colour $d$ is not dominant in any \mpsc\ of \G.}
Indeed, if $d$ was the dominant colour of a \mpsc\ $C$, then $C$ would contain a $d$-edge $e$ incident with both a $b$ and a $c$ edge of $C$. But then the \auto\ of \g that exchanges the endvertices of $e$ would map $C$ to a \mpsc\ $C'$ that crosses $C$, contradicting \eqref{LII5d}.
\medskip

In fact, we can prove a bit more about dominant colours:
\labtequ{Ld1dom}{If $C,B$ are \mpsc s of \G\ then they have the same dominant colour.}
Indeed, if not, then by \eqref{Ld5} we can assume that the dominant colour of $C$ is $c$ and that of $B$ is $b$. Note that as all cycles must be 3-coloured, $C$ must have a subpath of the form $cdcbc$ and $B$ must have a subpath of the form $bdbcb$. Translating one of those paths to the other as in \fig{fd1dom} we obtain a crossing of two \mpsc s. By \eqref{CII3m} this implies the existence of a new \mpsc\ containing a subpath of the form $dbcd$. But this contradicts \Prr{LIIAd} and so \eqref{Ld1dom} is proved.
\medskip
\showFig{fd1dom}{The crossing cycles in the proof of \eqref{Ld1dom}.}

By \eqref{Ld5} and \eqref{Ld1dom} all \mpsc s of \g have the same, spin-preserving, dominant colour. Assume from now one that this colour is $c$. We can now describe the \mpsc\ precisely:
\labtequ{Ld6}{Every \mpsc\ of \g is induced by the word $(c(bc)^nd)^{2m}$ for fixed $n,m\geq 1$.}
Let $C$ be a \mpsc, and define a \defi{$bc$-interval} of $C$ to be a maximal subpath of $C$ not containing a $d$-edge. Note that \eqref{Ld6} is equivalent to saying that all $bc$-intervals of \mpsc s of \g have the same length $2n+1$. Suppose, to the contrary, that $C$ has a $d$-edge $e$ \st\ the two $bc$-intervals on either side of $e$ have different lengths. Then, observing the spin behaviour of the edges, it is easy to see that the \auto\ of \g that exchanges the endvertices of $e$ maps $C$ to a \mpsc\ that crosses $C$ (\fig{fd6}),
contradicting \eqref{LII5d}. In particular, $C$ does not contain a $dcd$ subpath, for such a subpath contains a $bc$-interval of length 1, and $C$ has some $bc$-interval of length at least 3 as it is 3-coloured. This proves that every \mpsc\ $C$ has the desired form $(c(bc)^nd)^m$, with $n\geq 1$, and it only remains to show that $n$ and $m$ cannot vary for a different \mpsc\ $D$. The fact that $n$ cannot vary can be proved with a similar argument, by considering a \auto\ that maps a $d$-edge of $D$ to a $d$-edge of $C$ (see \fig{fd6} again). It follows that $m$ cannot vary either, since all \mpsc s have the same length by definition. The fact that $C$ has an even number of $d$-edges, giving rise to the exponent $2m$, can easily be proved by observing the spin behaviour of the edges.

\showFig{fd6}{Finding a crossing if two $bc$-intervals of $C$ have different lengths in the proof of \eqref{Ld6}.}

\subsubsection*{The subgroup $\Gam_2$ and the subgrpaph $G'_2$.}

Define the words $B:=bcdcb$ and $Z:=c(bc)^{n-1}$, where $n$ is supplied by \eqref{Ld6}. Note that every face boundary of \g is, by \eqref{bdbcdc}, of the form $\ldots dBdBdB \ldots$. Moreover, every \mpsc\ is, by \eqref{Ld6}, of the form $(BZdZ)^m$ (compare this with \eqref{azaz}).

Let $b^*$ be the element of $\Gam$ corresponding to the word $B$ and let $z$ be the element of $\Gam$ corresponding to the word $Z$. 
Let $\Gam_2$ be the subgroup of $\Gam$ generated by $\{b^*,z,d\}$. 

As in \eqref{delta} we still have:
\labtequ{deltad}{for every \mpsc\ $C$ of \g and every coset $\Delta$ of $\Gam_2$ in \G, at most one of the sides of $C$ contains elements of $\Delta$.}
To prove this, one can use similar arguments as in the proof of \eqref{delta}, however, there is an easier way: it is easy to check, just by observing the spin behaviour, that no path  induced by a word in the letters $b^*,z,d$ can cross a cycle $C$ induced by the word $(BZdZ)^m$, the word inducing the \mpsc s.
\medskip

Next, we check that 
\labtequ{Zd}{\fe\ path $P$ in \g induced by $Z$ \ta\ precisely two \mpsc s $C,D$ containing $P$. Moreover, $C \cap D=P$, that is, $C$ and $D$ have no common edge outside $P$.} 
The first part of this assertion is much easier to prove than the corresponding assertion \eqref{Z} in the previous section: it follows immediately from \eqref{Ld6}. The second part can be proved like \eqref{Z}: if $C,D$ have a common edge outside $P$, then using a subpath of each we can form a cycle $K$ shorter than $|C|$, and using Euler's formula as in \eqref{LII5d} we can prove that $K$ is \ps, a contradiction.
\medskip

The following assertion strengthens \eqref{deltad} and can be proved like \eqref{indep}. 
A \defi{metaedge} is a path of \g induced by one of the words $B,Z,d$.
\labtequ{Ldind}{Any two metaedges of $\Gam_2$ are independent.}
%

This means that, as in earlier sections, the \Cg\ $G_2$ of $\Gam_2$ \wrt\ the generating set $\{b^*,z,d\}$ has a \topem\ in \G: we can obtain $G_2$ from $G$ by substituting, for every two vertices $x,y$ of  $G_2$ that are adjacent by a  $b^*$ or $z$ edge, the \pth{x}{y}\ in \g  induced by $B,Z$ with an $x$-$y$ edge of the corresponding colour $b^*,z$. This yields indeed a  \topem\ of  $G_2$ in \G\ since by \eqref{Ldind} all these paths are independent. Starting with $G_2$ and replacing each $b^*$ or $z$ edge back by the corresponding path induced by $B$ or $Z$  we obtain a subdivision $G'_2\subseteq G$ of $G_2$ that will be useful later.

Note that every edge of \gt\ corresponds to an involution. Moreover, every $b^*$ or $d$-edge of \gt\ is spin-reversing while every $z$ edge is spin-preserving; this can be deduced from the spin behaviour of the original edges of \G. 

\comment{
	\labtequ{finfad}{There is $k\geq 2$ \st\ every face boundary of \g is induced by the word $(bdb cdc)^k = (Bd)^k$.}
	Indeed, the fact that all face boundaries have the same length follows from \Tr{imrcb} and the fact that every vertex is incident with two spin-preserving edges. Thus it only remains to show that this length is finite.
}

\subsubsection*{The \plpr\ of \G}

As in earlier sections we will express \g as a union of copies of \gtp, and then apply \Tr{ccfdec} in order to deduce a presentation of \g from a presentation of \gt. 

In this section we define a \defi{\soc} as in \Dr{defsoc} of the previous section, except that we now base this definition on the word $(BZdZ)^m$, which induces the \mpsc s in the current case.

To begin with, we claim that 
\labtequ{socsd}{every \mpsc\ $C$ of \g contains precisely two distinct \socs.}
Indeed, recall that $C$ is induced by the word $(c(bc)^nd)^{2m}$ by \eqref{Ld6}. Note that \fe\ two `consecutive' $d$-edges $d_1,d_2$ on $C$, the two $b$ edges incident with $d_1$ lie in one side of $C$, while the $b$ edges incident with $d_2$ lie in the other side of $C$. Thus \ti\ a bipartition $\{D_1,D_2 \}$ of the set of $d$-edges of $C$ \st\ all $b$ edges incident with an element of $D_i$ lie in the same side of $C$. It is now straightforward to check that the endvertices of all the edges in each of the $D_i$ lie in a common \soc\ of $C$, and these two \socs\ are distinct for $i=1,2$.

This allows us to create a structure tree $T$ on the set of left \gat\ cosets in \Gam\ as we did in \Sr{secIIc}: join two such cosets with an edge, if the corresponding copies of \gtp\ share a \mpsc\ of \G. It follows from \eqref{deltad} that $T$ is acyclic, and from \eqref{socsd} that it is connected. Thus once more, we can apply \Tr{ccfdec}, with the $H_i$ being the vertices of $T$ and the $F_i$ being the \mpsc s of \g giving rise to the edges of $T$. This yields that given any presentation of \gat\ we can transform it into a presentation of \Gam\ by replacing any occurrence of the letters $b^*, z$ by the corresponding words $B=bcdcb$ and $Z=c(bc)^{n-1}$ and adding the involution relations $b^2, c^2$. 

So let us find a presentation of \gat. We distinguish three cases according to the connectivity and number of ends of \gt.
\medskip

{\bf Case I:} \gt\ is \tcon\ and finite or 1-ended.
In this case we apply \Tr{LG5} \ref{ziii} to \gt, which yields $\gt \isom Cay\left<b^*,z,d\mid {b^*}^2, z^2, d^2, (b^*d)^r, (b^*zdz)^m\right>$, $r\geq 2, m\geq 1$. Note that $b^*$ and $d$ are interchangeable in this presentation, while $z$, being the only spin-preserving colour, plays a special role.

Substituting $b^*, z$ as suggested above we obtain the following presentation for \Gam:
$$G \isom Cay\left<b,c,d\mid {b^2}, c^2, d^2, (c bdbcd)^k; (c(bc)^nd)^{2m}\right>, k\geq 2, n,m\geq 1.$$
\note{2m because we compactified the word}
This is a \plpr: the relation $(c bdbcd)^k$ corresponds to face-boundaries, and the last relation corresponds to \mpsc s. We will prove in the next subsection that $n+m\geq 3$ must hold in this case.
\medskip

{\bf Case II:} \gt\ is \tcon\ and multi-ended.
In this case we can apply \Lr{LIIab1}, which yields that \gt\ must have a 2-coloured cycle involving the only spin-preserving colour $z$. We have already characterized the graphs of this type: \Tr{TIId11} yields that $\gt \isom Cay\left<b^*,z,d\mid {b^*}^2, z^2, d^2, (b^* d)^q, (dzb^* z)^m;(zb^*)^{2r} \right>$, $r,m,q\geq 2$, or (exchanging $b^*$ and $d$ in the above presentation and rearranging)
$\gt \isom Cay\left<b^*,z,d\mid {b^*}^2, z^2, d^2; (b^* d)^q, (dzb^* z)^m;(zd)^{2r} \right>$, $r,m,q\geq 2$.

Substituting $b^*, z$ as above we obtain in the first case
$$G \isom Cay\left<b,c,d\mid {b^2}, c^2, d^2; (c bdbcd)^q; (c(bc)^nd)^{2m}, (c(bc)^{n+1} d)^{2r} \right> \text{ with } n\geq 1 \text{ and } r,m,q\geq 2,$$ 
where we used the fact that $(c(bc)^{n-1} bcdcb )^{2r}= (c(bc)^{n} dcb )^{2r} =   (c(bc)^{n+1} d)^{2r}$. In the second case we obtain
$$G \isom Cay\left<b,c,d\mid {b^2}, c^2, d^2; (c bdbcd)^q; (c(bc)^nd)^{2m}, (c(bc)^{n-1} d)^{2r} \right>, \text{ with } n,r,m,q\geq 2.$$
In the latter presentation we are demanding $n\geq 2$ because if $n=1$ then \g has 2-coloured cycles contrary to our assumption.

Note that these two presentations are the same, as can be seen by exchanging $m$ with $r$ and $n$ with $n+1$. Thus we omit the first one.  We have now obtained possibility \ref{IId2ii} of \Tr{TIId}.
\medskip

{\bf Case III:} \gt\ is not \tcon.

It follows from \Lr{Liicon} that \gt\ must be \iicon. Thus, in this case \gt\ is one of the graphs of \Tr{main2}. Since it has three generators, it has to belong to one of the types \ref{iv}--\ref{ix} of that theorem. We will be able to eliminate most of these types as a possibility for \gt, leaving only type \ref{v} as a possibility.

It is made clear in \cite{cay2con} that for every graph of type \ref{iv}, all edges participating in the 4-cycles induced by the relation $(bc)^2$ must preserve spin in any embedding. But our \gt\ has an embedding in which two of the colours reverse spin, and so \gt\ cannot be of this type.

For a graph of type \ref{vi}, the colour participating in both relations has the property that any edge $e$ of that colour is a \sepe, and it separates the graph in two components each of which sends two edges of the same colour to $e$. Let us check that \gt\ cannot have a \sepe\ $e=uv$. If $uv$ is coloured $d$ or $b^*$, then note that it is contained in a basic cycle $C$, and each of its endvertices $u,v$ is incident with a basic cycle $D_u,D_v \neq C$. Now note that $(C \cup D_u \cup D_v) - \{u,v\}$ is connected, which means that if $\gt - \{u,v\}$ is disconnected, then \gt\ was already disconnected before removing $\{u,v\}$, a contradiction. If $uv$ is coloured $z$ instead, then note that the $d$-edge incident with $u$ and the $b^*$-edge incident with $v$ lie in a common component of $\gt - \{u,v\}$ because there is a basic cycle containing these two edges and $uv$. But this contradicts the property of the separating colour described above. Thus in all cases we obtain a contradiction if \gt\ is of type \ref{vi}.

It also follows from the analysis in \cite{cay2con} that for every \prem\ of a graph of type \ref{vii} at least two colours preserve spin, and so again the embedding of \gt\ we have implies that \gt\ cannot be of that type either.

Suppose now \gt\ is of type  \ref{viii}, which means that $\gt \isom Cay\left<b^*,z,d\mid {b^*}^2, z^2, d^2, (dzb^* z)^m \right>$. Then replacing $b^*$ and $z$ as above, we obtain the following presentation for \G: 
$G \isom Cay\left<b,c,d\mid {b^2}, c^2, d^2, (c(bc)^nd)^{2m} \right>, n\geq 2$. Note however, that the latter presentation is identical with that of \Tr{main2}~\ref{vii}; thus, by the converse implication of that theorem, \g is not \tcon\ in this case contradicting our assumption. 

If \gt\ is of the degenerate type  \ref{ix}, then there must be a pair of edges of \gt\ that have common endvertices. No $z$-edge can participate in such a pair, because $ZB$ and $Zd$ are both subwords of the word $ZdZB$ inducing the \mpsc s. But if a $d$ and a $b^*$ edge form such a pair, then the corresponding cycle of \g bounds a hexagonal face, which cannot be the case by \Prr{nohex}. Thus \gt\ is not of type  \ref{ix} either.

The only possible candidate left is type  \ref{v}, and this possibility can indeed occur as we will see in the next subsection. In this case we have either\\
$\gt \isom Cay \left< b^*,z,d\mid {b^*}^2, z^2, d^2, (b^*z)^{2r}, (zb^*zd)^m\right>$, $r,m\geq 2$, or \\ 
$\gt \isom Cay \left< b^*,z,d\mid {b^*}^2, z^2, d^2, (dz)^{2r}, (zb^*zd)^m\right>$, $r,m\geq 2$,\\
depending on which of the two spin reversing colours $b^*,d$ forms 2-coloured cycles with $z$. Replacing $b^*$ and $z$ as above, we obtain the following two presentations respectively:
 
$\g \isom Cay \left< b,c,d\mid b^2, c^2, d^2; (c(bc)^{n+1} d)^{2r},  (c(bc)^nd)^{2m} \right>$, $n\geq 1$, $r,m\geq 2$, or 
 
$\g \isom Cay \left< b,c,d\mid b^2, c^2, d^2; (c(bc)^{n-1} d)^{2r},  (c(bc)^nd)^{2m} \right>$, $n,r,m\geq 2$.
 
Again, there is no difference between these two presentations except for the naming of the parameters, and we can omit the first one. We have thus obtained possibility \ref{IId2iii} of \Tr{TIId}.

This completes the proof of the forward implication of \Tr{TIId}.

\subsubsection*{The converse implication}

In this section we show that \fe\ presentation as in \Tr{TIId} the corresponding \Cg\ is planar and \tcon. Our approach is very similar to that of the proof of \Tr{TIIc} (page~\pageref{convIIc}), and it more or less goes through the proof of the forward implication the other way round. 

Consider first a \Cg\ of  {\bf type \ref{IId2i}}: 
$$G \isom Cay\left<b,c,d\mid {b^2}, c^2, d^2, (c bdbcd)^k, (c(bc)^nd)^{2m}\right>, k\geq 2,\ n,m\geq 1.$$

We are going to construct an embedding of \G. For this, consider first the auxiliary \Cg\ $G_2 = Cay\left<b^*,z,d\mid {b^*}^2, z^2, d^2, (b^*d)^k, (b^*zdz)^m\right>$. Then \Tr{LG5} \ref{ziii} yields an embedding $\sig_2$ of \gt\ in which only $z$ preserves spin, and \gt\ is \tcon\ by \Tr{endtcon}. Modify \gt\ into a further auxiliary graph \gtp\ by replacing each $b^*$ edge of \gt\ by a path of length 5 with edges coloured $bcdcb$ (recall that this was the word $B$), and replacing each $z$ edge of \gt\ by a path of length $2n-1$ with edges coloured $c(bc)^{n-1}$ (the word $Z$). Note that these words are symmetric, and so it does not matter at which end of those paths we start colouring the new edges.

Note that every cycle of \gt\ induced by $(b^*zdz)^m$ has turned into a cycle of \gtp\ induced by the word of \eqref{Ld6}. Moreover, every such cycle bounds a face of \gtp. We call these cycles the \defi{basic cycles} of \gtp. A \defi{corner} of \gtp\ is a vertex of degree 3. By \eqref{socsd} every basic cycle $B$ of \gtp\ contains precisely two distinct `\socs'.  Note that precisely one of these \socs\ $S_B$ consists of corners of \gtp, while the elements of the other \soc\ $T_B$ are non-corners.  Now \fe\ basic cycle $B$ of \gtp, construct a copy $H_B$ of \gtp\ \st\ $B\subseteq H_B$ and the elements of $T_B$ are corners of $H_B$, and embed $H_B$ in the face of \gtp\ bounded by $B$. Repeat this inductively ad infinitum for each of the newly appeared basic cycles. Let \g be the resulting plane graph and \sig\ its embedding. It follows from Sabidussi's Theorem that \g is a \Cg: assertions \eqref{autoa} and \eqref{autob} are still valid, and imply that the \auto s of \g act transitively on its vertices. 

We will now prove that \g is \tcon\ using \Lr{lkcon}. For this, let \seq{H}\ be an enumeration of the  copies of \gtp\ in \G, and \fe\ $i$ let $K_i$ be the set of corners of $H_i$. The requirements \ref{coi} and \ref{coii} of \Lr{lkcon} are satisfied for $k=3$ since \gt\ is \tcon. We will show that the third requirement \ref{coiii} is satisfied unless $n=m=1$. Indeed, if the latter is the case, then we have $Z= c$ and so the $z$ edges of \gt\ are also edges of \G. Moreover, any basic cycle $C$ is induced by $(b^*zdz)^m$, and so after subdividing it only contains one metaedge. Note that such a metaedge contains all elements of one of the \socs\ of $C$. This implies that removing the first and last edge of this metaedge disconnects the two copies of \gtp\ that share $C$ in \G, which means that \g is not \tcon\ in this case.

If on the other hand one of $n,m$ is greater than 1, then each basic cycle $C$ contains more than one metaedge, and it follows that $C$ contains at least four edges joining its two \socs. Thus requirement \ref{coiii} of \Lr{lkcon} is satisfied too, and so  \g is \tcon\ in this case.

Next, we claim that \g has no 2-coloured cycle unless $n=m=1$. We begin with showing that $bd$ and $cd$ have infinite order independently of the values of $n$ and $m$. For this, note that \fe\ $b$-edge $e$ of \g \ti\ a $b^*$ metaedge starting with $e$, and a basic cycle $C_e$ containing this metaedge. The spin behaviour implies that the two $d$-edges incident with $e$ lie in distinct sides of $C_e$. Now given a  path $P$ of \g the edges $b_1 d_1 b_2 d_2 \ldots$ of which alternate in the colours $b,d$, consider the cycles $C_{b_i}$ obtained as above, and note that $C_{b_i}\neq C_{b_{i+1}}$; indeed if $C_{b_i}= C_{b_{i+1}}$ then $d_i$ is a \defi{chord} of $C_{b_i}$, i.e.\ an edge having both vertices on that cycle, but a basic cycle cannot have a chord since $k>1$. Note moreover that, by our previous remark about the spin, $C_{b_{i-1}}$ and $C_{b_{i+1}}$ lie in distinct sides of $C_{b_{i}}$. This immediately implies that $P$ cannot be a closed path, and so  $bd$ has infinite order indeed. 

Similarly, note that \fe\ path $P$ induced by $b\ c(bc)^n\ b$ \ti\ a basic cycle $C$ that contains the interior of $P$ and the first and last edge of $P$ lie in distinct sides of $C$. Adapting the above argument we conclude that $bc$ has infinite order too.

If $n>1$ then \fe\ $dcd$-path \ti\ a basic cycle $C$ containing only the middle edge, and the two incident $d$-edges lie in distinct sides of $C$, which again allows us to prove that $cd$ has infinite order. If $n=1$,  then note that if \g has a finite $cd$-cycle $C$, then $C$ is also a cycle of \gt\ since $Z=c$ in this case. Moreover, since $c$ preserves spin, some of the $b^*$-edges incident with $C$ lie in one of its sides and some of them lie in its other side; the situation around $C$ looks like \fig{fIId1} after adapting the colours. If no such edge is a chord of $C$ then, easily, $C$ is a dividing cycle, which contradicts the fact that \gt\ is at most 1-ended. If $C$ has a chord $e=uv$, then consider the shortest subarc $P$ of $C$ with endvertices $u,v$. If $||P||>3$, then \ti\ a \auto\ \gt\ that fixes $C$, maps $u$ to a vertex $u'$ of $P$ that has an incident  $b^*$-edge in the side of $C$ in which $e$ also lies, and maps $v$ to a vertex $v'$ outside $P$ (use \fig{fIId1} again to see this). But then the edges $uv, u'v'$ must cross, yielding a contradiction. 

Note that $||P||\neq 1$ because $k>1$. If $||P||=3$, and so $P$ is induced by $cdc$, then $P\cup e$ is a face-boundary of \gt. But every face-boundary of \gt\ is induced by one of the relators $(b^*d)^k, (b^*zdz)^m$ in its presentation. Thus $m$ must equal 1 in this case. To sum up, we proved that if $n+m\geq 3$ then $cd$ has infinite order (in \G) too. The interested reader will be able to check that this assertion is best possible: if $n=m=1$ then  $cd$ must have finite order in \G. We do not need this fact for our proof though since $n=m=1$ is already forbidden because of the connectivity.

The fact that \g has the desired presentation now follows from the forward implication of \Tr{TIId}, which we have already proved, since we checked that \g has the desired properties.
\medskip

Consider now a presentation of  {\bf type \ref{IId2ii}}:
$$G \isom Cay\left<b,c,d\mid {b^2}, c^2, d^2, (bdb cdc)^q; (c(bc)^{n}d)^{2m}, (c(bc)^{n+1} d)^{2r} \right>, n\geq 1, r,m,q\geq 2$$
We can then construct an embedding of \g by the same method, except that we have to start with a different \gt: this time we let\\ $\gt \isom Cay\left<b^*,z,d\mid {b^*}^2, z^2, d^2, (b^* d)^q, (dzb^* z)^m, (zb^*)^{2r} \right>$, and it follows from \Tr{TIId11} that \gt\ is again planar and \tcon, and has the desired spin behaviour. Otherwise, the construction remains the same. 

The parameter $m$ is now large enough to make sure that the requirements of \Lr{lkcon} for $k=3$ are satisfied in all cases, and so \g is \tcon. Moreover, we can prove that \g has no 2-coloured cycle by the same arguments, and our task is made easier by the fact that $m\geq 2$ now.

\medskip
Finally, consider a presentation of {\bf type \ref{IId2iii}}:
$$\g \isom Cay \left< b,c,d\mid b^2, c^2, d^2,  (c(bc)^{n-1}d)^{2m}, (c(bc)^{n} d)^{2r} \right>, n,r,m\geq 2.$$ 
We use the same approach again, except that it is now trickier to show that the resulting \Cg\ \g is \tcon. In this case we start our construction letting $\gt \isom Cay \left< b^*,z,d\mid {b^*}^2, z^2, d^2, (b^*z)^{2r}, (zb^*zd)^m\right>$, which corresponds to type  \ref{v} of \Tr{main2} and has connectivity 2. It is proved in \citeIIconCorNosp\ that this \gt\ has the following properties:
\begin{enumerate} 
\item \label{nosi} \gt\ has a \prem\ $\sig_2$ in which $z$ preserves spin while $b^*, d$ reverse spin (see \citeCayIIFignospiral). In this embedding, each vertex is incident with two faces bounded by a cycle induced by the relator $(zb^*zd)^m$ and one face that has infinite boundary.
\item \label{nosii} \gt\ cannot be separated by removing two edges $e,f$ unless both $e,f$ are coloured $d$, and it cannot be separated by removing a vertex and an edge $e$ unless $e$ is coloured $d$; 
\item \label{nosiii} If a pair of vertices $s,t$ of a cycle $C$ of \gt\ induced by 
$(zb^*zd)^m$ separates \gt, then both $s,t$ are incident with a $d$-edge of $C$;
\item \label{nosiv} \gt\ has no \sepe;
\item \label{nosvi}	\fe\  cycle $C$ of \gt\ induced by the word $(zb^* zd)^m$, and every $b^*$ edge $vw$ of $C$, \ti\  a \pth{v}{w}\ in $\gt$ meeting $C$ only at $v,w$, and
\item \label{nosv} If two cycles $C,D$ of \gt\ induced by the word $(zb^* zd)^m$ share an edge $uv$, then \ti\ path from $C$ to $D$ in $\gt - \{u,v\}$.
\end{enumerate}
Construct \g and an embedding of its using \gt\ and $\sig_2$ (provided by \ref{nosi} as in the previous cases. 

Although $\kap(\gt)=2$, we will be able to prove 
\begin{proposition}
\g is \tcon.
\end{proposition}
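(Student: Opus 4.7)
The plan is to apply \Lr{lkcon} with $k=3$ to the family $\cx = \{V(H) : H \text{ a copy of } G'_2 \text{ in } G\}$. Condition \ref{coi} is immediate: by construction $G = \bigcup H_i$, so the vertex sets cover $V(G)$. Condition \ref{coiii} is also easy to verify: two copies $H, H'$ sharing a basic cycle $C$ satisfy $V(C) \subseteq V(H) \cap V(H')$, and since $|V(C)| = 4m(2n+1) \geq 20$ for $n,m\geq 2$, the two sets remain heavily joined after removing any two vertices; so $V(H)$ is 3-connected to $V(H')$ in \G. Since every pair of copies is linked by a chain of successively adjacent ones (the gluing graph is the usual tree $T$), condition \ref{coiii} follows for all pairs.

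The heart of the argument is condition \ref{coii}: fixing a copy $H$ and a two-vertex set $S = \{x,y\} \subseteq V(G)$, we must show that $V(H) \setminus S$ lies in a single component of $G \setminus S$. If $|S \cap V(H)| \le 1$ this is automatic from the 2-connectivity of $H \cong G'_2$, which follows from \ref{nosiv} and the 2-connectivity of \gt\ together with the fact that subdividing an edge preserves 2-connectivity. So assume $S \subseteq V(H)$ and $\{x,y\}$ actually separates $H$ into components $P$ and $Q$; we must exhibit a \pth{P}{Q}\ in $G \setminus \{x,y\}$. First observe that $x,y$ must both be corners of $H$: if one of them is an interior vertex of a subdivided $b^*$- or $z$-edge $e$ of $H$, then deleting that interior vertex in $H$ has the same separating effect as deleting the edge $e$ in \gt, and by \ref{nosii} and \ref{nosiv} the only singleton-edge deletions that can contribute to a 2-separator are $d$-edges --- but $d$-edges are not subdivided. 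Hence $\{x,y\}$ corresponds to a 2-separator of \gt\ itself.

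Now split into cases according to where $\{x,y\}$ sits relative to the basic cycles of $H$. The decisive case is when $x,y$ both lie on a common basic cycle $C$ of $H$: by \ref{nosiii} both must then be incident with $d$-edges of $C$, and $C$ is split by $\{x,y\}$ into two arcs, each still containing $\gt$-corners carrying components of $H \setminus \{x,y\}$. Let $H'$ be the other copy of $G'_2$ sharing $C$, and note $x,y$ are also $d$-corners of $C$ in $H'$. Using \ref{nosvi} on $H'$, every $b^*$-edge $vw$ of $C$ is the endpoint of a \pth{v}{w}\ in $H' \setminus (C \setminus \{v,w\})$; this supplies detours in $H' \setminus \{x,y\}$ between prescribed pairs of $C$-vertices, producing the required \pth{P}{Q}\ inside $H' \cup C \subseteq G$ and avoiding $\{x,y\}$. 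When $\{x,y\}$ is a 2-separator of \gt\ not lying on a single basic cycle of $H$, the same circle of ideas applies: \ref{nosii} forces the separator to involve the $d$-labelled edges between neighbouring basic cycles, and \ref{nosv} supplies alternative routes between each pair of basic cycles sharing such an edge. Translating these routes into $H$ and the adjacent copies $H'$ produces the required \pth{P}{Q}\ in $G \setminus \{x,y\}$.

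The main obstacle will be the final case analysis. Since $\kap(\gt)=2$, one cannot hope to bypass separators inside $H$ alone; the whole point of the argument is that every 2-separator of \gt\ is forced (by \ref{nosii}--\ref{nosvi}) to be a pair of $d$-endpoints, and the gluing of copies of $G'_2$ along basic cycles --- each such cycle contributing $m\geq 2$ pairs of $d$-endpoints with duplicated routes in the neighbouring copy --- is precisely what kills these separators in \G. Making this routing rigorous for every configuration of $\{x,y\}$ relative to the basic cycles of $H$ and its neighbours (same basic cycle, adjacent basic cycles sharing a $d$-edge, or a more distant pair) is the technical bulk of the proof, but once completed, \Lr{lkcon} yields $\kap(G) \geq 3$ immediately, and equality $\kap(G) = 3$ follows from the fact that \g is cubic.
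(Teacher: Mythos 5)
Your overall strategy is the paper's: apply \Lr{lkcon} with $k=3$ to the copies of $G'_2$, and use the listed properties of \gt\ (in particular \ref{nosiii} and \ref{nosvi}) to reroute around any 2-separator through the adjacent copy glued along a basic cycle. Your treatment of the decisive case --- $x,y$ on a common basic cycle $C$, hence both incident with $d$-edges of $C$ by \ref{nosiii}, bypassed by the path of \ref{nosvi} living in the neighbouring copy --- is exactly the paper's claim \eqref{bctcon}. Two points, however, keep this from being a complete proof. First, the case you defer (``a more distant pair'') is not a routine translation of the same ideas; the paper resolves it with an explicit structural device that your sketch lacks: since the word $(zb^*zd)^m$ uses all three colours, every edge of a copy $H$ lies in a basic cycle, so any two corners of $H$ are joined by a finite chain of basic cycles in which consecutive cycles share a metaedge (claim \eqref{shar}); each such cycle's corner set is \tcon\ in \G\ by \eqref{bctcon}, and consecutive cycles in the chain are \tcon\ to one another by \ref{nosv}, so the pieces concatenate. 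Without this chaining argument the general position of $\{x,y\}$ is simply not handled.

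Second, two smaller inaccuracies. Your reduction ``$x,y$ must both be corners of $H$'' is not quite right as stated: if $x$ is an interior vertex of a metaedge $P_e$ and $y$ is an endvertex of $P_e$, then $\{x,y\}$ does separate $H$ (it strands the piece of $P_e$ between them) even though $e$ is not coloured $d$; this is only repaired by passing to \G, where the stranded vertices are corners of the adjacent copy. Relatedly, by taking the sets in \Lr{lkcon} to be the full vertex sets $V(H)$ you oblige yourself to prove that these subdivision vertices also remain attached after deleting $\{x,y\}$, which is more than is needed. The paper sidesteps both issues by taking the sets to be the corner sets $K_i$ only (these still cover $V(G)$, since every vertex is a corner of some copy), at the mild price of having to verify condition \ref{coiii} via the two disjoint \socs\ of a shared basic cycle rather than by the overlap $V(C)\subseteq V(H)\cap V(H')$ that makes it trivial in your version.
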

\begin{proof}
We will apply \Lr{lkcon}, with $K_i$ being, as usual, the set of corners of a copy $H_i$ of \gtp\ in \G, and \seq{H}\ being an enumeration of these copies.

So let us check that $K_i$ is \tcon\ in \G. To begin with, note that $H_i$ is the union of basic cycles, that is, cycles induced by the word $(zb^* zd)^m$, because this word contains all colours, and so every edge is in a basic cycle. 
\labtequ{bctcon}{\Fe\ basic cycle $C$ of $H_i$, $K_i\cap C$ is \tcon\ in \G.}
Indeed, suppose \ta\ vertices $s,t\in V(G)$ separating two vertices $x,y\in K_i\cap C$. Since $C$ is a cycle, both $s,t$ must lie on $C$, with each of the two components of $C- \{s,t\}$ containing one of $x,y$. It follows easily from \ref{nosii} that none of $s,t$ can be an interior vertex of a $b^*$ or $z$ metaedge contained in $C$, and so $s,t\in K_i\cap C$.  By \ref{nosiii} each of $s,t$ is incident with a $d$-edge of $C$, and these two $d$-edges $d_s,d_t$ are distinct by \ref{nosiv}. 
Note that by the choice of the words $Z,B$, the $b^*$ metaedge $b_s$ containing $d_s$ is contained in $C$, 
and note also that $d_t$ is not contained in $b_s$ as the latter contains only one $d$-edge. Thus, each of  the two components $S_1,S_2$ of $C- \{s,t\}$ contains one of the  endvertices $p,q$ of $b_s$.
Now by \ref{nosvi}, \ti\ a path $P$ joining $p$ to $q$ in the copy of \gtp\ sharing $C$ with $H_i$ which path  has no interior vertex on $C$. Thus  $s,t\not\in V(P)$, and $P$ connects the two components of $C - \{s,t\}$. This contradicts our assumption that $s,t$ separate  $x,y$ in \G, and proves \eqref{bctcon}.

\showFig{fds}{The path $P$ accounting for the \tcon ness of \G.}

Next, we claim that 
\labtequ{shar}{\fe\ $v,w\in K_i$, there is a finite sequence of basic cycles $C_0, \ldots C_k$ of $H_i$ \st\ $v\in C_0$,  $w\in C_k$, and $C_i$ shares a metaedge with $C_{i+1}$ \fe\ relevant $i$.}
Indeed, since the word $(zb^* zd)^m$  inducing the basic cycles involves all three colours, any two edges of $H_i$ sharing a vertex lie in a common basic cycle, and so \eqref{shar} can be proved by induction on the length of a \pth{v}{w}\ in $\gt$.
\medskip

Now \ref{nosv} yields that \fe\ two basic cycles $C,D$ of $H_i$ sharing a metaedge, $C$ is \tcon\ to $D$ in $H_i$. Combining this with \eqref{shar} and \eqref{bctcon} implies that $K_i$ is \tcon\ in \G: given $v,w\in K_i$, and a sequence $C_i$ as in  \eqref{shar}, we can construct a \pth{v}{w}\ in \g avoiding any fixed pair of vertices by combining paths joining two suitable vertices of $C_i$ \fe\ relevant $i$; see the proof of \Lr{lkcon} for a more detailed exposition of this argument.

Thus we have proved that the set of corners $K_i$ of any copy $H_i$ of \gtp\ in \g is \tcon\ in \G. Moreover, for any two copies $H_i,H_j$ of \gtp\ sharing a basic cycle $C$, it is clear that $K_i$ is \tcon\ to $K_j$ because $C$ contains more than 2 pairwise disjoint paths joining its two \socs, and each of  $K_i,K_j$ contains a distinct \soc\ of $C$. We can thus apply \Lr{lkcon} to prove that \g is \tcon.
\end{proof}

This completes the proof of the converse implication of \Tr{TIId}.

The graphs of the last type \ref{IId2iii} also have the surprising property that we already encountered in \Cr{monsters} that none of their faces is bounded by a cycle:
\begin{corollary} \label{monsters2}
For every $n,r,m\geq 2$, the graph\\ $\g \isom Cay \left< b,c,d\mid b^2, c^2, d^2;  (c(bc)^{n-1}d)^{2m}, (c(bc)^{n} d)^{2r} \right>$  is \tcon\ and has no finite face boundary.
\end{corollary}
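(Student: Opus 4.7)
\emph{Plan.} The corollary has two parts. The $\tcon$-ness of $G$ is immediate from the Proposition established in the course of the converse implication of $\Tr{TIId}$ for type \ref{IId2iii}: that Proposition states exactly that the graph produced by the construction there is \tcon. So only the second part---that no face of $G$ has a finite boundary walk---needs attention, and for this we inspect the explicit embedding of $G$ produced by that same construction.

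\emph{Setup.} Recall that $G$ was built by taking $H_0 := \gtp$, the natural subdivision of the auxiliary graph $\gt \isom Cay \left<b^*,z,d \mid {b^*}^2, z^2, d^2, (b^*z)^{2r}, (zb^*zd)^m\right>$ of type \ref{v} of $\Tr{main2}$, and then recursively gluing a fresh copy $H_B$ of $\gtp$ inside every basic-cycle face, i.e.\ every face bounded by a cycle induced by $(zb^*zd)^m$. The key input is property \ref{nosi} invoked there: the planar embedding $\sig_2$ of \gt\ makes every vertex incident with \emph{two} basic cycle faces and \emph{one} face with infinite boundary walk. The subdivision \gtp\ inherits this feature verbatim, since the degree-$2$ vertices added by the subdivision simply lie on the same three face-walks.

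\emph{Main step.} The crucial observation is that the recursive gluing inserts new structure \emph{only} inside basic cycle faces---it never touches the infinite-boundary faces. Consequently, every infinite-boundary face of every copy of $\gtp$ appearing in the construction survives unchanged as a face of $G$ with the same (infinite) boundary walk. I will argue that these are all the faces of $G$: given any face $F$ of $G$, pick a point $p\in F$ and follow the nested regions of the construction. At each level, $p$ either lies in an infinite-boundary face of the current copy of $\gtp$, in which case $F$ is that face and has infinite boundary; or $p$ lies strictly inside a basic-cycle face of the current copy, in which case we descend into the copy that was glued there and iterate. The recursion must terminate after finitely many steps, because the alternative would produce an infinite strictly-nested sequence of basic cycles, each of the same finite length $4m(n+1)$; their common intersection would contain $F$ as a subset of an end of $G$ rather than a single connected open region of $\R^2\sm G$, a contradiction.

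\emph{Main obstacle.} The only delicate point is the termination argument, which rests on the local finiteness of $G$ together with the fact that all basic cycles have the same bounded length, ruling out that a face could be squeezed into an end by infinitely many nested basic cycles. Once that is made precise, every face $F$ of $G$ is identified with the infinite-boundary face of some copy of $\gtp$ at finite depth, and therefore has an infinite boundary walk; combined with the already-established $\tcon$-ness, this finishes the proof.
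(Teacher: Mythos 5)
Your first part is exactly the paper's: the \tcon ness is quoted from the Proposition proved in the converse implication of \Tr{TIId} for type \ref{IId2iii}, so nothing more is needed there. For the second part you take a genuinely different route. The paper only uses your ``setup'' observation --- that the infinite-boundary faces of each glued copy of \gtp\ are never filled in, so \emph{every vertex is incident with at least one face with infinite boundary} --- and then finishes by symmetry: since $b$ and $c$ preserve spin, any face of \g can be mapped to any other by a \auto, whence \emph{all} faces have infinite boundary. That two-line symmetry step replaces your entire classification of the faces of \g, and in particular it never has to discuss faces that might be buried under infinitely many levels of the construction, because it only ever speaks of faces incident with a vertex.

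That is precisely where your argument has a soft spot. Your termination step asserts that an infinite strictly nested sequence of basic cycles $B_0\supset B_1\supset\cdots$ all containing the face $F$ yields a contradiction because $F$ would be ``a subset of an end rather than a single connected open region''. With the paper's definition (a face is a component of $\R^2\setminus\sig(G)$, and the embedding here is very much not \vapf), this is not a contradiction: such a component can exist inside $\bigcap_i D_i$; it simply has \emph{empty} boundary, since no vertex or edge of \g lies in all the nested closed discs. So the descent as stated does not terminate for every face; it terminates only for faces whose boundary contains a vertex $x$, because $x$ lies strictly inside only the finitely many basic discs above its copy of \gtp\ in the structure tree. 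Either restrict your descent to such faces from the outset (the degenerate empty-boundary components are artifacts of the accumulation points and are ignored by the paper's conventions as well), or replace the whole classification by the paper's spin/\auto\ transitivity argument, which is both shorter and immune to this issue.
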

\begin{proof}
Recall that in order to construct a \g as above, we started with the graph \gt\ and its embedding $\sig_2$, and inductively glued copies of \gt\ inside the basic cycles. By \ref{nosi} $\sig_2$ had faces with infinite boundary, and these faces were left intact by our construction, except for subdividing the edges in their boundary. Thus \g also has at least one face with infinite boundary at every vertex. But as two of the colours $b,c$ preserve spin in \G, such a face can be mapped to any other by a \auto\ of \G, implying that every face has infinite boundary.
\end{proof}

\section{Outlook} \label{afterIId}  

In \Sr{intEx} we showed two examples of a \defi{word extension}. This operation was implicit throughout the paper whenever we used a presentation of \gt\ to obtain one of \G. It would be interesting to study  word extensions in greater generality. One aim could be to refine Stallings' theorem into a theorem about all \Cg s rather than a theorem about their groups: prove that every multi-ended \Cg\ can be obtained from simpler \Cg s by means of certain operations including  word extensions. A modest first step in this direction would be to prove that every multi-ended cubic \Cg\ is a word extension of a cubic \Cg\ of a subgroup. To put it in a different way:

\begin{problem}
Every multi-ended cubic \Cg\ $G=Cay(\Gam,S)$ contains a subdivision of a cubic \Cg\ \gt\ of a proper subgroup of \Gam, and \g is the union of the translates of \gt\ under \Gam. 
\end{problem}

One of the most important ideas in this paper was the use of \socs\ (\Dr{defsoc}), and realising that they can be found in each of our multi-ended graphs. It would thus be interesting to prove that they appear in all  multi-ended planar \Cg s, not just the cubic ones. It also seems promising to try to generalise the concept to non-planar \Cg s, perhaps using the ideas of \cite{dunKro}. For example, the big cycle of \fig{fiamal} (iii) accommodates two dual \socs, each being the set of its vertices sending edges to one of its sides. This graph was the result of a  word extension of the graph of \fig{fiamal} (i), where a generator $a$ was replaced using the word $a_*^2$; see \Sr{secIa}. But we could have used the word  $a_*^3$ instead, in which case each edge of the original cycle would be subdivided into three, we would have three `dual' \socs, and removing the cycle would leave three infinite components. Thus the resulting graph is not planar, but still it can be analysed by our methods.

\medskip

\comment{

As mentioned in \Sr{intEx}, our results prove a variant of Mohar's \Cnr{conjBM} for the cubic case. Our proof, as summarised in \Sr{secSke}, and the recent vertex-cut theory \cite{dunKro}, suggest that this might be true in general, with no restriction on the vertex degree and without assuming planarity:
\begin{conjecture} \label{conjStal}
Every accessible \Cg\ \G\ can be obtained as the tree amalgamation of subdivisions of one or more Cayley graphs, each of which is either finite or 1-ended.
\end{conjecture}

}


\Tr{main} shows that every cubic planar \Cg\ admits a planar presentation with at most 6 relators. This motivates
\begin{problem}
Let $f(n)\in \N\cup \{\infty\}$ be the smallest cardinal such that every $n$-regular planar \Cg\ admits a planar presentation with at most $f(n)$ relators. Is $f(n)$ finite for each $n$? If yes how fast does it grow with $n$?
\end{problem}

In this paper we constructed surprising examples of planar \tcon\ \Cg s in which no face is bounded by a cycle. Thus one can ask
\begin{problem}
Is there, \fe\ $k\in\N$, a planar $k$-connected \Cg\ in which no face is bounded by a cycle?
\end{problem}

\acknowledgements{I am very grateful to Bojan Mohar, for triggering my interest in the topic and for later valuable discussions, and to Martin Dunwoody, for discussions leading to improvements in the final version.}

\bibliographystyle{plain}
\bibliography{collective}

\begin{thebibliography}{10}

\bibitem{BaAut}
L.~Babai.
\newblock {Automorphism groups, isomorphism, reconstruction.}
\newblock In {\em {Graham, R. L. (ed.) et al., Handbook of combinatorics. Vol.
  2. Amsterdam: Elsevier (North-Holland)}}, pages 1447--1540. 1995.

\bibitem{BaGro}
L.~Babai.
\newblock The growth rate of vertex-transitive planar graphs.
\newblock In {\em Proceedings of the eighth annual {ACM-SIAM} symposium on
  Discrete algorithms}, pages 564--573, New Orleans, Louisiana, United States,
  1997. Society for Industrial and Applied Mathematics.

\bibitem{bogop}
O.~Bogopolski.
\newblock {\em Introduction to Group Theory}.
\newblock EMS, Zuerich, Switzerland, 2008.

\bibitem{BoWaPla}
C.~P. Bonnington and M.~E. Watkins.
\newblock Planar embeddings with infinite faces.
\newblock {\em Journal of Graph Theory}, 42(4):257--275, 2003.

\bibitem{CWY}
Q.~Cui, J.~Wang, and X.~Yu.
\newblock Hamilton circles in infinite planar graphs.
\newblock {\em J.~Combin.\ Theory (Series B)}, 99(1):110--138, 2009.

\bibitem{dicks_dunw}
W.~Dicks and M.~J. Dunwoody.
\newblock {\em Groups acting on graphs}.
\newblock Cambridge University Press, 1989.

\bibitem{diestelBook05}
R.~Diestel.
\newblock {\em Graph {T}heory \emph{(3rd edition)}}.
\newblock Springer-Verlag, 2005.
\newblock \\ Electronic edition available at:\\ {\small\tt
  http://www.math.uni-hamburg.de/home/diestel/books/graph.theory}.

\bibitem{droInf}
C.~Droms.
\newblock {Infinite-ended groups with planar Cayley graphs.}
\newblock {\em J. Group Theory}, 9(4):487--496, 2006.

\bibitem{DrSeSeCon}
C.~Droms, B.~Servatius, and H.~Servatius.
\newblock {Connectivity and planarity of Cayley graphs.}
\newblock {\em Beitr.\ Algebra Geom.}, 39(2):269--282, 1998.

\bibitem{dunPla}
M.J. Dunwoody.
\newblock Planar graphs and covers.
\newblock Preprint.

\bibitem{dunKro}
M.J. Dunwoody and B.~Kr\"on.
\newblock Vertex cuts.
\newblock Preprint.

\bibitem{vapf}
A.~Georgakopoulos.
\newblock A group has a flat cayley complex if and only if it has a {VAP}-free
  cayley graph.
\newblock Preprint 2010.

\bibitem{cay2con}
A.~Georgakopoulos.
\newblock The planar cubic cayley graphs of connectivity 2.
\newblock Preprint 2010.

\bibitem{am}
A.~Georgakopoulos.
\newblock Word extensions of groups.
\newblock In preparation.

\bibitem{fleisch}
A.~Georgakopoulos.
\newblock Infinite hamilton cycles in squares of locally finite graphs.
\newblock {\em Advances in Mathematics}, 220:670--705, 2009.

\bibitem{agmh}
A.~Georgakopoulos and M.~Hamann.
\newblock In preparation.

\bibitem{ImWhi}
W.~Imrich.
\newblock {On Whitney's theorem on the unique embeddability of 3-connected
  planar graphs.}
\newblock In {\em {Recent Adv. Graph Theory, Proc. Symp. Prague 1974}}, pages
  303--306. 1975.

\bibitem{KroInf}
B.~Kr\"on.
\newblock Infinite faces and ends of almost transitive plane graphs.
\newblock Preprint.

\bibitem{macCla}
A.M. Macbeath.
\newblock {The classification of non-euclidean plane crystallographic groups.}
\newblock {\em Can.\ J.\ Math.}, 19:1192--1205, 1967.

\bibitem{MitCon}
K.~Mitchell.
\newblock Constructing semi-regular tilings.
\newblock {\em Talk given at the Spring 1995 Meeting of the Seaway Section of
  the MAA.}
\newblock {\small\tt http://people.hws.edu/mitchell/tilings/Part1.html}.

\bibitem{mohPers}
B.~Mohar.
\newblock Personal communication.

\bibitem{mohTre}
B.~Mohar.
\newblock Tree amalgamation of graphs and tessellations of the cantor sphere.
\newblock {\em Journal of Combinatorial Theory. Series B}, 96(5):740–753,
  2006.

\bibitem{sab}
G.~Sabidussi.
\newblock {On a class of fixed-point-free graphs.}
\newblock {\em Proc.\ Am.\ Math.\ Soc.}, 9:800--804, 1958.

\bibitem{whitney_congruent_1932}
H.~Whitney.
\newblock Congruent graphs and the connectivity of graphs.
\newblock {\em American J.\ of Mathematics}, 54(1):150--168, January 1932.

\bibitem{wilNon}
H.C. Wilkie.
\newblock {On non-Euclidean crystallographic groups.}
\newblock {\em Math.\ Z.}, 91:87--102, 1965.

\bibitem{ZVC}
H.~Zieschang, E.~Vogt, and H.-D. Coldewey.
\newblock {\em {Surfaces and planar discontinuous groups. Revised and expanded
  transl. from the German by J. Stillwell.}}
\newblock {Lecture Notes in Mathematics 835. Springer-Verlag}, 1980.

\end{thebibliography}
\end{document}